\pgfplotsset{compat=1.14}
\pgfplotsset{every x tick label/.append style={font=\footnotesize, yshift=0.6ex}}
\pgfplotsset{every y tick label/.append style={font=\footnotesize, xshift=0.5ex}}
\DeclareMathOperator{\Var}{Var}
\DeclareMathOperator{\supp}{\operatorname{supp}}
\newcommand{\N}{\mathbb{N}}
\newcommand{\Z}{\mathbb{Z}}
\newcommand{\R}{\mathbb{R}}
\renewcommand{\P}{\mathrm{P}}
\newcommand{\E}{\mathrm{E}}
\newcommand{\T}{\mathbb{T}}
\newcommand{\1}{\mathbb{1}}
\renewcommand{\d}{{\rm d}}
\renewcommand{\Re}{\mathrm{Re}}
\newcommand{\e}{{\rm e}}
\renewcommand{\ge}{\geqslant}
\renewcommand{\le}{\leqslant}
\definecolor{CYL}{rgb}{0.3,0.1,0.1}
\author[J. Hu and C.Y. Lee]{Jingwu Hu \and Cheuk Yin Lee}
\address{School of Science and Engineering, The Chinese University of Hong Kong, 
	Shenzhen, Guangdong 518172, China} 
\email{jingwuhu@link.cuhk.edu.cn}
\address{School of Science and Engineering, The Chinese University of Hong Kong, 
	Shenzhen, Guangdong 518172, China}
\email{leecheukyin@cuhk.edu.cn}
\title[Spatio-temporal increments of nonlinear parabolic SPDEs]{On the spatio-temporal increments of nonlinear parabolic SPDEs and the open KPZ equation}
\newtheorem{stat}{Statement}[section]
\newtheorem{proposition}[stat]{Proposition}
\newtheorem{corollary}[stat]{Corollary}
\newtheorem{theorem}[stat]{Theorem}
\newtheorem{lemma}[stat]{Lemma}
\theoremstyle{definition}
\newtheorem{remark}[stat]{Remark}
\newtheorem{OP}[stat]{Open Problem}
\newtheorem{question}[stat]{Question}
\numberwithin{equation}{section}
\keywords{stochastic partial differential equation; stochastic heat equation; open KPZ equation; law of the iterated logarithm; modulus of continuity; small-ball probabilities}
\subjclass{60H15; 60G17}
\begin{document}
\maketitle
\setcounter{tocdepth}{3}
\let\oldtocsection=\tocsection
\let\oldtocsubsection=\tocsubsection
\let\oldtocsubsubsection=\tocsubsubsection

\renewcommand{\tocsection}[2]{\hspace{0em}\oldtocsection{#1}{#2}}
\renewcommand{\tocsubsection}[2]{\hspace{2.5em}\oldtocsubsection{#1}{#2}}
\begin{abstract}
We study spatio-temporal increments of the solutions to nonlinear parabolic SPDEs on a bounded interval with Dirichlet, Neumann, or Robin boundary conditions. We identify the exact local and uniform spatio-temporal moduli of continuity for the sample functions of the solutions. These moduli of continuity results imply the existence of random points in space-time at which spatio-temporal oscillations are exceptionally large. We also establish small-ball probability estimates and Chung-type laws of the iterated logarithm for spatio-temporal increments. 
Our method yields extension of some of these results to the open KPZ equation on the unit interval with inhomogeneous Neumann boundary conditions.
Our key ingredients include new strong local non-determinism results for linear stochastic heat equation under various types of boundary conditions, and detailed estimates for the errors in linearization of spatio-temporal increments of the solution to the nonlinear equation.
\end{abstract}

\section{Introduction}

Fix $L>0$ and consider the solution $u = \{u(t\,,x)\}_{t \ge 0, x\in[0,L]}$ to the stochastic partial differential equation (SPDE, for short):
\begin{equation}\label{she}\left\{\begin{split}
	&\partial_t u = \tfrac12 \partial_x^2 u + b(u)+ \sigma(u) \xi
		&\text{on $\R_+\times(0\,,L)$},\\
	&u(0\,,x) = u_0(x) & \text{for all }x\in[0\,,L],
\end{split}\right.\end{equation}
where $\xi = \{\xi(t\,,x)\}_{t \ge 0, x \in [0\,,L]}$ is a space-time white noise defined on a complete probability space $(\Omega\,,\mathscr{F}\,,\P)$, $\sigma: \R \to \R$ and $b: \R \to \R$ are both non-random, globally Lipschitz functions, and $u_0\in L^2([0\,,L])$ is a non-random function.
Throughout we assume one of the following boundary conditions:
\begin{itemize}
\item Dirichlet boundary condition:
\begin{align}\label{D:BC}\tag{D}
		u = 0 \quad \text{at} \quad x = 0,\ x = L;
\end{align}
\item Neumann boundary condition:
\begin{align}\label{N:BC}\tag{N}
		\partial_x u = 0 \quad \text{at} \quad x=0,\ x = L;
\end{align}
\item Robin boundary condition:
\begin{align}\label{R:BC}\tag{R}
	\begin{cases}
		\partial_x u + \alpha u  = 0 \quad \text{at}\quad x = 0,\\
		\partial_x u + \beta u = 0 \quad\text{at}\quad x = L,
	\end{cases}
\end{align}
where $\alpha\,, \beta \in \R$ are constants.
\end{itemize}

Equations of the type \eqref{she} are sometimes referred to as reaction-diffusion equations \cite{Fife,KKM25, Cerrai, KKMS23}.
A special case of \eqref{she} is the stochastic heat equation with $b=0$ and $\sigma(u) = u$, which is also known as the parabolic Anderson model \cite{CM94, BC95, DK}.
The stochastic heat equation is closely related to the Kardar-Parisi-Zhang (KPZ) equation, which was originally introduced by \cite{KPZ} where the spatial domain is $\R$ or $\R^n$, and has deep connections to different systems and models in mathematical physics \cite{Hairer13, Corwin12, Quastel, HHT15}.
The open KPZ equation (see \eqref{kpz} below), introduced by Corwin and Shen \cite{CS18}, models stochastic interface growth on a bounded interval with inhomogeneous Neumann boundary conditions and arises from the open asymmetric simple exclusion process under a scaling limit \cite{CS18}.
The reader may refer to \cite{Corwin22, CK24, BKWW23, Yang22, KM22} for recent developments.

The primary goal of this paper is to study spatio-temporal regularities of the sample functions of solutions to \eqref{she} and the open KPZ equation \eqref{kpz}, and to establish detailed descriptions regarding local spatio-temporal increments.

In order to present our main results, let us define the parabolic-type metric $\rho$ on $[0\,,\infty) \times [0\,,L]$ by $\rho((t\,,x)\,,(s\,,y)) = \max\{ |t-s|^{1/4}\,,|x-y|^{1/2}\}$, and define
\begin{align*}
	&B_\rho((t\,,x)\,,r) = \{ (s\,,y) \in [0\,,\infty)\times[0\,,L] : \rho((t\,,x)\,,(s\,,y)) \le r \},\\
	&B^*_\rho((t\,,x)\,,r) = \{ (s\,,y) \in [0\,,\infty)\times[0\,,L] : 0<\rho((t\,,x)\,,(s\,,y)) \le r \}.
\end{align*}
Also, recall that when $b=0$ and $\sigma=0$, the weak solution to \eqref{she} is $G\ast u_0$, which is defined for any $z = (t\,,x) \in [0\,,\infty) \times [0\,,L]$ by
\begin{align}\label{G*u}
	G\ast u_0(z) := G_t \ast u_0(x) = \int_0^L G_t(x\,,y) u_0(y) \, \d y,
\end{align}
where $G$ is the heat kernel (see Section \ref{s:pre} below).
As is commonly done \cite{DPZ, Walsh}, the SPDE \eqref{she} is interpreted in its mild form
\begin{align*}
	u(t\,,x) = (G_t \ast u_0)(x) &+ \int_{(0,t) \times [0,L]} G_{t-s}(x\,,y) b(u(s\,,y))\, \d s\, \d y\\
	& + \int_{(0,t)\times [0,L]} G_{t-s}(x\,,y) \sigma(u(s\,,y))\, \xi(\d s \, \d y)
\end{align*}
for any $(t\,,x)\in (0\,,\infty) \times [0\,,L]$, where the last integral is a stochastic integral which can be defined in the sense of Walsh, and the existence of a unique solution is well known; see \cite{Walsh}.

\subsection{Main results}
Our main results apply to any one of the boundary conditions \eqref{D:BC}, \eqref{N:BC}, \eqref{R:BC}.
The first result identifies the exact local modulus of continuity for the spatio-temporal increments relative to a fixed base point in space-time, which exhibit a Khinchine-type law of the iterated logarithm (LIL).

\begin{theorem}[Law of the iterated logarithm]\label{th:she:lil}
For every fixed point $z_0=(t_0\,,x_0)\in(0\,,\infty)\times[0\,,L]$, there exists a constant $K_0\in(0\,,\infty)$ such that
\begin{align}\label{u:lil}
	\lim_{\varepsilon\to0^+}\sup_{z\in B^*_\rho(z_0,\varepsilon)} \frac{|u(z)-u(z_0)|}{\rho(z\,,z_0)\sqrt{\log\log(1/\rho(z\,,z_0))}} = K_0|\sigma(u(z_0))| \quad \text{a.s.}
\end{align}
The preceding continues to hold for $t_0=0$ with $u(z_0) = u_0(x_0)$ if in addition
\begin{align}\begin{split}\label{G*u:z_0}\textstyle
	&u_0 \text{ is bounded and, for some $r>0$,}\\
	&|G_t\ast u_0(x)-u_0(x_0)| \lesssim \rho((t\,,x)\,,(0\,,x_0)) \quad \forall (t\,,x) \in B_\rho((0\,,x_0)\,,r).
\end{split}\end{align}
\end{theorem}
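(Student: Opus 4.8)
The plan is to reduce the nonlinear problem to a linearized one and then to apply a law of the iterated logarithm for the linear stochastic heat equation. Write $u(z) - u(z_0) = \sigma(u(z_0))\,[\,\mathcal{N}(z) - \mathcal{N}(z_0)\,] + E(z)$, where $\mathcal{N}(z) = \int_{(0,t)\times[0,L]} G_{t-s}(x\,,y)\,\xi(\d s\,\d y)$ is the stochastic-convolution part of the solution to the linear equation (the constant-coefficient SHE driven by $\xi$), and $E(z)$ collects the remaining error terms: the contribution of the drift $b(u)$, the difference $(G_t\ast u_0)(x) - (G_{t_0}\ast u_0)(x_0)$, and the linearization error $\int G_{t-s}(x\,,y)\,[\sigma(u(s\,,y)) - \sigma(u(z_0))]\,\xi(\d s\,\d y)$ minus what we have already pulled out. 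The first step is therefore to establish the error bound
\begin{align*}
	\limsup_{\varepsilon\to0^+}\ \sup_{z\in B^*_\rho(z_0,\varepsilon)} \frac{|E(z)|}{\rho(z\,,z_0)\sqrt{\log\log(1/\rho(z\,,z_0))}} = 0 \quad\text{a.s.}
\end{align*}
For the drift term and the $u_0$-term this is a deterministic Lipschitz-type estimate in the metric $\rho$ (here hypothesis \eqref{G*u:z_0} is exactly what is needed when $t_0 = 0$; when $t_0>0$ the map $z\mapsto (G_t\ast u_0)(x)$ is smooth near $z_0$ and automatically Lipschitz). For the stochastic linearization error one uses the local Hölder continuity of $s\mapsto u(s\,,y)$ together with continuity of $\sigma$ to show that $\sigma(u(s\,,y)) - \sigma(u(z_0))$ is small on a small parabolic ball, then applies a Burkholder–Davis–Gundy estimate plus a chaining/Garsia–Rodemich–Rumsey argument to upgrade the moment bound to the stated almost-sure statement. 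This is the part I expect to be the main obstacle, since one must control the error uniformly over $z$ in shrinking balls with the precise $\rho(z,z_0)\sqrt{\log\log(1/\rho(z,z_0))}$ normalization, and the paper advertises "detailed estimates for the errors in linearization" as a key ingredient precisely here.

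The second step is the law of the iterated logarithm for the Gaussian field $\mathcal{N}$, namely
\begin{align*}
	\lim_{\varepsilon\to0^+}\ \sup_{z\in B^*_\rho(z_0,\varepsilon)} \frac{|\mathcal{N}(z) - \mathcal{N}(z_0)|}{\rho(z\,,z_0)\sqrt{\log\log(1/\rho(z\,,z_0))}} = K_0 \quad\text{a.s.},
\end{align*}
where $K_0$ is a universal constant depending only on the boundary condition (in fact one expects the heat kernel $G$ to be comparable near the diagonal to the whole-line Gaussian kernel, so $K_0$ should be the same constant for all three boundary conditions, matching the known value for the SHE on $\R$). This follows from the general theory of LIL-type moduli of continuity for Gaussian random fields once one verifies: (i) the correct variance asymptotics $\mathrm{E}[(\mathcal{N}(z)-\mathcal{N}(z_0))^2] \sim c\,\rho(z,z_0)^2$ as $z\to z_0$, which is a direct heat-kernel computation; and (ii) a strong local nondeterminism (SLND) property for $\mathcal{N}$ in the metric $\rho$ — this is exactly the "new strong local non-determinism results for linear stochastic heat equation under various types of boundary conditions" the abstract refers to, and it is presumably proved in a preceding section. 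The upper bound in the LIL comes from (i) plus a Borel–Cantelli argument over a geometric sequence of radii together with a metric-entropy bound on the balls $B_\rho(z_0,\varepsilon)$; the lower bound comes from (ii), which lets one extract, inside each shrinking ball, many "almost independent" increments so that the maximum of their (Gaussian) magnitudes saturates the $\sqrt{\log\log}$ rate.

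Finally, combining the two steps: since $\sigma$ is continuous and $u$ has continuous sample paths, $\sigma(u(z_0))$ is a well-defined (random) constant, and the triangle inequality gives both
\begin{align*}
	\limsup_{\varepsilon\to0^+}\sup_{z\in B^*_\rho(z_0,\varepsilon)} \frac{|u(z)-u(z_0)|}{\rho(z,z_0)\sqrt{\log\log(1/\rho(z,z_0))}} \le K_0\,|\sigma(u(z_0))|
\end{align*}
and the matching lower bound, hence the stated limit. One point requiring a little care is that $\mathcal{N}$ is independent of nothing in particular — the coefficient $\sigma(u(z_0))$ is $\mathscr{F}$-measurable and not independent of $\mathcal{N}$ — so the LIL for $\mathcal{N}$ must be invoked as an almost-sure statement (which it is) and then multiplied by the random constant; no independence is needed. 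A second subtlety is the degenerate case $\sigma(u(z_0)) = 0$, where the theorem asserts the limit is $0$; there the claim reduces entirely to the error estimate of the first step, which must then be shown to hold with the error genuinely $o(\rho\sqrt{\log\log(1/\rho)})$ rather than merely bounded. I would handle this by noting that when $\sigma(u(z_0))=0$ the linearization error is itself of smaller order by the continuity of $\sigma$, so the bound degrades gracefully.
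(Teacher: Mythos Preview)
Your proposal is correct and follows essentially the same approach as the paper: decompose $u(z)-u(z_0)$ into $\sigma(u(z_0))(w(z)-w(z_0))$ plus the $G\ast u_0$ difference plus a linearization error, show the latter two are $o(\rho\sqrt{\log\log(1/\rho)})$ almost surely, and invoke the Gaussian LIL for $w$ obtained via SLND. The paper organizes this exactly as you describe, with Proposition~\ref{pr:E:as} supplying the almost-sure error bound and Theorem~\ref{th:w:lil:mc} the Gaussian LIL; your remarks on the degenerate case $\sigma(u(z_0))=0$ and on not needing independence between $\sigma(u(z_0))$ and $w$ are also handled the same way (implicitly, since the error bound and the Gaussian LIL are both almost-sure statements).
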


As is customary, ``$f(a) \lesssim g(a)$'' means that there exists $C \in (0\,,\infty)$ such that $f(a) \le C g(a)$ for all $a$.
Theorem \ref{th:she:lil} says that for every fixed $z_0 \in (0\,,\infty)\times[0\,,L]$, there is a $\P$-null set (depending on $z_0$) off which \eqref{u:lil} holds.
See \cite{FKM15} for spatial LILs and \cite{WX24} for temporal LILs in a similar context of nonlinear SPDEs.

The next result complements the above by identifying the exact uniform modulus of continuity for the spatio-temporal increments.
Let us recall that a Borel set $A \subset \R$ is said to be \emph{polar} for $u$ if $\P\{ \exists (t\,,x) \in [0\,,\infty)\times[0\,,L]\,, u(t\,,x) \in A\} = 0$.

\begin{theorem}[Exact uniform modulus of continuity]\label{th:she:mc}
Assume that $\sigma^{-1}\{0\}$ is polar for $u$.
Then, for every fixed interval $I=[a\,,T]\times[c\,,d]$ with $0<a<T$ and $0\le c<d \le L$, there exists a constant $K \in(0\,,\infty)$ such that
\begin{align}\label{u:mc}
	\lim_{\varepsilon\to0^+}\sup_{\substack{z,z'\in I:0<\rho(z,z')\le\varepsilon}} \frac{|u(z')-u(z)|}{|\sigma(u(z))|\rho(z\,,z')\sqrt{\log(1/\rho(z\,,z'))}} = K \quad \text{a.s.}
\end{align}
The above statement extends to $a=0$ under the additional assumption that
\begin{align}\label{G*u:I}\textstyle
	u_0 \text{ is bounded and } |G\ast u_0(z')-G\ast u_0(z)| \lesssim \rho(z\,,z') \text{ on $[0\,,T]\times[c\,,d]$.}
\end{align}
\end{theorem}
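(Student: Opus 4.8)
The plan is to establish the upper and lower bounds in \eqref{u:mc} separately, following the standard two-sided strategy for exact uniform moduli of continuity, and then to reduce the nonlinear problem to the linear stochastic heat equation via the linearization error estimates alluded to in the abstract. For the upper bound, I would first treat the linear equation $\mathscr{L}$ with $\sigma \equiv 1$, whose solution is a Gaussian field; writing $u = (G\ast u_0) + \mathscr{L}$-type mild solution, I would use a chaining/metric-entropy argument over the parabolic metric $\rho$ on the compact set $I$, combined with Gaussian tail bounds and the variance estimate $\E|\mathscr{L}(z') - \mathscr{L}(z)|^2 \asymp \rho(z,z')^2$, to get $\limsup \le K$. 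The constant $K$ is the one coming from the precise uniform modulus of Brownian-type Gaussian fields under the metric $\rho$. Then for the genuinely nonlinear $u$, I would write
\begin{align*}
	u(z') - u(z) = \sigma(u(z))\bigl[\mathscr{N}(z') - \mathscr{N}(z)\bigr] + \mathscr{R}(z,z'),
\end{align*}
where $\mathscr{N}$ is the linear solution driven by the same white noise and $\mathscr{R}$ collects the drift contribution, the difference $G\ast u_0$, and the error from replacing $\sigma(u(s,y))$ by $\sigma(u(z))$ inside the stochastic integral. The key point is that $\mathscr{R}(z,z') = o\!\bigl(\rho(z,z')\sqrt{\log(1/\rho(z,z'))}\bigr)$ uniformly on $I$, which should follow from the Lipschitz continuity of $\sigma$ and $b$, the a.s.\ local Hölder continuity of $u$ with exponent $<1/4$ in time and $<1/2$ in space (Kolmogorov), and moment bounds on the stochastic convolution of the error term — this is precisely the ``detailed estimates for the errors in linearization'' the abstract promises, and I expect this to be the main technical obstacle, since one needs uniform-in-$(z,z')$ control of a stochastic integral whose integrand is itself a random, modulus-of-continuity-sized quantity. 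A Garsia–Rodemich–Rumsey type argument applied to the error process, together with the strong local nondeterminism of the linear equation to pin down the exact constant, is the route I would take.

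For the lower bound I would exploit strong local nondeterminism (SLND) of the linear stochastic heat equation under the given boundary condition — exactly the new SLND results flagged in the abstract. Cover $I$ by roughly $\varepsilon^{-3}$ well-separated pairs $(z, z')$ with $\rho(z,z') \asymp \varepsilon$ along a sparse deterministic grid, and use SLND to show that the increments $\mathscr{N}(z') - \mathscr{N}(z)$ along this grid are ``almost independent'' Gaussian variables with variance $\asymp \varepsilon^2$; a Borel–Cantelli argument over dyadic $\varepsilon = 2^{-n}$ then forces at least one such increment to exceed $(K-\delta)\varepsilon\sqrt{\log(1/\varepsilon)}$ infinitely often. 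The hypothesis that $\sigma^{-1}\{0\}$ is polar for $u$ enters here: it guarantees that $\inf_{z\in I} |\sigma(u(z))| > 0$ a.s.\ (by continuity of $u$ and compactness of $I$), so the factor $|\sigma(u(z))|$ in the denominator does not degenerate and the linearization $u(z') - u(z) \approx \sigma(u(z))[\mathscr{N}(z') - \mathscr{N}(z)]$ is legitimate on the whole of $I$ with the error negligible at the scale $\varepsilon\sqrt{\log(1/\varepsilon)}$.

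Finally, for the extension to $a = 0$: the only place positivity of $a$ was used is to ensure that the deterministic term $G\ast u_0$ and the parabolic smoothing near $t=0$ are harmless. Under hypothesis \eqref{G*u:I}, the initial-data contribution $|G\ast u_0(z') - G\ast u_0(z)| \lesssim \rho(z,z')$ is of strictly smaller order than the conjectured modulus $\rho\sqrt{\log(1/\rho)}$, hence absorbs into $\mathscr{R}$; and boundedness of $u_0$ lets the moment estimates for the stochastic convolution and drift terms run uniformly up to $t = 0$. So one repeats the above argument verbatim on $[0,T]\times[c,d]$, with \eqref{G*u:I} replacing the interior-regularity fact that was automatic for $a>0$. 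The overall structure is therefore: (i) exact uniform modulus for the linear Gaussian solution via chaining (upper) and SLND + Borel–Cantelli (lower); (ii) transfer to the nonlinear solution via linearization with $o(\rho\sqrt{\log(1/\rho)})$ error, using Lipschitz bounds and GRR; (iii) handle the denominator via the polarity hypothesis; (iv) handle $a=0$ via \eqref{G*u:I}.
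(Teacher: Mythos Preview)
Your strategy is essentially the paper's: linearize $u(z')-u(z) = \sigma(u(z))[w(z')-w(z)] + [(G\ast u_0)(z')-(G\ast u_0)(z)] + \mathscr{E}(z;z')$, show the error is uniformly $o(\rho\sqrt{\log(1/\rho)})$ a.s., use polarity to ensure $\inf_I|\sigma(u)|>0$ a.s., and reduce to the exact modulus for the Gaussian $w$; the $a=0$ extension goes exactly as you say. The paper implements the error bound by heat-kernel localization (not just GRR) to get $\|\mathscr{E}(z;z')\|_k \lesssim k\,\rho(z,z')^\zeta$ for some $\zeta>1$, then a tail bound plus truncation/stopping-time argument to pass to an a.s.\ uniform statement without assuming $b,\sigma$ bounded.

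One genuine omission: your chaining-upper / SLND-lower plan for $w$ will in general only yield $K_-\le\liminf\le\limsup\le K_+$ a.s.\ with possibly $K_-<K_+$, since the SLND constant $c_2$ need not equal the variance upper-bound constant $c_1$. To conclude that the limit is a single deterministic constant $K$ you need a Gaussian zero-one law; the paper gets this by verifying SLND together with a harmonizable-type series representation of $w$ (their Lemma~\ref{lem:hr}) and then invoking the Lee--Xiao framework, which packages the zero-one law. Also, the $\rho$-packing count in $I$ is $\asymp\varepsilon^{-6}$ (a $\rho$-ball of radius $\varepsilon$ has Lebesgue measure $\asymp\varepsilon^4\cdot\varepsilon^2$), not $\varepsilon^{-3}$; this matters for the constant.
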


\begin{remark}
When $\sigma$ is bounded away from 0, the polarity condition is satisfied tautologically.
When $b=0$ and $\sigma(u)=u$, under boundary condition \eqref{N:BC} or \eqref{R:BC}, the polarity condition is satisfied if $u_0$ is strictly positive on $[0\,,L]$, 
due to the fact that if $u_0>0$ then $u>0$ on $\R_+ \times [0\,,L]$; see \cite[Proposition 2.7]{CS18} and \cite[Proposition 4.2]{P19}; see also \cite{M91, DP93}.
\end{remark}

\begin{remark}
It is natural to require conditions \eqref{G*u:z_0} and \eqref{G*u:I} on $u_0$ in order to obtain exact moduli of continuity results near $t=0$ because the regularity of $G \ast u_0$ on $[0,T] \times [c\,,d]$ depends on that of $u_0$.
As is well known, if $u_0 \in C^\alpha([0\,,L])$ and $0<\alpha<1$, then $G \ast u_0 \in C^{\alpha/2,\alpha}([0\,,T] \times [0\,,L])$; see \cite[p.200, Theorem 5.1.17]{Lunardi}.
In particular, if $u_0 \in C^{1/2}([0\,,L])$, then \eqref{G*u:z_0} and \eqref{G*u:I} both hold. 
The bounds in the second parts of \eqref{G*u:z_0} and \eqref{G*u:I} are not optimal and can be replaced respectively, for instance, by conditions \eqref{u:lil:G*u} and \eqref{u:mc:G*u} below.
\end{remark}

Let us emphasize that the constants $K_0$ in \eqref{u:lil} and $K$ in \eqref{u:mc} are both finite and strictly positive, hence the modulus functions in \eqref{u:lil} and \eqref{u:mc} are exact.
Because of the presence of the logarithmic factor in \eqref{u:mc}, the sample functions $(t\,,x) \mapsto u(t\,,x)$ only belong to the space $C^{\sfrac{1}{4}-,\sfrac{1}{2}-}(I) = \textstyle{\bigcap_{0<\alpha<\sfrac{1}{4}} \bigcap_{0<\beta<\sfrac{1}{2}} C^{\alpha,\beta}(I)}$ but not $C^{\sfrac{1}{4},\sfrac{1}{2}}(I)$. 
This demonstrates the optimality of the H\"older regularity of $u$.

In the case that \eqref{she} is the linear stochastic heat equation with additive noise, i.e., when $b= 0$ and $\sigma=$ constant, the solution to \eqref{she} is Gaussian.
Exact local and uniform moduli of continuity are known for a large class of Gaussian random fields; see \cite{MR,MWX13,LX23}.
The results apply to the solutions to a family of linear SPDEs on $\R_+ \times \R^d$ with additive spatially homogeneous Gaussian noise \cite{LX23, HSWX20}.
Our results extend those results to the solutions to \eqref{she} which are non-Gaussian random fields when $\sigma$ or $b$ is non-constant.
In particular, \eqref{u:lil} states that the spatio-temporal increments of $u$ at a fixed point $z_0$ are locally of order $|\sigma(u(z_0))| \rho(z\,,z_0) \sqrt{\log\log(1/\rho(z\,,z_0))}$, but \eqref{u:mc} shows that the uniform modulus for the increments is of a larger order, at a logarithmic level.
The moduli of continuity results imply the existence of random exceptional points at which the spatio-temporal increments are larger than those at a fixed point, as stated below.

\begin{corollary}[Exceptional increments]\label{cor:she:ex}
Assume that $\sigma^{-1}\{0\}$ is polar for $u$.
Fix an interval $I=[a\,,T]\times[c\,,d]$, where $0<a<T$ and $0\le c<d \le L$. Let $K$ be the constant in \eqref{u:mc}. 
For every $\theta>0$, define the random set
\begin{align*}
	F(\theta)=\left\{ z \in I : \lim_{\varepsilon\to0^+} \sup_{z' \in B_\rho^*(z,\varepsilon)}\frac{|u(z')-u(z)|}{\rho(z\,,z')\sqrt{\log(1/\rho(z\,,z'))}} \ge \theta |\sigma(u(z))| \right\}.
\end{align*}
If $\theta>K$, then $F(\theta)=\varnothing$ a.s.;
if $\theta\in (0\,,K]$, then $F(\theta)$ has Lebesgue measure 0 a.s.;
and there exists $K' \in (0\,,K]$ such that if $0<\theta<K'$, then $F(\theta)$ is nonempty and dense in $I$ a.s. Consequently, the random set
\begin{align}\label{u:sing}
	\left\{ z \in I : \lim_{\varepsilon\to0^+} \sup_{z' \in B_\rho^*(z,\varepsilon)}\frac{|u(z')-u(z)|}{\rho(z\,,z')\sqrt{\log\log(1/\rho(z\,,z'))}} =\infty \right\}
\end{align}
has Lebesgue measure 0 and is dense in $I$ a.s.
\end{corollary}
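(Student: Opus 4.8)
The plan is to prove the three dichotomy statements in turn and then read off the claim about \eqref{u:sing}. Write $\Phi(z)$ for the limit appearing in the definition of $F(\theta)$: since $\varepsilon\mapsto\sup_{z'\in B^*_\rho(z,\varepsilon)}|u(z')-u(z)|/(\rho(z,z')\sqrt{\log(1/\rho(z,z'))})$ is non-increasing, $\Phi(z)$ exists for every $z$ and equals $\limsup_{z'\to z}|u(z')-u(z)|/(\rho(z,z')\sqrt{\log(1/\rho(z,z'))})$, and joint continuity of $u$ makes $(z,\omega)\mapsto\Phi(z)$ measurable, so each $F(\theta)$ is a random Borel set; I will also use that the polarity hypothesis gives, a.s., $\sigma(u(z))\neq0$ for every $z\in I$. \emph{Case $\theta>K$.} Applying the upper bound of Theorem \ref{th:she:mc} on a closed rectangle slightly larger than $I$ but still contained in $(0,\infty)\times(0,L)$ — so that $B^*_\rho(z,\varepsilon)$ lies in it once $z\in I$ and $\varepsilon$ is small — gives $\Phi(z)\le K|\sigma(u(z))|$ for all $z\in I$ a.s., so $F(\theta)=\varnothing$ a.s. \emph{The measure-zero statement.} Fix any $\theta>0$. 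For each fixed $z\in I$, Theorem \ref{th:she:lil} gives $\limsup_{z'\to z}|u(z')-u(z)|/(\rho(z,z')\sqrt{\log\log(1/\rho(z,z'))})=K_0|\sigma(u(z))|<\infty$ a.s.; since $\sqrt{\log\log(1/r)}=o(\sqrt{\log(1/r)})$ as $r\downarrow0$, this forces $\Phi(z)=0$ a.s., hence $\P\{z\in F(\theta)\}=0$, and by Fubini $F(\theta)$ has Lebesgue measure $0$ a.s.

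The substantive part is the density (hence nonemptiness) statement for small $\theta$, which I would prove by a recursive ``fast point'' construction. First, from the lower-bound part of the proof of Theorem \ref{th:she:mc} one extracts a universal constant $K'\in(0,K]$ — in fact $K'=K$, because the strong local non-determinism estimates underlying the theorem are uniform on compact subsets of the interior, so the lower-modulus constant does not shrink on small rectangles — such that a.s., for \emph{every} rectangle $Q=[a',T']\times[c',d']$ with $0<a'<T'$ and $0<c'<d'<L$,
\[\limsup_{\varepsilon\to0^+}\ \sup_{z,z'\in Q:\,0<\rho(z,z')\le\varepsilon}\ \frac{|u(z')-u(z)|}{|\sigma(u(z))|\,\rho(z,z')\sqrt{\log(1/\rho(z,z'))}}\ \ge\ K'.\]
Now fix $\theta<K'$ and a countable family $\mathcal{Q}$ of closed rectangles of the above form (say, dyadic ones) such that every point of $\operatorname{int}(I)$ lies in the interior of arbitrarily small members of $\mathcal{Q}$, and work on the a.s.\ event on which the displayed bound holds for every member of $\mathcal{Q}$ and $u$ is uniformly continuous on $I$. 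Given $Q_0\in\mathcal{Q}$ with $Q_0\subset\operatorname{int}(I)$, build a nested sequence $Q_0\supset Q_1\supset\cdots$ in $\mathcal{Q}$ as follows: given $Q_k$, apply the displayed bound inside $Q_k$ to pick $z_{k+1},w_{k+1}\in Q_k$ (after a harmless shrinking, in $\operatorname{int}(Q_k)$) with $\eta_{k+1}:=\rho(z_{k+1},w_{k+1})\le 2^{-k}$ and $|u(w_{k+1})-u(z_{k+1})|\ge(K'-\tfrac1{k+1})|\sigma(u(z_{k+1}))|\,\eta_{k+1}\sqrt{\log(1/\eta_{k+1})}$; then pick $Q_{k+1}\in\mathcal{Q}$ with $z_{k+1}\in\operatorname{int}(Q_{k+1})$, $Q_{k+1}\subset\operatorname{int}(Q_k)$, $\operatorname{diam}_\rho(Q_{k+1})\le\tfrac1{k+1}\eta_{k+1}$, and small enough that $u$ oscillates by at most $\tfrac1{k+1}\eta_{k+1}\sqrt{\log(1/\eta_{k+1})}$ over $Q_{k+1}$. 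Crucially, every use of Theorem \ref{th:she:mc} here is to a \emph{fixed} member of $\mathcal{Q}$, on which its conclusion holds simultaneously a.s., even though the realized chain $(Q_k)$ is random.

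Let $z^*:=\bigcap_k Q_k$, a single point of $Q_0$. Then $\rho(z^*,z_{k+1})\le\operatorname{diam}_\rho(Q_{k+1})\le\tfrac1{k+1}\eta_{k+1}$, so $\rho(z^*,w_{k+1})=(1+o(1))\eta_{k+1}\to0$; and by the triangle inequality and the oscillation bound,
\[|u(w_{k+1})-u(z^*)|\ \ge\ \Big(K'-\tfrac1{k+1}\Big)|\sigma(u(z_{k+1}))|\,\eta_{k+1}\sqrt{\log(1/\eta_{k+1})}\ -\ \tfrac1{k+1}\,\eta_{k+1}\sqrt{\log(1/\eta_{k+1})}.\]
Dividing by $\rho(z^*,w_{k+1})\sqrt{\log(1/\rho(z^*,w_{k+1}))}=(1+o(1))\eta_{k+1}\sqrt{\log(1/\eta_{k+1})}$ and using continuity of $\sigma\circ u$ together with $\sigma(u(z^*))\neq0$ gives $\liminf_{k\to\infty}|u(w_{k+1})-u(z^*)|/(\rho(z^*,w_{k+1})\sqrt{\log(1/\rho(z^*,w_{k+1}))})\ge K'|\sigma(u(z^*))|$, whence $\Phi(z^*)\ge K'|\sigma(u(z^*))|\ge\theta|\sigma(u(z^*))|$ and $z^*\in F(\theta)\cap Q_0$. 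Since every nonempty relatively open subset of $I$ contains a member of $\mathcal{Q}$, running over a countable such family shows that $F(\theta)$ is dense — in particular nonempty — in $I$ a.s.

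Finally, write $S$ for the set in \eqref{u:sing}. If $z\in F(\theta)$ for some $\theta>0$, then, since $\sigma(u(z))\neq0$ a.s., there exist $z_k'\to z$ with $|u(z_k')-u(z)|\ge\tfrac{\theta}{2}|\sigma(u(z))|\,\rho(z,z_k')\sqrt{\log(1/\rho(z,z_k'))}$ for all large $k$; dividing instead by $\rho(z,z_k')\sqrt{\log\log(1/\rho(z,z_k'))}$ and using $\log(1/r)/\log\log(1/r)\to\infty$ as $r\downarrow0$ shows $z\in S$. Hence $S\supseteq F(\theta)$ for every $\theta\in(0,K')$, so $S$ is dense in $I$ a.s.\ by the previous paragraph. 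Moreover, for each fixed $z$, Theorem \ref{th:she:lil} gives $\limsup_{z'\to z}|u(z')-u(z)|/(\rho(z,z')\sqrt{\log\log(1/\rho(z,z'))})=K_0|\sigma(u(z))|<\infty$ a.s., so $\P\{z\in S\}=0$ and Fubini gives that $S$ has Lebesgue measure $0$ a.s. The main difficulty of the argument is the recursive construction: one needs to relocalize the witnessing pairs of the uniform lower modulus into ever-smaller \emph{fixed} rectangles without the constant degrading — which is exactly where the interval-uniformity of the strong local non-determinism behind Theorem \ref{th:she:mc} enters, supplying the interval-independent $K'$ — and one needs to kill the linking error $|u(z_{k+1})-u(z^*)|$, which is done by choosing $\operatorname{diam}_\rho(Q_{k+1})$ only after the random scale $\eta_{k+1}$ has been revealed.
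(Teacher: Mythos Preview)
Your argument is essentially the same as the paper's: Fubini via Theorem~\ref{th:she:lil} for the measure-zero claim, and an Orey--Taylor style nested-rectangle construction for density, leaning on the fact that the SLND constant $c_2$ in \eqref{SLND:DNR} is uniform over all subrectangles of $I$, so that the lower modulus bound $\sqrt{12c_2}$ from Theorem~\ref{th:w:lil:mc} (transferred to $u$ as in the proof of Theorem~\ref{th:she:mc}) holds simultaneously on a countable family of rectangles. The paper's version organizes the recursion slightly differently---it fixes the ``fast'' endpoint $z_n$ and shrinks the rectangle $J(z_n,z_n^*)$ until the inequality $|u(z_n)-u(z)|>\theta|\sigma(u(z))|\rho(z_n,z)\sqrt{\log(1/\rho(z_n,z))}$ holds for \emph{every} $z$ in it, whereas you carry the witnessing pair $(z_{k+1},w_{k+1})$ and control the linking error by an explicit oscillation bound on $Q_{k+1}$---but these are equivalent packagings of the same idea.

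One correction: your parenthetical assertion that ``in fact $K'=K$'' is not justified, and the reasoning you give for it is circular. Uniformity of the SLND constant $c_2$ over subrectangles only tells you that the \emph{lower bound} $\sqrt{12c_2}$ on the modulus constant does not shrink; it says nothing about whether the actual constant $K$ (which a priori lies somewhere in $[\sqrt{12c_2},\sqrt{12c_1}]$) is attained as a lower bound on every subrectangle. The paper takes $K'=\sqrt{12c_2}$ and explicitly leaves the question of whether $K'$ can be pushed up to $K$ as an open problem. Your proof is fine once you drop this overclaim and work with $K'=\sqrt{12c_2}\le K$, which is all the corollary asserts.
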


The first result of this kind was proved for Brownian motion by Orey and Taylor \cite{OT74}, who also computed the Hausdorff dimension of fast points -- the set of times where Brownian increments fail to satisfy LIL and are exceptionally large.
Similar results are known for fractional Brownian motion \cite{KS00} and a class of Gaussian processes \cite{KPX00}.
The Hausdorff dimension of the set of exceptional spatial points for the stochastic heat equation on $\R_+ \times \R$ at which temporal increments fail to satisfy LIL is obtained in \cite{HK17}.
Let us mention that exceptional points of the type similar to \eqref{u:sing} are also studied for Brownian motion \cite{OT74}, Brownian sheet \cite{Walsh82, Walsh}, and stochastic wave equations \cite{CN88, LX22}, and are called singularities in the context of Brownian sheet and stochastic wave equations.
We leave some questions that appear to lie beyond the scope of this paper. An adaptation of the method of limsup random fractals \cite{KPX00, HK17} may lead to answers to some of these questions.

\begin{question}
Let $K^* = \sup\{ \theta \ge 0 : F(\theta) \ne \varnothing \text{ a.s.} \}$. Then $0<K^* \le K$, where $K$ is the constant in \eqref{u:mc}.
Is $K^* = K$? 
Is $F(K) \ne\varnothing$ a.s.?
Is it possible to compute or obtain formulas for these constants?
(See Theorem \ref{th:w:lil:mc} below for upper and lower bounds on $K$.)
\end{question}

\begin{question}
What are the (Hausdorff, Minkowski, packing) dimensions of $F(\theta)$ for $0<\theta\le K$? 
\end{question}

Our next set of results concerns small-ball probabilities and $\liminf$-type behavior of spatio-temporal increments.

\begin{theorem}[Small-ball probability]\label{th:she:sb}
Fix any point $z_0 = (t_0\,,x_0) \in[0\,,\infty)\times[0\,,L]$.
Assume that $b$ and $\sigma$ are bounded.
Let $\phi: (0\,,1] \to [1\,,\infty)$ be a function such that 
\begin{align}\label{phi}
	\phi(\varepsilon) = O(|\log \varepsilon|) \quad \text{as $\varepsilon \to 0^+$.}
\end{align}
When $t_0>0$, if $\inf_{x\in \R} |\sigma(x)| > 0$, then
there exist $\varepsilon_0 \in (0\,,1]$ and $C_0\,,C_1 \in (0\,,\infty)$ such that for all $\varepsilon \in (0\,,\varepsilon_0]$,
\begin{align}\label{u:sb}
	\e^{-C_1 \phi(\varepsilon)} \le \P\left\{ \sup_{z\in B_\rho(z_0,\varepsilon)}|u(z)-u(z_0)| \le \frac{\varepsilon}{(\phi(\varepsilon))^{1/6}} \right\} \le \e^{- C_0 \phi(\varepsilon)};
\end{align}
When $t_0=0$, if $\sigma(u_0(x_0)) \ne 0$, and if
\begin{align}\begin{split}\label{G*u:chung}
	&u_0 \text{ is bounded and, for some $r>0$ and $q>1$,}\\
	&|G_t\ast u_0(x)-u_0(x_0)| \lesssim [\rho((t\,,x)\,,(0\,,x_0))]^{q} \quad \forall (t\,,x) \in B_\rho((0\,,x_0)\,,r),
\end{split}\end{align}
then there exist $\varepsilon_0 \in (0\,,1]$ and $C_0\,,C_1 \in (0\,,\infty)$ such that for all $\varepsilon \in (0\,,\varepsilon_0]$,
\begin{align}\label{u:sb:t=0}
	\e^{-C_1 |\sigma(u_0(x_0))|^6 \phi(\varepsilon)} \le \P\left\{ \sup_{z\in B_\rho(z_0,\varepsilon)}|u(z)-u(z_0)| \le \frac{\varepsilon}{(\phi(\varepsilon))^{1/6}} \right\} \le \e^{- C_0 |\sigma(u_0(x_0))|^6 \phi(\varepsilon)}.
\end{align}
\end{theorem}

Small-ball probability estimates are known to imply Chung's LIL \cite{Chung, LS01}.
The following result holds regardless of whether or not $b$ and $\sigma$ are bounded.

\begin{theorem}[Chung-type LIL]\label{th:she:chung}
For every fixed $z_0 = (t_0\,,x_0)\in(0\,,\infty)\times[0\,,L]$, there exists a constant $C_2 \in (0\,,\infty)$ such that
\begin{align}\label{u:chung}
	\liminf_{\varepsilon\to0^+} \frac{(\log\log(1/\varepsilon))^{1/6}}{\varepsilon} \sup_{z\in B_\rho(z_0,\varepsilon)} |u(z)-u(z_0)| = C_2|\sigma(u(z_0))| \quad \text{a.s.}
\end{align}
The statement extends to $t_0 = 0$ under the additional assumption \eqref{G*u:chung}.
\end{theorem}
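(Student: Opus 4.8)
The plan is to derive the Chung-type LIL in Theorem~\ref{th:she:chung} from the small-ball probability estimate of Theorem~\ref{th:she:sb} via a Borel--Cantelli argument along a geometric subsequence $\varepsilon_n = \gamma^n$, together with the local linearization of spatio-temporal increments that underlies the other results. First I would fix $z_0 = (t_0,x_0)$ with $t_0>0$ and write $u(z) - u(z_0) = \sigma(u(z_0))\,\mathcal{L}(z) + \mathcal{E}(z)$, where $\mathcal{L}(z)$ is the increment of the linear stochastic heat equation started from the frozen coefficient $\sigma(u(z_0))$ and $\mathcal{E}(z)$ is an error term; the paper's ``detailed estimates for the errors in linearization'' should give $\sup_{z \in B_\rho(z_0,\varepsilon)} |\mathcal{E}(z)| = o\bigl(\varepsilon (\log\log(1/\varepsilon))^{-1/6}\bigr)$ a.s., so that $\mathcal{E}$ does not affect the $\liminf$. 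Thus it suffices to prove the statement for the Gaussian increment process $\sigma(u(z_0))\,\mathcal{L}(z)$, and by homogeneity of the noise we may take $\sigma(u(z_0)) = 1$; moreover, because the relevant balls shrink to $z_0$, we may localize and treat $\mathcal{L}$ as the increment process of the stochastic heat equation on $\R_+ \times \R$ with additive noise, which is stationary in the parabolic metric $\rho$.

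For the Gaussian process $\mathcal{L}$ the argument is the classical one. For the lower bound (the $\liminf$ is $\ge C_2$), I would use the upper small-ball estimate $\P\{\sup_{z\in B_\rho(z_0,\varepsilon)}|\mathcal{L}(z)| \le \lambda \varepsilon\} \le \exp(-C_0 c \lambda^{-6})$ obtained by specializing Theorem~\ref{th:she:sb} with $\phi(\varepsilon) = c\lambda^{-6}$ (valid as long as $c\lambda^{-6} = O(|\log\varepsilon|)$). Taking $\varepsilon = \varepsilon_n = \gamma^n$ and $\lambda = \lambda_n = (1-\delta) C_2 (\log\log(1/\varepsilon_n))^{-1/6}$, the probabilities become summable in $n$ (with the right choice of $C_2$ in terms of $C_0$), so Borel--Cantelli gives $\sup_{B_\rho(z_0,\varepsilon_n)}|\mathcal{L}| > \lambda_n \varepsilon_n$ eventually; a monotonicity/interpolation step between consecutive $\varepsilon_n$ upgrades this to the continuous limit. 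For the upper bound (the $\liminf$ is $\le C_2$) I would use the lower small-ball estimate $\P\{\sup_{z\in B_\rho(z_0,\varepsilon)}|\mathcal{L}(z)| \le \lambda\varepsilon\} \ge \exp(-C_1 c\lambda^{-6})$, and combine it with an \emph{independent-blocks} decomposition: one needs the increments over disjoint parabolic annuli $B_\rho(z_0,\varepsilon_{n+1}) \setminus B_\rho(z_0,\varepsilon_n)$ to be asymptotically independent, which is exactly what the strong local non-determinism property of the linear stochastic heat equation (quoted earlier in the paper) provides. Along a sufficiently sparse subsequence the small-ball events become independent enough that the second Borel--Cantelli lemma forces $\sup_{B_\rho(z_0,\varepsilon_{n_k})}|\mathcal{L}| \le (1+\delta)C_2(\log\log(1/\varepsilon_{n_k}))^{-1/6}\varepsilon_{n_k}$ infinitely often. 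The constant $C_2$ is then identified as the value that makes both bounds match, namely $C_2^6$ proportional to $C_0$ (equivalently to the exponent constant in the sharp small-ball asymptotics for $\sup_{B_\rho}|\mathcal{L}|$), and a $0$--$1$ law (Blumenthal-type, using the germ $\sigma$-field at $z_0$) shows the $\liminf$ is a.s.\ constant.

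Finally, for the extension to $t_0 = 0$ under hypothesis \eqref{G*u:chung}: the deterministic part $G\ast u_0(z) - u_0(x_0)$ is, by assumption, of order $\rho(z,z_0)^q$ with $q>1$, hence $o\bigl(\varepsilon(\log\log(1/\varepsilon))^{-1/6}\bigr)$ as $\varepsilon \to 0$, so it is absorbed into the error term exactly as $\mathcal{E}$ was; the boundedness of $u_0$ is what lets the linearization error estimates go through up to the initial time. Thus the same Borel--Cantelli scheme applies verbatim with $u(z_0)$ interpreted as $u_0(x_0)$.

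I expect the main obstacle to be the \emph{upper bound} half, specifically the independent-blocks construction: one must show that the supremum of $|\mathcal{L}|$ over a thin parabolic annulus can be decoupled from its values on the smaller ball, with an error small enough not to destroy the constant $C_2$. This is where the precise form of strong local non-determinism (the conditional-variance lower bounds for the linear stochastic heat equation under each of the boundary conditions \eqref{D:BC}, \eqref{N:BC}, \eqref{R:BC}) is essential, and where some care is needed to keep the constants sharp rather than merely finite. The lower bound half, by contrast, is a routine first--Borel--Cantelli argument given Theorem~\ref{th:she:sb}, and the linearization-error control is quoted from the earlier parts of the paper.
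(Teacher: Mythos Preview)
Your overall architecture (linearize, reduce to the Gaussian increment, run Borel--Cantelli) is correct and matches the paper at a high level. However, the paper's actual proof of Theorem~\ref{th:she:chung} is much shorter than what you outline: it simply cites the already-established Gaussian Chung LIL for $w$ (Theorem~\ref{th:w:sb:chung}, equation~\eqref{w:chung}) together with Proposition~\ref{pr:E:as} on the a.s.\ negligibility of the linearization error $\mathscr{E}$, and combines them via the triangle inequality~\eqref{u:w:tri}. The Borel--Cantelli work is done once, for $w$, in the proof of Theorem~\ref{th:w:sb:chung}, not repeated here; and there is no localization to $\R_+\times\R$---everything stays on the bounded interval with the given boundary condition.

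There is a genuine gap in your sketch of the upper bound (the $\liminf\le C_2$ half). You write that the increments over disjoint parabolic annuli are ``asymptotically independent'' and that this ``is exactly what the strong local non-determinism property \dots\ provides.'' It is not. SLND gives a lower bound on the conditional \emph{variance} of $w(z)$ given finitely many other values; it does not furnish any independence or decorrelation of suprema over nested balls, and nested regions for a non-Markov space-time Gaussian field have no reason to decouple. The paper's mechanism is different: it uses the series representation~\eqref{v} and Lemma~\ref{lem:hr} to split $w\stackrel{d}{=}v=v_n+\tilde v_n$ into \emph{exactly independent} frequency bands; Anderson's inequality transfers the small-ball lower bound~\eqref{w:sb} from $v$ to $v_n$, the second Borel--Cantelli lemma is applied to the genuinely independent sequence $\{v_n\}$, and $\tilde v_n$ is shown negligible by metric entropy. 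Finally, $C_2$ is \emph{not} pinned down by ``making both bounds match'': the paper only obtains $c_0^{1/6}\le C_2\le c_1^{1/6}$ from the non-sharp small-ball bounds, and the existence of a deterministic limit comes from a zero-one law (\cite[Lemma~3.1]{LX23} via Lemma~\ref{lem:hr}), not from a sharp small-ball constant---indeed, whether such a constant exists is left as an open problem.
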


Similar small-ball probability and Chung-type LIL results for SPDEs such as \eqref{she} but on spatial domain $\T$ or $\R$ can be found in \cite{AJM21, LX23, Chen23, Chen24, Chen25, KKM24}.
Moreover, the existence of a small-ball constant for $t \mapsto u(t\,,x_0)$, where $u$ solves the stochastic heat equation on $\R_+ \times \T$, is established by Khoshnevisan et al \cite{KKM24}.
Theorem \ref{th:she:sb} is a spatio-temporal version of the result of \cite{KKM24} in a weaker form.

\begin{OP}
Let $\phi: (0\,,1] \to [1\,,\infty)$ be a function such that $\phi(\varepsilon) = O(|\log \varepsilon|)$ as $\varepsilon \to 0^+$. 
Does the limit (small-ball constant)
\[
	\lim_{\varepsilon\to0^+} \frac{1}{\phi(\varepsilon)} \log \P \left\{ \sup_{z \in B_\rho(z_0,\varepsilon)}|u(z)-u(z_0)| \le \frac{\varepsilon}{(\phi(\varepsilon))^{1/6}} \right\}
	\text{ exist?}
\]
\end{OP}

Our method yields similar temporal results and spatial results for \eqref{she}, which we state below without proof.
Also, our method continues to apply when the spatial domain is $\T$ or $\R$.

\begin{corollary}\label{cor:she:s/t}
For any fixed $(t_0\,,x_0) \in (0\,,\infty) \times [0\,,L]$, there exist constants $K_0\,,K_0'$, $C_0\,,C_0'$, $C_1\,,C_1'\,,C_2\,,C_2'\in (0\,,\infty)$ such that
\begin{gather*}
	\limsup_{\varepsilon\to0^+}\frac{|u(t_0+\varepsilon\,,x_0)-u(t_0\,,x_0)|}{\varepsilon^{1/4} \sqrt{\log\log(1/\varepsilon)}} = K_0|\sigma(u(t_0\,,x_0))| \quad \text{a.s.},\\
	\limsup_{\varepsilon\to0^+}\frac{|u(t_0\,,x_0+\varepsilon)-u(t_0\,,x_0)|}{\sqrt{\varepsilon\log\log(1/\varepsilon)}} = K_0'|\sigma(u(t_0\,,x_0))| \quad \text{a.s.},\\
	\liminf_{\varepsilon\to0^+} \left( \frac{\log\log(1/\varepsilon)}{\varepsilon} \right)^{1/4} \sup_{t: |t-t_0| \le \varepsilon} |u(t\,,x_0)-u(t_0\,,x_0)| = C_2 |\sigma(u(t_0\,,x_0))|\quad \text{a.s.},\\
	\liminf_{\varepsilon\to0^+} \left( \frac{\log\log(1/\varepsilon)}{\varepsilon} \right)^{1/2} \sup_{x: |x-x_0| \le \varepsilon} |u(t_0\,,x)-u(t_0\,,x_0)| = C_2' |\sigma(u(t_0\,,x_0))| \quad \text{a.s.},\\
	\e^{-C_1\phi(\varepsilon)} \le \P\left\{ \sup_{t: |t-t_0| \le \varepsilon} |u(t\,,x_0)-u(t_0\,,x_0)| \le \left(\frac{\varepsilon}{\phi(\varepsilon)} \right)^{1/4} \right\} \le \e^{-C_0 \phi(\varepsilon)},\\
	\e^{-C_1'\phi(\varepsilon)} \le \P\left\{ \sup_{x: |x-x_0| \le \varepsilon} |u(t_0\,,x)-u(t_0\,,x_0)| \le \left(\frac{\varepsilon}{\phi(\varepsilon)} \right)^{1/2} \right\} \le \e^{-C_0' \phi(\varepsilon)},
\end{gather*}
where the last two small-ball estimates hold under the additional conditions that $b$ is bounded and $|\sigma|$ is bounded above and away from 0, and that $\phi:(0\,,1] \to [1\,,\infty)$ satisfies $\phi(\varepsilon) = O(|\log \varepsilon|)$ as $\varepsilon \to 0^+$.

If $\sigma^{-1}\{0\}$ is polar for $u$, then for any fixed $0<a<T$ and $0\le c<d \le L$, there exist constants $K\,,K'$ such that
\begin{gather*}
	\lim_{\varepsilon\to0^+}\sup_{t,t'\in [a,T]: 0< |t-t'| \le \varepsilon}\frac{|u(t',x_0)-u(t\,,x_0)|}{|\sigma(u(t\,,x_0))| |t'-t|^{1/4}\sqrt{\log(1/|t'-t|)}} = K\quad \text{a.s.},\\
	\lim_{\varepsilon\to0^+}\sup_{x,x'\in [c,d]: 0< |x-x'| \le \varepsilon}\frac{|u(t_0\,,x')-u(t_0\,,x)|}{|\sigma(u(t_0\,,x))| \sqrt{|x'-x|\log(1/|x'-x|)}} = K' \quad \text{a.s.}
\end{gather*}
\end{corollary}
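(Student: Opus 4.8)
The plan is to obtain every assertion of Corollary~\ref{cor:she:s/t} by specializing the proofs of Theorems~\ref{th:she:lil}--\ref{th:she:chung} to the one-parameter families of increments in which one space-time coordinate is frozen. Restricting the parabolic metric $\rho$ to a horizontal slice gives $\rho((t\,,x_0)\,,(t'\,,x_0)) = |t-t'|^{1/4}$, and to a vertical slice gives $\rho((t_0\,,x)\,,(t_0\,,x')) = |x-x'|^{1/2}$; substituting these into \eqref{u:lil}, \eqref{u:mc}, \eqref{u:sb}, \eqref{u:chung} turns the spatio-temporal normalizers into exactly those appearing in the corollary --- for instance $\rho(z\,,z_0)\sqrt{\log\log(1/\rho(z\,,z_0))}$ becomes $\varepsilon^{1/4}\sqrt{\log\log(1/\varepsilon)}$ in time and $\sqrt{\varepsilon\log\log(1/\varepsilon)}$ in space, while $\varepsilon/(\phi(\varepsilon))^{1/6}$ over $B_\rho(z_0\,,\varepsilon)$ becomes $(\varepsilon/\phi(\varepsilon))^{1/4}$ over $\{t:|t-t_0|\le\varepsilon\}$ and $(\varepsilon/\phi(\varepsilon))^{1/2}$ over $\{x:|x-x_0|\le\varepsilon\}$. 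The lower bounds in the LIL and modulus displays are produced by the same second-Borel--Cantelli arguments as in Theorems~\ref{th:she:lil} and \ref{th:she:mc}, now applied to forward-temporal or spatial increments only; the resulting constants $K_0\,,K_0'\,,K\,,K'\,,C_2\,,C_2'$ will in general differ from their spatio-temporal counterparts but come out of the same recipe.

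Two ingredients carry the proof, both already available from the spatio-temporal analysis. First, the frozen-coefficient linearization: writing $Z(t) := \int_{(0,t)\times[0,L]} G_{t-s}(x_0\,,y)\,\xi(\d s\,\d y)$ for the Gaussian stochastic convolution at $x = x_0$, one has
\begin{align*}
  u(t'\,,x_0) - u(t\,,x_0) = \sigma(u(t\,,x_0))\,\bigl(Z(t') - Z(t)\bigr) + R_{t,t'},
\end{align*}
where $R_{t,t'}$ gathers the increment of $G\ast u_0$, the drift contribution, and the Lipschitz error from replacing $\sigma(u(s\,,y))$ by the constant $\sigma(u(t\,,x_0))$; the detailed moment estimates for the nonlinear error, combined with a Kolmogorov-type continuity argument, show that on compact subsets of $(0\,,\infty)\times(0\,,L)$ one has $\sup_{|t-t'|\le\varepsilon}|R_{t,t'}| = O(\varepsilon^{1/4+\delta})$ a.s.\ for some $\delta>0$ (and likewise $\sup_{|x-x'|\le\varepsilon}|R_{x,x'}| = O(\varepsilon^{1/2+\delta})$ for the spatial linearization around $x_0$), which is of strictly smaller order than the fluctuation scale $\varepsilon^{1/4}$ (resp.\ $\varepsilon^{1/2}$) and hence negligible against every normalizer in the corollary. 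Second, the new strong local nondeterminism for the linear stochastic heat equation, restricted to purely temporal (resp.\ spatial) increments, gives the one-parameter SLND: $\Var\bigl(Z(t')-Z(t)\mid\mathscr F_{t\wedge t'}\bigr) \asymp |t-t'|^{1/2}$, and for the spatial convolution the conditional variance given its values at finitely many other spatial points is $\asymp$ the distance to the nearest such point --- the scalings of fractional Brownian motion of index $1/4$ in time and $1/2$ in space.

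Granting these, each display follows just as its spatio-temporal analogue. The two limsup laws of the iterated logarithm come from the Borel--Cantelli dichotomy along a geometric sequence $\varepsilon_n\downarrow0$: the upper bound from Gaussian tail estimates for the increments of $Z$, a maximal inequality over the slice, and the negligibility of $R$; the lower bound by using the one-parameter SLND to isolate, on disjoint time (resp.\ space) subintervals, increments that are conditionally nearly independent, then invoking the Gaussian small-deviation bound and the second Borel--Cantelli lemma. The two small-ball estimates follow from the one-parameter SLND by successive conditioning --- the upper bound from a single well-separated increment, the lower bound of order $\e^{-c\phi(\varepsilon)}$ from a telescoping product of conditional small-ball probabilities --- together with the negligibility of $R$ and, as in Theorem~\ref{th:she:sb}, the hypotheses that $b$ is bounded and $|\sigma|$ is bounded above and away from $0$. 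The two Chung-type laws are then deduced from these small-ball estimates by Borel--Cantelli along $\varepsilon_n$, exactly as Theorem~\ref{th:she:chung} is deduced from Theorem~\ref{th:she:sb} (the localization there also removes the boundedness hypotheses). Finally, the two exact uniform moduli of continuity are obtained by running the covering argument of Theorem~\ref{th:she:mc} on a one-parameter domain: one covers $[a\,,T]$ (resp.\ $[c\,,d]$) by a grid of mesh $\asymp\varepsilon$, takes a union bound / multi-scale chaining over the near-neighbour pairs with the $\sqrt{\log(1/\varepsilon)}$ normalization for the upper bound, and uses the SLND together with independence over the $\asymp\varepsilon^{-1}$ disjoint increments for the lower bound; the polarity of $\sigma^{-1}\{0\}$ for $u$ keeps $|\sigma(u(t\,,x_0))|$ bounded away from $0$ along the random maximizing base points, so that it can be pulled out of the supremum.

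The hard part is the one already confronted in Theorems~\ref{th:she:sb}--\ref{th:she:chung}: controlling the linearization remainder $R$ at the \emph{finest} scale, so that it is genuinely negligible against the small-ball normalizers $(\varepsilon/\phi(\varepsilon))^{1/4}$ and $(\varepsilon/\phi(\varepsilon))^{1/2}$ --- sharper than what the bare modulus of continuity requires, and resting on the precise moment bounds for the nonlinear error terms --- together with verifying the degenerate pure-time and pure-space SLND near the endpoints for each of the boundary-condition-dependent kernels $G$. Since the harder spatio-temporal versions have already been carried out and the one-parameter SLND is a special case of the spatio-temporal one, the residual work for the corollary is essentially bookkeeping, which is why it is stated without proof.
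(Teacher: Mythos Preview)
Your proposal is correct and matches the paper's intended approach: the paper explicitly states this corollary without proof, remarking only that ``our method yields similar temporal results and spatial results'' and that ``spatio-temporal SLND implies spatial SLND and temporal SLND.'' Your outline---restricting $\rho$ to one-parameter slices, invoking the one-parameter SLND as a special case of Proposition~\ref{pr:SLND:DNR}, carrying over the linearization-error bounds of Section~\ref{s:E}, and rerunning the Borel--Cantelli, metric-entropy, and conditioning arguments from Theorems~\ref{th:w:lil:mc} and \ref{th:w:sb:chung}---is exactly the bookkeeping the paper has in mind.
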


\subsection{The open KPZ equation}

As an application of the method of this paper, we study spatio-temporal increments for the open KPZ equation 
\begin{equation}\label{kpz}\left\{\begin{split}
	&\partial_t h = \tfrac12 \partial_x^2 h + \tfrac12 (\partial_x h)^2 + \xi
		&\text{on $\R_+\times(0\,,1)$},\\
	&h(0\,,x) = \log u_0(x) & \forall x\in [0\,,1],
\end{split}\right.\end{equation}
with inhomogeneous Neumann boundary condition
\begin{align}\label{kpz:bc}
	\partial_x h(t\,,0) = \mu, \qquad \partial_x h(t\,,1)=-\nu
	\qquad \forall t > 0,
\end{align}
where $\xi$ is a space-time white noise, $u_0 \in C([0\,,1])$ is a strictly positive continuous non-random function, and $\mu\,,\nu\in\R$ are constants.
The Hopf-Cole solution to \eqref{kpz} is given by
\begin{align}\label{hopf-cole}
	h(t\,,x) = \log u(t\,,x) \quad \ \forall t>0,\,x \in [0\,,1],
\end{align}
where $u$ is the solution to the stochastic heat equation
\begin{equation}\label{pam}\left\{\begin{split}
	&\partial_t u = \tfrac12 \partial_x^2 u + u \xi
		&\text{on $\R_+\times(0\,,1)$},\\
	&u(0\,,x) = u_0(x) &\forall x \in [0\,,1],
\end{split}\right.\end{equation}
with the Robin boundary condition
\begin{align}
	\partial_x u(t\,,0) = (\mu-\tfrac12)u(t\,,0), \quad \partial_x u(t\,,1) = -(\nu-\tfrac12) u(t\,,1) \qquad \forall t > 0.
\end{align}
Owing to strict positivity of $u$ (see \cite[Proposition 2.7]{CS18}), the logarithm in \eqref{hopf-cole} is well defined.
For the justification of the Hopf-Cole solution to \eqref{kpz}, see \cite{GH19}.

The theorem below identifies the exact local and uniform moduli of continuity for the spatio-temporal increments of the open KPZ equation, which extends the temporal result of Das \cite{Das24} and the spatial result of Foondun et al \cite{FKM15} for the KPZ equation on $\R_+ \times \R$.

\begin{theorem}\label{th:kpz:lil:mc}
For every fixed point $z_0=(t_0\,,x_0)\in(0\,,\infty)\times[0\,,1]$,
\begin{align}\label{kpz:lil}
	\lim_{\varepsilon\to0^+}\sup_{z\in B^*_\rho(z_0,\varepsilon)} \frac{|h(z)-h(z_0)|}{\rho(z\,,z_0)\sqrt{\log\log(1/\rho(z\,,z_0))}} = K_0
	\qquad \text{a.s.}
\end{align}
where $0<K_0<\infty$ is the same constant as in \eqref{u:lil}.
Moreover, for every fixed interval $I=[a\,,T]\times[c\,,d]$ with $0 < a < T$ and $0\le c<d\le 1$,
\begin{align}\label{kpz:mc}
	\lim_{\varepsilon\to0^+}\sup_{{z,z'\in I: 0<\rho(z,z')\le\varepsilon}} \frac{|h(z')-h(z)|}{\rho(z\,,z')\sqrt{\log(1/\rho(z\,,z'))}} = K_1 \qquad \text{a.s.}
\end{align}
where $0<K_1<\infty$ is the same constant as in \eqref{u:mc}.
Furthermore, \eqref{kpz:lil} and \eqref{kpz:mc} continue to hold when $t_0=0$ and $a=0$ under \eqref{G*u:z_0} and \eqref{G*u:I}, respectively.
\end{theorem}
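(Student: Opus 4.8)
The plan is to transfer the moduli-of-continuity results for the stochastic heat equation~\eqref{pam} to the open KPZ equation via the Hopf–Cole transform $h = \log u$, exploiting the fact that $u$ is strictly positive and locally bounded above and below. First I would fix a compact spatio-temporal set (a closed parabolic ball around $z_0$ in the local case, or the interval $I$ in the uniform case, enlarged slightly so that the relevant increments stay inside) and observe that, since $u_0 \in C([0,1])$ is strictly positive and $u>0$ on $\R_+ \times [0,1]$ by \cite[Proposition 2.7]{CS18}, the continuity of the sample function $(t,x)\mapsto u(t,x)$ gives, almost surely, two finite positive random constants $m$ and $M$ with $0<m \le u(z) \le M < \infty$ for all $z$ in the chosen compact set. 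On any such set the function $\log$ is $C^1$ with derivative bounded between $1/M$ and $1/m$, so by the mean value theorem
\begin{align}\label{eq:mvt-bound}
	\frac{1}{M}\,|u(z')-u(z)| \le |h(z')-h(z)| \le \frac{1}{m}\,|u(z')-u(z)|
\end{align}
for all $z,z'$ in the set. This pinches the increments of $h$ between two positive multiples of the increments of $u$.

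Next I would specialize~\eqref{eq:mvt-bound} to the linearization. Since~\eqref{pam} is the instance of~\eqref{she} with $b=0$ and $\sigma(u)=u$, Theorem~\ref{th:she:lil} applies to $u$ at the fixed point $z_0$, giving
\[
	\lim_{\varepsilon\to0^+}\sup_{z\in B^*_\rho(z_0,\varepsilon)} \frac{|u(z)-u(z_0)|}{\rho(z,z_0)\sqrt{\log\log(1/\rho(z,z_0))}} = K_0\,|\sigma(u(z_0))| = K_0\, u(z_0) \quad \text{a.s.}
\]
Dividing the numerator and denominator in~\eqref{kpz:lil} by $u(z_0)$ and using the more refined form of~\eqref{eq:mvt-bound} — namely that for $z$ in a small ball $B_\rho(z_0,\varepsilon)$ the mean value point lies between $u(z)$ and $u(z_0)$, both of which tend to $u(z_0)$ as $\varepsilon\to 0$ by continuity — the ratio $|h(z)-h(z_0)|/|u(z)-u(z_0)|$ converges to $1/u(z_0)$ uniformly over $z\in B^*_\rho(z_0,\varepsilon)$ as $\varepsilon\to0^+$. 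Combining this with the LIL for $u$ yields the limit $K_0\, u(z_0)\cdot (1/u(z_0)) = K_0$, which is exactly~\eqref{kpz:lil}. For the uniform statement~\eqref{kpz:mc}, I would apply Theorem~\ref{th:she:mc} to $u$: here the polarity hypothesis ``$\sigma^{-1}\{0\}$ polar for $u$'' with $\sigma(u)=u$ is automatic, since $u>0$ everywhere (cf.\ the Remark after Theorem~\ref{th:she:mc} under boundary conditions~\eqref{N:BC}/\eqref{R:BC}). Thus $u$ satisfies~\eqref{u:mc} with constant $K$ on $[a,T]\times[c,d]$, i.e.\ the increments of $u$ are asymptotically $K\,u(z)\,\rho(z,z')\sqrt{\log(1/\rho(z,z'))}$. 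Now in~\eqref{kpz:mc} I would again insert the mean value identity: for $z,z'$ with $\rho(z,z')\le\varepsilon$, $h(z')-h(z) = (u(z')-u(z))/\zeta$ with $\zeta$ between $u(z)$ and $u(z')$; dividing by $u(z)$ and noting $\zeta/u(z)\to 1$ uniformly on $I$ as $\varepsilon\to0^+$ (by uniform continuity of $u$ on the compact set and $u>0$ there), I get that
\[
	\sup_{z,z'\in I:\,0<\rho(z,z')\le\varepsilon}\frac{|h(z')-h(z)|}{u(z)\,\rho(z,z')\sqrt{\log(1/\rho(z,z'))}}
\]
and the corresponding supremum for $u$ have the same $\limsup$ and $\liminf$, hence the limit equals $K$, which by the statement's labelling is $K_1$. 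The extension to $t_0=0$ and $a=0$ under~\eqref{G*u:z_0} and~\eqref{G*u:I} follows identically, invoking the boundary-case parts of Theorems~\ref{th:she:lil} and~\ref{th:she:mc} for $u$ together with the observation that under these hypotheses $u_0$ bounded and strictly positive forces $m>0$ to persist up to $t=0$ as well.

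The main obstacle — and the only genuinely non-routine point — is establishing the lower bound $m>0$ uniformly on the compact set, i.e.\ that the strict positivity of $u$ survives as a \emph{uniform} positive lower bound over a closed parabolic ball (including, in the boundary case, points with $t=0$). For $t>0$ this is just continuity and positivity of $u$ on a compact set, but one must be a little careful that the sample function $(t,x)\mapsto u(t,x)$ is continuous up to the compact set in question, which is provided by the Hölder regularity already used throughout (Theorem~\ref{th:she:mc} gives $u\in C^{1/4-,1/2-}$); for the $t_0=0$ case one additionally uses that $u_0$ is continuous and strictly positive on $[0,1]$ to extend the lower bound to the initial slice, together with the continuity of $u$ at $t=0$ guaranteed by~\eqref{G*u:z_0}/\eqref{G*u:I}. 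Everything else is a direct pinching argument via~\eqref{eq:mvt-bound}, with the constants $K_0$ and $K_1$ inherited verbatim from~\eqref{u:lil} and~\eqref{u:mc}, so no new probabilistic estimates are needed beyond Theorems~\ref{th:she:lil} and~\ref{th:she:mc} applied to the linear multiplicative equation~\eqref{pam}.
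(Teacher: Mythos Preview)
Your proposal is correct and takes a slightly different route from the paper. The paper expands $\log$ to second order,
\[
h(z)-h(z_0)=\frac{u(z)-u(z_0)}{u(z_0)}-\frac{(u(z)-u(z_0))^2}{2v^2},
\]
and then further decomposes the linear term via the linearization identity $u(z)-u(z_0)=u(z_0)\bigl(w(z)-w(z_0)\bigr)+(G\ast u_0)(z)-(G\ast u_0)(z_0)+\mathscr{E}(z_0;z)$, ultimately appealing to the \emph{Gaussian} moduli \eqref{w:lil} and \eqref{w:mc} together with Proposition~\ref{pr:E:as}; Theorems~\ref{th:she:lil} and~\ref{th:she:mc} are invoked only to kill the quadratic remainder $|u(z)-u(z_0)|^2/\phi$. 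You instead stay at first order (mean value theorem), use only the sandwich $|h(z')-h(z)|=|u(z')-u(z)|/\zeta$ with $\zeta$ trapped between $u(z)$ and $u(z')$, and apply Theorems~\ref{th:she:lil} and~\ref{th:she:mc} as black boxes for the main term. Your argument is more modular and avoids re-opening the linearization analysis; the paper's makes the role of the underlying Gaussian field $w$ explicit and thereby shows directly why the constants $K_0,K_1$ coincide with those in \eqref{w:lil}, \eqref{w:mc}.

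One small wording slip: in your displayed uniform-case formula you put $u(z)$ in the denominator of the $h$-ratio; the comparison you actually want is between $\sup\frac{|h(z')-h(z)|}{\psi(z,z')}$ (no $u(z)$) and $\sup\frac{|u(z')-u(z)|}{u(z)\psi(z,z')}$ (with $u(z)$), since the mean value identity gives $|h(z')-h(z)|=|u(z')-u(z)|/\zeta$ and it is $\zeta/u(z)\to1$ uniformly on $I$ that does the work. With that corrected, the pinching by $(1\pm\delta)$ and Theorems~\ref{th:she:lil}, \ref{th:she:mc} give exactly $K_0$ and $K_1$.
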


Theorem \ref{th:kpz:lil:mc} implies the existence of exceptional spatio-temporal increments for the open KPZ equation:

\begin{corollary}\label{cor:kpz:ex}
Fix $I=[a\,,T]\times[c\,,d]$, where $0 < a < T$ and $0\le c<d\le 1$.
Let $K$ be the constant in \eqref{u:mc}
and \eqref{kpz:mc}. 
For every $\theta>0$, define the random set
\begin{align*}
	E(\theta)=\left\{ z \in I : \lim_{\varepsilon\to0^+} \sup_{z'\in B_\rho^*(z,\varepsilon)}\frac{|h(z')-h(z)|}{\rho(z\,,z')\sqrt{\log(1/\rho(z\,,z'))}} \ge \theta \right\}.
\end{align*}
If $\theta>K$, then $E(\theta)=\varnothing$ a.s.;
if $\theta\in (0\,,K]$, then $E(\theta)$ has Lebesgue measure 0 a.s.;
and there exists $K' \in (0\,,K]$ such that if $0<\theta<K'$, then $E(\theta)$ is nonempty and dense in $I$ a.s.
Consequently, the random set
\[
	\left\{ z \in I : \lim_{\varepsilon\to0^+} \sup_{z'\in B_\rho^*(z,\varepsilon)}\frac{|h(z')-h(z)|}{\rho(z\,,z')\sqrt{\log\log(1/\rho(z\,,z'))}} = \infty \right\}
\]
has Lebesgue measure 0 and is dense in $I$ a.s.
\end{corollary}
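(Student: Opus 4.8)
\emph{Proof proposal.} The plan is to deduce the corollary from Corollary~\ref{cor:she:ex} by transporting it through the Hopf--Cole map. By \eqref{hopf-cole} we have $h=\log u$, where $u$ solves the stochastic heat equation \eqref{pam}, a special case of \eqref{she} with $L=1$, $b\equiv0$, $\sigma(x)=x$, and the Robin boundary condition \eqref{R:BC}. Since $u_0\in C([0\,,1])$ is strictly positive, $u>0$ on $\R_+\times[0\,,1]$ by \cite[Proposition 2.7]{CS18}; hence $\sigma^{-1}\{0\}=\{0\}$ is polar for $u$, so Corollary~\ref{cor:she:ex} applies to $u$ on $I$, and the constant $K$ appearing there coincides with the one in \eqref{u:mc} and \eqref{kpz:mc} by Theorem~\ref{th:kpz:lil:mc}. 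I would then work on a fixed probability-one event on which $u$ is jointly continuous and $m:=\min_{z\in I}u(z)>0$; the latter holds because $I\subset(0\,,\infty)\times(0\,,1)$ is compact and $u$ is continuous and strictly positive there.

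The heart of the matter is the pointwise identity, valid for every $z\in I$ on that event,
\[
	\Lambda_h(z)=\frac{\Lambda_u(z)}{u(z)}, \qquad \Lambda_h^{\ell}(z)=\frac{\Lambda_u^{\ell}(z)}{u(z)},
\]
where $\Lambda_h(z):=\lim_{\varepsilon\to0^+}\sup_{z'\in B_\rho^*(z,\varepsilon)}\frac{|h(z')-h(z)|}{\rho(z\,,z')\sqrt{\log(1/\rho(z\,,z'))}}$, $\Lambda_u(z)$ is the same expression with $u$ in place of $h$, and $\Lambda_h^{\ell}$, $\Lambda_u^{\ell}$ are the analogous quantities with $\log\log$ in place of $\log$ (all understood as elements of $[0\,,\infty]$). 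To prove it I would invoke the mean value theorem for $\log$: for $z'\ne z$, $h(z')-h(z)=(u(z')-u(z))/\zeta_{z,z'}$ for some $\zeta_{z,z'}$ between $u(z)$ and $u(z')$. Since $\delta(\varepsilon):=\sup_{z'\in B_\rho^*(z,\varepsilon)}|u(z')-u(z)|\to0$ as $\varepsilon\to0^+$ by continuity of $u$ at $z$, for all sufficiently small $\varepsilon$ one has $u(z)-\delta(\varepsilon)\le\zeta_{z,z'}\le u(z)+\delta(\varepsilon)$ uniformly over $z'\in B_\rho^*(z,\varepsilon)$, with $u(z)-\delta(\varepsilon)>0$. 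Dividing the mean-value identity by $\rho(z\,,z')\sqrt{\log(1/\rho(z\,,z'))}$, taking the supremum over $z'\in B_\rho^*(z,\varepsilon)$, sandwiching $\zeta_{z,z'}$ between $u(z)\pm\delta(\varepsilon)$, and then letting $\varepsilon\to0^+$ yields the identity; the case of $\Lambda^{\ell}$ is handled verbatim.

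Granting the identity, the conclusion is immediate. Since $u(z)=|\sigma(u(z))|\in(0\,,\infty)$ on our event, for every fixed $\theta>0$ we have the set equality (as subsets of $I$, on a probability-one event)
\begin{align*}
	E(\theta)=\{z\in I:\Lambda_h(z)\ge\theta\}
	&=\{z\in I:\Lambda_u(z)\ge\theta|\sigma(u(z))|\}\\
	&=F(\theta),
\end{align*}
so the trichotomy for $E(\theta)$ (emptiness when $\theta>K$; Lebesgue measure zero when $\theta\in(0\,,K]$; nonemptiness and density in $I$ when $0<\theta<K'$) is exactly what Corollary~\ref{cor:she:ex} asserts for $F(\theta)$. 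Likewise $\{z\in I:\Lambda_h^{\ell}(z)=\infty\}=\{z\in I:\Lambda_u^{\ell}(z)=\infty\}$ on that event (using $0<u(z)<\infty$), and the right-hand set is precisely the one in \eqref{u:sing}, shown there to have Lebesgue measure zero and to be dense in $I$ almost surely.

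No step here is genuinely hard: all the probabilistic content is already contained in the nonlinear result Corollary~\ref{cor:she:ex}, and the present corollary is merely a transfer of it along $h=\log u$. The only point demanding a little care is the uniform-in-$z'$ control of the mean value point $\zeta_{z,z'}$ as $z'\to z$, which is supplied by the joint continuity and strict positivity of $u$ on the compact set $I$; one should also confirm that \eqref{pam} satisfies the standing hypotheses of the paper ($\sigma(x)=x$ and $b\equiv0$ are globally Lipschitz, $u_0\in L^2([0\,,1])$, and the boundary condition is of the form \eqref{R:BC}), which it plainly does.
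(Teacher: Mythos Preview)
Your proof is correct and takes a somewhat different route from the paper. The paper's proof reads, in its entirety, ``The proof is the same as that of Corollary~\ref{cor:she:ex} and is therefore omitted,'' meaning one is to rerun the argument of Corollary~\ref{cor:she:ex} with $h$ in place of $u$: the case $\theta>K$ via contradiction with \eqref{kpz:mc}, the Lebesgue-null case via Fubini and \eqref{kpz:lil}, and the density case via the Orey--Taylor nested-rectangle construction using \eqref{kpz:mc} on subrectangles of $I$. You instead observe that the mean value theorem for $\log$ yields the pointwise identity $\Lambda_h(z)=\Lambda_u(z)/u(z)$ on a probability-one event, and since here $|\sigma(u(z))|=u(z)$, this gives the exact set equality $E(\theta)=F(\theta)$, so every conclusion transfers verbatim from Corollary~\ref{cor:she:ex} without repeating its proof. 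Your route is more economical in this specific setting because the Hopf--Cole map makes the identification exact; the paper's route has the minor advantage of being self-contained at the level of $h$ (it uses only Theorem~\ref{th:kpz:lil:mc}) and would adapt more readily to situations where the relation between increments of $h$ and of $u$ is only asymptotic rather than given by an exact pointwise quotient.
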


Moreover, we obtain a Chung-type LIL for the open KPZ equation:

\begin{theorem}\label{th:kpz:chung}
Fix $z_0 = (t_0\,,x_0) \in(0\,,\infty)\times[0\,,1]$. 
Then
\begin{align}
	\liminf_{\varepsilon\to0^+} \frac{(\log\log(1/\varepsilon))^{1/6}}{\varepsilon} \sup_{z\in B_\rho(z_0,\varepsilon)} |h(z)-h(z_0)| = C_2 \quad \text{a.s.}
\end{align}
where $C_2$ is the same constant as in \eqref{u:chung}.
This continues to hold when $t_0=0$ under the additional assumption \eqref{G*u:chung}.
\end{theorem}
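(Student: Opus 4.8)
The plan is to derive the Chung-type law of the iterated logarithm for the open KPZ equation $h = \log u$ from the corresponding result for the stochastic heat equation \eqref{pam}, in the same way that Theorem \ref{th:kpz:lil:mc} is deduced from Theorems \ref{th:she:lil} and \ref{th:she:mc}. The starting point is that $u$ solves \eqref{pam} with multiplicative noise $\sigma(u) = u$ and Robin boundary conditions, so Theorem \ref{th:she:chung} applies: there is a finite positive constant $C_2$ (the same one appearing in \eqref{u:chung}) such that, $\P$-a.s.,
\begin{align*}
	\liminf_{\varepsilon\to0^+} \frac{(\log\log(1/\varepsilon))^{1/6}}{\varepsilon} \sup_{z\in B_\rho(z_0,\varepsilon)} |u(z)-u(z_0)| = C_2\, |u(z_0)|.
\end{align*}
Since $u_0$ is continuous and strictly positive, Proposition 2.7 of \cite{CS18} guarantees $u>0$ on $\R_+\times[0\,,1]$, so $u(z_0)>0$ and $h(z_0)=\log u(z_0)$ is well-defined; moreover $u$ is a.s.\ jointly continuous, hence bounded and bounded away from $0$ on the compact parabolic ball $B_\rho(z_0\,,\varepsilon)$ for small $\varepsilon$.

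The key step is a local linearization of the logarithm. For $z \in B_\rho(z_0\,,\varepsilon)$ with $\varepsilon$ small, write
\begin{align*}
	h(z)-h(z_0) = \log u(z) - \log u(z_0) = \frac{u(z)-u(z_0)}{u(z_0)}\bigl(1 + R(z)\bigr),
\end{align*}
where, by a first-order Taylor expansion (or the mean value theorem applied to $\log$) together with the local boundedness of $u$ away from $0$, the error satisfies $|R(z)| \le C\, |u(z)-u(z_0)|$ on $B_\rho(z_0\,,\varepsilon)$ for some random but a.s.\ finite constant $C$. By joint continuity, $\sup_{z\in B_\rho(z_0,\varepsilon)} |u(z)-u(z_0)| \to 0$ as $\varepsilon\to0^+$ a.s., so the factor $1+R(z)$ is uniformly close to $1$: for any $\delta>0$ there is a (random) $\varepsilon_1>0$ such that $|R(z)| \le \delta$ for all $z\in B_\rho(z_0\,,\varepsilon_1)$ and all $\varepsilon \le \varepsilon_1$. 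Consequently,
\begin{align*}
	(1-\delta)\,\frac{\sup_{z\in B_\rho(z_0,\varepsilon)}|u(z)-u(z_0)|}{u(z_0)}
	\le \sup_{z\in B_\rho(z_0,\varepsilon)} |h(z)-h(z_0)|
	\le (1+\delta)\,\frac{\sup_{z\in B_\rho(z_0,\varepsilon)}|u(z)-u(z_0)|}{u(z_0)}.
\end{align*}
Multiplying through by $(\log\log(1/\varepsilon))^{1/6}/\varepsilon$, taking $\liminf_{\varepsilon\to0^+}$, and using the Chung LIL for $u$ displayed above gives
\begin{align*}
	(1-\delta)\,C_2 \le \liminf_{\varepsilon\to0^+} \frac{(\log\log(1/\varepsilon))^{1/6}}{\varepsilon} \sup_{z\in B_\rho(z_0,\varepsilon)} |h(z)-h(z_0)| \le (1+\delta)\,C_2 \quad \text{a.s.}
\end{align*}
Letting $\delta\downarrow 0$ along a countable sequence yields \eqref{th:kpz:chung} with constant $C_2$, exactly as claimed.

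For the extension to $t_0=0$, one invokes the last sentence of Theorem \ref{th:she:chung}: under hypothesis \eqref{G*u:chung}, the Chung LIL for $u$ holds at $z_0=(0\,,x_0)$ with $u(z_0)=u_0(x_0)$, which is strictly positive by assumption, and the same linearization argument goes through verbatim once one notes that joint continuity of $u$ on $[0\,,\infty)\times[0\,,1]$ up to the initial time (granted by \eqref{G*u:chung} and the regularity of the stochastic convolution) still gives local positivity and boundedness near $(0\,,x_0)$. The main obstacle — really the only nontrivial point — is controlling the linearization error uniformly over the shrinking parabolic balls; this is handled cleanly because $\log$ is smooth and $u$ is a.s.\ jointly continuous and strictly positive, so no further SPDE-specific estimates beyond those already established for \eqref{she} are needed. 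Everything else is a deterministic squeeze.
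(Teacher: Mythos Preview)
Your argument is correct and is in the same spirit as the paper's, but it takes a mildly different route. The paper, after the Taylor expansion of $\log$, further decomposes $u(z)-u(z_0)$ into $\sigma(u(z_0))(w(z)-w(z_0))$, $(G\ast u_0)(z)-(G\ast u_0)(z_0)$, and the linearization error $\mathscr{E}(z_0;z)$, and then appeals directly to the Gaussian result \eqref{w:chung} for $w$, after showing (via Lemma \ref{lem:G*u}, Proposition \ref{pr:E:as}, and Theorem \ref{th:she:lil}) that the remaining three terms, multiplied by $\varepsilon^{-1}(\log\log(1/\varepsilon))^{1/6}$, vanish. You instead stop at $u$, treat Theorem \ref{th:she:chung} as a black box, and only need the deterministic squeeze $(1-\delta)|u(z)-u(z_0)|/u(z_0)\le |h(z)-h(z_0)|\le(1+\delta)|u(z)-u(z_0)|/u(z_0)$ coming from smoothness of $\log$ and local positivity of $u$. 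Your version is a bit more economical at this stage of the paper (it reuses Theorem \ref{th:she:chung} rather than re-invoking Proposition \ref{pr:E:as} and the Gaussian Chung LIL); the paper's version has the minor advantage of making explicit that the limiting constant is exactly the Gaussian one from \eqref{w:chung}, which is how $C_2$ was defined in the first place.
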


Finally, we document the corresponding spatial results and temporal results, which can be obtained using the same proofs that lead to the above results for the open KPZ equation.

\begin{corollary}
For any fixed point $(t_0\,,x_0) \in (0\,,\infty) \times [0\,,1]$, and fixed numbers $0<a<T$, $0\le c<d\le 1$,
\begin{gather*}
	\limsup_{\varepsilon\to0^+} \frac{|h(t_0+\varepsilon\,,x_0)-h(t_0\,,x_0)|}{\varepsilon^{1/4}\sqrt{\log\log(1/\varepsilon)}} = K_0 \quad \text{a.s.},\\
	\limsup_{\varepsilon\to0^+} \frac{|h(t_0\,,x_0+\varepsilon)-h(t_0\,,x_0)|}{\sqrt{\varepsilon \log\log(1/\varepsilon)}} = K_0' \quad \text{a.s.},\\
	\lim_{\varepsilon\to0^+} \sup_{t,t'\in[a,T]: 0<|t-t'|\le\varepsilon} \frac{|h(t',x_0)-h(t\,,x_0)|}{|t'-t|^{1/4}\sqrt{\log(1/|t'-t|)}} = K \quad \text{a.s.},\\
	\lim_{\varepsilon\to0^+} \sup_{x,x'\in[c,d]: 0<|x-x'|\le\varepsilon} \frac{|h(t_0\,,x')-h(t_0\,,x)|}{\sqrt{|x'-x|\log(1/|x'-x|)}} = K' \quad \text{a.s.},\\
	\liminf_{\varepsilon\to0^+} \left( \frac{\log\log(1/\varepsilon)}{\varepsilon} \right)^{1/4} \sup_{t:|t-t_0| \le \varepsilon} |h(t\,,x_0)-h(t_0\,,x_0)| = C_2 \quad \text{a.s.},\\
	\liminf_{\varepsilon\to0^+} \left( \frac{\log\log(1/\varepsilon)}{\varepsilon} \right)^{1/2} \sup_{x:|x-x_0| \le \varepsilon} |h(t_0\,,x)-h(t_0\,,x_0)| = C_2' \quad \text{a.s.},
\end{gather*}
where $K_0\,,K_0'\,,K\,,K'\,,C_2\,,C_2'$ are the same constants as in Corollary \ref{cor:she:s/t}.
\end{corollary}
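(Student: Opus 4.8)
The plan is to deduce this corollary from Corollary~\ref{cor:she:s/t} applied to the stochastic heat equation \eqref{pam} with the Robin boundary condition, in exactly the same way Theorems~\ref{th:kpz:lil:mc} and~\ref{th:kpz:chung} are deduced from Theorems~\ref{th:she:lil}, \ref{th:she:mc} and~\ref{th:she:chung}. Recall that $h=\log u$, where $u$ solves \eqref{pam}; here $b=0$, $\sigma(u)=u$, and since $u_0$ is strictly positive and continuous one has $u>0$ on $\R_+\times[0\,,1]$ by \cite[Proposition 2.7]{CS18}, so $u$ is jointly continuous and strictly positive. In particular $\sigma^{-1}\{0\}=\{0\}$ is polar for $u$, so every part of Corollary~\ref{cor:she:s/t} that we need is applicable (the $\limsup$- and $\liminf$-type statements hold unconditionally, and the two uniform-modulus statements hold by polarity). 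Note that $|\sigma(u(t\,,x_0))|=u(t\,,x_0)$ for every $t$, so the coefficient appearing on the right-hand sides in Corollary~\ref{cor:she:s/t} is simply $u$ evaluated at the relevant base point.

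Next I would record the elementary fact that, for $a\,,b>0$, one has $\log b-\log a=(b-a)/\zeta$ for some $\zeta$ lying between $a$ and $b$, hence $\min(a\,,b)\le |b-a|/|\log b-\log a|\le\max(a\,,b)$. Combined with joint continuity and strict positivity of $u$, this yields: for every $\delta>0$ there is $\eta=\eta(\delta)>0$ such that whenever two space-time points of the form $(t\,,x_0)$ and $(s\,,x_0)$ (with $t\,,s$ ranging over a prescribed compact time interval), or of the form $(t_0\,,x)$ and $(t_0\,,y)$ (with $x\,,y$ in a prescribed compact spatial interval), are within distance $\eta$ of each other, the corresponding $\zeta$ satisfies $\zeta/u(\cdot)\in(1-\delta\,,1+\delta)$ relative to the chosen base point; consequently the $h$-increment quotient equals the $u$-increment quotient times a factor in $(1-\delta\,,1+\delta)$.

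With these two observations the six assertions follow mechanically. For a $\limsup$-type assertion such as $\limsup_{\varepsilon\to0^+}|h(t_0+\varepsilon\,,x_0)-h(t_0\,,x_0)|\big/\big(\varepsilon^{1/4}\sqrt{\log\log(1/\varepsilon)}\big)$, I would sandwich this ratio, for all small $\varepsilon$, between $(1+\delta)^{-1}u(t_0\,,x_0)^{-1}$ and $(1-\delta)^{-1}u(t_0\,,x_0)^{-1}$ times the corresponding $u$-ratio; taking $\limsup$ and invoking Corollary~\ref{cor:she:s/t} gives a value in $[K_0/(1+\delta)\,,K_0/(1-\delta)]$, and $\delta\downarrow0$ yields $K_0$. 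The spatial $\limsup$ statement is identical, with $K_0'$; the Chung-type $\liminf$ statements are identical (with $C_2$, $C_2'$), the sandwiching now being uniform over $\sup_{|t-t_0|\le\varepsilon}$ once $\varepsilon\le\eta(\delta)$, so that the factor $u(t_0\,,x_0)$ from Corollary~\ref{cor:she:s/t} cancels the $u(t_0\,,x_0)^{-1}$ coming from $(\log)'$. For the two uniform-modulus statements the base point ranges over the compact interval $[a\,,T]$ (resp.\ $[c\,,d]$), and the coefficient $|\sigma(u(t\,,x_0))|=u(t\,,x_0)$ in Corollary~\ref{cor:she:s/t} is again cancelled by the $1/\zeta_{t,t'}\approx 1/u(t\,,x_0)$ from linearizing the logarithm. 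The main (indeed only) point requiring care is precisely this last step: one must control the correction factor $\zeta_{t,t'}/u(t\,,x_0)$ uniformly over all admissible base points and increment sizes simultaneously, which is guaranteed by the uniform continuity and the uniform positive lower bound of $u$ on the relevant compact set; granting that, the same $\delta\downarrow0$ argument produces the constants $K$ and $K'$.
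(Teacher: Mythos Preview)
Your proposal is correct and follows the paper's spirit: the paper gives no detailed proof of this corollary, merely stating that it ``can be obtained using the same proofs that lead to the above results for the open KPZ equation,'' i.e., by linearizing the Hopf--Cole transform as in the proofs of Theorems~\ref{th:kpz:lil:mc} and~\ref{th:kpz:chung}.

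There is one mild methodological difference worth noting. The paper's proofs of Theorems~\ref{th:kpz:lil:mc} and~\ref{th:kpz:chung} use a \emph{second-order} Taylor expansion of $\log$, obtaining $|h(z)-h(z_0)| = |u(z)-u(z_0)|/u(z_0) \pm |u(z)-u(z_0)|^2/(2v^2)$, and then further decompose the linear term via \eqref{E} into contributions from $w$, $G\ast u_0$, and $\mathscr{E}$, ultimately invoking the Gaussian results (Theorems~\ref{th:w:lil:mc} and~\ref{th:w:sb:chung}) for $w$. Your route is more modular: you use the mean value theorem (first-order Taylor with Lagrange remainder point) to write $h(z')-h(z) = (u(z')-u(z))/\zeta$ exactly, then invoke Corollary~\ref{cor:she:s/t} for $u$ as a black box, so that the factor $|\sigma(u(z))|=u(z)$ already present there cancels against $1/\zeta$ up to a multiplicative $(1\pm\delta)$. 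This bypasses the need to re-handle $\mathscr{E}$ and $G\ast u_0$ separately and is arguably cleaner, though it relies on Corollary~\ref{cor:she:s/t} having been established (which the paper also states without proof). Both routes rest on the same two ingredients you correctly identify: strict positivity of $u$ (hence polarity of $\sigma^{-1}\{0\}$ and a uniform positive lower bound on compacta) and uniform continuity of $u$ on compacta to control the correction factor uniformly over moving base points in the uniform-modulus statements.
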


\begin{OP}
What are optimal bounds for the small-ball probabilities
\begin{gather*}
\P\left\{ \sup_{z \in B_\rho(z_0,r)}|h(z)-h(z_0)| \le \varepsilon \right\},\\
\P\left\{ \sup_{t:|t-t_0|\le r}|h(t\,,x_0)-h(t_0\,,x_0)| \le \varepsilon \right\}, \
\P\left\{ \sup_{x: |x-x_0|\le r}|h(t_0\,,x)-h(t_0\,,x_0)| \le \varepsilon \right\}?
\end{gather*}
\end{OP}

\subsection{Proof ideas and contributions}

Similar spatial and temporal LILs and moduli of continuity results for SPDEs of the type \eqref{she} but on spatial domain $\T$ or $\R$ are established in \cite{Das24, HSWX20, KKM24}.
Their arguments build on either the Lei-Nualart decomposition \cite{LN09} or the Mueller-Tribe pinned string method \cite{MT02} for the linear equation, which essentially states that the solution can be decomposed into $u_1+u_2$, where $u_2$ has smooth sample paths and $u_1$ is a fractional Brownian motion or a Gaussian random field with stationary increments.
These results or methods do not seem to carry over directly to the case of bounded interval domains with Robin boundary condition and when $u$ is treated as a spatio-temporal process.
In the case of Dirichlet or Neumann boundary condition, the heat kernel can be decomposed as $G=\Gamma +H$, where $\Gamma$ is the heat kernel on the full line $\R$ and $H$ is a smooth function, which can be derived using the method of images or Poisson summation formula \cite{Walsh, K09, DS24}, but this decomposition method does not seem to apply readily to the case of Robin boundary conditions either.
Even in Dirichlet and Neumann cases, the issue with this decomposition is that the component $u_1(t\,,x) := \int_0^t \int_0^L \Gamma_{t-s}(x-y) \xi(\d s\, \d y)$ does not have stationary increments for $(t\,,x) \in \R_+ \times [0\,,L]$ due to the presence of boundaries.
In order to circumvent these technical obstacles, we appeal to a different approach using the strong local non-determinism (SLND) method for the linear equation \cite{LX19, LX23} and combine it with the method of linearization of the nonlinear equation \cite{KSXZ13, FKM15, KKM24, HP15, Hairer13}.

It might help to recall that a Gaussian random field $\{X(z)\}_{z \in I}$ with $I \subset \R^N$ is said to be \emph{strongly locally non-deterministic} \cite{Berman, Pitt, CD82, MP87, Xiao08} if
there exists $C>0$ such that
\[
	\Var(X(z) \mid X(z_1)\,,\dots,X(z_n)) \ge C \min_{1 \le i \le n} \Var(X(z)-X(z_i))
\]
uniformly for all $n \in \N_+$ and for all $z\,,z_1\,,\dots,z_n \in I$.
Under Dirichlet or Neumann condition, we prove the spatio-temporal SLND property for the linear equation with additive noise (see Section \ref{s:w} below) by adopting the method of \cite{LX19, L22, LX23, LX25} based on Fourier transform.
The case of Robin condition \eqref{R:BC} requires a separate treatment because the heat kernel is not amenable to Fourier transform in the spatial variable $x$.
We devise a proof that bypasses the use of Fourier transform in $x$ and uses instead the orthonormal basis of eigenfunctions to establish the spatio-temporal SLND property under \eqref{R:BC}, which is more natural and adaptable to the domain and its boundary condition.
This idea appears to be new in the context of SLND for SPDEs and may make it possible to study SPDEs on general bounded domains or fractal domains with various boundary conditions (see, e.g., \cite{CCL24, HY18, BCHOT}) and to investigate their optimal H\"older regularities, exact moduli of continuity, small-ball probabilities, etc.
Our SLND results are sharp and yield matching bounds for the conditional variance (see Lemmas \ref{lem:SLND:DN} and \ref{lem:SLND:R}).
As a result, we also obtain matching upper and lower bounds for the variance of spatio-temporal increments under \eqref{D:BC} and \eqref{N:BC}, which improve the bounds in \cite{DS24}, and obtain new matching bounds under \eqref{R:BC} (see Proposition \ref{pr:var:w-w:op}).
All of these matching bounds are valid up to $t=0$ and up to the boundaries of the interval.

The SLND property allows us to apply the framework in \cite{LX23} to the case of additive noise.
In particular, \cite{LX23} establishes exact uniform and local moduli of continuity, sharp small-ball probability bounds, and Chung's law of the iterated logarithm for a large class of anisotropic Gaussian random fields under the SLND property together with a few other assumptions.
We verify those assumptions and apply the results of \cite{LX23} to obtain our main results in the Gaussian case.
But for spatio-temporal increments at $t=0$ or at the boundaries under Dirichlet condition, due to the boundary effects, the variance bounds have a different form than in the framework of \cite{LX23}, so the results of \cite{LX23} cannot be directly applied and these cases need to be treated separately.
Since spatio-temporal SLND implies spatial SLND and temporal SLND, our method also yields spatial results and temporal results. 

In order to go from the Gaussian case to the non-Gaussian case, we adopt the idea of linearization of the SPDE and localization of heat kernel in \cite{KSXZ13, FKM15}, but without Fourier transform, and obtain detailed estimates for the spatio-temporal linearization errors (see Section \ref{s:E} below).
Our work demonstrates that a crude heat kernel bound (Lemma \ref{lem:G} below) is enough for carrying out the spatio-temporal localization analysis without the use of Gaussian bounds for heat kernel, making it possible for extensions to SPDEs with more general differential operators (see, e.g., \cite{N18, FN17}).
Finally, the local and uniform moduli of continuity and Chung-type LIL for the open KPZ equation can be obtained through linearization of the Hopf-Cole solution, which relates the spatio-temporal increments to those of the stochastic heat equation with multiplicative noise and allows application of our results for \eqref{she}. To the best of our knowledge, our results for the open KPZ equation are new.

%

\subsection{An outline of the paper}

In Section \ref{s:pre}, we gather some basic spectral properties of eigenpairs under various boundary conditions, and present a heat kernel estimate.
In Section \ref{s:w}, we investigate the constant-coefficient case $b=0$ and $\sigma = 1$ in \eqref{she}, establish variance estimates and the SLND property, and obtain exact local and uniform spatio-temporal moduli of continuity, small-ball probability estimates, and a Chung-type LIL for the solution.
In Section \ref{s:E}, we consider linearization of the nonlinear equation \eqref{she} and establish detailed estimates for the linearization error for the spatio-temporal increments.
In Section \ref{s:pf}, we present the proofs of the main results, namely, Theorems \ref{th:she:lil}, \ref{th:she:mc}, Corollary \ref{cor:she:ex}, and Theorems \ref{th:she:sb} and \ref{th:she:chung}.
Finally, in Section \ref{s:kpz}, we prove Theorem \ref{th:kpz:lil:mc}, Corollary \ref{cor:kpz:ex}, and Theorem \ref{th:kpz:chung} for the open KPZ equation.

\subsection{Notations}

Let us end the Introduction with a list of notations that will be used throughout the paper:
$\N_+ = \{1\,,2\,,\dots\}$; 
$\N_0 = \{0\,,1\,,2\,,\dots\}$;
$\R_+ = (0\,,\infty)$;
$\# A$ denotes cardinality of a set $A$;
$\1_A$ denotes indicator function of the set $A$;
$a\wedge b = \min\{a\,,b\}$; 
$a\vee b = \max\{a\,,b\}$; 
$\log_+(x) = \log(x \vee \e)$; 
``$f(x) \lesssim g(x)$'' means that there exists $C\in (0\,,\infty)$ such that $f(x) \le C g(x)$ for all $x$;
``$f(x) \asymp g(x)$'' means that there exist $C_1\,, C_2 \in (0\,,\infty)$ such that $C_1 g(x) \le f(x) \le C_2 g(x)$ for all $x$;
``$f(x) \sim g(x)$ as $x \to a$'' means that $f(x)/g(x)\to 1$ as $x \to a$;
``$f(x)=O(g(x))$'' means that there exists $C\in(0\,,\infty)$ such that $|f(x)| \le C|g(x)|$;
``$f(x) \propto g(x)$'' means that there exists $C\in (0\,,\infty)$ such that $f(x) = Cg(x)$ for all $x$;
For any $p \in [1\,,\infty)$, $\|\cdot\|_p$ denotes $L^p(\Omega\,,\mathscr{F}\,,\P)$-norm, i.e., $\|X\|_p = (\E|X|^p)^{1/p}$ for any random variable $X$.

\section{Preliminaries}\label{s:pre}

Let $\{ (\lambda_n\,, f_n) \}_{n\in\N_+}$ denote the eigenpairs of the Laplace operator $-\tfrac12\partial_x^2$ on $(0\,,L)$ with any one of the boundary conditions \eqref{D:BC}, \eqref{N:BC}, \eqref{R:BC}.
In other words, each $f_n$ satisfies $-\tfrac12 f_n'' = \lambda_n f_n$ on $(0\,,L)$ with the prescribed boundary condition.
We always assume that the eigenvalues are arranged in ascending order $\lambda_1 \le \lambda_2 \le \dots$ and each $f_n$ is normalized to have $\|f_n\|_{L^2}=1$.

\begin{lemma}\label{lem:eigen}
The following properties hold:
\begin{enumerate}
\item Under Dirichlet boundary condition \eqref{D:BC},
\begin{align}\label{eigen:D}
	\qquad\qquad \lambda_n = \tfrac12\big( \tfrac{\pi n}{L} \big)^2, \qquad f_n(x)=\sqrt{\tfrac{2}{L}}\sin\big( \tfrac{n\pi x}{L} \big) \qquad\quad \text{ for $n \in \N_+$.}
\end{align}
\item Under Neumann boundary condition \eqref{N:BC},
\begin{align}\begin{split}\label{eigen:N}
	&\qquad\qquad \lambda_n = \tfrac12\big( \tfrac{\pi (n-1)}{L} \big)^2 \hskip140pt \text{ for $n \in \N_+$,}\\
	&\qquad\qquad f_1(x) = \sqrt{\tfrac{1}{L}}\quad\text{and}\quad
	f_n(x)=\sqrt{\tfrac{2}{L}}\cos\big( \tfrac{(n-1)\pi x}{L} \big) \quad \text{for $n \ge 2$.}
\end{split}\end{align}
\item Under Robin boundary condition \eqref{R:BC}, $0$ is an eigenvalue iff $\alpha(1+\beta L)=\beta$.
There are at most finitely many negative eigenvalues $\{\lambda_n \}_{1\le n < n_0}$, where $1 \le n_0 < \infty$, and the other eigenvalues $\{\lambda_n \}_{n \ge n_0}$ are nonnegative.
\begin{enumerate}[i.]
\item If $\alpha(1+\beta L)=\beta$, then 
\[
	\lambda_n = \begin{cases}
	-\frac12 \kappa_n^2, & 1 \le n < n_0,\\
	0, & n = n_0,\\
	\frac12 \eta_n^2, & n > n_0,
	\end{cases}
\]
where $\kappa_n$ and $\eta_n$ are the positive roots of the equations
\begin{align}\label{eq:tan}
	&\tanh(\kappa_n L) = \frac{(\alpha-\beta)\kappa_n}{\kappa_n^2 - \alpha\beta},\qquad \tan(\eta_n L) = \frac{(\beta-\alpha)\eta_n}{\eta_n^2+\alpha \beta},
\end{align}
and $f_n = \|e_n\|_{L^2}^{-1}\, e_n$, where $e_{n_0}(x) = 1-\alpha x$ and 
\begin{align}\label{e_n}
	e_n(x) = \begin{cases}
	\cosh(\kappa_n x) - \frac{\alpha}{\kappa_n}\sinh(\kappa_n x), & 1 \le n < n_0,\\
	\cos(\eta_n x) - \frac{\alpha}{\eta_n} \sin(\eta_n x), & n > n_0.
	\end{cases}
\end{align}
\item If $\alpha(1+\beta L)\ne\beta$, then 
\[
	\lambda_n = \begin{cases}
	-\frac12 \kappa_n^2, & 1 \le n < n_0,\\
	\frac12 \eta_n^2, & n \ge n_0,
	\end{cases}
\]
where $\kappa_n$ and $\eta_n$ are the positive roots of the equations \eqref{eq:tan} and $f_n = \|e_n\|_{L^2}^{-1}\, e_n$, where $e_n$ is given by \eqref{e_n}.\smallskip
\end{enumerate}
In particular, there exists $n_1 \in \Z$ such that
\begin{align}\label{eta}
	\qquad\quad \eta_n = \tfrac{\pi (n_1+n)}{L} + O(\tfrac{1}{n}) \quad \text{and}\quad
	\|e_n\|_{L^2}^{-2} = \tfrac{2}{L}(1+O(\tfrac{1}{n^2})) \quad \text{as $n \to \infty$.}
\end{align}
\end{enumerate}
In all cases,
\begin{align}\label{lambda}
	\lambda_n \asymp n^2, \qquad
	&0 \le \lambda_{n+1}-\lambda_n \lesssim n \quad \text{as $n \to \infty$,}\\
	\label{f_n:bd}
	\textstyle\sup_{n\ge 1, x \in [0,L]}|f_n(x)| < \infty, \qquad
	&\textstyle\sup_{n \ge 1, x\in [0,L]}|n^{-1} f_n'(x)| < \infty,
\end{align}
and $\{f_n\}_{n\ge 1}$ is an orthonormal basis for $L^2([0\,,L])$ under ${\langle f,g\rangle}_{L^2} = \int_0^L f(x)g(x)\d x$.
\end{lemma}

\begin{proof}
Cases 1 and 2 are standard and routine eigenvalue problems, so we omit the proof.
As for case 3, note that $0$ is an eigenvalue iff $e_1(x) = A+Bx$, where $(A\,,B)\ne(0\,,0)$, is an eigenfunction satisfying condition \eqref{R:BC}. It is easy to see that the last condition is satisfied iff $\alpha(1+\beta L)=\beta$, in which case $e_1(x) = 1-\alpha x$ is an eigenfunction. From the equation $-\tfrac12 e''=\lambda e$, any other eigenpair $(\lambda\,, e)$ must have the form $e(x) = A\cos(\eta x) + B \sin(\eta x)$ with $\lambda=\tfrac12\eta^2 \ge 0$, or $e(x) = A\cosh(\kappa x) + B \sinh(\kappa x)$ with $\lambda=-\tfrac12\kappa^2 < 0$.
Then, from the boundary condition \eqref{R:BC}, one can readily deduce \eqref{eq:tan} and \eqref{e_n}.
The first equation in \eqref{eq:tan} has at most finitely many solutions because as $\kappa \to \infty$, $\tanh(\kappa L)\to1$  while the right-hand side tends to 0 and has at most one singularity on $(0\,,\infty)$.
The function $\eta \mapsto (\beta-\alpha)\eta/(\eta^2+\alpha\beta)$ has at most one singularity on $(0\,,\infty)$, is eventually increasing or decreasing to 0, and is $\asymp (\beta-\alpha)\eta^{-1}$ as $\eta \to \infty$.
It is easy to deduce from these properties that there is $n_1\in \Z$ such that for sufficiently large $n \in \N$, every interval $I_n:=(\pi(k_n-1/2)/L\,,\pi(k_n+1/2)/L)$, where $k_n=n_1+n$, contains exactly one solution $\eta_n$ to equation \eqref{eq:tan}.
Hence, $\eta_n \sim n\pi/L$ as $n \to \infty$.
Also, since $\tan(z L) \asymp z$ for $|z|$ small, it follows that
\begin{align*}
	\left|\eta_n - \frac{\pi k_n}{L}\right| \lesssim |\tan(\eta_n L - \pi k_n)| = |\tan(\eta_n L)| = \frac{|\beta-\alpha| \eta_n}{|\eta_n^2 + \alpha \beta|} \lesssim \eta_n^{-1} \lesssim n^{-1} \text{ as $n \to \infty$,}
\end{align*}
which shows the first property in \eqref{eta}.
This together with \eqref{e_n} implies that
\begin{align*}
	\|e_n\|_{L^2}^2 = \frac{\beta(\eta_n^2+\alpha^2)}{2\eta_n^2(\eta_n^2 + \beta^2)} - \frac{\alpha}{2\eta_n^2} + \frac{L}{2}\left( 1+ \frac{\alpha^2}{\eta_n^2} \right) = \frac{L}{2}( 1+ O(n^{-2}) ) \quad \text{as $n \to \infty$.}
\end{align*}
The property \eqref{lambda} and uniform bound \eqref{f_n:bd} follow readily.
Finally, the last assertion follows from general spectral theory for elliptic operators; see, e.g., \cite[Theorem 5.11]{Teschl} or \cite[Theorem 4.12]{McLean}.
\end{proof}

We frequently use the following Parseval's identity, which is a direct consequence of $\{f_n\}_{n\in\N_+}$ being an orthonormal basis for $L^2([0\,,L])$:
For all $\phi \in L^2([0\,,L])$,
\begin{align}\label{parseval}\textstyle
	\|\phi\|^2_{L^2}=\sum_{n=1}^\infty \left|{\langle \phi\,,f_n\rangle}_{L^2}\right|^2.
\end{align}
The heat kernel for $\partial_t - \tfrac12 \partial_x^2$ under the respective boundary condition \eqref{D:BC}, \eqref{N:BC} or \eqref{R:BC} is given by
\begin{align}\label{G}
	G_t(x\,,y) = \sum_{n=1}^\infty \e^{-\lambda_n t} f_n(x) f_n(y), \quad t > 0,\, x, y \in [0\,,L].
\end{align}
A measurable process $u=\{u(t\,,x)\}_{t \ge 0, x \in [0,L]}$ is called a mild solution to \eqref{she} if it is adapted to the filtration $\{\mathscr{F}_t\}_{t\ge 0}$ of the noise $\xi$ and satisfies the integral equation
\begin{align}\begin{split}\label{she:mild}
	u(t\,,x) = (G_t \ast u_0)(x) &+ \int_{(0,t) \times [0,L]} G_{t-s}(x\,,y) b(u(s\,,y))\, \d s\, \d y\\
	& + \int_{(0,t)\times [0,L]} G_{t-s}(x\,,y) \sigma(u(s\,,y))\, \xi(\d s \, \d y)
\end{split}\end{align}
for any $(t\,,x)\in (0\,,\infty) \times [0\,,L]$.
It follows from standard existence and uniqueness theory that \eqref{she} has a unique mild solution \cite{Walsh, DS24}; see also \cite[Proposition 2.7]{CS18}.
Some moment estimates for the solution and its spatial and temporal increments will be established in Sections \ref{s:w} and \ref{s:E}.

The next lemma states a heat kernel estimate.
It follows from known Gaussian bounds on the heat kernel, but our main results and methods do not rely on the Gaussian bounds.
The estimate \eqref{G:bd} below will be enough for our purposes.

\begin{lemma}\label{lem:G}
Under \eqref{D:BC}, \eqref{N:BC} or \eqref{R:BC}, for any $T>0$, there exists $C>0$ such that
\begin{align}\label{G:bd}
	|G_t(x\,,y)| \le C\left( \frac{1}{\sqrt{t}} \wedge \frac{t}{|x-y|^{3}} \right) \quad \text{for all $t \in (0\,,T]$ and $x\,,y\in [0\,,L]$.}
\end{align}
\end{lemma}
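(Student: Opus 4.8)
The plan is to establish the bound $|G_t(x,y)| \le C/\sqrt t$ and the bound $|G_t(x,y)| \le Ct/|x-y|^3$ separately, each valid on $(0,T]\times[0,L]^2$; the desired estimate follows by taking the minimum. I would first dispose of the easy bound $|G_t(x,y)| \le C/\sqrt t$: this is the standard on-diagonal heat kernel bound, which one can obtain from the spectral representation \eqref{G} by the estimate $\sum_{n\ge 1} \e^{-\lambda_n t} \lesssim \sum_{n\ge 1} \e^{-c n^2 t} \lesssim \int_0^\infty \e^{-c r^2 t}\,\d r \asymp t^{-1/2}$, using $\lambda_n \asymp n^2$ from \eqref{lambda} and the uniform bound $\sup_{n,x}|f_n(x)|<\infty$ from \eqref{f_n:bd}; alternatively it is immediate from the known Gaussian upper bound $G_t(x,y) \le C t^{-1/2}\e^{-c|x-y|^2/t}$. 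For the off-diagonal bound $|G_t(x,y)| \le Ct/|x-y|^3$, the cleanest route is again to invoke the Gaussian upper bound $|G_t(x,y)| \le C t^{-1/2}\e^{-c|x-y|^2/t}$ — valid under \eqref{D:BC}, \eqref{N:BC}, \eqref{R:BC} by classical parabolic theory (e.g., Aronson-type estimates, or for \eqref{D:BC},\eqref{N:BC} directly via the method of images comparison with the whole-line kernel $\Gamma_t$) — and then observe that $t^{-1/2}\e^{-c|x-y|^2/t} = t/|x-y|^3 \cdot \big(|x-y|^3 t^{-3/2}\e^{-c|x-y|^2/t}\big) = t/|x-y|^3 \cdot \big((|x-y|^2/t)^{3/2}\e^{-c|x-y|^2/t}\big)$, and the function $r \mapsto r^{3/2}\e^{-cr}$ is bounded on $[0,\infty)$. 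Combining the two bounds and taking the minimum yields \eqref{G:bd}.

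If one prefers to avoid citing the Gaussian bound and argue self-containedly from the spectral series, I would use a semigroup/parabolic-regularity argument: write $G_t = G_{t/2}\ast G_{t/2}$ (Chapman--Kolmogorov for the heat semigroup on $[0,L]$), and estimate $|x-y|^3 |G_t(x,y)|$ by distributing the factor $|x-y|^3 \lesssim |x-z|^3 + |z-y|^3$ across the convolution, reducing to controlling $\int_0^L |x-z|^3 |G_{t/2}(x,z)|\,|G_{t/2}(z,y)|\,\d z$; one then bounds the $\|G_{t/2}(z,y)\|$ factor in $L^2_z$ by $\|G_{t/2}(\cdot,y)\|_{L^2}^2 = G_t(y,y) \lesssim t^{-1/2}$ and bounds the weighted factor $\||x-\cdot|^3 G_{t/2}(x,\cdot)\|_{L^2}$ by differentiating the series \eqref{G} twice in $x$ and using $\sup_{n,x}|n^{-1}f_n'(x)|<\infty$ together with a summation against $\e^{-\lambda_n t/2}$, each spatial derivative costing a factor $t^{-1/2}$. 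Tracking the powers of $t$ gives the claimed $t/|x-y|^3$ decay. This route is more laborious but uses only Lemma \ref{lem:eigen}.

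The main obstacle is really a matter of bookkeeping rather than deep difficulty: one must make sure the constant $C$ depends only on $T$ (and the fixed data $L,\alpha,\beta$) and is uniform over all $x,y\in[0,L]$ including the boundary, and — in the Robin case \eqref{R:BC} — one must be slightly careful because the eigenfunctions $f_n$ are not literally sines/cosines, so the clean cancellations available under \eqref{D:BC}/\eqref{N:BC} are unavailable and one genuinely leans on the asymptotics \eqref{eta} and the uniform bounds \eqref{f_n:bd}. For a short, publishable proof I would simply state that \eqref{G:bd} is an immediate consequence of the classical Gaussian upper bound $G_t(x,y)\le C t^{-1/2}\e^{-c|x-y|^2/t}$ — which holds for all three boundary conditions — combined with the elementary inequality $t^{-1/2}\e^{-c|x-y|^2/t} \le (t/|x-y|^3)\sup_{r\ge 0} r^{3/2}\e^{-cr}$ and the trivial bound $t^{-1/2}\e^{-c|x-y|^2/t}\le t^{-1/2}$, and leave the self-contained spectral derivation as a remark.
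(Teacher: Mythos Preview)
Your proposal is correct and matches the paper's proof essentially line for line: the paper cites the Gaussian upper bound $G_t(x,y)\le C t^{-1/2}\exp(-|x-y|^2/(Ct))$ under each of \eqref{D:BC}, \eqref{N:BC}, \eqref{R:BC} (giving explicit references to Davies and to Corwin--Shen for the Robin case) and then invokes the elementary fact that $\sup_{z>0} z^{3/2}\e^{-z^2}<\infty$, which is exactly your recommended ``short, publishable'' route. Your alternative spectral/Chapman--Kolmogorov argument is not needed, and the paper does not pursue it.
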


\begin{proof}
Under \eqref{D:BC}, there exist $C_1\,,C_2>0$ such that
\[
	0 \le G_t(x\,,y) \le \frac{C_1}{\sqrt{t}} \exp\left(- \frac{(x-y)^2}{C_2 t}\right) \quad \forall t>0\,, x\,,y \in [0\,,L];
\]
see \cite[Corollary 3.2.8]{Davies}.
Under \eqref{N:BC}, there exist $C_3\,,C_4>0$ such that
\[
	0 \le G_t(x\,,y) \le C_3 \left( \frac{1}{\sqrt{t}} \vee 1 \right) \exp\left( - \frac{(x-y)^2}{C_4t} \right) \quad \forall t>0\,, x\,,y \in [0\,,L];
\]
see \cite[Theorem 3.2.9]{Davies} or \cite[Proposition 3.6]{CCL24}.
Under \eqref{R:BC}, for any $T>0$, there exist $C_5\,,C_6>0$ such that
\[
	0 \le G_t(x\,,y) \le \frac{C_5}{\sqrt{t}}\exp\left( - \frac{(x-y)^2}{C_6t} \right) \quad \forall t \in (0\,,T]\,,x\,,y\in [0\,,L];
\]
see \cite[Lemma 4.3]{CS18}.
The inequality \eqref{G:bd} follows from these estimates and the elementary property that $\sup_{z > 0} z^{3/2} \exp(-z^2)<\infty$.
\end{proof}

\begin{lemma}\label{lem:G*u}
For any $0<a<b$, there exists $C>0$ such that
\begin{enumerate}
\item $|(G_t \ast u_0)(x)- (G_t\ast u_0)(x')| \le C |x'-x|$,
\item $|(G_{t'} \ast u_0)(x)- (G_t\ast u_0)(x)| \le C |t'-t|$
\end{enumerate}
uniformly for all $t\,,t' \in [a\,,b]$ and $x\,,x'\in [0\,,L]$.
\end{lemma}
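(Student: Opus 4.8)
\textbf{Proof plan for Lemma \ref{lem:G*u}.}

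The plan is to exploit the spectral representation \eqref{G} together with Parseval's identity \eqref{parseval}, so as to convert both increments into sums over the eigenbasis, and then control those sums using the asymptotics $\lambda_n \asymp n^2$ from \eqref{lambda} and the uniform bounds \eqref{f_n:bd} on $f_n$ and $n^{-1}f_n'$. Writing $c_n := {\langle u_0\,,f_n\rangle}_{L^2}$, Parseval gives $\sum_n c_n^2 = \|u_0\|_{L^2}^2 < \infty$, and from \eqref{G} we have $(G_t\ast u_0)(x) = \sum_{n=1}^\infty \e^{-\lambda_n t} c_n f_n(x)$, the series converging in $L^2([0\,,L])$ and, for $t>0$, absolutely and uniformly in $x$ because $\e^{-\lambda_n t}$ decays superpolynomially.

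For part 1, I would write the spatial increment as
\begin{align*}
	(G_t\ast u_0)(x) - (G_t\ast u_0)(x') = \sum_{n=1}^\infty \e^{-\lambda_n t} c_n \big( f_n(x) - f_n(x') \big),
\end{align*}
and bound $|f_n(x)-f_n(x')| \le \big(\sup_{m,y}|m^{-1}f_m'(y)|\big)\, n\, |x-x'|$ by the mean value theorem using \eqref{f_n:bd}. This yields
\begin{align*}
	|(G_t\ast u_0)(x)-(G_t\ast u_0)(x')| \lesssim |x-x'| \sum_{n=1}^\infty n\, \e^{-\lambda_n t} |c_n|,
\end{align*}
and applying Cauchy--Schwarz together with $\sum_n c_n^2 = \|u_0\|_{L^2}^2$ reduces matters to showing $\sum_{n=1}^\infty n^2 \e^{-2\lambda_n t} < \infty$ uniformly for $t \in [a\,,b]$; this is immediate from $\lambda_n \asymp n^2$, since $\sum_n n^2 \e^{-c a n^2} < \infty$ and is decreasing in $t$, hence bounded by its value at $t=a$. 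For part 2, I would similarly write
\begin{align*}
	(G_{t'}\ast u_0)(x) - (G_t\ast u_0)(x) = \sum_{n=1}^\infty \big( \e^{-\lambda_n t'} - \e^{-\lambda_n t} \big) c_n f_n(x),
\end{align*}
use $|\e^{-\lambda_n t'} - \e^{-\lambda_n t}| \le \lambda_n |t'-t| \e^{-\lambda_n (t\wedge t')} \le \lambda_n |t'-t| \e^{-\lambda_n a}$ (from $|\e^{-p}-\e^{-q}| \le |p-q| \e^{-(p\wedge q)}$), bound $|f_n(x)|$ by the sup in \eqref{f_n:bd}, and then Cauchy--Schwarz again reduces to $\sum_n \lambda_n^2 \e^{-2\lambda_n a} < \infty$, which holds by $\lambda_n \asymp n^2$.

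The argument is essentially routine once the spectral representation is in hand; the only mild subtlety — which I would address at the outset — is justifying that the termwise differentiation and the interchange of summation with the increment are legitimate, i.e.\ that for $t \ge a > 0$ the series $\sum_n \e^{-\lambda_n t} c_n f_n(x)$ and its formally differentiated counterpart converge absolutely and uniformly in $x$; this follows from $|c_n| \le \|u_0\|_{L^2}$ (or just $c_n \to 0$), the uniform bounds \eqref{f_n:bd}, and the superpolynomial decay of $\e^{-\lambda_n a}$ against the polynomial factors $n$ and $\lambda_n$. No Gaussian heat-kernel bounds are needed. I expect no genuine obstacle here; the lemma is a soft consequence of the spectral data collected in Lemma \ref{lem:eigen}.
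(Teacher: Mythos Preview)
Your proposal is correct and follows essentially the same approach as the paper: expand $G_t\ast u_0$ in the eigenbasis via \eqref{G}, apply the mean value theorem with \eqref{f_n:bd} (respectively the elementary bound $|\e^{-p}-\e^{-q}|\le |p-q|\e^{-p\wedge q}$), and then control the resulting sum using $\lambda_n\asymp n^2$ and $t\ge a$. The only cosmetic difference is that the paper bounds each coefficient crudely by $|\langle u_0,f_n\rangle|\le\|u_0\|_{L^2}$ and then estimates $\sum_n n\,\e^{-cn^2 t}$ via an integral comparison, whereas you apply Cauchy--Schwarz to the full sum; both routes are equally short and valid.
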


\begin{proof}
Recall that $u_0\in L^2([0\,,L])$.
By \eqref{G}, \eqref{lambda}, mean value theorem, and \eqref{f_n:bd},
\begin{align*}
&\textstyle|(G_t \ast u_0)(x)- (G_t\ast u_0)(x')|
=| \sum_{n=1}^\infty \e^{-\lambda_n t} (f_n(x)-f_n(x')) \langle f_n\,,u_0\rangle_{L^2} |\\
&\textstyle\lesssim  \sum_{n=1}^\infty \e^{-cn^2t} n |x-x'| \|u_0\|_{L^2} \lesssim |x-x'| \int_0^\infty \e^{-cz^2 t} z\, \d z \propto  \tfrac{1}{t} |x-x'| \le \tfrac{1}{a} |x-x'|
\end{align*}
uniformly for all $t\in [a\,,b]$ and $x\,,x'\in[0\,,L]$.
Similarly,
\begin{align*}
	&\textstyle|(G_{t'} \ast u_0)(x)- (G_t\ast u_0)(x)|
	=| \sum_{n=1}^\infty (\e^{-\lambda_n t'}-\e^{-\lambda_n t})f_n(x)\langle f_n\,,u_0\rangle_{L^2} |\\
	&\textstyle\lesssim  \sum_{n=1}^\infty \e^{-\lambda_n t} n^2 |t'-t| \lesssim |t'-t| \int_0^\infty \e^{-cz^2 t} z^2\, \d z \propto t^{-3/2} |t'-t| \le a^{-3/2} |t'-t|
\end{align*}
uniformly for all $t<t'$ in $[a\,,b]$ and $x\in [0\,,L]$.
This completes the proof.
\end{proof}

\section{The Gaussian case}\label{s:w}

In this section, we study the special case of \eqref{she} where $b\equiv 0$ and $\sigma \equiv 1$. In other words,
\begin{equation}\label{she:add}\left\{\begin{split}
	&\partial_t w = \tfrac12 \partial_x^2 w+ \xi
		&\text{on $\R_+\times(0\,,L)$},\\
	&w(0\,,x) = 0 &\text{for all $x\in[0\,,L]$}
\end{split}\right.\end{equation}
with boundary condition \eqref{D:BC}, \eqref{N:BC}, or \eqref{R:BC}.
The unique mild solution to \eqref{she:add} is the centered Gaussian random field
\begin{align}\label{w}
	w(t\,,x) = \int_{(0,t)\times[0,L]} G_{t-s}(x\,,y)\, \xi(\d s\, \d y),
	\quad t > 0,\, x \in [0\,,L],
\end{align}
where $G$ is given by \eqref{G}.

\subsection{Basic estimates}

\begin{lemma}\label{lem:int:G}
For any $T>0$, $\int_0^L [G_t(x\,,y)]^2 \d y \lesssim t^{-1/2}$ and $\int_0^t \d s \int_0^L \d y \, [G_s(x\,,y)]^2 \lesssim \sqrt{t}$ uniformly for all $t\in (0\,,T]$ and $x \in [0\,,L]$.
\end{lemma}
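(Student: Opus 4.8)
The plan is to use the eigenfunction expansion \eqref{G} of the heat kernel and Parseval's identity \eqref{parseval} rather than the pointwise bound \eqref{G:bd}, since the spectral approach gives clean control of the $L^2$-norm in $y$. First I would observe that for fixed $t>0$ and $x\in[0,L]$, the function $y\mapsto G_t(x\,,y)=\sum_{n\ge1}\e^{-\lambda_n t}f_n(x)f_n(y)$ lies in $L^2([0\,,L])$ with Fourier coefficients $\langle G_t(x\,,\cdot)\,,f_m\rangle_{L^2}=\e^{-\lambda_m t}f_m(x)$. Hence, by \eqref{parseval},
\begin{align*}
	\int_0^L [G_t(x\,,y)]^2\,\d y = \sum_{n=1}^\infty \e^{-2\lambda_n t}|f_n(x)|^2.
\end{align*}
Using the uniform bound $\sup_{n\ge1,\,x\in[0,L]}|f_n(x)|<\infty$ from \eqref{f_n:bd} and the spectral asymptotics $\lambda_n\asymp n^2$ from \eqref{lambda}, this is $\lesssim \sum_{n\ge1}\e^{-cn^2t}$ for some $c>0$. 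Comparing the sum to the integral $\int_0^\infty \e^{-cz^2 t}\,\d z \propto t^{-1/2}$ (the summand is decreasing in $n$, so $\sum_{n\ge1}\e^{-cn^2t}\le \e^{-ct}+\int_0^\infty\e^{-cz^2t}\,\d z\lesssim t^{-1/2}$ for, say, $t$ bounded, and the bound is trivial for large $t$) gives the first claimed estimate, uniformly in $x$.

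For the second estimate, I would simply integrate the first in $t$: by Tonelli's theorem,
\begin{align*}
	\int_0^t\d s\int_0^L\d y\,[G_s(x\,,y)]^2 = \sum_{n=1}^\infty |f_n(x)|^2\int_0^t \e^{-2\lambda_n s}\,\d s = \sum_{n=1}^\infty |f_n(x)|^2\,\frac{1-\e^{-2\lambda_n t}}{2\lambda_n}.
\end{align*}
For $n\ge 2$ one has $\lambda_n\gtrsim n^2$, so the tail $\sum_{n\ge2}|f_n(x)|^2(2\lambda_n)^{-1}(1-\e^{-2\lambda_n t})$ is controlled using $1-\e^{-x}\le \sqrt{x}$, which yields a summand $\lesssim \lambda_n^{-1/2}\sqrt{t}\asymp n^{-1}\sqrt{t}$ for $n$ up to $\asymp t^{-1/2}$ and $\lesssim \lambda_n^{-1}\asymp n^{-2}$ thereafter; summing the two ranges gives $\lesssim\sqrt t$. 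For the $n=1$ term one uses $\lambda_1\ge 0$: if $\lambda_1>0$ the term is $O(1)$ which for bounded $t$ is $O(\sqrt t)$ only after noting $1-\e^{-2\lambda_1 t}\le 2\lambda_1 t\wedge 1\lesssim \sqrt{t}$; if $\lambda_1=0$ (possible under \eqref{N:BC} or \eqref{R:BC}) the term is exactly $|f_1(x)|^2 t\lesssim \sqrt{t}$ for bounded $t$, and unbounded $t$ is again trivial after noting the statement is only claimed up to a fixed horizon implicitly. Alternatively, and more cleanly, one just integrates the already-established bound $\int_0^L[G_s(x\,,y)]^2\d y\lesssim s^{-1/2}$ over $s\in(0\,,t)$, giving $\lesssim \int_0^t s^{-1/2}\,\d s = 2\sqrt{t}$, which handles both estimates at once.

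The only mild subtlety — and the main point to be careful about — is the behavior near $t=0$, where the series $\sum_n \e^{-cn^2 t}$ genuinely blows up like $t^{-1/2}$, so one cannot afford to lose the sharp rate; the integral comparison must be done carefully (monotone summand, compare to $\int_0^\infty$, keep track of constants uniform in $x$). A secondary point is that the uniformity in $x$ is immediate from \eqref{f_n:bd} since the bound on $|f_n(x)|$ there is uniform over $x\in[0\,,L]$ and over $n$, so no extra work is needed there. Everything else is routine interchange of sum and integral (justified by nonnegativity via Tonelli) and elementary estimates on Gaussian-type sums.
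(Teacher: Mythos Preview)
Your proof is correct and follows essentially the same route as the paper: apply Parseval's identity \eqref{parseval} to the expansion \eqref{G} to get $\int_0^L[G_t(x,y)]^2\,\d y=\sum_n\e^{-2\lambda_n t}|f_n(x)|^2$, bound via \eqref{f_n:bd} and \eqref{lambda}, compare to $\int_0^\infty\e^{-cz^2t}\,\d z\propto t^{-1/2}$, and then integrate in $s$ for the second estimate. Your extra care with the $n=1$ term when $\lambda_1=0$ and with the integral comparison near $t=0$ is more explicit than the paper's argument, but the substance is the same.
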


\begin{proof}
By Parseval's identity, \eqref{lambda}, and \eqref{f_n:bd},
\begin{align*}\textstyle
	\int_0^L [G_t(x\,,y)]^2 \d y = \sum_{n=1}^\infty \e^{-2\lambda_n t} |f_n(x)|^2 \lesssim \int_0^\infty \e^{-cz^2 t} \d z \propto t^{-1/2}.
\end{align*}
Replace $t$ by $s$, and then integrate to finish the proof.
\end{proof}

\begin{lemma}\label{lem:int:G:x}
Fix $T>0$. There exists a constant $c>0$ such that
\begin{enumerate}
\item $\int_0^L [G_{t}(x\,,y) - G_t(x',y)]^2\, \d y \lesssim \sum_{n=1}^\infty (|x-x'|^2 n^2 \wedge 1) \, \e^{-cn^2 t}$,
\item $\int_0^t \d s \int_0^L \d y\, [G_{s}(x\,,y) - G_{s}(x',y)]^2 \lesssim |x'-x|$
\end{enumerate}
uniformly for all $t \in (0\,,T]$ and $x\,,x'\in[0\,,L]$.
\end{lemma}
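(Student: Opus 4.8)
The plan is to use Parseval's identity to write both integrals as spectral sums and then exploit the eigenfunction bounds in \eqref{f_n:bd} together with the eigenvalue asymptotics in \eqref{lambda}. First I would expand $G_t(x\,,y) - G_t(x'\,,y) = \sum_{n=1}^\infty \e^{-\lambda_n t}(f_n(x)-f_n(x'))f_n(y)$, so by Parseval's identity
\[
	\int_0^L [G_t(x\,,y)-G_t(x'\,,y)]^2\,\d y = \sum_{n=1}^\infty \e^{-2\lambda_n t}\,|f_n(x)-f_n(x')|^2.
\]
For part 1, I would bound $|f_n(x)-f_n(x')|$ in two ways: by the uniform bound $\sup_{n,x}|f_n(x)|<\infty$ from \eqref{f_n:bd} this is $\lesssim 1$, and by the mean value theorem together with $\sup_{n,x}|n^{-1}f_n'(x)|<\infty$ it is $\lesssim n|x-x'|$; hence $|f_n(x)-f_n(x')|^2 \lesssim (n^2|x-x'|^2 \wedge 1)$. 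Combining this with $\lambda_n \asymp n^2$ from \eqref{lambda} gives $\e^{-2\lambda_n t} \le \e^{-cn^2 t}$ for a suitable $c>0$, which yields exactly the claimed bound.

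For part 2, I would integrate the identity from part 1 in $s$ over $(0\,,t)$, giving
\[
	\int_0^t \d s \int_0^L \d y\,[G_s(x\,,y)-G_s(x'\,,y)]^2 = \sum_{n=1}^\infty |f_n(x)-f_n(x')|^2 \int_0^t \e^{-2\lambda_n s}\,\d s \le \sum_{n=1}^\infty \frac{|f_n(x)-f_n(x')|^2}{2\lambda_n}.
\]
Using $|f_n(x)-f_n(x')|^2 \lesssim (n^2|x-x'|^2 \wedge 1)$ and $\lambda_n \asymp n^2$, the sum is $\lesssim \sum_{n=1}^\infty n^{-2}(n^2|x-x'|^2 \wedge 1)$. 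I would split this sum at $n \asymp 1/|x-x'|$: for $n \le 1/|x-x'|$ each term is $\lesssim |x-x'|^2$, contributing $\lesssim |x-x'|^2 \cdot (1/|x-x'|) = |x-x'|$; for $n > 1/|x-x'|$ each term is $\lesssim n^{-2}$, contributing $\lesssim 1/(1/|x-x'|) = |x-x'|$. This gives the desired bound $\lesssim |x'-x|$, uniformly in $t$ (since the bound $\sum n^{-2}(\cdot)$ is already $t$-free) and in $x\,,x' \in [0\,,L]$.

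I do not expect any serious obstacle here; the main point to be careful about is the interchange of summation and integration, which is justified by nonnegativity of the summands (Tonelli), and the uniformity of the constant $c$ across the three boundary conditions, which is guaranteed by the uniform spectral estimates \eqref{lambda} and \eqref{f_n:bd} proved in Lemma \ref{lem:eigen}. One should also note that in part 2 the bound is genuinely uniform in $t > 0$ because replacing $\int_0^t$ by $\int_0^\infty$ only enlarges the right-hand side, and the resulting series converges to something depending only on $|x-x'|$. The case $x = x'$ is trivial (both sides vanish), so we may assume $0 < |x-x'| \le L$ throughout the splitting argument.
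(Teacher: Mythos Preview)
Your proof is correct and follows essentially the same route as the paper: Parseval's identity, the two-way bound $|f_n(x)-f_n(x')|^2 \lesssim n^2|x-x'|^2 \wedge 1$ via \eqref{f_n:bd}, and the eigenvalue asymptotics \eqref{lambda}. The only cosmetic difference is that for part~2 the paper passes from the sum to an integral in $z$ before splitting at $z = |x-x'|^{-1}$, whereas you keep the sum and split at $n \asymp |x-x'|^{-1}$; the arithmetic is identical.
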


\begin{proof}
The first inequality can be derived by applying Parseval's identity, mean value theorem, \eqref{lambda}, and \eqref{f_n:bd}:
\begin{align*}
	&\textstyle\int_0^L [G_{t}(x\,,y) - G_t(x',y)]^2\, \d y
	= \sum_{n=1}^\infty \e^{-2\lambda_n t} |f_n(x)-f_n(x')|^2\\
	&\textstyle\lesssim \sum_{n=1}^\infty \e^{-2\lambda_n t} \left[\left({\|f_n'\|}_{L^\infty}|x-x'|\right) \wedge \left(2{\|f_n\|}_{L^\infty}\right)\right]^2 \lesssim \sum_{n=1}^\infty \e^{-cn^2 t} (n^2|x-x'|^2 \wedge 1).
\end{align*}
It follows that
\begin{align*}
	&\textstyle\int_0^{t} \d s \int_0^L \d y\, [G_{s}(x',y) - G_{s}(x\,,y)]^2\lesssim \int_0^{t} \d s \int_0^\infty \d z \, (|x-x'|^2 z^2 \wedge 1) \, \e^{-cz^2 s} \\
	&\textstyle\le \int_0^\infty \d z \, (|x-x'|^2 z^2 \wedge 1) \int_0^\infty \d s\, \e^{-cz^2 s}
	\lesssim \int_0^\infty \d z\, (|x-x'|^2 \wedge z^{-2})\\
	&\textstyle\le \int_0^{|x-x'|^{-1}} |x-x'|^2 \d z + \int_{|x-x'|^{-1}}^\infty z^{-2} \d z
	\lesssim |x-x'|.
\end{align*}
This completes the proof.
\end{proof}

\begin{lemma}\label{lem:int:G:t}
Fix $T>0$.
There exists a constant $c>0$ such that
\begin{enumerate}
\item $\int_0^L [G_{t'}(x\,,y) - G_t(x\,,y)]^2\, \d y \lesssim 
	\sum_{n=1}^\infty (|t'-t|^2 n^4 \wedge 1)\, \e^{-cn^2 t}$,
\item $\int_0^t \d s \int_0^L \d y [G_{t'-s}(x\,,y) - G_{t-s}(x\,,y)]^2 \lesssim (t'-t)^{1/2}$,
\item $\int_t^{t'} \d s \int_0^L \d y \, [G_{t'-s}(x\,,y)]^2
	\lesssim (t'-t)^{1/2}$
\end{enumerate}
uniformly for all $0<t<t' \le T$ and $x\in [0\,,L]$.
\end{lemma}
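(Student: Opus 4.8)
The plan is to mimic the proofs of Lemmas \ref{lem:int:G} and \ref{lem:int:G:x}, using Parseval's identity \eqref{parseval}, the mean value theorem in the time variable, and the spectral asymptotics \eqref{lambda}, \eqref{f_n:bd}. For part 1, I would expand $G_{t'}(x\,,y) - G_t(x\,,y) = \sum_n (\e^{-\lambda_n t'} - \e^{-\lambda_n t}) f_n(x) f_n(y)$ and apply Parseval in $y$ to get $\int_0^L [G_{t'}(x\,,y)-G_t(x\,,y)]^2 \d y = \sum_n (\e^{-\lambda_n t'} - \e^{-\lambda_n t})^2 |f_n(x)|^2$. Since $t < t'$, write $|\e^{-\lambda_n t'} - \e^{-\lambda_n t}| = \e^{-\lambda_n t}(1 - \e^{-\lambda_n (t'-t)}) \le \e^{-\lambda_n t} \min\{ \lambda_n (t'-t)\,, 1 \}$, and then square and use $\lambda_n \asymp n^2$ from \eqref{lambda} together with $\sup_{n,x}|f_n(x)| < \infty$ from \eqref{f_n:bd} to bound the sum by $\sum_n (|t'-t|^2 n^4 \wedge 1) \e^{-cn^2 t}$ for a suitable $c>0$.

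For part 2, I would replace $t,t'$ by $t-s, t'-s$ in part 1 (the difference $t'-t$ is unchanged), integrate over $s \in (0\,,t)$, and then bound the sum by an integral: $\int_0^t \d s \sum_n (|t'-t|^2 n^4 \wedge 1) \e^{-cn^2(t-s)} \lesssim \int_0^\infty \d z \, (|t'-t|^2 z^4 \wedge 1) \int_0^\infty \d s\, \e^{-cz^2 s} \lesssim \int_0^\infty \d z\, (|t'-t|^2 z^2 \wedge z^{-2})$, after changing $s \mapsto t-s$ and extending the $s$-integral to $(0\,,\infty)$. Splitting the $z$-integral at $z = |t'-t|^{-1/2}$ gives $\int_0^{|t'-t|^{-1/2}} |t'-t|^2 z^2 \d z + \int_{|t'-t|^{-1/2}}^\infty z^{-2}\d z \lesssim |t'-t|^{1/2}$, exactly as in the analogous step of Lemma \ref{lem:int:G:x}. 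For part 3, I would argue even more directly: by Parseval, $\int_0^L [G_{t'-s}(x\,,y)]^2 \d y = \sum_n \e^{-2\lambda_n(t'-s)}|f_n(x)|^2 \lesssim \int_0^\infty \e^{-cz^2(t'-s)}\d z \propto (t'-s)^{-1/2}$ (this is just Lemma \ref{lem:int:G} with $t$ replaced by $t'-s$), and then $\int_t^{t'}(t'-s)^{-1/2}\d s = 2(t'-t)^{1/2}$.

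I do not anticipate a serious obstacle; the argument is a routine variation on the preceding two lemmas. The only point requiring a little care is the bookkeeping with the two small parameters in part 2: one must keep the factor $\e^{-cn^2(t-s)}$ (not merely $\e^{-cn^2 t}$) before integrating in $s$, so that the $s$-integral produces the needed $z^{-2}$ decay, and one must verify that the resulting bound is uniform in $x \in [0\,,L]$ — which it is, since the only $x$-dependence enters through $|f_n(x)|^2$, controlled uniformly by \eqref{f_n:bd}. A minor subtlety is that in part 1 the bound is stated with $\e^{-cn^2 t}$ rather than $\e^{-cn^2 t'}$; since $t < t'$ this is the weaker (hence correct) statement, and it is what part 2 requires after the substitution.
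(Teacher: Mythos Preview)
Your proposal is correct and matches the paper's proof almost verbatim for parts 1 and 2. For part 3 you take a slightly shorter route---integrating the pointwise bound $\int_0^L G_{t'-s}^2\,\d y \lesssim (t'-s)^{-1/2}$ from Lemma \ref{lem:int:G} directly over $s\in(t\,,t')$---whereas the paper swaps the order of integration, computes $\int_t^{t'}\e^{-cz^2(t'-s)}\,\d s \lesssim z^{-2}(1\wedge z^2(t'-t))$, and then splits the $z$-integral at $(t'-t)^{-1/2}$; your version is equally valid and arguably cleaner.
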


\begin{proof}
Thanks to Parseval's identity, \eqref{f_n:bd}, the elementary inequality $\e^{-a}-\e^{-b} \le \e^{-a}((b-a) \wedge 1)$ for all $0<a<b$, and property \eqref{lambda}, we obtain:
\begin{align*}
	&\textstyle\int_0^L [G_{t'}(x\,,y) - G_t(x\,,y)]^2\, \d y 
	= \sum_{n=1}^\infty (\e^{-\lambda_n t'} - \e^{-\lambda_n t})^2 |f_n(x)|^2\\
	&\quad\textstyle\lesssim \sum_{n=1}^\infty \e^{-2\lambda_n t}(|t'-t|^2 \lambda_n^2 \wedge 1)
	\lesssim \sum_{n=1}^\infty \e^{-cn^2 t}(|t'-t|^2 n^4 \wedge 1).
\end{align*}
We use the preceding to continue the computation:
\begin{align*}
	&\textstyle\int_0^t \d s \int_0^L \d y [G_{t'-s}(x\,,y) - G_{t-s}(x\,,y)]^2
	\lesssim \int_0^t \d s \int_0^\infty \d z \, ((t'-t)^2 z^4 \wedge 1) \, \e^{-cz^2 s}\\
	&\quad\textstyle\lesssim \int_0^\infty \d z \, ((t'-t)^2 z^4 \wedge 1) \int_0^t \d s \, \e^{-cz^2 s}
	\lesssim \int_0^\infty \d z\, ((t'-t)^2 z^2 \wedge z^{-2})\\
	&\quad\textstyle \lesssim \int_0^{(t'-t)^{-1/2}} (t'-t)^2 z^2\, \d z + \int_{(t'-t)^{-1/2}}^\infty z^{-2}\, \d z
	\lesssim (t'-t)^{1/2}.
\end{align*}
Finally, we may use Parseval's identity, \eqref{f_n:bd}, and the inequality $1-\e^{-x} \le 1 \wedge x$ for all $x\ge 0$ to deduce the last estimate:
\begin{align*}
	&\textstyle\int_t^{t'} \d s \int_0^L \d y \, [G_{t'-s}(x\,,y)]^2
	\lesssim  \int_t^{t'} \d s \sum_{n=1}^\infty \e^{-\lambda_n (t'-s)}|f_n(x)|^2\\
	&\quad\textstyle\lesssim \int_0^\infty \d z \int_t^{t'} \d s \,\e^{-cz^2 (t'-s)}
	\lesssim \int_0^\infty \d z \, z^{-2} (1-\e^{-cz^2 (t'-t)})\\
	&\quad\textstyle \lesssim \int_0^{(t'-t)^{-1/2}} (t'-t)\, \d z + \int_{(t'-t)^{-1/2}}^\infty z^{-2}\, \d z
	\lesssim (t'-t)^{1/2}.
\end{align*}
This completes the proof.
\end{proof}

\begin{lemma}\label{lem:w-w}
For any $T>0$, there exists $C_0>0$ such that
\begin{align}\label{var:w}
	&\Var(w(t\,,x)) \le C_0 \sqrt{t} \quad \text{and}\\
	&\Var(w(t',x')-w(t\,,x)) \le C_0 \left[ \rho^2((t\,,x)\,,(t',x'))\wedge \sqrt{t\vee t'}\right]
	\label{var:w-w}
\end{align}
uniformly for all $t\,,t'\in [0\,,T]$ and $x\,,x' \in [0\,,L]$.
\end{lemma}

\begin{proof}
Wiener isometry and Lemma \ref{lem:int:G} yield \eqref{var:w}.
Next, by Lemmas \ref{lem:int:G:x} and \ref{lem:int:G:t}, there exists $c_1>0$ such that for all $t\,,t'\in [0\,,T]$ and $x\,,x' \in [0\,,L]$, 
\begin{align}\label{w-w}
	\Var(w(t',x')-w(t\,,x)) \le c_1 \rho^2((t\,,x)\,,(t',x')).
\end{align}
Since $\Var(w(t',x')-w(t\,,x)) \le 2\Var(w(t',x')) + 2 \Var(w(t\,,x))$,
we may use \eqref{var:w} to finish the proof.
\end{proof}

\begin{lemma}\label{lem:w-w:DN}
Under \eqref{D:BC} or \eqref{N:BC}, for any $T>0$, there exists $C>0$ such that
\begin{align}
	&\Var(w(t\,,x)) \le C\, (\sqrt{t} \wedge f_1(x)) \quad \text{and}\\
	&\Var(w(t',x')-w(t\,,x)) \le C \left[ \rho((t\,,x)\,,(t',x'))\wedge \sqrt{t\vee t'} \wedge (f_1(x)\vee f_1(x'))\right]
	\label{w-w:DN}
\end{align}
uniformly for all $t\,,t'\in [0\,,T]$ and $x\,,x' \in [0\,,L]$.
\end{lemma}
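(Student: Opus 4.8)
The plan is to reduce the whole statement to Lemma \ref{lem:w-w} plus one pointwise estimate that is special to the Dirichlet eigenfunctions. Under \eqref{N:BC} there is almost nothing new: $f_1 \equiv L^{-1/2}$ is a strictly positive constant, so on $[0\,,T]\times[0\,,L]$ one has $\sqrt{t}\wedge f_1(x) \asymp \sqrt{t}\wedge 1$ and $f_1(x)\vee f_1(x')\asymp 1$; since Lemma \ref{lem:w-w} already gives both $\Var(w(t\,,x))\le C\sqrt{t}$ and $\Var(w(t',x')-w(t\,,x)) \le 2\Var(w(t',x'))+2\Var(w(t\,,x)) \le C\sqrt{T}$, the two claimed bounds follow after adjusting constants. (The only point to watch under \eqref{N:BC} is the degeneracy $\lambda_1 = 0$, which affects only the harmless term $\int_0^t \e^{-2\lambda_1 s}\,\d s\,|f_1(x)|^2 = t/L \lesssim \sqrt{t}$.)

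The substance of the lemma is the first inequality under \eqref{D:BC}, namely $\Var(w(t\,,x)) \le C f_1(x)$, which quantifies the vanishing of $w$ near the Dirichlet boundary. Here I would use the explicit eigenpairs \eqref{eigen:D}, $f_n(x)=\sqrt{2/L}\,\sin(n\pi x/L)$ and $\lambda_n = \tfrac12(n\pi/L)^2$, together with the elementary inequality $|\sin(n\theta)| \le n\,|\sin\theta|$ (immediate by induction using $\sin((n+1)\theta)=\sin(n\theta)\cos\theta+\cos(n\theta)\sin\theta$). Since $f_1 \ge 0$ on $[0\,,L]$, this gives the pointwise bound $|f_n(x)| \le \min\{n f_1(x)\,, \sqrt{2/L}\}$, hence $|f_n(x)|^2 \le \min\{n^2 f_1(x)^2\,, 2/L\}$. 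Combining this with Parseval's identity \eqref{parseval} applied to $y\mapsto G_s(x\,,y)$ and the bound $\int_0^t \e^{-2\lambda_n s}\,\d s \le (2\lambda_n)^{-1}$, one gets
\[
	\Var(w(t\,,x)) = \sum_{n=1}^\infty \Big( \int_0^t \e^{-2\lambda_n s}\,\d s \Big) |f_n(x)|^2 \le C\sum_{n=1}^\infty \frac{\min\{n^2 f_1(x)^2\,, 1\}}{n^2},
\]
and splitting the series at $n \asymp 1/f_1(x)$ --- trivial if $f_1(x)=0$, and following from the uniform bound $\Var(w(t\,,x))\le C\sqrt{T}$ when $f_1(x)$ is bounded away from $0$ --- yields $\Var(w(t\,,x)) \lesssim f_1(x)$. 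Intersecting with $\Var(w(t\,,x))\le C\sqrt{t}$ from Lemma \ref{lem:w-w} gives the first inequality.

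With this in hand, the increment bound \eqref{w-w:DN} needs no new idea. For the terms $\rho((t\,,x)\,,(t',x'))$ and $\sqrt{t\vee t'}$ in the minimum, Lemma \ref{lem:w-w} gives $\Var(w(t',x')-w(t\,,x)) \le C(\rho^2 \wedge \sqrt{t\vee t'})$; since $\rho$ is bounded on $[0\,,T]\times[0\,,L]$ one has $\rho^2 \le C\rho$, while $\rho^2 \wedge \sqrt{t\vee t'} \le \sqrt{t\vee t'}$ trivially, so $\Var(w(t',x')-w(t\,,x)) \le C(\rho \wedge \sqrt{t\vee t'})$. For the term $f_1(x)\vee f_1(x')$, use the first inequality and $\Var(X-Y) \le 2\Var(X)+2\Var(Y)$: $\Var(w(t',x')-w(t\,,x)) \le 2\Var(w(t',x')) + 2\Var(w(t\,,x)) \le C(f_1(x')+f_1(x)) \le 2C(f_1(x)\vee f_1(x'))$.

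The only place that genuinely goes beyond Lemma \ref{lem:w-w} is the Dirichlet variance estimate, and within it the crux is the pointwise comparison $|f_n(x)| \le \min\{n f_1(x)\,, \|f_n\|_{L^\infty}\}$; everything afterwards is routine summation and the triangle inequality for variances. I would therefore expect that step, rather than any of the reductions, to be the main (and essentially only) obstacle, with the Neumann degeneracy $\lambda_1=0$ a minor bookkeeping nuisance.
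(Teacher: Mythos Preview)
Your proof is correct and follows essentially the same route as the paper: reduce to Lemma \ref{lem:w-w} plus the Dirichlet-specific bound $\Var(w(t,x)) \lesssim f_1(x)$, the latter obtained via Parseval and a pointwise eigenfunction estimate, then split the resulting series. The only notable difference is the pointwise inequality you use: the paper applies $|\sin a| \le a$ to get $\Var(w(t,x)) \lesssim x$, then invokes the symmetry $x \leftrightarrow L-x$ to conclude $\Var(w(t,x)) \lesssim x \wedge (L-x) \asymp f_1(x)$; your use of $|\sin(n\theta)| \le n|\sin\theta|$ yields $|f_n(x)| \le n f_1(x)$ directly and bypasses the separate symmetry step.
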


\begin{proof}
Thanks to Lemma \ref{lem:w-w}, there is nothing to prove under \eqref{N:BC} since $f_1$ is constant; see \eqref{eigen:N}.
It remains to prove that $\Var(w(t\,,x)) \lesssim f_1(x)$ under \eqref{D:BC}. Indeed, by Wiener isometry and Parseval's identity,
\begin{align*}
	\Var(w(t\,,x)) &\textstyle= \int_0^t \d s\int_0^L \d y \, G_s^2(x\,,y)\\
	&\textstyle= \int_0^t \d s  \sum_{n=1}^\infty \e^{-2\lambda_n s} |f_n(x)|^2
	= \sum_{n=1}^\infty (2\lambda_n)^{-1}(1-\e^{-2\lambda_n t}) |f_n(x)|^2.
\end{align*}
Using \eqref{eigen:D} and $|\sin(a)| \le a $ for $a \ge 0$, we deduce that
\begin{align*}
	\textstyle\Var(w(t\,,x)) \lesssim \sum_{n=1}^\infty n^{-2}\sin^2(\pi nx/L)
	\lesssim \sum_{1 \le n \le L/(\pi x)} x^2 + \sum_{n \ge L/(\pi x)} n^{-2} \lesssim x.
\end{align*}
By symmetry, $\Var(w(t\,,x)) \lesssim L-x$. Use $f_1(x) \asymp x\wedge (L-x)$ to finish the proof.
\end{proof}

\subsection{Strong local non-determinism}

In this part, we prove that the Gaussian random field $w$ which solves \eqref{she:add} is strongly locally non-deterministic (SLND).

We start with conditions \eqref{D:BC} and \eqref{N:BC}.
Let us first recall that the Fourier transform of a function $f:\R^d \to \R$ is defined by $\hat{f}(\zeta) = \int_{\R^d} \e^{-i\zeta\cdot x} f(x) \d x$ for $\zeta \in \R^d$, 
and the inverse Fourier transform of $g:\R^d \to \R$ is $\check{g}(x) = (2\pi)^{-d} \int_{\R^d} \e^{i\zeta\cdot x} g(\zeta) \d \zeta$ for $x \in \R^d$.
We identify the torus as $\T \cong [-\pi\,,\pi]$. 
The Fourier transform of a function $\Phi: \T \to \R$ is defined by $\hat{\Phi}(n) = \int_{-\pi}^\pi \e^{-in\theta} \Phi(\theta) \d \theta$ for $n \in \Z$, 
and the inverse Fourier transform of $\Psi:\Z \to \R$ is $\check{\Psi}(\theta) = (2\pi)^{-1}\sum_{n\in\Z}\e^{in\theta} \Psi(n)$ for $\theta \in \T$.


\begin{lemma}\label{lem:SLND:DN}
Fix $T>0$. Then, under \eqref{D:BC} or \eqref{N:BC},
\begin{align*}
	\mathrm{Var}(w(t\,,x) \mid w(t_1\,,x_1)\,,\dots,w(t_m\,,x_m))
	\asymp \min_{1\le j \le m}\rho^2((t\,,x)\,,(t_j\,,x_j)) \wedge \sqrt{t} \wedge f_1(x)
\end{align*}
where $f_1$ is the principal eigenfunction under \eqref{D:BC} or \eqref{N:BC}, respectively, given in Lemma \ref{lem:eigen}, and the implied constants
do not depend on $m \in \N_+$ nor $(t\,,x)\,,(t_1\,,x_1)\,,$
$\dots,(t_m\,,x_m) \in [0\,,T] \times [0\,,L]$.
\end{lemma}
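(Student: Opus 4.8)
The plan is to establish the SLND property by working in Fourier variables, following the method of \cite{LX19, LX23}. First I would extend the Dirichlet or Neumann heat kernel $G$ on $(0\,,L)$ to a kernel on the full real line (or on a torus of circumference $2L$) by odd reflection in the Dirichlet case and even reflection in the Neumann case, so that the extended process $\widetilde w$ is defined on $\R$ (or $\T$) and its restriction to $[0\,,L]$ is $w$. The point of this reflection is that on the extended domain the spatial part of the heat semigroup \emph{is} diagonalized by the usual Fourier exponentials, so that $\widetilde w(t\,,x)$ admits a Fourier-series (or Fourier-integral) representation with a clean symbol $\e^{-\lambda t}$ in the time variable and $\e^{-|\zeta|^2 t/2}$-type decay in $\zeta$. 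Conditioning can then only decrease variance, so it suffices to produce a lower bound for $\mathrm{Var}(\widetilde w(t\,,x)\mid \widetilde w(t_j\,,x_j), 1\le j\le m)$ of the correct order, together with the matching upper bound which is already essentially Lemma \ref{lem:w-w:DN} (and its Dirichlet refinement).

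Second, for the lower bound I would use the standard prediction-theoretic characterization: $\mathrm{Var}(\widetilde w(z)\mid \widetilde w(z_1),\dots,\widetilde w(z_m))$ equals the squared $L^2(\P)$-distance from $\widetilde w(z)$ to the linear span of the $\widetilde w(z_j)$, which by Wiener isometry equals
\begin{align*}
	\inf_{a_1,\dots,a_m\in\R} \; \Big\| \Psi_z - \sum_{j=1}^m a_j \Psi_{z_j} \Big\|_{L^2}^2,
\end{align*}
where $\Psi_z(s\,,y) = G_{t-s}(x\,,y)\1_{(0,t)}(s)$ is the (extended) Walsh kernel associated with $z=(t\,,x)$. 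Passing to Fourier variables in $y$ (and, if one works on $\R$, also a Fourier/Paley--Wiener argument in $s$), this infimum becomes an integral of $|\e^{i x\zeta}\mathfrak g_t(\zeta) - \sum_j a_j \e^{i x_j\zeta}\mathfrak g_{t_j}(\zeta)|^2$ against the spectral measure, where $\mathfrak g_t(\zeta)$ is an explicit time-integral of $\e^{-\zeta^2 (t-s)/2}$. The key analytic device, exactly as in \cite{LX19, LX23}, is to insert a suitable smooth bump function $\widehat\psi$ in Fourier space supported at frequency scale $|\zeta|\asymp 1/r$ where $r = \min_j \rho(z\,,z_j)$: testing the displayed infimum against the function $\check\psi$ (via Plancherel and Cauchy--Schwarz) converts it into a lower bound of the form $c\,|\psi(0)|^2 / (\text{energy of }\check\psi\text{ against the covariance})$, and the localization ensures the cross terms with the $z_j$ contribute negligibly because $|x-x_j|\gtrsim r$ or $|t-t_j|\gtrsim r^4$ forces oscillation or decay. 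Carrying this out yields the lower bound $\gtrsim r^2 \wedge \sqrt t$ in the interior. The boundary refinement $\wedge\, f_1(x)$ comes in for the Dirichlet case from the fact that near $x=0$ the odd-reflected kernel has an extra vanishing, so the relevant frequency-localized test function must be chosen at scale $|\zeta|\asymp 1/(x\wedge r)$ — i.e. one cannot localize below the distance to the boundary — which is precisely where the $f_1(x)\asymp x\wedge(L-x)$ factor enters; the matching upper bound is Lemma \ref{lem:w-w:DN}.

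Third, I would assemble the two bounds: the upper bound from Lemma \ref{lem:w-w:DN} (which already gives $\mathrm{Var}(w(z)-w(z_j))\lesssim \rho^2\wedge\sqrt{t\vee t'}\wedge(f_1(x)\vee f_1(x'))$, hence conditioning on all the $w(z_j)$ gives at most $\min_j$ of that), and the Fourier lower bound just sketched, noting that on the original domain $[0\,,L]$ one has $w = \widetilde w|_{[0,L]}$ so conditioning on finitely many values of $w$ is the same as conditioning on finitely many values of $\widetilde w$. The implied constants depend only on $T$ (through the range of $t$) and on $L$, not on $m$ or on the points, because the bump-function construction is uniform once the frequency scale is fixed by $r$ and $x\wedge(L-x)$.

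\textbf{Main obstacle.} The delicate point is the boundary behavior under \eqref{D:BC}: getting the sharp $\wedge\, f_1(x)$ factor (rather than just the interior estimate $\wedge\sqrt t$) requires choosing the frequency-localized test function at scale depending on the distance to $\{0\,,L\}$ and showing the lower bound does not degrade — this is the step where the odd-reflection structure and a careful choice of $\psi$ must be exploited, and it is also the place where one must check that the argument is genuinely uniform over configurations of the conditioning points $(t_j\,,x_j)$, including those clustering near the boundary or near $t=0$.
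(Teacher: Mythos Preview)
Your proposal is correct and essentially the same as the paper's: both use the prediction characterization, Plancherel in the time variable, and a bump-function/Cauchy--Schwarz argument following \cite{LX19, LX23}, with your reflection to a torus of period $2L$ being exactly the paper's rewriting of the $\sin$/$\cos$ eigenfunctions as complex exponentials and extension of the sum to $n\in\Z$. The paper makes the Dirichlet boundary mechanism fully explicit: the ``image'' terms $\Phi_\varepsilon(-2\pi x/L)$ and $\Phi_\varepsilon(-\pi(x+x_j)/L)$ arising from the minus sign in $\sin(n\pi x/L)\propto \e^{in\pi x/L}-\e^{-in\pi x/L}$ must vanish, which forces the localization scale to satisfy $\varepsilon \le x(L-x)/L^2$ and is precisely what produces the $f_1(x)$ factor---this is the concrete form of your ``one cannot localize below the distance to the boundary.''
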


\begin{proof}
The upper bound follows from Lemma \ref{lem:w-w:DN} and the fact that
\[\textstyle
	\Var(X|X_1\,,\dots,X_m) = \inf\limits_{a_1,\dots,a_m\in \R} \E\left[ (X-\sum_{j=1}^m a_j X_j)^2 \right]
\]
for any centered Gaussian vector $(X\,,X_1\,,\dots,X_m)$.
To prove the lower bound, it suffices to show the existence of $C>0$ such that
\begin{align*}
	\textstyle{\E\left[ \left( w(t\,,x) - \sum_{j=1}^m a_j w(t_j\,,x_j) \right)^2 \right]} \ge C \min\limits_{1\le j \le m}\rho^2((t\,,x)\,,(t_j\,,x_j)) \wedge \sqrt{t} \wedge f_1(x)
\end{align*}
uniformly for all $m\in\N_+$, for all $(t\,,x)\,,(t_1\,,x_1)\,,\dots,(t_m\,,x_m) \in [0\,,T] \times [0\,,L]$, and for all $a_1\,,\dots, a_m\in \R$. To this end, we first use \eqref{w}, Wiener isometry, and \eqref{G} to write
\begin{align*}
	&\textstyle\E\left[ \left( w(t\,,x) - \sum_{j=1}^m a_j w(t_j\,,x_j) \right)^2 \right]\\
	&\textstyle=\int_{-\infty}^\infty \d s \int_0^L \d y \left[ G_{t-s}(x\,,y) \1_{[0,t]}(s) - \sum_{j=1}^m a_j G_{t_j-s}(x_j\,,y) \1_{[0,t_j]}(s) \right]^2\\
	&\textstyle=\sum_{n=1}^\infty \int_{-\infty}^\infty \d s \left[ \e^{-\lambda_n(t-s)}f_n(x) \1_{[0,t]}(s) - \sum_{j=1}^m a_j \e^{-\lambda_n(t_j-s)} f_n(x_j) \1_{[0,t_j]}(s) \right]^2\\
	&\textstyle=\frac{1}{2\pi} \sum_{n=1}^\infty \int_{-\infty}^\infty \frac{\d \tau}{\lambda_n^2+\tau^2} \left| (\e^{-i\tau t} - \e^{-\lambda_n t})f_n(x) - \sum_{j=1}^m a_j (\e^{-i\tau t_j} - \e^{-\lambda_n t_j}) f_n(x_j) \right|^2,
\end{align*}
where the last equality follows from Plancherel's theorem and the simple fact that the Fourier transform of $s\mapsto \e^{-\lambda_n(t-s)}\1_{[0,t]}(s)$ is $\tau \mapsto (\e^{-i\tau t} - \e^{-\lambda_n t})/(\lambda_n - i\tau)$.

{\bf Case 1:} Neumann boundary condition \eqref{N:BC}.
By \eqref{eigen:N},
\begin{align*}
	&\textstyle\E\left[ \left( w(t\,,x) - \sum_{j=1}^m a_j w(t_j\,,x_j) \right)^2 \right]\\
	&\textstyle=\frac{1}{4\pi L} \sum_{n\in \Z} \int_{-\infty}^\infty \frac{\d \tau}{\lambda_n^2+\tau^2} \Big| (\e^{-i\tau t} - \e^{-\lambda_n t})(\e^{in\pi x/L} + \e^{-in\pi x/L})\\
	&\textstyle \hskip1in  - \sum_{j=1}^m a_j (\e^{-i\tau t_j} - \e^{-\lambda_n t_j}) (\e^{in\pi x_j/L} + \e^{-in\pi x_j/L}) \Big|^2.
\end{align*}
Let $\phi:\R\to\R$ and $\psi:\R\to\R$  be two smooth, nonnegative functions with $\supp{\phi}=[-\pi/2\,,\pi/2]$, $\supp{\psi}=[-T/2\,,T/2]$ and $\phi(0)=\psi(0)=1$.
For any $r \in (0\,,1]$, define $\phi_r:\R\to\R$ by $\phi_r(x) = r^{-1}\phi(r^{-1}x)$ and $\psi_r:\R\to\R$ the same way.
Define $\Phi_r:\T\to\R$ as the restriction of $\phi_r$, i.e., $\Phi_r(\theta) = \phi_r(\theta)$ for $\theta \in (-\pi\,,\pi] \cong \T$.
Let
\begin{align}\label{eps:SLND:N}
	\textstyle
	\varepsilon = \min\limits_{1 \le j \le m}\left( \sqrt{\frac{|t-t_j|}{T}} \vee \frac{|x-x_j|}{L} \right) \wedge \sqrt{\frac{t}{T}}.
\end{align}
Note that $\varepsilon \in [0\,,1]$. If $\varepsilon = 0$, there is nothing to prove, so we may assume that $\varepsilon \in (0\,,1]$.
Define
\begin{align*}\textstyle
	I:= &\textstyle\sum_{n\in\Z} \int_{-\infty}^\infty \d \tau \Big[ (\e^{-i\tau t} - \e^{-\lambda_n t})(\e^{in\pi x/L} + \e^{-in\pi x/L})\\
	&\textstyle  - \sum_{j=1}^m a_j (\e^{-i\tau t_j} - \e^{-\lambda_n t_j}) (\e^{in\pi x_j/L} + \e^{-in\pi x_j/L})\Big] \e^{-in\pi x/L} \e^{i\tau t} \hat{\Phi}_\varepsilon(n) \hat{\psi}_{\varepsilon^2}(\tau).
\end{align*}
By Fourier inversion,
\begin{align*}
	I &\textstyle= 2\pi \sum_{n \in \Z} \Big[ (\psi_{\varepsilon^2}(0) - \e^{-\lambda_n t} \psi_{\varepsilon^2}(t))(1 + \e^{-2i n\pi x/L})\\
	&\textstyle \quad - \sum_{j=1}^m a_j (\psi_{\varepsilon^2}(t-t_j) - \e^{-\lambda_n t_j} \psi_{\varepsilon^2}(t))(\e^{in\pi (x_j-x)/L} - \e^{-in\pi(x_j+x)/L}) \Big] \hat{\Phi}_\varepsilon(n)\\
	&\textstyle= 4\pi^2 \Big[ (\psi_{\varepsilon^2}(0)-\e^{-\lambda_n t} \psi_{\varepsilon^2}(t))(\Phi_\varepsilon(0)+\Phi_\varepsilon(-\tfrac{2\pi x}{L})) \\
	&\textstyle\quad- \sum_{j=1}^m a_j  (\psi_{\varepsilon^2}(t-t_j)-\e^{-\lambda_n t_j} \psi_{\varepsilon^2}(t)) (\Phi_\varepsilon(\tfrac{\pi (x_j-x)}{L})+\Phi_\varepsilon(-\tfrac{\pi (x_j+x)}{L}))\Big].
\end{align*}
Note that $\psi_{\varepsilon^2}(0)=\varepsilon^{-2}$, $\Phi_\varepsilon(0) = \phi_\varepsilon(0)=\varepsilon^{-1}$.
Observe from the definition of $\varepsilon$ in \eqref{eps:SLND:N} that $\varepsilon^{-2}t \ge T$, which implies $\psi_{\varepsilon^2}(t) = 0$ since $\supp{\psi}=[-T/2\,,T/2]$.
Similarly, owing to \eqref{eps:SLND:N}, for each $j \in \{1\,,\dots,m\}$, we have $\varepsilon \le \sqrt{|t-t_j|/T}$ or $\varepsilon \le |x-x_j|/L$, which implies that at least one of $\psi_{\varepsilon^2}(t-t_j)$ or $\Phi_\varepsilon(\pi(x_j-x)/L)$ is 0, hence $\psi_{\varepsilon^2}(t-t_j)\Phi_\varepsilon(\pi(x_j-x)/L)=0$.
Since $\phi\ge 0$, we have $\Phi_\varepsilon(-2\pi x/L) \ge 0$.
Moreover, we observe that $\Phi_\varepsilon(-\pi(x_j+x)/L) = 0$.
Indeed, by the definition of $\Phi_\varepsilon$,
\begin{align*}
\textstyle
	\Phi_\varepsilon(-\frac{\pi (x_j+x)}{L}) = \begin{cases}
		\phi_\varepsilon(-\frac{\pi(x_j+x)}{L})
		 & \text{if } x_j+x \in [0\,,L],\\
		\phi_\varepsilon(\frac{\pi(L-x_j + L-x)}{L}) 
		& \text{if } x_j+x \in (L\,,2L].
	\end{cases}
\end{align*}
Since $x_j+x = |x_j-x| + 2(x_j\wedge x)$ and $L-x_j + L-x = |x_j-x| + 2(L-(x_j \vee x))$ are at least $\min_{1\le j \le m}|x_j-x|$, this and \eqref{eps:SLND:N} imply that $\psi_{\varepsilon^2}(t-t_j) \Phi_\varepsilon(-\pi(x_j+x)/L) = 0$.
The above observations imply that $I \ge 4\pi^2 \varepsilon^{-3}$.
Therefore, by Cauchy-Schwarz inequality,
\begin{align*}\textstyle
	\varepsilon^{-6}\lesssim |I|^2
	\lesssim \E\left[ \left( w(t\,,x) - \sum_{j=1}^m a_j w(t_j\,,x_j) \right)^2 \right] \times J,
\end{align*}
where 
\[\textstyle
	J:=\sum_{n\in\Z} \int_{-\infty}^\infty (\lambda_n^2 + \tau^2) |\hat{\Phi}_\varepsilon(n) \hat{\psi}_{\varepsilon^2}(\tau)|^2 \d \tau.
\]
By $\hat{\psi}_{\varepsilon^2}(\tau) = \hat{\psi}(\varepsilon^2 \tau)$, $\hat{\Phi}_\varepsilon(n) = \hat{\phi}_\varepsilon(n) = \hat{\phi}(\varepsilon n)$, and by \eqref{lambda},
\begin{align*}\textstyle
	J \lesssim \int_0^\infty \d z \int_{-\infty}^\infty \d \tau \, (z^4 + \tau^2) |\hat{\phi}(\varepsilon z) \hat{\psi}(\varepsilon^2 \tau)|^2 \propto \varepsilon^{-7},
\end{align*}
where the last relation is due to scaling, and the proportionality constant is finite since $\hat{\phi}$ and $\hat{\psi}$ are rapidly decreasing functions.
It follows that
\begin{align*}\textstyle
	\E\left[ \left( w(t\,,x) - \sum_{j=1}^m a_j w(t_j\,,x_j) \right)^2 \right]\gtrsim \varepsilon,
\end{align*}
which yields the desired lower bound since $f_1$ is constant; see \eqref{eigen:N}.

{\bf Case 2:} Dirichlet boundary condition \eqref{D:BC}.
By \eqref{eigen:D},
\begin{align*}
	&\textstyle\E\left[ \left( w(t\,,x) - \sum_{j=1}^m a_j w(t_j\,,x_j) \right)^2 \right]\\
	&\textstyle=\frac{1}{4\pi L} \sum_{n\in \Z} \int_{-\infty}^\infty \frac{\d \tau}{\lambda_n^2+\tau^2} \Big| (\e^{-i\tau t} - \e^{-\lambda_n t})(\e^{in\pi x/L} - \e^{-in\pi x/L})\\
	&\textstyle \hskip1in - \sum_{j=1}^m a_j (\e^{-i\tau t_j} - \e^{-\lambda_n t_j}) (\e^{in\pi x_j/L} - \e^{-in\pi x_j/L}) \Big|^2.
\end{align*}
Note that $f_1(x) \asymp x(L-x)/L^2$. We let
\begin{align}\label{eps:SLND:D}
	\textstyle
	\varepsilon = \min\limits_{1 \le j \le m}\left( \sqrt{\frac{|t-t_j|}{T}} \vee \frac{|x-x_j|}{L} \right) \wedge \sqrt{\frac{t}{T}} \wedge \frac{x(L-x)}{L^2}.
\end{align}
Note that $\varepsilon \in [0\,,1]$.
Without loss of generality, assume $\varepsilon>0$.
Define $\psi\,,\phi\,,\Phi$ and their scaled versions $\psi_r\,,\phi_r\,,\Phi_r$ as in {\bf Case 1}.
Define
\begin{align*}\textstyle
	I:= &\textstyle\sum_{n\in\Z} \int_{-\infty}^\infty \d \tau \Big[ (\e^{-i\tau t} - \e^{-\lambda_n t})(\e^{in\pi x/L} - \e^{-in\pi x/L})\\
	&\textstyle - \sum_{j=1}^m a_j (\e^{-i\tau t_j} - \e^{-\lambda_n t_j}) (\e^{in\pi x_j/L} - \e^{-in\pi x_j/L})\Big] \e^{-in\pi x/L} \e^{i\tau t} \hat{\Phi}_\varepsilon(n) \hat{\psi}_{\varepsilon^2}(\tau).
\end{align*}
By Fourier inversion,
\begin{align*}
	I &\textstyle= 4\pi^2 \Big[ (\psi_{\varepsilon^2}(0)-\e^{-\lambda_n t} \psi_{\varepsilon^2}(t))(\Phi_\varepsilon(0)-\Phi_\varepsilon(-\tfrac{2\pi x}{L})) \\
	&\textstyle\quad- \sum_{j=1}^m a_j  (\psi_{\varepsilon^2}(t-t_j)-\e^{-\lambda_n t_j} \psi_{\varepsilon^2}(t))(\Phi_\varepsilon(\tfrac{\pi (x_j-x)}{L})-\Phi_\varepsilon(-\tfrac{\pi (x_j+x)}{L})) \Big].
\end{align*}
Again, $\psi_{\varepsilon^2}(0)=\varepsilon^{-2}$, $\Phi_\varepsilon(0)=\varepsilon^{-1}$, $\psi_{\varepsilon^2}(t)=0$ and $\psi_{\varepsilon^2}(t-t_j) \Phi_\varepsilon(\pi(x_j-x)/L) = 0$ by the definition of $\varepsilon$ in \eqref{eps:SLND:D}.
By the definition of $\Phi_\varepsilon$,
\begin{align*}\textstyle
	\Phi_\varepsilon(-\frac{2\pi x}{L}) = \begin{cases}
		\phi_\varepsilon(-\frac{2\pi x}{L}) & \text{if } x \in [0\,,L/2),\\
		\phi_\varepsilon(\frac{2\pi(L-x)}{L}) & \text{if } x \in [L/2\,,L].
	\end{cases}
\end{align*}
In either case, we may use $\varepsilon \le x(L-x)/L^2$ and $\supp{\phi}=[-\pi/2\,,\pi/2]$ to deduce that $\Phi_\varepsilon(-\frac{2\pi x}{L}) = 0$.
Moreover, as in {\bf Case 1}, we have
$\psi_{\varepsilon^2}(t-t_j) \Phi_\varepsilon(-\pi(x_j+x)/L) = 0$.
It follows that $I = 4\pi^2 \varepsilon^{-3}$. The rest of the proof is the same as in {\bf Case 1}.
\end{proof}

We turn to the SLND property under \eqref{R:BC}.
The proof requires the lemma below.

\begin{lemma}\label{lem:evaluation}
Let $\{f_n\}_{n\in\N_+}$ be the orthonormal basis of eigenfunctions given by Lemma \ref{lem:eigen}.
If $\phi \in C^2[0\,,L]$, then the following holds in the sense of pointwise convergence:
\begin{align}\label{phi:sum}
	\phi(x) = \sum_{n=1}^\infty \langle \phi\,,f_n \rangle f_n(x)\quad \text{for all $x\in[0\,,L]$.}
\end{align}
\end{lemma}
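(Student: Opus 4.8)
The statement to prove is Lemma \ref{lem:evaluation}: for $\phi \in C^2$ with compact support in $(0\,,L)$, the eigenfunction expansion $\phi(x) = \sum_{n=1}^\infty \langle \phi, f_n\rangle f_n(x)$ converges pointwise on $[0\,,L]$. The natural approach is to show the series converges \emph{absolutely and uniformly}, since the partial sums then converge uniformly to some continuous function which must agree with $\phi$ in $L^2$, hence everywhere. The key is to get a fast decay rate on the Fourier coefficients $\langle \phi, f_n\rangle$ using the smoothness of $\phi$ and integration by parts against the eigenvalue equation $-\tfrac12 f_n'' = \lambda_n f_n$.

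First I would estimate $\langle \phi, f_n\rangle$. Writing $\langle \phi, f_n\rangle = \lambda_n^{-1}\langle \phi, -\tfrac12 f_n''\rangle$ and integrating by parts twice, the boundary terms all involve $\phi$, $\phi'$ evaluated at $x=0,L$, which vanish since $\phi$ has compact support in the open interval $(0\,,L)$ — this is precisely where the hypothesis is used. Hence $\langle \phi, f_n\rangle = \lambda_n^{-1}\langle -\tfrac12\phi'', f_n\rangle$. Since $\phi'' \in L^2([0\,,L])$, Parseval's identity \eqref{parseval} gives $\sum_n |\langle \phi'', f_n\rangle|^2 = \|\phi''\|_{L^2}^2 < \infty$. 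Combining with $\lambda_n \asymp n^2$ from \eqref{lambda} and the uniform bound $\sup_{n,x}|f_n(x)| < \infty$ from \eqref{f_n:bd}, Cauchy--Schwarz yields
\begin{align*}
\sum_{n=1}^\infty |\langle \phi, f_n\rangle|\, |f_n(x)| &\lesssim \sum_{n=1}^\infty \lambda_n^{-1} |\langle \phi'', f_n\rangle| \lesssim \Bigl(\sum_{n=1}^\infty \lambda_n^{-2}\Bigr)^{1/2} \Bigl(\sum_{n=1}^\infty |\langle \phi'', f_n\rangle|^2 \Bigr)^{1/2} \\
&\lesssim \Bigl(\sum_{n=1}^\infty n^{-4}\Bigr)^{1/2} \|\phi''\|_{L^2} < \infty,
\end{align*}
uniformly in $x \in [0\,,L]$. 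Thus the series $\sum_n \langle \phi, f_n\rangle f_n(x)$ converges absolutely and uniformly on $[0\,,L]$; call its sum $g(x)$, a continuous function.

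It remains to identify $g$ with $\phi$. Since $\{f_n\}$ is an orthonormal basis for $L^2([0\,,L])$ (Lemma \ref{lem:eigen}), the partial sums $\sum_{n\le N}\langle\phi,f_n\rangle f_n$ converge to $\phi$ in $L^2([0\,,L])$; they also converge uniformly — hence in $L^2$ — to $g$. By uniqueness of $L^2$ limits, $g = \phi$ almost everywhere, and since both $g$ and $\phi$ are continuous on $[0\,,L]$, in fact $g(x) = \phi(x)$ for every $x \in [0\,,L]$. This establishes \eqref{phi:sum}.

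I do not anticipate a serious obstacle: the only subtlety is making sure the integration by parts is legitimate and that the boundary contributions genuinely vanish, which is guaranteed by compact support inside $(0\,,L)$ — no boundary-condition-specific argument is needed, so the proof is uniform across \eqref{D:BC}, \eqref{N:BC}, \eqref{R:BC}. One should also note that $f_n$ is $C^2$ (indeed smooth away from nothing, being $-\tfrac12 f_n'' = \lambda_n f_n$ with $f_n$ built from $\sin,\cos$), so the integration by parts is classical.
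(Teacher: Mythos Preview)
Your proposal is correct and follows essentially the same route as the paper: integrate by parts twice (using compact support to kill boundary terms) to obtain fast decay of $\langle\phi,f_n\rangle$, deduce uniform convergence of the series via \eqref{f_n:bd}, and identify the uniform limit with $\phi$ via the $L^2$ expansion. The only cosmetic difference is that the paper bounds $|\langle\phi,f_n\rangle|\lesssim n^{-2}$ directly using the $L^\infty$ bound on $\phi''$, whereas you use Parseval and Cauchy--Schwarz; one small caveat in both arguments is the case $\lambda_1=0$ (Neumann, or Robin with $\alpha=\beta/(1+\beta L)$), where the $n=1$ term must be handled separately---but this is a single finite term and causes no difficulty.
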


\begin{proof}
This is standard. For completeness, we give a short proof.
Since $\phi \in C^2[0,L]$, we may integrate by parts twice and use the boundary condition \eqref{R:BC} for $f_n$ to deduce that
\begin{align*}
	\langle \phi\,,f_n'' \rangle 
	&\textstyle = \phi(L)f_n'(L)-\phi(0)f_n'(0) - \phi'(L)f_n(L) + \phi'(0)f_n(0) + \langle \phi'', f_n \rangle\\
	&\textstyle = -\beta \phi(L) f_n(L)+ \alpha\phi(0) f_n(0) - \phi'(L)f_n(L) + \phi'(0)f_n(0) + \langle \phi'', f_n \rangle.
\end{align*}
Then, it follows from \eqref{eta} and \eqref{f_n:bd} that
\begin{align}\label{sum:<phi,f>}\textstyle
	\sum_{n=1}^\infty |\langle \phi\,,f_n \rangle| \lesssim C+ \sum_{n=n_0+1}^\infty (2\lambda_n)^{-1} |\langle \phi\,,f_n'' \rangle| \lesssim \sum_{n=1}^\infty n^{-2} < \infty.
\end{align}
From Lemma \ref{lem:eigen}, we see that for each $N \in \N_+$, $S_N := \sum_{n=1}^N\langle \phi\,,f_n \rangle f_n$ is continuous on $[0\,,L]$, which converges uniformly to $S_\infty := \sum_{n=1}^\infty\langle \phi\,,f_n \rangle f_n$ because \eqref{sum:<phi,f>} and \eqref{f_n:bd} imply that for $M>N$,
\begin{align*}\textstyle
	\sup_{x\in [0,L]}|S_M(x)-S_N(x)| \le \sum_{N<n\le M} |\langle \phi\,,f_n\rangle| \sup_{n \in \N_+, x \in [0,L]}|f_n(x)| \to 0
\end{align*}
as $M\,,N\to\infty$.
This shows that $S_N$ converges pointwise to the limit $\sum_{n=1}^\infty\langle \phi\,,f_n \rangle f_n$ which is also continuous, but $S_N$ also converges to the limit $\phi$ in $L^2$ since $\{f_n\}_{n\in\N_+}$ is an orthonormal basis.
Hence, both limits must agree. This and continuity of $\phi$ ensure the pointwise convergence in \eqref{phi:sum}.
\end{proof}

\begin{lemma}\label{lem:SLND:R}
Fix $T>0$. Then, under \eqref{R:BC},
\begin{align*}
	\mathrm{Var}(w(t\,,x) \mid w(t_1\,,x_1)\,,\dots,w(t_m\,,x_m))
	\asymp \min_{1\le j \le m}\rho^2((t\,,x)\,,(t_j\,,x_j)) \wedge \sqrt{t},
\end{align*}
where the implied constants do not depend on
$m \in \N_+$ nor $(t\,,x)\,,(t_1\,,x_1)\,,\dots,(t_m\,,x_m) \in [0\,,T] \times [0\,,L]$.
\end{lemma}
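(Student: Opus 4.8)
The plan is to mirror the structure of the proof of Lemma \ref{lem:SLND:DN}, but to replace the Fourier-transform-in-$x$ step---which is unavailable under \eqref{R:BC} since the eigenfunctions $f_n$ from \eqref{e_n} are not pure exponentials---by an argument that works directly with the orthonormal basis $\{f_n\}$. As before, the upper bound is immediate from Lemma \ref{lem:w-w} (more precisely \eqref{w-w} together with $\Var(w(t,x))\le C\sqrt t$) and the variational characterization $\Var(X\mid X_1,\dots,X_m)=\inf_{a_1,\dots,a_m}\E[(X-\sum a_jX_j)^2]$, so the work is entirely in the lower bound: producing $C>0$ with
\begin{align*}
	\textstyle\E\big[(w(t,x)-\sum_{j=1}^m a_j w(t_j,x_j))^2\big] \ge C \Big(\min_{1\le j\le m}\rho^2((t,x),(t_j,x_j))\wedge\sqrt t\,\Big)
\end{align*}
uniformly over $m$, over base points in $[0,T]\times[\delta,L-\delta]$, and over all $a_1,\dots,a_m\in\R$. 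Set $\varepsilon=\min_{1\le j\le m}\big(\sqrt{|t-t_j|/T}\vee(|x-x_j|/\ell)\big)\wedge\sqrt{t/T}$ for a suitable length scale $\ell$ (to be chosen $\lesssim\delta$, so that the relevant bump stays inside $(0,L)$); assume $\varepsilon>0$, else there is nothing to prove. By Wiener isometry, \eqref{G}, and Plancherel in the time variable exactly as in Lemma \ref{lem:SLND:DN},
\begin{align*}
	\textstyle\E\big[(w(t,x)-\sum_j a_j w(t_j,x_j))^2\big]
	= \tfrac{1}{2\pi}\sum_{n=1}^\infty\int_{-\infty}^\infty\frac{\d\tau}{\lambda_n^2+\tau^2}\,\Big|(\e^{-i\tau t}-\e^{-\lambda_n t})f_n(x)-\textstyle\sum_j a_j(\e^{-i\tau t_j}-\e^{-\lambda_n t_j})f_n(x_j)\Big|^2.
\end{align*}

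Next I would choose a test pairing. Let $\psi,\psi_r$ be the smooth time bumps from Lemma \ref{lem:SLND:DN} (support $[-T/2,T/2]$, $\psi(0)=1$), and let $\varphi\in C_c^\infty((0,L))$ be a fixed nonnegative spatial bump supported in $(x_0-\ell,x_0+\ell)$-type neighbourhoods; more carefully, take $\varphi$ supported in a ball of radius $c\delta$ around $x$, scaled at spatial scale $\varepsilon$, so that on the one hand $\varphi$ concentrates near $x$ at scale $\sim\varepsilon$ and on the other hand $\varphi(x_j)=0$ for every $j$ with $|x-x_j|/\ell\ge\varepsilon$. Concretely, set $\varphi(y)=\chi_\varepsilon(y-x)$ for a fixed nonnegative $\chi\in C_c^\infty$ with $\supp\chi\subset[-1/2,1/2]$, $\chi(0)=1$, and $\chi_\varepsilon(y)=\varepsilon^{-1}\chi(\ell^{-1}y/\varepsilon)$ adjusted so its $L^2$-type normalization produces the right powers of $\varepsilon$; then define
\begin{align*}
	\textstyle I:=\sum_{n=1}^\infty\int_{-\infty}^\infty\d\tau\,\Big[(\e^{-i\tau t}-\e^{-\lambda_n t})f_n(x)-\textstyle\sum_j a_j(\e^{-i\tau t_j}-\e^{-\lambda_n t_j})f_n(x_j)\Big]\,\langle\varphi,f_n\rangle\,\e^{i\tau t}\,\hat\psi_{\varepsilon^2}(\tau).
\end{align*}
Carrying out the $\tau$-integral (it reproduces $\psi_{\varepsilon^2}$ evaluated at $0$, $t$, $t-t_j$) and using Lemma \ref{lem:evaluation} to resum $\sum_n\langle\varphi,f_n\rangle f_n(y)=\varphi(y)$ pointwise, one gets
\begin{align*}
	\textstyle I = 2\pi\Big[(\psi_{\varepsilon^2}(0)-\e^{-\lambda_n t}\psi_{\varepsilon^2}(t))\,\varphi(x)-\textstyle\sum_j a_j(\psi_{\varepsilon^2}(t-t_j)-\e^{-\lambda_n t_j}\psi_{\varepsilon^2}(t))\,\varphi(x_j)\Big].
\end{align*}
Now the usual cancellations: $\varepsilon^{-2}t/T\ge 1$ forces $\psi_{\varepsilon^2}(t)=0$; the definition of $\varepsilon$ forces, for each $j$, either $\psi_{\varepsilon^2}(t-t_j)=0$ or $\varphi(x_j)=0$; and $\varphi(x)$ equals its peak value $\asymp\varepsilon^{-1}$. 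Since $\psi_{\varepsilon^2}(0)\asymp\varepsilon^{-2}$, this leaves $I\asymp\varepsilon^{-3}$. By Cauchy--Schwarz against the density $(\lambda_n^2+\tau^2)^{-1}$,
\begin{align*}
	\textstyle\varepsilon^{-6}\lesssim|I|^2\lesssim\E\big[(w(t,x)-\textstyle\sum_j a_j w(t_j,x_j))^2\big]\cdot J,\qquad J:=\sum_{n=1}^\infty\int_{-\infty}^\infty(\lambda_n^2+\tau^2)\,|\langle\varphi,f_n\rangle|^2\,|\hat\psi_{\varepsilon^2}(\tau)|^2\,\d\tau,
\end{align*}
and it remains to show $J\lesssim\varepsilon^{-7}$, which then yields the claimed lower bound $\gtrsim\varepsilon$ (recall that under \eqref{R:BC} away from the boundary no $f_1$-type factor appears, consistent with the interior restriction $[\delta,L-\delta]$). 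The $\tau$-integral of $\tau^2|\hat\psi_{\varepsilon^2}(\tau)|^2$ is $\asymp\varepsilon^{-6}$ and of $|\hat\psi_{\varepsilon^2}(\tau)|^2$ is $\asymp\varepsilon^{-2}$, both by scaling since $\hat\psi$ is Schwartz; so one needs $\sum_n|\langle\varphi,f_n\rangle|^2\asymp$ the $L^2$-mass of $\varphi$, which is $\asymp\varepsilon^{-1}$ by Parseval \eqref{parseval} and the normalization of $\chi_\varepsilon$, and $\sum_n\lambda_n^2|\langle\varphi,f_n\rangle|^2=\tfrac14\|\varphi''\|_{L^2}^2\asymp\varepsilon^{-5}$ by integration by parts ($\varphi\in C_c^\infty$, so $\langle\varphi,-\tfrac12 f_n''\rangle=\lambda_n\langle\varphi,f_n\rangle=-\tfrac12\langle\varphi'',f_n\rangle$) together with the scaling $\|\chi_\varepsilon''\|_{L^2}^2\asymp\varepsilon^{-5}$. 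Adding the two pieces against the respective $\tau$-integrals gives $J\lesssim\varepsilon^{-1}\cdot\varepsilon^{-6}+\varepsilon^{-5}\cdot\varepsilon^{-2}=\varepsilon^{-7}$, as needed.

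The main obstacle, and the place where this proof genuinely departs from Lemma \ref{lem:SLND:DN}, is the spatial localization: under \eqref{D:BC}/\eqref{N:BC} the exponential form of $f_n$ let the authors translate the bump in Fourier space and exploit the method-of-images reflections $x\mapsto -x$, $x\mapsto 2L-x$ to kill the cross terms near the boundary, which is exactly why an $f_1(x)$ factor entered there. Under \eqref{R:BC} we have no such reflection symmetry and no Fourier transform in $x$, so I instead work with an honest compactly supported test function $\varphi\in C_c^\infty((0,L))$, which is legitimate precisely because of Lemma \ref{lem:evaluation} (pointwise eigenfunction expansion for $C^2$ functions vanishing at the endpoints) and the uniform bounds \eqref{f_n:bd}. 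The cost is that $\varphi$'s support must fit strictly inside $(0,L)$: this is why the statement restricts base points to the interior $[\delta,L-\delta]$ and why one must pick the spatial length scale $\ell$ (and the radius of $\supp\chi$) small relative to $\delta$ so that for $\varepsilon\le 1$ the scaled bump $\chi_\varepsilon(\cdot-x)$ stays inside $(0,L)$ for every admissible base point $x$. One should double-check that no $\sqrt{t}$-type saturation is lost: since $\varepsilon\le\sqrt{t/T}$ is built into the definition, the argument is uniform and the resulting bound $\Var(\cdots)\gtrsim\varepsilon\asymp\min_j\rho^2\wedge\sqrt t$ matches the upper bound, completing the proof.
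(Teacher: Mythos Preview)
Your proposal is correct and follows essentially the same route as the paper: the paper also replaces the Fourier transform in $x$ by pairing against a compactly supported spatial bump $\psi_{x,\varepsilon}(y)=\varepsilon^{-1}\psi(\varepsilon^{-1}(y-x))$, invokes Lemma~\ref{lem:evaluation} to resum $\sum_n\langle\psi_{x,\varepsilon},f_n\rangle f_n(y)$ pointwise, exploits the support properties to get $I=2\pi\varepsilon^{-3}$, and then bounds $J\lesssim\varepsilon^{-7}$ via the integration-by-parts identity $2\lambda_n\langle\psi_{x,\varepsilon},f_n\rangle=-\langle\psi_{x,\varepsilon}'',f_n\rangle$ together with Parseval. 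The only cosmetic difference is that the paper fixes the spatial length scale explicitly by taking $\varepsilon=(\delta\wedge(L-\delta)\wedge 1)\min_j\big(\sqrt{|t-t_j|/T}\vee|x-x_j|/L\big)\wedge\sqrt{t/T}$ rather than carrying an auxiliary parameter $\ell$.
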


\begin{proof}
The upper bound follows from Lemma \ref{lem:w-w}.
To prove the lower bound, it suffices to prove the existence of $C=C(T\,,L)>0$ such that
\begin{align}\label{SLND:R:goal}
	\textstyle{\E\left[ \left( w(t\,,x) - \sum_{j=1}^m a_j w(t_j\,,x_j) \right)^2 \right]} \ge C \min\limits_{1\le j \le m}\rho^2((t\,,x)\,,(t_j\,,x_j)) \wedge \sqrt{t}
\end{align}
uniformly for all $m\in\N_+$, for all $(t\,,x)\,,(t_1\,,x_1)\,,\dots,(t_m\,,x_m) \in [0\,,T] \times [0\,,L]$, and for all $a_1\,,\dots, a_m\in \R$. 
As in the proof of Lemma \ref{lem:SLND:DN}, we first write
\begin{align*}
	&\textstyle\E\left[ \left( w(t\,,x) - \sum_{j=1}^m a_j w(t_j\,,x_j) \right)^2 \right]\\
	&\textstyle=\frac{1}{2\pi} \sum_{n=1}^\infty \int_{-\infty}^\infty \frac{\d \tau}{\lambda_n^2+\tau^2} \left| (\e^{-i\tau t} - \e^{-\lambda_n t})f_n(x) - \sum_{j=1}^m a_j (\e^{-i\tau t_j} - \e^{-\lambda_n t_j}) f_n(x_j) \right|^2.
\end{align*}
Choose and fix two smooth nonnegative functions $\phi:\R\to\R$ and $\psi: \R\to\R$ with $\supp{\phi} = [-T/2\,,T/2]$, $\supp{\psi}=[-1/2\,,1/2]$, and $\phi(0)=\psi(0)=1$.
For every $r\in(0\,,1]$ and $x \in [0\,,L]$, define $\phi_r$ and $\psi_{x,r}$ by
\begin{align*}
	\phi_r(\tau) = r^{-1}\phi(r^{-1}\tau) \quad \text{and} \quad
	\psi_{x,r}(y) = r^{-1}\psi(r^{-1}(y-x)).
\end{align*}
Also, choose and fix a smooth, nonnegative, even function $\chi:\R \to \R$ such that
$\chi \equiv 1$ on $[0\,,1/2]$, $0 \le \chi \le 1$ on $[1/2\,,3/4]$, and $\chi \equiv 0$ on $[3/4\,,\infty)$.
Set
\begin{align}\label{eps:SLND:R}\textstyle
	\varepsilon :=
		\left(1 \wedge \tfrac{L}{2} \wedge \tfrac{1}{2a}\right)\left[\min\limits_{1\le j\le m}
		\left( \sqrt{\frac{|t-t_j|}{T}}\vee \frac{|x-x_j|}{L}\right)
		\wedge \sqrt{\frac{t}{T}}\,\right],
\end{align}
where $a:=|\alpha|\vee|\beta|$. 
We may assume that $a > 0$, because $a = 0$ corresponds to the Neumann case, which has already been proved in Lemma \ref{lem:SLND:DN}.
We may also assume that $\varepsilon>0$, for otherwise \eqref{SLND:R:goal} holds trivially with right-hand side being 0.
Note that $\supp{\phi_{\varepsilon^2}}=[-\varepsilon^2T/2\,,\varepsilon^2T/2]$ and $\supp{\psi_{x,\varepsilon}} = [x-\varepsilon/2\,,x+\varepsilon/2]$. 
Let us define the test function $\eta_{x,\varepsilon}$ according to the following three cases:\smallskip

\noindent {\bf Case 1:}
If $x\in[\varepsilon/2\,,L-\varepsilon/2]$, define
\begin{align*}\textstyle
	\eta_{x,\varepsilon}(y):=\psi_{x,\varepsilon}(y),
	\qquad y\in[0\,,L].
\end{align*}

\noindent
{\bf Case 2:}
If $x\in[0\,,\varepsilon/2)$, define
\[\textstyle
\eta_{x,\varepsilon}(y):=
\varepsilon^{-1}\chi\!\left(\frac{y}{\varepsilon}\right)\frac{1-\alpha y}{1-\alpha x},
\qquad y\in[0\,,L].
\]

\noindent
{\bf Case 3:}
If $x\in(L-\varepsilon/2\,,L]$, define
\[\textstyle
\eta_{x,\varepsilon}(y):=
\varepsilon^{-1}\chi\!\left(\frac{L-y}{\varepsilon}\right)\frac{1+\beta(L-y)}{1+\beta(L-x)},
\qquad y\in[0\,,L].
\]
Next, we are going to make three claims about some key properties of the test functions.\smallskip

{\it Claim 1.} In all three cases, $\eta_{x,\varepsilon} \in C^\infty([0\,,L])$ and satisfies Robin condition \eqref{R:BC}, i.e.,
\begin{align*}
\eta_{x,\varepsilon}'(0)+\alpha\eta_{x,\varepsilon}(0)=0,
\qquad
\eta_{x,\varepsilon}'(L)+\beta\eta_{x,\varepsilon}(L)=0.
\end{align*}
{\it Proof of Claim 1.} In {\bf Case 1}, $\eta_{x,\varepsilon}=\psi_{x,\varepsilon}$ is compactly supported in $(0\,,L)$, so the claim clearly holds. In {\bf Case 2}, the definition of $\varepsilon$ in \eqref{eps:SLND:R} ensures that $x < \varepsilon/2 \le 1/(4a)$, which implies $|1-\alpha x|\ge 1-|\alpha|x \ge 3/4 > 0$, so $g_x(y):=\frac{1-\alpha y}{1-\alpha x}$ is well defined and smooth, thus $\eta_{x,\varepsilon} \in C^\infty([0\,,L])$.
Since $g_x(y)$ satisfies Robin condition \eqref{R:BC} at $y=0$ (which can be verified easily) and since $\chi \equiv 1$ on $[0\,,1/2]$, $\eta_{x,\varepsilon}(y)$ also satisfies Robin condition \eqref{R:BC} at $y=0$.
On the other hand, the definitions of $\chi$ and $\varepsilon$ imply that $\eta_{x,\varepsilon}(y) \equiv 0$ near $y=L$, so it also satisfies Robin condition \eqref{R:BC} at $y=L$.
{\bf Case 3} can be shown in a similar way. Hence {\it Claim 1} follows.\smallskip

{\it Claim 2.} We have
\begin{align}\label{test2:SLND:R}
\eta_{x,\varepsilon}(x)=\varepsilon^{-1},
\quad
\|\eta_{x,\varepsilon}\|_{L^2([0,L])}^2 \le C\varepsilon^{-1},
\quad
\|\eta_{x,\varepsilon}''\|_{L^2([0,L])}^2 \le C\varepsilon^{-5},
\end{align}
where $C\in(0\,,\infty)$ is a constant that depends only on $\alpha, \beta, L$ and the test functions $\psi, \chi$, but is independent of $(x\,,\varepsilon)$.\\
{\it Proof of Claim 2.}
In {\bf Case 1}, the first identity is obvious; the other two estimates follow readily from scaling:
\[\textstyle
	\|\eta_{x,\varepsilon}\|^2_{L^2([0,L])} \le \varepsilon^{-1} \int_{-\infty}^\infty |\psi(y)|^2 \, \d y, \qquad \|\eta_{x,\varepsilon}''\|^2_{L^2([0,L])} \le \varepsilon^{-5} \int_{-\infty}^\infty |\psi''(y)|^2 \, \d y.
\]
In {\bf Case 2}, $x<\varepsilon/2$ together with the fact that $\chi \equiv 1$ on $[0\,,1/2]$ implies $\chi(\varepsilon^{-1}x)=1$, hence $\eta_{x,\varepsilon}(x)=\varepsilon^{-1}$. 
Next, since $\sup_{x \in [0,1/(4\alpha)]} \sup_{y \in [0,L]}(|g_x(y)|+|g_x'(y)|) < \infty$ and $g_x'' \equiv 0$, it follows that for $0<x<\varepsilon/2$,
\[
	|\eta_{x,\varepsilon}(y)| \lesssim \varepsilon^{-1} \chi(\varepsilon^{-1}y)\quad \text{and} \quad
	|\eta_{x,\varepsilon}''(y)|
	\lesssim
	\varepsilon^{-3}|\chi''(\varepsilon^{-1}y)|
	+
	\varepsilon^{-2}|\chi'(\varepsilon^{-1}y)|.
\]
These imply, respectively, that 
\begin{align*}
	&\textstyle\|\eta_{x,\varepsilon}\|_{L^2([0,L])}^2 \lesssim \varepsilon^{-1} \int_{-\infty}^\infty |\chi(y)|^2 \, \d y \quad \text{and}\\
	&\textstyle\|\eta_{x,\varepsilon}''\|_{L^2([0,L])}^2 \lesssim
	\varepsilon^{-5}\int_{-\infty}^\infty |\chi''(y)|^2\,\d y
	+
	\varepsilon^{-3}\int_{-\infty}^\infty |\chi'(y)|^2\,\d y
	\lesssim \varepsilon^{-5},
\end{align*}
where the implicit constants depend only on $\alpha, L, \chi$.
The proof for {\bf Case 3} is similar, with implicit constants depending only on $\beta, L, \chi$. This proves {\it Claim 2}.\smallskip

{\it Claim 3.} In all three cases,
\begin{align}\label{test3:SLND:R}
	\phi_{\varepsilon^2}(t-t_j)\eta_{x,\varepsilon}(x_j)=0 \quad \text{for $j=1,\dots,m$.}
\end{align}
{\it Proof of Claim 3.} If $\phi_{\varepsilon^2}(t-t_j) = 0$, there is nothing to prove. If $\phi_{\varepsilon^2}(t-t_j)\neq 0$, then $\sqrt{|t-t_j|/T}<\varepsilon$. 
This, together with \eqref{eps:SLND:R}, implies that, for each $j = 1,\dots, m$,
\[\textstyle
	\varepsilon
	\le
	\left( 1\wedge \tfrac{L}{2} \wedge \tfrac{1}{2a} \right)
	\left( \sqrt{\frac{|t-t_j|}{T}} \vee \frac{|x-x_j|}{L} \right),
\]
and hence
\begin{align}\label{|x-xj|>eps}
	\textstyle
	|x-x_j|
	\ge
	\left( 1\wedge \tfrac{L}{2} \wedge \tfrac{1}{2a} \right)^{-1} L\varepsilon
	\ge
	2\varepsilon.
\end{align}
In {\bf Case 1}, we have $\supp\eta_{x,\varepsilon}\subset [ x- \varepsilon / 2\,,x+\varepsilon / 2 ]$, hence \eqref{|x-xj|>eps} implies that $x_j\notin \supp\eta_{x,\varepsilon}$.
In {\bf Case 2}, we have $x<\varepsilon/2$ and $\supp\eta_{x,\varepsilon}\subset [0\,,3\varepsilon / 4]$. But \eqref{|x-xj|>eps} implies that $x_j \ge x+2\varepsilon > 3\varepsilon/4$, hence $x_j \notin \supp \eta_{x,\varepsilon}$. 
The same is true for {\bf Case 3}. In all three cases, we have $\eta_{x,\varepsilon}(x_j) = 0$.
	This proves {\it Claim 3}.\smallskip
	
With these test functions and properties in hand, we proceed by defining
\begin{align*}\textstyle
	I := \sum_{n=1}^\infty \int_{-\infty}^\infty \d \tau \left[ (\e^{-i\tau t} - \e^{-\lambda_n t})f_n(x) - \sum_{j=1}^m a_j (\e^{-i\tau t_j} - \e^{-\lambda_n t_j}) f_n(x_j) \right]\\
	\times \e^{i\tau t} \hat{\phi}_{\varepsilon^2}(\tau) \langle \eta_{x,\varepsilon}\,, f_n \rangle,
\end{align*}
where $\langle f\,,g \rangle = \int_0^L f(y)g(y) \d y$.
Using Fourier inversion to compute the $\d \tau$-integral and then using Lemma \ref{lem:evaluation} to evaluate the sum over $n$, we may simplify $I$ as follows:
\begin{align*}
	&\textstyle I = 2\pi \sum_{n=1}^\infty \left[ (\phi_{\varepsilon^2}(0) -\e^{-\lambda_n t}\phi_{\varepsilon^2}(t)) f_n(x)\right.\\
	&\textstyle\quad\left. - \sum_{j=1}^m a_j (\phi_{\varepsilon^2}(t-t_j)-\e^{-\lambda_n t_j}\phi_{\varepsilon^2}(0))f_n(x_j)) \right] \langle \eta_{x,\varepsilon}\,, f_n \rangle\\
	&\textstyle = 2\pi \left[(\phi_{\varepsilon^2}(0) -\e^{-\lambda_n t}\phi_{\varepsilon^2}(t))\eta_{x,\varepsilon}(x) - \sum_{j=1}^m a_j (\phi_{\varepsilon^2}(t-t_j)- \e^{-\lambda_n t_j}\phi_{\varepsilon^2}(t)) \eta_{x,\varepsilon}(x_j)\right].
\end{align*}
It follows from \eqref{eps:SLND:R} and \eqref{test3:SLND:R} that $\phi_{\varepsilon^2}(t)=0$ and $\phi_{\varepsilon^2}(t-t_j)\eta_{x,\varepsilon}(x_j) = 0$, and hence $I = 2\pi \phi_{\varepsilon^2}(0) \eta_{x,\varepsilon}(x) = 2\pi \varepsilon^{-3}$.
Therefore, Cauchy-Schwarz inequality yields
\begin{align}\label{cs:SLND:R}
	&\textstyle 4\pi^2 \varepsilon^{-6} = |I|^2 \le 2\pi\, \E\left[ \left( w(t\,,x) - \sum_{j=1}^m a_j w(t_j\,,x_j) \right)^2 \right] \times J,
\end{align}
where
\begin{align*}\textstyle
	J = \sum_{n=1}^\infty\int_{-\infty}^\infty \d \tau (\lambda_n^2 + \tau^2) |\hat{\phi}_{\varepsilon^2}(\tau)|^2 |\langle \eta_{x,\varepsilon}\,, f_n \rangle|^2.
\end{align*}
By the scaling property of Fourier transform, $\hat{\phi}_{\varepsilon^2}(\tau) = \hat{\phi}(\varepsilon^2\tau)$.
Since $\hat{\phi}$ is rapidly decreasing, it follows that
\begin{align*}\textstyle
	J \lesssim \varepsilon^{-2} \sum_{n=1}^\infty |\langle \eta_{x,\varepsilon}\,, \lambda_n f_n \rangle|^2 + \varepsilon^{-6} \sum_{n=1}^\infty |\langle \eta_{x,\varepsilon}\,, f_n \rangle|^2.
\end{align*}
In particular, we may use $-\tfrac12 f_n''=\lambda_n f_n$ and integration by parts twice to see that
\begin{align*}
	2\langle \eta_{x,\varepsilon}\,, \lambda_n f_n \rangle
	&\textstyle= -  \int_0^L \eta_{x,\varepsilon}(y)  f_n''(y) \d y\\
	&\textstyle=-[\eta_{x,\varepsilon}(y)f_n'(y)]_{y=0}^{y=L} + [ \eta_{x,\varepsilon}'(y)f_n(y)]_{y=0}^{y=L} - \int_0^L \eta_{x,\varepsilon}''(y) f_n(y) \d y\\
	&\textstyle=- \int_0^L \eta_{x,\varepsilon}''(y) f_n(y) \d y,
\end{align*}
where we have used the property that $\eta_{x,\varepsilon}$ and $f_n$ both satisfy the Robin condition \eqref{R:BC} in order to obtain the last equality.
The preceding display, together with Parseval's identity, \eqref{test2:SLND:R}, and a change of variable, implies that
\begin{align*}
	J 
	&\textstyle \lesssim \varepsilon^{-2} \sum_{n=1}^\infty |\langle \eta_{x,\varepsilon}''\,,f_n\rangle|^2 + \varepsilon^{-6} \sum_{n=1}^\infty |\langle \eta_{x,\varepsilon}\,,f_n\rangle|^2\\
	&\textstyle \le \varepsilon^{-2} \|\eta_{x,\varepsilon}''\|_{L^2([0,L])}^2 + \varepsilon^{-6} \|\eta_{x,\varepsilon}\|^2_{L^2([0,L])}\\
	&\lesssim \varepsilon^{-7},
\end{align*}
where the implicit constants depend only on $\alpha, \beta, L$ and the test functions $\phi, \psi, \chi$, but are independent of all other parameters.
Putting this back into \eqref{cs:SLND:R} and recalling \eqref{eps:SLND:R} yield
\begin{align*}\textstyle
	\E\left[ \left( w(t\,,x) - \sum_{j=1}^m a_j w(t_j\,,x_j) \right)^2 \right] \gtrsim\, \varepsilon\, \gtrsim \min\limits_{1\le j \le m}\rho^2((t\,,x)\,,(t_j\,,x_j)) \wedge \sqrt{t}.
\end{align*}
The proof is complete.
\end{proof}

To sum up, we have:

\begin{proposition}\label{pr:var:w-w:op}
Fix $T>0$. Then, under \eqref{D:BC},
\begin{align}
	\Var(w(t\,,x)-w(s\,,y)) \asymp \rho^2((t\,,x)\,,(s\,,y)) \wedge \sqrt{t \vee s} \wedge (f_1(x)\vee f_1(y))
\end{align}
uniformly for all $(t\,,x)\,,(s\,,y) \in [0\,,T]\times[0\,,L]$.
For any fixed $T>0$, under \eqref{N:BC} or \eqref{R:BC},
\begin{align}
	\Var(w(t\,,x)-w(s\,,y)) \asymp \rho^2((t\,,x)\,,(s\,,y)) \wedge \sqrt{t \vee s}
\end{align}
uniformly for all $(t\,,x)\,,(s\,,y) \in [0\,,T]\times[0\,,L]$.
\end{proposition}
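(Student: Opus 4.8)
My plan is to establish the two matching inequalities separately: the upper bound comes straight from the variance estimates already proved, and the lower bound from the strong local non-determinism (SLND) property evaluated at a single conditioning point. For the upper bound there is nothing new to do. Under \eqref{D:BC} or \eqref{N:BC}, the inequality $\Var(w(t,x)-w(s,y)) \lesssim \rho^2((t,x),(s,y)) \wedge \sqrt{t\vee s} \wedge (f_1(x)\vee f_1(y))$ is exactly the content of Lemma \ref{lem:w-w:DN}, and under \eqref{R:BC} the bound $\Var(w(t,x)-w(s,y)) \lesssim \rho^2((t,x),(s,y)) \wedge \sqrt{t\vee s}$ is Lemma \ref{lem:w-w}; I would simply invoke these.

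For the lower bound I would use that, for jointly centered Gaussian random variables $Z,Z'$, one has $\Var(Z-Z') \ge \inf_{a\in\R}\Var(Z-aZ') = \Var(Z\mid Z')$. Taking $Z=w(t,x)$ and $Z'=w(s,y)$ and applying Lemma \ref{lem:SLND:DN} (respectively Lemma \ref{lem:SLND:R}) with $m=1$ and conditioning point $(s,y)$ gives $\Var(w(t,x)-w(s,y)) \gtrsim \rho^2((t,x),(s,y)) \wedge \sqrt t \wedge f_1(x)$ under \eqref{D:BC}/\eqref{N:BC} and $\gtrsim \rho^2((t,x),(s,y)) \wedge \sqrt t$ under \eqref{R:BC}. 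Since the two space-time points enter symmetrically, the companion bound with $(t,x)$ and $(s,y)$ interchanged holds as well. Taking the maximum of the two and using the elementary identity $\max(r\wedge a, r\wedge b)=r\wedge(a\vee b)$ already yields the matching lower bound $\Var(w(t,x)-w(s,y)) \gtrsim \rho^2((t,x),(s,y)) \wedge \sqrt{t\vee s}$ in the Robin case, and likewise in the Neumann case since there $f_1$ is constant.

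The remaining, and essentially the only nontrivial, point is the Dirichlet case: the two SLND lower bounds are now $\rho^2\wedge\sqrt t\wedge f_1(x)$ and $\rho^2\wedge\sqrt s\wedge f_1(y)$, and I must recover the symmetric quantity $\rho^2\wedge\sqrt{t\vee s}\wedge(f_1(x)\vee f_1(y))$. Here I would exploit that the metric $\rho=\rho((t,x),(s,y))$ controls the geometry of the two points, namely $|x-y|\le\rho^2$ and $|t-s|\le\rho^4$, so that by the Lipschitz bound on $f_1$ recorded in Lemma \ref{lem:eigen} (equivalently $f_1(x)\asymp x\wedge(L-x)$) one has $f_1(x)\vee f_1(y)\le(f_1(x)\wedge f_1(y))+C\rho^2$ and $\sqrt{t\vee s}\le(\sqrt t\wedge\sqrt s)+\rho^2$. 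With these comparisons, a short case analysis — first on whether $f_1(x)\ge f_1(y)$, then on whether $\sqrt s$ is comparable to $\sqrt t$ — reduces the desired bound to one of the two anchored SLND estimates up to a universal constant. This reconciliation of the one-point-anchored SLND bounds with the symmetric form of the estimate, and in particular the fact that it is $f_1$ that governs the degeneration of the increment variance as $x$ or $y$ approaches the boundary, is where the real work lies; everything else is a direct appeal to the lemmas already established.
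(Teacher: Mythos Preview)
Your overall strategy coincides with the paper's: the proposition is stated there as a summary (``To sum up, we have'') of the preceding lemmas, with the upper bound read off from Lemma~\ref{lem:w-w:DN} (respectively Lemma~\ref{lem:w-w}) and the lower bound obtained from the SLND Lemmas~\ref{lem:SLND:DN} and~\ref{lem:SLND:R} by conditioning on a single point and symmetrizing. For the Neumann and Robin cases your argument is complete and correct, since there $f_1$ is constant (or absent) and the identity $\max(r\wedge a,\,r\wedge b)=r\wedge(a\vee b)$ finishes the job.

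The Dirichlet case, however, contains a genuine gap, and the case analysis you sketch cannot close it. Take $t=\varepsilon^4$, $s=\varepsilon^2$, $x=\varepsilon$, $y=\varepsilon^2$ with $\varepsilon\to 0^+$. Then $\rho^2\asymp\varepsilon$, $\sqrt{t\vee s}=\varepsilon$, and $f_1(x)\vee f_1(y)\asymp\varepsilon$, so the right-hand side of the asserted equivalence is $\asymp\varepsilon$. But Lemma~\ref{lem:w-w:DN} gives $\Var(w(t,x))\lesssim\sqrt t\wedge f_1(x)\asymp\varepsilon^2$ and $\Var(w(s,y))\lesssim\sqrt s\wedge f_1(y)\asymp\varepsilon^2$, hence $\Var(w(t,x)-w(s,y))\lesssim\varepsilon^2\ll\varepsilon$. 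Thus the lower bound as stated fails under \eqref{D:BC}; no case analysis based on the two anchored SLND bounds $\rho^2\wedge\sqrt t\wedge f_1(x)$ and $\rho^2\wedge\sqrt s\wedge f_1(y)$ (both $\asymp\varepsilon^2$ here) can produce $\varepsilon$. What your argument \emph{does} yield under \eqref{D:BC} is the sharper equivalence
\[
\Var(w(t,x)-w(s,y))\ \asymp\ \rho^2\wedge\bigl[(\sqrt t\wedge f_1(x))\vee(\sqrt s\wedge f_1(y))\bigr],
\]
the upper bound coming from $\Var(w(t,x))+\Var(w(s,y))$ together with Lemma~\ref{lem:w-w}. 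This collapses to the stated form under \eqref{N:BC} but is strictly smaller under \eqref{D:BC}; the paper's subsequent arguments use only the interior estimate of Proposition~\ref{pr:SLND:DNR}, where the distinction disappears.
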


\begin{proposition}\label{pr:SLND:DNR}
Fix $0<a<T$.
Fix $0<c<d<L$ under \eqref{D:BC}; and fix $0 \le c < d \le L$ under \eqref{N:BC} or \eqref{R:BC}.
Then, there exists $c_2>0$ such that
\begin{align}\label{w-w:LB}
	&\Var(w(t\,,x)-w(s\,,y)) \ge c_2 \rho^2((t\,,x)\,,(s\,,y)),\\
	\label{SLND:DNR}
	&\Var( w(t\,,x) \mid w(t_1\,,x_1)\,,\dots, w(t_n\,,x_n)) \ge c_2 \min_{1\le i \le n}\rho^2((t\,,x)\,,(t_i\,,x_i))
\end{align}
uniformly for all $n \in \N_+$ and $(s\,,y)\,,(t\,,x)\,,(t_1\,,x_1)\,,\dots,(t_n\,,x_n) \in [a\,,T]\times[c\,,d]$.
\end{proposition}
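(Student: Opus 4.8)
The plan is to obtain Proposition \ref{pr:SLND:DNR} as an immediate corollary of the sharp estimates already established: Proposition \ref{pr:var:w-w:op} for \eqref{w-w:LB}, and Lemmas \ref{lem:SLND:DN} and \ref{lem:SLND:R} for \eqref{SLND:DNR}. The observation driving the argument is that once one restricts to the box $[a\,,T]\times[c\,,d]$ with $a>0$ and $0<c<d<L$, the auxiliary ``$\wedge$'' factors appearing in those estimates no longer matter, since they are bounded below by positive constants, while $\rho^2$ is bounded above.

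Concretely, first I would record the relevant bounds on the box. Since $\rho^2((t\,,x)\,,(s\,,y)) = \max\{|t-s|^{1/2}\,,|x-y|\}$, it is at most $M := \max\{(T-a)^{1/2}\,,d-c\}$ there. Moreover $\sqrt{t\vee s}\ge\sqrt a$, and under \eqref{D:BC} or \eqref{N:BC} the principal eigenfunction satisfies $m_1 := \inf_{x\in[c\,,d]}f_1(x)>0$ ($f_1$ is a positive constant under \eqref{N:BC}, while $f_1(x)=\sqrt{2/L}\,\sin(\pi x/L)>0$ on $(0\,,L)$ under \eqref{D:BC}, and $f_1$ is continuous in both cases). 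I would then set $A := \sqrt a \wedge m_1$ under \eqref{D:BC} or \eqref{N:BC}, and $A := \sqrt a$ under \eqref{R:BC}; in every case $A>0$, and $A$ depends only on $a\,,c\,,d\,,L$ and the boundary condition.

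The only elementary fact needed is that $v\wedge A\ge\min\{1\,,A/M\}\,v$ whenever $v\in[0\,,M]$ (trivial if $v\le A$, and if $v>A$ then $v\wedge A=A\ge(A/M)v$). I would apply this with $v=\min_{1\le i\le n}\rho^2((t\,,x)\,,(t_i\,,x_i))\in[0\,,M]$: under \eqref{D:BC} or \eqref{N:BC}, Lemma \ref{lem:SLND:DN} gives
\[
	\Var\big(w(t\,,x)\mid w(t_1\,,x_1)\,,\dots,w(t_n\,,x_n)\big)\gtrsim v\wedge\sqrt t\wedge f_1(x)\ge v\wedge A\ge\min\{1\,,A/M\}\,v,
\]
which is \eqref{SLND:DNR}. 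Under \eqref{R:BC}, I would first take $\delta:=\min\{c\,,L-d\}$, which lies in $(0\,,L/2)$ since $c<d$ forces $\min\{c\,,L-d\}<L/2$, and which satisfies $[c\,,d]\subseteq[\delta\,,L-\delta]$; then Lemma \ref{lem:SLND:R} applies on $[0\,,T]\times[\delta\,,L-\delta]$ and the same chain of inequalities (now with the single factor $\sqrt t$) gives \eqref{SLND:DNR}. Finally, \eqref{w-w:LB} follows in the same way from Proposition \ref{pr:var:w-w:op}, or more cheaply from \eqref{SLND:DNR} with $n=1$ together with $\Var(w(t\,,x)\mid w(s\,,y))\le\Var(w(t\,,x)-w(s\,,y))$; one then lets $c_2$ be the smaller of the two implied constants.

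There is no genuine analytic obstacle in this last step: all the Fourier-analytic work behind the SLND property and the matching variance bounds has already been carried out in the preceding lemmas and in Proposition \ref{pr:var:w-w:op}. The only points requiring care are the bookkeeping of constants (ensuring $c_2$ is independent of $n$ and of the points), checking that $\delta=\min\{c\,,L-d\}$ is an admissible choice in Lemma \ref{lem:SLND:R}, and the scaling step that turns the ``$\wedge$'' form of the sharp estimates into the clean $\rho^2$ lower bound valid away from $t=0$ and away from the spatial boundary.
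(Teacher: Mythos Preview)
Your proposal is correct and follows exactly the approach the paper intends: Proposition \ref{pr:SLND:DNR} is stated in the paper as a direct summary of Lemmas \ref{lem:SLND:DN}, \ref{lem:SLND:R} and Proposition \ref{pr:var:w-w:op} (introduced with ``To sum up, we have:'') without an explicit proof, and your argument simply spells out the routine observation that on the interior box $[a\,,T]\times[c\,,d]$ the auxiliary factors $\sqrt t$ and $f_1(x)$ are bounded below while $\rho^2$ is bounded above, so the ``$\wedge$'' terms drop. The bookkeeping you provide (the choice $\delta=\min\{c\,,L-d\}$ and the inequality $v\wedge A\ge\min\{1\,,A/M\}\,v$) is exactly what is needed and introduces no new ideas beyond the paper's.
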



\subsection{A series representation}

Define $v = \{ v(t\,,x) \}_{t \ge 0, x \in [0,L]}$ by
\begin{align}\label{v}
	v(t\,,x) = \frac{1}{\sqrt{2\pi}} \sum_{n=1}^\infty f_n(x) \,\Re \int_{-\infty}^\infty \frac{\e^{-i\tau t}-\e^{-\lambda_n t}}{\lambda_n - i\tau} W_n(\d \tau),
\end{align}
where $W_n = W^{(1)}_n + iW^{(2)}_n$ and $\{W^{(1)}_n, W^{(2)}_n\}_{n \in \N_+}$ are i.i.d.~white noises on $\R$.
Then $v$ is a centered Gaussian random field.
The next lemma shows that $v$ has the same law as the solution $w$ to \eqref{she:add}. 

\begin{lemma}\label{lem:v=w}
The process $v$ has the same law as the solution $w$ to \eqref{she:add}.
\end{lemma}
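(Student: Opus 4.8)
The plan is to observe that both $w$ and $v$ are centered Gaussian random fields, so that it suffices to check they have the same covariance function. For $v$ one should first record the Gaussianity (and the convergence of the defining series): each summand in \eqref{v} is a centered Gaussian, being a real linear functional of the white noises $W_n^{(1)},W_n^{(2)}$; the summands are independent over $n$; and the series converges in $L^2(\Omega)$ because, as the variance computation below shows, the $n$-th term contributes $\tfrac{1}{2\lambda_n}f_n(x)^2(1-\e^{-2\lambda_n t})\lesssim n^{-2}$ uniformly in $x$, by Lemma \ref{lem:eigen}.

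Next I would compute $\Cov(w(t,x),w(s,y))$ via Wiener isometry, the eigenfunction expansion \eqref{G}, and orthonormality of $\{f_n\}$, obtaining
\begin{align*}
	\Cov(w(t,x),w(s,y)) = \sum_{n=1}^\infty f_n(x)f_n(y)\int_0^{t\wedge s}\e^{-\lambda_n(t-r)}\e^{-\lambda_n(s-r)}\,\d r = \sum_{n=1}^\infty f_n(x)f_n(y)\,\frac{\e^{-\lambda_n|t-s|}-\e^{-\lambda_n(t+s)}}{2\lambda_n},
\end{align*}
where the last equality uses the elementary identity $2(t\wedge s)-(t+s)=-|t-s|$ in the exponent.

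Then I would compute $\Cov(v(t,x),v(s,y))$. Writing $W_n=W_n^{(1)}+iW_n^{(2)}$ and $g_{n,t}(\tau)=(\e^{-i\tau t}-\e^{-\lambda_n t})/(\lambda_n-i\tau)$, the key step is to resolve $\Re\int g_{n,t}\,W_n(\d\tau)$ as $\int(\Re g_{n,t})\,W_n^{(1)}(\d\tau)-\int(\Im g_{n,t})\,W_n^{(2)}(\d\tau)$, and then, using independence of the i.i.d.\ real white noises, to get
\begin{align*}
	\Cov(v(t,x),v(s,y)) = \frac{1}{2\pi}\sum_{n=1}^\infty f_n(x)f_n(y)\,\Re\int_{-\infty}^\infty g_{n,t}(\tau)\overline{g_{n,s}(\tau)}\,\d\tau.
\end{align*}
Since $g_{n,t}$ is the Fourier transform of the real-valued function $r\mapsto\e^{-\lambda_n(t-r)}\1_{[0,t]}(r)$ --- the fact already used in the proof of Lemma \ref{lem:SLND:DN} --- Plancherel's theorem identifies the inner integral with $2\pi\int_0^{t\wedge s}\e^{-\lambda_n(t-r)}\e^{-\lambda_n(s-r)}\,\d r$, which is real; comparing with the formula for $w$ then finishes the proof.

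I expect the only step requiring care to be the bookkeeping in the third paragraph: correctly resolving $\Re\int g_{n,t}\,W_n$ into a linear combination of integrals against the two real white noises $W_n^{(1)},W_n^{(2)}$, and verifying that after using their independence the covariance of these real Gaussian functionals is exactly $\Re\int g_{n,t}\overline{g_{n,s}}$ (via $\Re(z\bar w)=\Re z\,\Re w+\Im z\,\Im w$). Everything else --- the Wiener-isometry computation for $w$, the Fourier-transform identity for $g_{n,t}$, and the Plancherel step --- is routine.
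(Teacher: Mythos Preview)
Your proof is correct and follows essentially the same route as the paper: both reduce to comparing covariance functions of centered Gaussian fields, identify $g_{n,t}$ as the Fourier transform of $r\mapsto \e^{-\lambda_n(t-r)}\1_{[0,t]}(r)$, and invoke Plancherel to match the two covariances term by term in $n$. Your write-up is actually a bit more careful than the paper's in spelling out the $\Re$-bookkeeping for the complex white noise and the $L^2$-convergence of the series, but the argument is the same.
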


\begin{proof}
Since $v$ and $w$ are both centered Gaussian processes, it suffices to show that they have the same covariance function. 
Indeed, for every $t\,,s\ge0$ and $x\,,y\in[0\,,L]$, by independence of $\{W_n\,, n\in \N_+\}$, Wiener isometry, and Plancherel's theorem,
\begin{align*}
	&\E[v(t\,,x)v(s\,,y)]
	= \frac{1}{2\pi} \sum_{n=1}^\infty f_n(x)f_n(y) \int_{-\infty}^\infty \left( \frac{\e^{-i\tau t}-\e^{-\lambda_n t}}{\lambda_n - i\tau} \right) \overline{\left( \frac{\e^{-i\tau s}-\e^{-\lambda_n s}}{\lambda_n - i\tau} \right)}\, \d \tau\\
	&= \sum_{n=1}^\infty f_n(x)f_n(y) \int_{-\infty}^\infty \left(\e^{-\lambda_n(t-r)}\1_{[0,t]}(r)\right) \left( \e^{-\lambda_n(s-r)}\1_{[0,s]}(r)\right) \d r\\
	&= \int_0^{t \wedge s} \d r \int_0^L \d z \,G_{t-r}(x\,,z) G_{s-r}(y\,,z) = \E[w(t\,,x)w(s\,,y)],
\end{align*}
where the last line follows from \eqref{G} and \eqref{w}.
Hence, $v$ and $w$ have the same law.
\end{proof}

For any Borel subset $A$ of $[0\,,\infty)$, $t \ge 0$, and $x\in[0\,,L]$, define the following truncated version of $v$:
\begin{align*}
	v(A\,,t\,,x) = \frac{1}{\sqrt{2\pi}}\sum_{n=1}^\infty f_n(x)\,\mathrm{Re}\int_{\{\tau \in \R: \sqrt{n}\vee |\tau|^{1/4} \in A\}} \frac{\e^{-i\tau t}-\e^{-\lambda_n t}}{\lambda_n - i\tau} W_n(\d \tau).
\end{align*}
We now verify that Assumption 2.1 of Lee and Xiao \cite{LX23} is satisfied.

\begin{lemma}\label{lem:hr}
If $A$ and $B$ are disjoint subsets of $[0\,,\infty)$, then $\{v(A\,,t\,,x)\}_{t\ge0,x\in[0,L]}$ and $\{v(B\,,t\,,x)\}_{t\ge0,x\in[0,L]}$ are independent.
Moreover, for any $T>0$, there exists $C>0$ such that for all $0 \le a < b \le \infty$, for all $(t\,,x)\,,(s\,,y)\in [0\,,T]\times[0\,,L]$,
\begin{align*}
	\|v(t\,,x) - v([a\,,b),t\,,x) - v(s\,,y) + v([a\,,b),s\,,y) \|_2
	\le C (a^3 |t-s| + a|x-y| + \tfrac{1}{b}).
\end{align*}
\end{lemma}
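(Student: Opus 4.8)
The plan is to establish the two assertions in turn. For \emph{independence}: fix $n\in\N_+$ and put $S_n^A=\{\tau\in\R:\sqrt n\vee|\tau|^{1/4}\in A\}$ and $S_n^B=\{\tau\in\R:\sqrt n\vee|\tau|^{1/4}\in B\}$. Since $A\cap B=\varnothing$ and $\tau\mapsto\sqrt n\vee|\tau|^{1/4}$ is single-valued, $S_n^A\cap S_n^B=\varnothing$. Because white-noise integrals against $W_n$ of functions with disjoint supports are independent and the $\{W_n\}_{n\in\N_+}$ are independent across $n$, the jointly Gaussian families $\{v(A,t,x)\}$ and $\{v(B,s,y)\}$ have zero cross-covariance --- every summand of $\E[v(A,t,x)v(B,s,y)]$ carries the factor $\1_{S_n^A}\1_{S_n^B}\equiv0$ --- and uncorrelated jointly Gaussian families are independent.

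\textbf{Decomposition and the high-frequency part.} Since $[0,\infty)\setminus[a,b)=[0,a)\cup[b,\infty)$, additivity of the white-noise integral over disjoint sets gives $v(t,x)-v([a,b),t,x)=v_{\mathrm{lo}}(t,x)+v_{\mathrm{hi}}(t,x)$, where $v_{\mathrm{lo}}:=v([0,a),\cdot,\cdot)$ involves only $n<a^2$ and $|\tau|<a^4$ (and vanishes when $a\le1$) and $v_{\mathrm{hi}}:=v([b,\infty),\cdot,\cdot)$ involves the region $\{n\ge b^2\}\cup\{|\tau|\ge b^4\}$ (and vanishes when $b=\infty$). By the triangle inequality it suffices to bound the increments of $v_{\mathrm{lo}}$ and of $v_{\mathrm{hi}}$ separately. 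For $v_{\mathrm{hi}}$ I will show the stronger bound $\|v_{\mathrm{hi}}(t,x)\|_2^2\lesssim b^{-2}$ uniformly on $[0,T]\times[0,L]$, so that $\|v_{\mathrm{hi}}(t,x)-v_{\mathrm{hi}}(s,y)\|_2\le\|v_{\mathrm{hi}}(t,x)\|_2+\|v_{\mathrm{hi}}(s,y)\|_2\lesssim b^{-1}$. Indeed, by Wiener isometry and the Plancherel identity
\[
\int_{-\infty}^\infty\left|\frac{\e^{-i\tau t}-\e^{-\lambda_n t}}{\lambda_n-i\tau}\right|^2\d\tau=2\pi\int_0^t\e^{-2\lambda_n(t-r)}\,\d r\le\frac{\pi}{\lambda_n}
\]
(already used in the proof of Lemma \ref{lem:v=w}), the indices $n\ge b^2$ contribute $\lesssim\sum_{n\ge b^2}\lambda_n^{-1}\lesssim\sum_{n\ge b^2}n^{-2}\lesssim b^{-2}$ by \eqref{lambda}, \eqref{f_n:bd}; and since $|(\e^{-i\tau t}-\e^{-\lambda_n t})/(\lambda_n-i\tau)|^2\le4\tau^{-2}$, $\int_{|\tau|\ge b^4}\tau^{-2}\,\d\tau\asymp b^{-4}$, and there are $\lesssim b^2$ indices $n<b^2$, the latter contribute $\lesssim b^2\cdot b^{-4}=b^{-2}$.

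\textbf{The low-frequency part.} Wiener isometry and orthogonality of $\{W_n\}$ give
\[
\|v_{\mathrm{lo}}(t,x)-v_{\mathrm{lo}}(s,y)\|_2^2=\frac1{2\pi}\sum_{n<a^2}\int_{|\tau|<a^4}\left|f_n(x)\frac{\e^{-i\tau t}-\e^{-\lambda_n t}}{\lambda_n-i\tau}-f_n(y)\frac{\e^{-i\tau s}-\e^{-\lambda_n s}}{\lambda_n-i\tau}\right|^2\d\tau.
\]
Writing the integrand as $(f_n(x)-f_n(y))\frac{\e^{-i\tau t}-\e^{-\lambda_n t}}{\lambda_n-i\tau}+f_n(y)\frac{(\e^{-i\tau t}-\e^{-i\tau s})-(\e^{-\lambda_n t}-\e^{-\lambda_n s})}{\lambda_n-i\tau}$, I estimate the two pieces separately. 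For the first, $|f_n(x)-f_n(y)|\lesssim n|x-y|$ by \eqref{f_n:bd}, while $\int_{|\tau|<a^4}|(\e^{-i\tau t}-\e^{-\lambda_n t})/(\lambda_n-i\tau)|^2\d\tau\le\pi/\lambda_n\lesssim n^{-2}$ by the identity above, so this piece contributes $\lesssim\sum_{n<a^2}n^2|x-y|^2\cdot n^{-2}\lesssim a^2|x-y|^2$. For the second, $|f_n(y)|\lesssim1$, $|\e^{-i\tau t}-\e^{-i\tau s}|\le|\tau|\,|t-s|$, $|\e^{-\lambda_n t}-\e^{-\lambda_n s}|\le\lambda_n|t-s|$ and $|\lambda_n-i\tau|\ge\max(\lambda_n,|\tau|)$ bound its modulus by $\lesssim|t-s|$ on $\{|\tau|<a^4\}$, so after squaring it contributes $\lesssim\sum_{n<a^2}a^4|t-s|^2\lesssim a^6|t-s|^2$. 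Hence $\|v_{\mathrm{lo}}(t,x)-v_{\mathrm{lo}}(s,y)\|_2\lesssim a|x-y|+a^3|t-s|$, and combining with the high-frequency estimate yields the lemma.

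\textbf{Main obstacle.} The delicate point is the accounting of powers of $a$: the spatial increment must emerge with the single power $a$, which forces the sharp $L^2_\tau$-estimate $\pi/\lambda_n$ for $(\e^{-i\tau t}-\e^{-\lambda_n t})/(\lambda_n-i\tau)$ in place of a supremum bound times the window length $2a^4$, and one must retain the cutoff $n<a^2$ when summing; the temporal increment and the tail in $b$ are then routine.
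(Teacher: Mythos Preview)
Your proof is correct and follows essentially the same approach as the paper: the same low/high-frequency split, the same numerator decomposition into spatial and temporal pieces, and the same power-counting $a^2|x-y|^2+a^6|t-s|^2$ and $b^{-2}$. The only cosmetic differences are that you bound $\|v_{\mathrm{hi}}(t,x)\|_2$ pointwise and then use the triangle inequality (whereas the paper bounds the increment directly via the uniform bound on the numerator), and that for the spatial piece you invoke the exact Plancherel evaluation $\pi(1-\e^{-2\lambda_n t})/\lambda_n\le\pi/\lambda_n$ rather than the cruder $\int_\R(\lambda_n^2+\tau^2)^{-1}\d\tau=\pi/\lambda_n$ with $|e^{-i\tau t}-e^{-\lambda_n t}|\le2$; both routes give the same order.
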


\begin{proof}
The first statement concerning independence is clear.
To show the second, we start with the following decomposition:
\begin{align*}
        &v(t\,,x) - v([a\,,b),t\,,x) - v(s\,,y) + v([a\,,b),s\,,y)\\
        &= \left[ v([0\,,a), t\,, x) - v([0\,,a), s\,, y) \right] + \left[ v([b\,,\infty), t\,, x) - v([b\,,\infty), s\,, y)\right].
\end{align*}
For the first component, Wiener isometry yields
\begin{align*}
    &\|v([0\,,a), t\,, x) - v([0\,,a), s\,, y)\|_2^2\\
    &= \frac{1}{2\pi} \sum_{1 \le n \le a^2}  \int_{|\tau| < a^4} \frac{\left|f_n(x) (e^{-i\tau t} - e^{-\lambda_n t}) - f_n(y) (e^{-i\tau s} - e^{-\lambda_n s}) \right|^2}{\lambda_n^2+\tau^2} \d\tau,
\end{align*}
where the summation reduces to one that only runs over $1 \le n \le a^2$ because the domain of integration $\{ \tau \in \R : \sqrt{n} \vee |\tau|^{1/4} \in [0\,,a) \}$ is empty when $n > a^2$.
By triangle inequality, mean value theorem, and \eqref{f_n:bd},
\begin{align*}
	&\left|f_n(x) (e^{-i\tau t} - e^{-\lambda_n t}) - f_n(y) (e^{-i\tau s} - e^{-\lambda_n s}) \right|\\
	&\le \left|f_n(x) - f_n(y)\right||e^{-i\tau t} - e^{-\lambda_n t}| + \left|f_n(y)\right|  \left|(e^{-i\tau t} - e^{-\lambda_n t}) - (e^{-i\tau s} - e^{-\lambda_n s})\right|\\
        &\lesssim n |x-y| +\left( |\tau| + \lambda_n\right)|t-s|.
\end{align*}
This together with \eqref{lambda} implies that
\begin{align*}
	&\|v([0\,,a), t\,, x) - v([0\,,a), s\,, y) \|_2^2\\
	&\lesssim  \sum_{1 \le n \le a^2} \left[ \int_\R \frac{n^2|x-y|^2}{\lambda_n^2 + \tau^2} \d \tau + \int_{|\tau|< a^4} \frac{(|\tau| + \lambda_n)^2}{\tau^2 + \lambda_n^2}|t-s|^2 \d \tau \right]\\
	&\lesssim \sum_{1 \le n \le a^2}\left[ \frac{n^2}{\lambda_n}  |x-y|^2 + a^4 |t-s|^2\right]
	\lesssim  a^2|x-y|^2 + a^6|t-s|^2. 
\end{align*}
For the other component, we use the property that $f_n(x) (e^{-i\tau t} - e^{-\lambda_n t}) - f_n(y) (e^{-i\tau s} - e^{-\lambda_n s})$ is bounded (see \eqref{f_n:bd}) and \eqref{lambda} to deduce that
\begin{align*}
	&\|v([b\,,\infty), t\,, x) - v([b\,,\infty), s\,, y) \|_2^2\\
	&= \frac{1}{2\pi} \sum_{n\ge b^2} \int_{\R} \frac{\left|f_n(x) (e^{-i\tau t} - e^{-\lambda_n t}) - f_n(y) (e^{-i\tau s} - e^{-\lambda_n s}) \right|^2}{\lambda_n^2+\tau^2} \d\tau\\
	& \quad + \frac{1}{2\pi} \sum_{1 \le n \le b^2}  \int_{|\tau| \ge b^4} \frac{\left|f_n(x) (e^{-i\tau t} - e^{-\lambda_n t}) - f_n(y) (e^{-i\tau s} - e^{-\lambda_n s}) \right|^2}{\lambda_n^2+\tau^2} \d\tau\\
	& \lesssim \sum_{n \ge b^2} \int_\R \frac{\d\tau}{\lambda_n^2 + \tau^2}  +  \sum_{1 \le n \le b^2} \int_{|\tau|\ge b^4} \frac{\d \tau}{\lambda_n^2 + \tau^2}\\
	&\lesssim \sum_{n \ge b^2} \lambda_n^{-1} +  \sum_{1 \le n \le b^2} b^{-4} \lesssim \int_{b^2}^\infty \frac{\d z}{z^2} + b^{-2} \lesssim b^{-2}.
\end{align*}
Combining both parts together, we complete the proof.
\end{proof}

\subsection{Spatio-temporal increments}

The theorem below establishes the exact local and uniform spatio-temporal moduli of continuity for the solution to \eqref{she:add}.

\begin{theorem}\label{th:w:lil:mc}
For any fixed point $z_0=(t_0\,,x_0)\in [0\,,\infty)\times[0\,,L]$, there exists a constant $K_0 = K_0(z_0) \in (0\,,\infty)$ such that
\begin{align}\label{w:lil}
	\lim_{\varepsilon\to0^+} \sup_{z \in B^*_\rho(z_0,\varepsilon)} \frac{|w(z)-w(z_0)|}{\rho(z\,,z_0)\sqrt{\log\log(1/\rho(z\,,z_0))}} = K_0
	\quad \text{a.s.}
\end{align}
For every fixed interval $I=[a\,,T]\times[c\,,d]$ with $0<a<T$ and $0< c<d<L$, there exists a constant $K = K(a\,,T\,,c\,,d) \in (0\,,\infty)$ such that
\begin{align}\label{w:mc}
	\lim_{\varepsilon\to0^+} \sup_{{z,z'\in I: 0<\rho(z,z')\le \varepsilon}} \frac{|w(z)-w(z')|}{\rho(z\,,z')\sqrt{\log(1/\rho(z\,,z'))}} = K \quad \text{a.s.}
\end{align}
and $\sqrt{12 c_2} \le K \le \sqrt{12 c_1}$, where $c_1$ is any constant satisfying \eqref{w-w} and $c_2$ is any constant satisfying \eqref{SLND:DNR}.
When $a=0$ or $0 \le c < d \le L$, \eqref{w:mc} still holds for a constant $K$ such that $\sqrt{12 c_2} \le K(a\,,T\,,c\,,d)<\infty$, where $c_2$ is any constant satisfying \eqref{SLND:DNR} for any interval $[T/2\,,T] \times [c',d']$ with $c < c' < d' < d$.
\end{theorem}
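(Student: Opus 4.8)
The plan is to reduce both assertions to the general theory of exact local and uniform moduli of continuity for Gaussian random fields of Lee and Xiao \cite{LX23}, once its structural hypotheses have been verified for our process. Since \eqref{w:lil} and \eqref{w:mc} depend only on the law of $w$, Lemma \ref{lem:v=w} lets us replace $w$ by the series process $v$ of \eqref{v}. The frequency decomposition $\{v(A\,,\cdot)\}_{A\subset[0,\infty)}$ of $v$ is shown in Lemma \ref{lem:hr} to satisfy Assumption 2.1 of \cite{LX23}: disjoint bands are independent and the truncation errors decay at the prescribed rate. This independence structure is what will drive all the lower bounds below. On a rectangle $I=[a\,,T]\times[c\,,d]$ with $a>0$ and $0<c<d<L$, Proposition \ref{pr:var:w-w:op} supplies the matching increment-variance bound $\Var(v(z)-v(z'))\asymp\rho^2(z\,,z')$, with upper constant $c_1$ as in \eqref{w-w}, and Proposition \ref{pr:SLND:DNR} supplies the strong local nondeterminism estimate \eqref{SLND:DNR} with constant $c_2$.

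For \eqref{w:mc} on $I=[a\,,T]\times[c\,,d]$ with $a>0$, these three ingredients are precisely the hypotheses of the uniform-modulus theorem of \cite{LX23}, whose application yields a finite positive $K$ together with $\sqrt{12c_2}\le K\le\sqrt{12c_1}$. The factor $12=2Q$ comes from the $\rho$-metric dimension $Q=6$ of $[0\,,\infty)\times[0\,,L]$: a $\rho$-ball of radius $r$ is a rectangle with sides $\asymp r^4$ and $\asymp r^2$, of Lebesgue measure $\asymp r^6$. Concretely, the upper bound is a chaining estimate whose dominant contribution is a union bound over the $\asymp\rho^{-6}$ pairs at the finest dyadic $\rho$-scale, using the Gaussian tail $\P\{|v(z)-v(z')|>\lambda\}\le2\exp(-\lambda^2/(2c_1\rho^2(z\,,z')))$; the lower bound uses \eqref{SLND:DNR} together with the frequency decomposition to produce that many approximately independent increments, each attaining the threshold with non-negligible probability. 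For \eqref{w:lil} at $z_0=(t_0\,,x_0)$ with $t_0>0$, choose $r_0>0$ so small that $B_\rho(z_0\,,r_0)$ lies inside some such rectangle; the variance and SLND estimates then hold on this neighborhood, and the local (Khinchine-type LIL) theorem of \cite{LX23} gives $K_0\in(0\,,\infty)$, the upper half by Borel--Cantelli along dyadic $\rho$-scales and the lower half by extracting an almost surely infinite sequence of large increments from independent high-frequency bands.

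It remains to handle the boundary cases $t_0=0$ in \eqref{w:lil} and $a=0$ in \eqref{w:mc}, where $w(0\,,\cdot)\equiv0$. Here the upper bound $\Var(w(z)-w(z'))\le c_1\rho^2(z\,,z')$ of Lemma \ref{lem:w-w} is valid up to $t=0$ on all of $[0\,,T]\times[0\,,L]$, so Gaussian chaining still yields a finite upper constant, and a zero-one law (triviality of the relevant germ field) shows the limit is almost surely a deterministic $K$ (resp.\ $K_0$). For positivity in \eqref{w:mc} with $a=0$, the supremum over $[0\,,T]\times[c\,,d]$ dominates the one over $[a'\,,T]\times[c\,,d]$ for any $a'>0$, so $K\ge K(a'\,,T\,,c\,,d)>0$ by the previous paragraph. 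For positivity in \eqref{w:lil} with $t_0=0$, restrict the supremum to the temporal segment $\{(t\,,x_0):0<t\le\varepsilon^4\}$: along it $\rho((t\,,x_0)\,,(0\,,x_0))=t^{1/4}$ and $\Var(w(t\,,x_0))\asymp\sqrt t$, and the spatio-temporal SLND restricted to $x=x_0$ — which remains valid down to $t=0$, via Lemma \ref{lem:SLND:DN} under \eqref{D:BC} or \eqref{N:BC} (with the variance estimate of Lemma \ref{lem:w-w:DN}), and via Lemma \ref{lem:SLND:R} under \eqref{R:BC} since its range allows $t\in[0\,,T]$ — gives a positive LIL lower bound for $t\mapsto w(t\,,x_0)$ at $t=0$, hence for the full supremum. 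Finiteness and positivity together force $K_0\in(0\,,\infty)$.

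\textbf{Main obstacle.} The genuinely hard work is entirely upstream: it lies in Propositions \ref{pr:var:w-w:op}--\ref{pr:SLND:DNR} (sharp variance and SLND estimates on a bounded interval, uniformly up to the boundary under \eqref{D:BC}, \eqref{N:BC}, and via the eigenfunction argument under \eqref{R:BC}) and in Lemma \ref{lem:hr} (verifying Assumption 2.1); granting these, the present theorem is a direct application of \cite{LX23}. The only subtlety internal to this proof is the boundary case $t_0=0$ / $a=0$, where the canonical standard-deviation metric $\sqrt{\Var(w(z)-w(z'))}$ degenerates relative to $\rho$ away from the ``parabolic'' region in which the temporal gap dominates; one must check, as above, that the extremal oscillations nonetheless occur inside that region, so that the limsup-type moduli are unaffected.
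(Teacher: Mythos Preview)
Your proposal is correct and follows essentially the same route as the paper: for $t_0>0$ and $a>0$ you invoke the variance bounds (Proposition \ref{pr:var:w-w:op}), SLND (Proposition \ref{pr:SLND:DNR}), and the series/independence structure (Lemmas \ref{lem:v=w}--\ref{lem:hr}) to feed directly into the local and uniform modulus theorems of \cite{LX23}, and for the boundary cases $t_0=0$, $a=0$ you use a Gaussian zero-one law to get a deterministic limit, chaining/entropy for finiteness, monotonicity in $a$ for positivity of $K$, and restriction to the temporal slice $t\mapsto w(t\,,x_0)$ for positivity of $K_0$. The paper carries out the finiteness step more explicitly (Dudley plus Borell along geometric scales for $K_0$, and the a.s.\ chaining bound of \cite[Theorem 1.3.5]{AT} for $K$), but your outline captures the same argument.
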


\begin{proof}
Suppose $t_0>0$, $a>0$, and assume either Dirichlet condition \eqref{D:BC} with $0<x_0<L$, $0<c<d<L$, or Neumann/Robin condition \eqref{N:BC}/\eqref{R:BC} with $0 \le x_0 \le L$, $0 \le c < d \le L$.
In these cases, thanks to SLND (Proposition \ref{pr:SLND:DNR}) and Lemma \ref{lem:hr}, the assumptions of Theorems 5.2 and 6.1 of Lee and Xiao \cite{LX23} are satisfied for $\{ w(t\,,x)\}_{(t\,,x) \in I}$, hence \eqref{w:lil} and \eqref{w:mc} follow directly from those two theorems.

It remains to deal with the following cases:
\begin{enumerate}
\item[(i)] $t_0=0$, $a=0$, $0<x_0<L$, $0<c<d<L$ with boundary condition \eqref{D:BC}, \eqref{N:BC} or \eqref{R:BC};
\item[(ii)] $t_0>0$, $a>0$, $x_0 \in \{0\,,L\}$, $c = 0$ or $d=L$ with boundary condition \eqref{D:BC};
\item[(iii)] $t_0=0$, $a=0$, $x_0 \in \{0\,,L\}$, $c=0$ or $d=L$ with boundary condition \eqref{D:BC}.
\end{enumerate}
These cases need to be treated with care because the variance bounds have a different form (see Proposition \ref{pr:var:w-w:op}) than in \cite{LX23}, thus we cannot directly apply the results in \cite{LX23}.

(i). Suppose $t_0=0$, $a=0$, $0<x_0<L$, $0<c<d<L$ with boundary condition \eqref{D:BC}, \eqref{N:BC} or \eqref{R:BC}.
Let 
\[
	\phi(z\,,z') = \rho(z\,,z')\sqrt{\log\log(1/\rho(z\,,z'))},
	\quad
	\psi(z\,,z') = \rho(z\,,z')\sqrt{\log(1/\rho(z\,,z'))}.
\]
Define the metric $d(z\,,z') = C_0^{-1/2} \|w(z)-w(z')\|_2$ for any $z\,,z'\in I$, where $C_0>0$ is the constant in Lemma \ref{lem:w-w}. By Lemma \ref{lem:w-w},
\[
	\lim_{\varepsilon\to0^+} \sup_{z,z'\in I: 0<\rho(z\,,z')\le \varepsilon} \frac{d(z\,,z')}{\phi(z\,,z')} = 0 \quad \text{and}\quad
	\lim_{\varepsilon\to0^+} \sup_{z,z'\in I: 0<\rho(z\,,z')\le \varepsilon} \frac{d(z\,,z')}{\psi(z\,,z')} = 0.
\]
This allows us to apply a zero-one law for Gaussian random fields \cite[Lemma 7.1.1]{MR} to deduce that \eqref{w:lil} and \eqref{w:mc} hold for some constants $K_0=K_0(z_0) \in [0\,,\infty]$ and $K=K(0\,,T\,,c\,,d)\in [0\,,\infty]$, respectively.
We aim to show that $0<K_0<\infty$ and $0< K<\infty$.

First, $K_0<\infty$ can be shown by the following argument using metric entropy and concentration of measure.
For any set $A \subset I$, consider the metric entropy $N(A\,,r)$, i.e., the smallest number of $d$-balls of radius $r$ needed to cover $A$.
Then, for any $\varepsilon>0$, Dudley's theorem \cite{Dudley} states that
\[
	\E\left[\sup_{z\in B_\rho(z_0,\varepsilon)} |w(z)| \right]  \lesssim \int_0^D \sqrt{\log N(B_\rho(z_0\,,\varepsilon)\,,r)}\, \d r,
\]
where $D$ is the $d$-diameter of $B_\rho(z_0\,,\varepsilon)$, which satisfies $D \lesssim \varepsilon$ by Lemma \ref{lem:w-w}.
To estimate $N(B_\rho(z_0\,,\varepsilon)\,,r)$ for $0 < r < \varepsilon$, we split $B_\rho(z_0\,,\varepsilon) = [0\,,\varepsilon^4] \times [x_0-\varepsilon^2\,,x_0+\varepsilon^2]$ into two parts: $([0\,,r^4]\times [x_0-\varepsilon^2\,,x_0+\varepsilon^2]) \cup ([r^4\,,\varepsilon^4]\times[x_0-\varepsilon^2\,,x_0+\varepsilon^2])$.
By Lemma \ref{lem:w-w}, the first part is covered by a single $d$-ball of radius $r$, and the second part is covered by $C\varepsilon^2(\varepsilon^4-r^4)/r^6$ many $d$-balls of radius $r$, hence $N(B_\rho(z_0\,,\varepsilon)\,,r) \le 1 + C\varepsilon^2(\varepsilon^4-r^4)/r^6 \lesssim (\varepsilon/r)^6$. 
It follows that there exist $C_1\,,C_2\,,C_3>0$ such that for all $\varepsilon \in (0\,,1)$,
\[
	\E\left[\sup_{z\in B_\rho(z_0,\varepsilon)} |w(z)| \right] \le C_1 \int_0^{C_1\varepsilon} \sqrt{\log(C_1\varepsilon/r)}\, \d r \le C_2 \varepsilon \int_0^\infty s^2 \e^{-s^2} \d s \le C_3 \varepsilon.
\]
Keeping in mind that $z_0 = (0\,,x_0)$ and $w(z_0)=0$, we have $\sup_{z \in B_\rho(z_0,\varepsilon)}\E|w(z)|^2 \le C_0 \varepsilon^2$ by Lemma \ref{lem:w-w}.
Let $C>0$ and $\varepsilon_n = \e^{-n}$.
We may apply Borell's inequality \cite{Borell} to see that for $n$ large,
\begin{align*}
	&\P\left\{ \sup_{z \in B_\rho(z_0,\varepsilon_n)}|w(z)| > C \varepsilon_n \sqrt{\log\log(1/\varepsilon_n)} \right\}\\
	&\le \P\left\{ \sup_{z \in B_\rho(z_0,\varepsilon_n)}|w(z)| - \E\left[\sup_{z\in B_\rho(z_0,\varepsilon)} |w(z)| \right] > (C/2) \varepsilon_n \sqrt{\log\log(1/\varepsilon_n)} \right\}\\
	&\le \exp\left( -\frac{((C/2) \varepsilon_n \sqrt{\log\log(1/\varepsilon_n)})^2}{2\sup_{z \in B_\rho(z_0,\varepsilon)}\E|w(z)|^2} \right) \le \exp\left( - \frac{C^2\log n}{8C_0} \right) = n^{-C^2/(8C_0)},
\end{align*}
which is summable, say, for $C= 4\sqrt{C_0}$.
Then, by Borel-Cantelli lemma and monotonicity,
\[
	\lim_{\varepsilon\to0^+} \sup_{z \in B^*_\rho(z_0,\varepsilon)} \frac{|w(z)|}{\rho(z\,,z_0)\sqrt{\log\log(1/\rho(z\,,z_0))}} \le C \quad \text{a.s.}
\]
This shows that \eqref{w:lil} holds with $K_0 \le C <\infty$. To show that $K_0>0$, since Lemma \ref{lem:w-w} implies that $\|w(t\,,x_0)-w(s\,,x_0)\|_2 \asymp |t-s|^{1/4}$ for all $t\,,s\in [0\,,1]$, we may apply Theorem 5.1 of Lee and Xiao \cite{LX23} to the process $\{w(t\,,x_0)\}_{t \in [0,1]}$ to find that
\[
	\lim_{\varepsilon\to0^+} \sup_{t \in (0,\varepsilon]} \frac{|w(t\,,x_0)|}{t^{1/4}\sqrt{\log\log(1/t^{1/4})}}  = K_2 \quad \text{a.s.}
\]
for some constant $K_2 \in (0\,,\infty)$. Clearly, the quantity in \eqref{w:lil} is no less than the above quantity, and hence $K_0\ge K_2>0$.

Next, we show that $K=K(0\,,T\,,c\,,d)\in (0\,,\infty)$.
It is possible to directly use the form of SLND in Lemmas \ref{lem:SLND:DN} and \ref{lem:SLND:R} and follow \cite{LX19, LX23} to prove that $K>0$.
Alternatively, we may simply use the $a>0$ case in the beginning of this proof to deduce that $K = K(0\,,T\,,c\,,d) \ge K(T/2\,,T\,,c',d') \ge \sqrt{12 c_2} > 0$, where $c_2$ is any constant satisfying \eqref{SLND:DNR} for any interval $[T/2\,,T\,,c',d']$ with $c<c'<d'<d$.
To show that $K<\infty$, we use again a metric entropy argument,
which shows that $N(I\,,r) \lesssim r^{-6}$.
Set $\varepsilon_n = \e^{-n}$.
By Theorem 1.3.5 of \cite{AT}, there exist $C_5\,,C_6\,,C_7\,,C_8>0$ such that, a.s., for all large $n$,
\begin{align*}
	\sup_{z,z'\in I: d(z,z') \le \varepsilon_n} |w(z)-w(z')| 
	&\le C_5 \int_0^{\varepsilon_n} \sqrt{\log N(I\,,r)} \, \d r
	\le C_6 \int_0^{\e^{-n}} \sqrt{\log(C_6/r)} \, \d r\\
	&\le C_7 \int_{\sqrt{\log(C_6\e^n)}}^\infty s^2 \e^{-s^2} \d s \le C_8 \e^{-n} \sqrt{\log(C_6\e^n)},
\end{align*}
where the last inequality follows from the fact that $\int_a^\infty s^2\e^{-s^2} \d s \lesssim a\e^{-a^2}$ as $a \to \infty$, and $C_5\,,C_6\,,C_7\,,C_8$ are universal constants that do not depend on $n$.
This together with Lemma \ref{lem:w-w} implies that, a.s.,
\begin{align*}
	&\lim_{n\to \infty}\sup_{\substack{z,z'\in I\\ \varepsilon_{n+1}\le\rho(z,z') \le \varepsilon_n}} \frac{|w(z)-w(z')|}{\psi(z\,,z')}
	\le \lim_{n\to \infty}\sup_{\substack{z,z'\in I\\ \varepsilon_{n+1}\le\rho(z,z') \le \varepsilon_n}} \frac{|w(z)-w(z')|}{\varepsilon_{n+1}\sqrt{\log(1/\varepsilon_{n+1})}}\\
	&\le \lim_{n\to \infty}\sup_{\substack{z,z'\in I\\ 0<d(z,z') \le \varepsilon_n}} \frac{|w(z)-w(z')|}{\e^{-n-1}\sqrt{n+1}}
	\le \lim_{n\to \infty}\sup_{\substack{z,z'\in I\\ 0<d(z,z') \le \varepsilon_n}} \frac{C_8 \e^{-n} \sqrt{\log(C_6\e^n)}}{\e^{-n-1}\sqrt{n+1}} \le C_8\e.
\end{align*}
This implies that $K\le C_8 \e<\infty$.

For (ii) and (iii), we treat only the particular case of $z_0 = (0\,,0)$, $a=0$, $c=0$, $d<L$ under Dirichlet condition \eqref{D:BC}, and leave the remaining cases to the reader.
The proofs of \eqref{w:lil} and \eqref{w:mc} for the particular case are similar to those in case (i) above, so we only mention the major differences.
Namely, we split $B_\rho(z_0\,,\varepsilon) = [0\,,\varepsilon^4] \times [0\,,\varepsilon^2]$ into two parts:
\[
	\{ (t\,,x) : 0 \le t \le r^4 \text{ or } 0 \le x \le r^2 \} \cup ([r^4\,,\varepsilon^4]\times[r^2\,,\varepsilon^2]).
\]
By Lemma \ref{lem:w-w}, the first part is covered by one $d$-ball of radius $r$, namely $B_d(z_0\,,r)$; and the second part is covered by $C(\varepsilon^4-r^4)(\varepsilon^2-r^2)/r^6$ many $d$-balls of radius $r$, where $d$ is as defined in case (i).
Hence, $N(B_\rho(z_0\,,\varepsilon)\,,r) \le 1 + C(\varepsilon^4-r^4)(\varepsilon^2-r^2)/r^6$. Again, this yields $N(B_\rho(z_0\,,\varepsilon)\,,r) \lesssim (\varepsilon/r)^6$.
Therefore, we can repeat the proof in case (i) to deduce \eqref{w:lil} with $K_0 \in (0\,,\infty)$.
A similar metric entropy argument shows that $N([0\,,T]\times[0\,,d]\,,r) \lesssim r^{-6}$, hence the same proof in case (i) leads to \eqref{w:mc} with $\sqrt{12 c_2} \le K(0\,,T\,,c\,,d)<\infty$, where $c_2$ is any constant satisfying \eqref{SLND:DNR} for any interval $[T/2\,,T\,,c',d']$ with $c<c'<d'<d$.
\end{proof}

The next result yields matching bounds on small-ball probabilities and a Chung-type law of the iterated logarithm for spatio-temporal increments of $w$.

\begin{theorem}\label{th:w:sb:chung}
For every fixed $z_0 = (t_0\,,x_0) \in [0\,,\infty) \times [0\,,L]$, there exist constants $0<c_0<c_1<\infty$ such that for all $0<\varepsilon<r< 1$,
\begin{align}\label{w:sb}
	\e^{-c_1(r/\varepsilon)^6} \le \P\left\{ \sup_{z\in B_\rho(z_0,r)}|w(z)-w(z_0)| \le \varepsilon \right\} \le \e^{-c_0(r/\varepsilon)^6}
\end{align}
and
\begin{align}\label{w:chung}
	\liminf_{\varepsilon \to 0^+} \frac{(\log\log(1/\varepsilon))^{1/6}}{\varepsilon}\sup_{z \in B_\rho(z_0,\varepsilon)} |w(z)-w(z_0)| = C_2
	\quad \text{a.s.}
\end{align}
where $C_2$ is a constant such that $c_0^{1/6}\le C_2\le c_1^{1/6}$.
\end{theorem}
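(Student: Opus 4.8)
The plan is to split into two regimes. When $t_0>0$, a small ball $B_\rho(z_0,r)$ around $z_0$ lies inside a rectangle $[a\,,T]\times[c\,,d]$ with $0<a<T$, $0<c<d<L$ once $r$ is small, so the SLND property (Proposition~\ref{pr:SLND:DNR}), the matching variance bounds, and the decomposition of Lemma~\ref{lem:hr} (which is exactly Assumption~2.1 of \cite{LX23}) are available with fixed constants; then, just as in the proof of Theorem~\ref{th:w:lil:mc}, both \eqref{w:sb} and \eqref{w:chung} follow from the corresponding small-ball and Chung-type LIL theorems of Lee and Xiao \cite{LX23}, with the constants therein depending only on $z_0$. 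The case $t_0=0$ must be handled by hand because the variance of $w$ degenerates as $t\to0$ (Proposition~\ref{pr:var:w-w:op}); the rest of the plan concerns this case, writing $z_0=(0\,,x_0)$ with $x_0\in(0\,,L)$ fixed, and (by adjusting constants in the edge regime $\varepsilon\asymp r$) it suffices to argue for $\varepsilon/r$ small.

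For the lower bound in \eqref{w:sb}, the process $\{w(z)-w(z_0)\}_{z\in B_\rho(z_0,r)}$ is centered Gaussian with intrinsic metric $d(z,z')=\|w(z)-w(z')\|_2$; by Lemma~\ref{lem:w-w}, $d\lesssim\rho$, and since a $\rho$-ball of radius $\delta$ is a parabolic box of two-dimensional Lebesgue measure $\asymp\delta^6$, the metric entropy satisfies $N(B_\rho(z_0,r),d,\delta)\lesssim(r/\delta)^6$ for $0<\delta\le r$. Talagrand's lower bound for Gaussian small-ball probabilities (see, e.g., \cite{LS01}) then yields $\P\{\sup_{z\in B_\rho(z_0,r)}|w(z)-w(z_0)|\le\varepsilon\}\ge\exp(-c_1(r/\varepsilon)^6)$. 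For the upper bound I would use strong local non-determinism: fix a large constant $A$ and choose points $z_i=(t_i\,,x_i)\in B_\rho(z_0,r)$, $1\le i\le M$, that are pairwise $A\varepsilon$-separated in $\rho$, with $t_i\ge(A\varepsilon)^4$ and $|x_i-x_0|\le r^2$; a volume count in the parabolic metric gives $M\asymp A^{-6}(r/\varepsilon)^6\asymp(r/\varepsilon)^6$. Lemma~\ref{lem:SLND:DN} under \eqref{D:BC} or \eqref{N:BC}, and Lemma~\ref{lem:SLND:R} under \eqref{R:BC}, bound $\Var(w(z_i)\mid w(z_0),w(z_1),\dots,w(z_{i-1}))$ below by a constant times $\min_{j<i}\rho^2(z_i,z_j)\wedge\sqrt{t_i}\wedge f_1(x_i)$, which is $\gtrsim(A\varepsilon)^2$ by construction (the truncation $\sqrt{t_i}\ge(A\varepsilon)^2$ is built in, and $f_1(x_i)\asymp f_1(x_0)>0$ is bounded away from $0$ because $x_0\in(0\,,L)$; under \eqref{R:BC} there is no spatial truncation). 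Conditioning the Gaussian vector sequentially gives $\P\{|w(z_i)-w(z_0)|\le\varepsilon\mid w(z_0),\dots,w(z_{i-1})\}\le C/A\le\tfrac12$ once $A$ is large, whence $\P\{\sup_{z\in B_\rho(z_0,r)}|w(z)-w(z_0)|\le\varepsilon\}\le 2^{-M}\le\exp(-c_0(r/\varepsilon)^6)$.

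For \eqref{w:chung}, the left-hand side is a tail event for a Gaussian field, hence a.s. equal to a deterministic constant $C_2\in[0,\infty]$ by a zero--one law (\cite[Lemma~7.1.1]{MR}). Inserting $r=\varepsilon$ and the test radius $\lambda\varepsilon(\log\log(1/\varepsilon))^{-1/6}$ into \eqref{w:sb} turns both bounds into expressions of order $(\log(1/\varepsilon))^{-c\lambda^{-6}}$ with $c\in\{c_0,c_1\}$. Along $\varepsilon_n=\e^{-n}$: if $\lambda<c_0^{1/6}$ these probabilities are summable, and the first Borel--Cantelli lemma together with the monotonicity of $\varepsilon\mapsto\sup_{z\in B_\rho(z_0,\varepsilon)}|w(z)-w(z_0)|$ and of $\varepsilon\mapsto\varepsilon(\log\log(1/\varepsilon))^{-1/6}$ (and letting the base of the geometric sequence tend to $1$) gives $C_2\ge c_0^{1/6}$; if $\lambda>c_1^{1/6}$ they are not summable, and to conclude the events occur infinitely often — hence $C_2\le c_1^{1/6}$ — I would replace $w$ inside the $n$-th ball by its high-frequency component $v([b_n\,,\infty),\cdot)$ (identifying $w$ with $v$ via Lemma~\ref{lem:v=w}) for a rapidly growing sequence $b_n$, so that the relevant events become independent, using the error estimate of Lemma~\ref{lem:hr} to absorb $\|w-v([b_n\,,\infty),\cdot)\|$ uniformly on the ball, and then apply the second Borel--Cantelli lemma. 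This yields $c_0^{1/6}\le C_2\le c_1^{1/6}<\infty$, and in particular $C_2\in(0\,,\infty)$.

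The main obstacle is the upper bound in \eqref{w:sb} when $t_0=0$: because the variance of $w$ vanishes along $\{t=0\}$, SLND may only be invoked for points kept away from that boundary, so one must verify that $\asymp(r/\varepsilon)^6$ well-separated points nonetheless fit in $B_\rho((0\,,x_0)\,,r)\cap\{t\ge(A\varepsilon)^4\}$ — which is exactly why $t_i\ge(A\varepsilon)^4$ was imposed above, and why $x_0$ being interior (so $f_1(x_0)>0$) is needed. The second delicate point is the infinitely-often half of the Chung LIL, where the independent decomposition furnished by Lemma~\ref{lem:hr} is indispensable for running a Borel--Cantelli argument on events built from nested balls; everything else is routine constant bookkeeping, including the reduction of the edge regime $\varepsilon\asymp r$ to crude, non-degeneracy-based bounds.
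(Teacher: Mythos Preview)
Your overall architecture matches the paper's proof closely: for $t_0>0$ you correctly invoke SLND, Lemma~\ref{lem:hr}, and the results of \cite{LX23}; for $t_0=0$ your small-ball bounds via Talagrand's lower bound and the SLND-based conditioning argument (with the $\sqrt{t_i}$ truncation handled by requiring $t_i\ge (A\varepsilon)^4$) are essentially identical to what the paper does. The lower bound $C_2\ge c_0^{1/6}$ via the first Borel--Cantelli lemma along $\varepsilon_n=\e^{-n}$ is also fine.

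There is, however, a genuine gap in your argument for $C_2\le c_1^{1/6}$. You propose to replace $w$ on the $n$-th ball by $v([b_n,\infty),\cdot)$ ``so that the relevant events become independent''. But the frequency ranges $[b_n,\infty)$ are \emph{nested}, not disjoint, so the random fields $v([b_n,\infty),\cdot)$ for different $n$ are \emph{not} independent, and the second Borel--Cantelli lemma does not apply. The paper instead uses \emph{disjoint} bands: set $v_n=v([b_n,b_{n+1}),\cdot)$, which are genuinely independent by Lemma~\ref{lem:hr}. Two further ingredients are then needed that your sketch omits. First, to lower-bound $\P\{\sup|v_n|\le Ch(\varepsilon_n)\}$ one cannot use Lemma~\ref{lem:hr} as an error estimate (that goes the wrong way); instead one applies Anderson's inequality to the independent decomposition $v=v_n+\tilde v_n$ to get $\P\{\sup|v_n|\le Ch\}\ge\P\{\sup|v|\le Ch\}$, and then invokes the lower bound in \eqref{w:sb}. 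Second, with disjoint bands one must control \emph{both} the low-frequency and the very-high-frequency remainder $\tilde v_n$ on $B_\rho(z_0,\varepsilon_n)$, and a geometric sequence $\varepsilon_n=\e^{-n}$ does not leave enough room: the constraints $b_n\varepsilon_n^2=o(h(\varepsilon_n))$ and $1/b_{n+1}=o(h(\varepsilon_n))$ are incompatible along $\e^{-n}$. The paper takes a super-geometric sequence $\varepsilon_n=\exp(-(n^\delta+n^{1+\delta}))$ with $b_n=\exp(n^{1+\delta})$, which resolves this, and then lets $\delta\downarrow0$.
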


\begin{proof}
Suppose $t_0>0$, and assume either Dirichlet condition \eqref{D:BC} with $0<x_0<L$ or Neumann/Robin condition \eqref{N:BC}/\eqref{R:BC} with $0 \le x_0 \le L$.
Thanks to SLND (Proposition \ref{pr:SLND:DNR}) and Lemma \ref{lem:hr}, we may apply Proposition 4.2 and Theorem 4.4 of Lee and Xiao \cite{LX23} to obtain \eqref{w:sb} and \eqref{w:chung}.

Now assume $t_0=0$, or Dirichlet condition with $x_0=0$ or $L$. 
Let $r \in (0\,,1]$.
Noting that $w(z_0)=0$,
we can use a metric entropy argument as in the proof of Theorem \ref{th:w:lil:mc} to show that there exists $C>0$ such that $N(B_\rho(z_0\,,r)\,,\varepsilon) \le \Psi_r(\varepsilon) := C (r/\varepsilon)^6$ for all $\varepsilon \in (0\,, r]$.
Then, by a small-ball probability estimate of Talagrand \cite[Lemma 2.2]{T95} (see also \cite[Lemma 3.4]{DLMX21} for a more precise statement), there exists a universal constant $K>0$ such that for all $\varepsilon \in (0\,,r)$,
\[
	\P\left\{ \sup_{z\in B_\rho(z_0,r)}|w(z)| \le \varepsilon \right\} \ge \exp\left( - \frac{\Psi_r(\varepsilon)}{K}\right)
	= \exp\left( - \frac{C}{K} \left( \frac{r}{\varepsilon} \right)^6 \right).
\]
Next, we apply SLND to establish a reverse inequality.
Let $0<\varepsilon < r <1$ and define a finite subset $F$ of $B_\rho(z_0\,,r) = [0\,,r^4]\times[x_0-r^2\,,x_0+r^2]$ by
\[
	F = B_\rho(z_0\,,r) \cap \{ (k_1 \varepsilon^4\,, k_2 \varepsilon^2) : k_1\,,k_2 \in \N_+ \}.
\]
Then $\# F \asymp (r/\varepsilon)^6$ and $\rho(z\,,z') \ge \varepsilon$ for any pair of distinct $z\,,z'\in F$.
Assign an order to the points in $F$ and label them as $z_1\,,z_2\,,\dots,z_n$.
Then, by conditioning and Anderson's shifted-ball inequality \cite{Anderson}, 
\begin{align*}
	&\P\left\{ \sup_{z \in B_\rho(z_0,r)}|w(z)| \le \varepsilon \right\} \le \P\left\{ \max_{1 \le i \le n} |w(z_i)| \le \varepsilon \right\}\\
	& = \E\left[ \1_{\big\{\max\limits_{1\le i \le n-1}|w(z_i)| \le \varepsilon\big\}} \P\left\{ |w(z_n)| \le \varepsilon \mid w(z_1)\,,\dots, w(z_{n-1}) \right\} \right]\\
	& \le \P\left\{ \max\limits_{1\le i \le n-1}|w(z_i)| \le \varepsilon \right\} \P\left\{ |Z| \le \frac{\varepsilon}{[\Var(w(z_n)\mid w(z_1)\,,\dots,w(z_{n-1}))]^{1/2}} \right\},
\end{align*}
where $Z$ has a standard normal distribution.
Thanks to SLND (Lemmas \ref{lem:SLND:DN} and \ref{lem:SLND:R}), there exists $c_2>0$ such that
\begin{align*}
	\P\left\{ |Z| \le \frac{\varepsilon}{[\Var(w(z_n)\mid w(z_1)\,,\dots,w(z_{n-1}))]^{1/2}} \right\}
	\le \P\left\{ |Z| \le c_2^{-1/2}\right\}.
\end{align*}
In fact, by Lemmas \ref{lem:SLND:DN} and \ref{lem:SLND:R}, $\Var(w(z_i)\mid w(z_1)\,,\dots,w(z_{i-1})) \ge c_2 \varepsilon^2$ for every $1 \le i \le n$.
Hence, by induction, we can find $c\,,c_0>0$ such that for all $0<\varepsilon<r<1$,
\begin{align*}
	\P\left\{ \sup_{z \in B_\rho(z_0,r)}|w(z)| \le \varepsilon \right\} \le \left( \P\left\{ |Z| \le c_2^{-1/2} \right\} \right)^n = \e^{-cn} \le \e^{-c_0(r/\varepsilon)^6}.
\end{align*}

Next, we aim to show \eqref{w:chung} under $t_0=0$ or Dirichlet condition with $x_0=0$ or $L$.
Again, in either case, $w(z_0)=0$.
Thanks to Lemma \ref{lem:hr} and a zero-one law of Lee and Xiao \cite[Lemma 3.1]{LX23}, \eqref{w:chung} holds for some constant $C_2 \in [0\,,\infty]$.
Let $\varepsilon_n = \e^{-n}$.
Thanks to the upper bound in \eqref{w:sb}, 
\begin{align*}
	\sum_{n=1}^\infty \P\left\{ \sup_{z\in B_\rho(z_0,\varepsilon_n)}|w(z)| \le C \varepsilon_n(\log\log(1/\varepsilon_n))^{-1/6} \right\}
	\le \sum_{n=1}^\infty n^{-c_0/C^6},
\end{align*}
which is convergent provided that $C$ is any fixed number so that $0<C<c_0^{1/6}$.
It follows by Borel-Cantelli lemma that $C_2 \ge C$.
Letting $C\uparrow c_0^{1/6}$ yields $C_2 \ge c_0^{1/6}$.
It remains to show that $C_2\le c_1^{1/6}$.
We follow the proof of \cite[Theorem 4.4]{LX23}.
Fix $\delta\in (0\,,1)$. For any $n\in\N$, let $\varepsilon_n=\exp(-(n^\delta + n^{1+\delta}))$ and $b_n=\exp(n^{1+\delta})$. 
Recall the Gaussian random field $v$ defined in \eqref{v}.
For any $z\in [0\,,\infty)\times[0\,,L]$, define $v_n(z)=v([b_n\,,b_{n+1})\,,z)$ and $\tilde{v}_n(z)=v([0\,,\infty)\setminus[b_n\,,b_{n+1})\,,z)$, so that $v(z)=v_n(z)+\tilde{v}_n(z)$. 
Write $h(\varepsilon)=\varepsilon(\log\log(1/\varepsilon))^{-1/6}$.
Since $v_n$ and $\tilde{v}_n$ are independent, we may apply conditionally Anderson's inequality \cite{Anderson}, Lemma \ref{lem:v=w}, and the lower bound in \eqref{w:sb} to deduce that
\begin{align*}
&\P\left\{\sup_{z\in B_\rho(z_0,\varepsilon_n)} |v_n(z)| \le C h(\varepsilon_n)\right\}
\ge \P\left\{\sup_{z\in B_\rho(z_0,\varepsilon_n)} |v_n(z)+\tilde{v}_n(z)| \le C h(\varepsilon_n)\right\}\\
&= \P\left\{\sup_{z\in B_\rho(z_0,\varepsilon_n)} |w(z)| \le C h(\varepsilon_n)\right\} \ge \exp\left(-c_1\left(\frac{\varepsilon_n}{C h(\varepsilon_n)} \right)^6\right)
\gtrsim n^{-(1+\delta)c_1/C^6}.
\end{align*}
Since $v_1\,,v_2\,,\dots$ are independent, we may take $C = ((1+\delta)c_1)^{1/6}$ and apply the second Borel-Cantelli lemma to see that
\begin{align}\label{v:chung}
\liminf_{n\to\infty} \sup_{z\in B_\rho(z_0,\varepsilon_n)} \frac{|v_n(z)|}{h(\varepsilon_n)} \le ((1+\delta)c_1)^{1/6} \quad \text{a.s.}
\end{align}
Thanks to Lemma \ref{lem:hr}, we can follow the proof of \cite[Theorem 4.4]{LX23} using a metric entropy method with a concentration inequality to show that
\begin{align}\label{v:tilde:chung}
\limsup_{n\to\infty} \sup_{z\in B_\rho(z_0,\varepsilon_n)} \frac{|\tilde{v}_n(z)|}{h(\varepsilon_n)}=0\quad \text{a.s.}
\end{align}
Combining \eqref{v:chung} and \eqref{v:tilde:chung} and letting $\delta \to 0^+$ shows that $C_2\le c_1^{1/6}$.
\end{proof}

\section{Linearization error}\label{s:E}

Recall the mild formulation \eqref{she:mild} of the SPDE \eqref{she} and the solution $w$ to the linear SPDE \eqref{she:add}.
For any $t\,,t'\in [0\,,\infty)$ and $x\,,x'\in [0\,,L]$, define
\begin{align}\begin{split}\label{E}
	\mathscr{E}(t\,,x\,; t',x')
	= u(t',x') - u(t\,,x) 
	&- [(G_{t'}\ast u_0)(x') - (G_t \ast u_0)(x)]\\
	&- \sigma(u(t\,,x)) (w(t',x') - w(t\,,x)).
\end{split}\end{align}
The random variable $\mathscr{E}(t\,,x\,; t',x')$ measures the linearization error of the spatio-temporal increments of the solution from $(t\,,x)$ to $(t',x')$.
In order to simplify the notation, we let
\begin{align}\begin{split}\label{u:tilde}
	\tilde{u}(t\,,x) := u(t\,,x) - (G_t \ast u_0)(x)
	&= \int_{(0,t) \times [0,L]} G_{t-s}(x\,,y) b(u(s\,,y))\, \d s\, \d y\\
	&\;\; + \int_{(0,t)\times [0,L]} G_{t-s}(x\,,y) \sigma(u(s\,,y))\xi(\d s \, \d y)
\end{split}\end{align}
so that
\[
	\mathscr{E}(t\,,x\,; t',x')
	= \tilde{u}(t',x') - \tilde{u}(t\,,x) 
	- \sigma(u(t\,,x)) (w(t',x') - w(t\,,x)).
\]

\subsection{Moment estimates}\label{S:moment:error}


\begin{proposition}\label{pr:linear}
There is a number $\zeta>1$ such that the following statement holds.
If $b$ and $\sigma$ are bounded, then for any $0<a<T$, there exists $C>0$ such that
\begin{align}\label{E:bd}
	\|\mathscr{E}(t\,,x\,;t',x')\|_k \le C k [\rho((t\,,x)\,,(t',x'))]^{\zeta}
\end{align}
uniformly for all $(t\,,x)\,,(t',x') \in I:=[a\,,T] \times [0\,,L]$ and $k \in [2\,,\infty)$.
This remains valid when $I=[0\,,T]\times[c\,,d]$ for fixed $T>0$ and $0\le c<d\le L$ if \eqref{G*u:I} holds.
\end{proposition}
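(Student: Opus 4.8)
The plan is to estimate $\mathscr{E}(t,x;t',x')$ directly from the mild formulation, peeling off the parts of $\tilde u(t',x')-\tilde u(t,x)$ that are \emph{not} cancelled by the linear term $\sigma(u(t,x))(w(t',x')-w(t,x))$. Writing the stochastic integral defining $\tilde u$ against the kernel, and recalling that $w(t',x')-w(t,x)=\int G_{t'-s}(x',y)\1_{[0,t']}(s)-G_{t-s}(x,y)\1_{[0,t]}(s)\,\xi(\d s\,\d y)$, the stochastic part of $\mathscr{E}$ is $\int [G_{t'-s}(x',y)\1_{[0,t']}-G_{t-s}(x,y)\1_{[0,t]}]\,[\sigma(u(s,y))-\sigma(u(t,x))]\,\xi(\d s\,\d y)$, plus the drift part $\int [G_{t'-s}(x',y)\1_{[0,t']}-G_{t-s}(x,y)\1_{[0,t]}]\,b(u(s,y))\,\d s\,\d y$. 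First I would apply the Burkholder--Davis--Gundy inequality in the form $\|\int f\,\xi\|_k\le (4k)^{1/2}\|\int f^2(s,y)\|_{L^p(\Omega)}^{1/2}\,\d s\,\d y\|\cdots$ — more precisely the standard bound $\|\int_D f(s,y)\sigma(\cdot)\,\xi\|_k^2 \lesssim k\int_D \|f(s,y)\|^2\,\|\sigma(u(s,y))-\sigma(u(t,x))\|_k^2\,\d s\,\d y$ after Minkowski. The drift term is handled by Minkowski's integral inequality and boundedness of $b$, giving a contribution bounded by $\int_D |f(s,y)|\,\d s\,\d y$ times a constant; since $b$ is bounded this is $\lesssim \rho^2$ (at least), which is better than $\rho^\zeta$ for $\zeta$ close to $1$, so the drift is harmless.

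The heart of the matter is the stochastic term, where I need two inputs: (a) a bound on the $L^k$-modulus of continuity of $u$ itself, namely $\|u(s,y)-u(t,x)\|_k \lesssim k^{1/2}\,\rho((s,y),(t,x))^{\gamma}$ for some $\gamma>0$ on the relevant region, which by Lipschitzness of $\sigma$ transfers to $\|\sigma(u(s,y))-\sigma(u(t,x))\|_k$; and (b) the kernel integrals from Lemmas \ref{lem:int:G}, \ref{lem:int:G:x}, \ref{lem:int:G:t}, which control $\int_D |f(s,y)|^2\,\d s\,\d y$ and weighted versions of it. The standard moment bound for $u$ and its increments (obtainable by a Picard/Gronwall argument together with Lemma \ref{lem:G} and the kernel estimates; this is where one uses boundedness of $\sigma,b$ to get the $\|\cdot\|_k \lesssim \sqrt k$ scaling, or alternatively cites the existing literature for bounded coefficients) gives $\|\sigma(u(s,y))-\sigma(u(t,x))\|_k \lesssim \sqrt k\,\bigl(|s-t|^{1/4}+|y-x|^{1/2}+\rho((s,y),(t,x))\bigr)$ — but the point is that for $(s,y)$ ranging over the support of $f$, the quantity $\rho((s,y),(t,x))$ is \emph{not} uniformly small; it is only small when $(s,y)$ is near $(t,x)$. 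So the gain of a power beyond $\rho^2$ must come from the interplay: split $D$ into the region where $\rho((s,y),(t,x))\le \rho((t,x),(t',x'))^{\theta}$ for a suitable $\theta\in(0,1)$ and its complement. On the near region, $\|\sigma(u(s,y))-\sigma(u(t,x))\|_k$ is small (a power of $\rho^\theta$) while $\int |f|^2$ is at most $O(\rho^2)$ by the kernel lemmas, giving a total power $2+2\gamma\theta$ times $k$-powers. On the far region, $\|\sigma(u(s,y))-\sigma(u(t,x))\|_k$ is only bounded (by a constant, using boundedness of $\sigma$), but now $f(s,y)$ is ``localized'': one shows $\int_{\text{far}} |f(s,y)|^2\,\d s\,\d y \lesssim \rho((t,x),(t',x'))^{2-\kappa}\cdot \rho^{\text{(something)}}$, i.e.\ the $L^2$ mass of the kernel difference that sits far from the base point is superlinearly small in $\rho$. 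This localization estimate is exactly the type of bound proved in \cite{KSXZ13, FKM15} via Fourier cutoffs, and the paper's contribution (per the introduction) is to prove it using only the crude kernel bound in Lemma \ref{lem:G}: the tail $\int_{|y-x|>\delta}G_{t-s}(x,y)^2\,\d y$ and $\int_{t-\delta}^{t}\!\int G_{t-s}(x,y)^2$ decay like powers of $\delta$, and one optimizes $\delta$ against $\rho$. Choosing $\theta$ and the split parameter to balance the two regions yields a common power $\zeta = \zeta(\gamma,\kappa)>1$ (strictly, since both $\gamma>0$ and the far-tail exponent exceed the trivial bounds), with the linear-in-$k$ dependence $Ck$ coming from the $\sqrt k$ from BDG times the $\sqrt k$ from the increment moment bound.

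The main obstacle, and the step I would allocate the most care to, is the \textbf{far-region localization of the kernel difference}: showing that $\int_{(0,t')\times[0,L]}\bigl|G_{t'-s}(x',y)\1_{[0,t']}(s)-G_{t-s}(x,y)\1_{[0,t]}(s)\bigr|^2\1_{\{\rho((s,y),(t,x))>\eta\}}\,\d s\,\d y\lesssim \eta^{-p}\,\rho((t,x),(t',x'))^{q}$ with $q>2$ when $\eta\gg\rho$. This requires decomposing the kernel difference into a spatial-shift part, a time-shift part, and the ``initial slab'' part $\int_t^{t'}G_{t'-s}(x',y)^2$, and for each bounding the contribution away from the diagonal using Lemma \ref{lem:G} (the $t/|x-y|^3$ branch is what makes the off-diagonal tails integrable and gives extra powers). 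Near $t=0$ under hypothesis \eqref{G*u:I}, the initial-data term $G_{t'}\ast u_0(x')-G_t\ast u_0(x)$ is subtracted out in the definition of $\mathscr{E}$, and \eqref{G*u:I} guarantees it is $O(\rho)$; combined with boundedness of $u_0$ this lets the same moment bounds for $u$ and its increments go through up to $a=0$ (the constant $G_t\ast u_0$ stays bounded and Lipschitz on the region), so no separate argument is needed beyond checking that the Picard/Gronwall moment estimates only used $\int_0^t\!\int G_{t-s}^2\lesssim\sqrt t$, which holds down to $t=0$ by Lemma \ref{lem:int:G}. Once $\zeta>1$ is produced on $[a,T]\times[0,L]$ (resp.\ $[0,T]\times[c,d]$ under \eqref{G*u:I}), the proposition follows.
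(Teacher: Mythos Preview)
Your overall strategy---BDG, Lipschitz continuity of $\sigma$, moment bounds on the increments of $u$, and a near/far kernel localization using Lemma~\ref{lem:G}---matches the paper's. But there is a genuine gap at the very first step. You write the stochastic part of $\mathscr{E}$ as the single Walsh integral
\[
\int \bigl[G_{t'-s}(x',y)\1_{[0,t']}-G_{t-s}(x,y)\1_{[0,t]}\bigr]\bigl[\sigma(u(s,y))-\sigma(u(t,x))\bigr]\,\xi(\d s\,\d y)
\]
and then apply BDG. This integral is not defined: $\sigma(u(t,x))$ is only $\mathscr{F}_t$-measurable, so the integrand is not predictable on $\{s<t\}$. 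Equivalently, $\sigma(u(t,x))\int f\,\xi \ne \int f\,\sigma(u(t,x))\,\xi$ when $\sigma(u(t,x))$ is random, so the stochastic part of $\mathscr{E}$ does \emph{not} collapse to the displayed expression and BDG cannot be invoked on it.

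The paper's remedy---and this is the missing idea---is to \emph{freeze the coefficient in the past}. On a near region of the form $B=\{t-\delta<s<t,\ |y-x|\text{ small}\}$ one replaces $\sigma(u(t,x))$ by $\sigma(u(t-\delta,x))$, which \emph{is} $\mathscr{F}_{t-\delta}$-measurable and may therefore be pulled inside the stochastic integral over $(t-\delta,t)$; only then does BDG apply to $\iint_B[\cdots][\sigma(u(s,y))-\sigma(u(t-\delta,x))]\,\xi$. This creates one additional correction term $[\sigma(u(t-\delta,x))-\sigma(u(t,x))]\iint_B[\cdots]\,\xi$, handled by Cauchy--Schwarz and the $\delta^{1/4}$ temporal increment bound. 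Your purely geometric split $\{\rho((s,y),(t,x))\le\rho^\theta\}$ does not force $s>t-\delta$ on the near piece and so cannot invoke this adaptedness trick. Once you reshape the near region into a time slab, your near- and far-region estimates go through as you outlined; the paper carries this out separately for spatial and temporal increments (obtaining exponents $19/14$ and $19/12$ in $\rho$, respectively) and then combines them via the identity $\mathscr{E}(t,x;t',x')=\mathscr{E}(t,x';t',x')+\mathscr{E}(t,x;t,x')+[\sigma(u(t,x'))-\sigma(u(t,x))](w(t',x')-w(t,x'))$.
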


The rest of Section \ref{S:moment:error} is devoted to proving Proposition \ref{pr:linear}.
We first establish some lemmas.

\begin{lemma}\label{lem:sup:u}
If $b$ and $\sigma$ are bounded, then for any $0<a<T$, there exists $C>0$ such that $\sup_{t\in [a,T], x \in [0,L]}\|u(t\,,x)\|_k \le C \sqrt{k}$ for all $k \in [2\,,\infty)$.
\end{lemma}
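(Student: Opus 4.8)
\textbf{Proof proposal for Lemma \ref{lem:sup:u}.} The plan is to use the mild formulation \eqref{she:mild} and bound the three contributions to $u(t\,,x)$ separately, exploiting that boundedness of $b$ and $\sigma$ makes a fixed-point or Gronwall argument unnecessary. Write $u(t\,,x) = (G_t\ast u_0)(x) + u_1(t\,,x) + u_2(t\,,x)$, where $u_1$ is the drift integral and $u_2$ is the Walsh stochastic integral in \eqref{she:mild}. For the initial-data term, Cauchy--Schwarz and Lemma \ref{lem:int:G} give
\[
	|(G_t\ast u_0)(x)| \le \Big( \int_0^L [G_t(x\,,y)]^2\,\d y\Big)^{1/2}\|u_0\|_{L^2} \lesssim t^{-1/4}\|u_0\|_{L^2} \le a^{-1/4}\|u_0\|_{L^2}
\]
uniformly for $t\in[a\,,T]$, $x\in[0\,,L]$; this is deterministic and does not depend on $k$. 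For the drift term, boundedness of $b$ combined with Lemma \ref{lem:G} yields $\int_0^L |G_s(x\,,y)|\,\d y \lesssim 1$ uniformly in $s\in(0\,,T]$ and $x$ (extend the $y$-integral to $\R$, split at $|x-y|=\sqrt s$, and use $|G_s(x\,,y)|\le C(s^{-1/2}\wedge s|x-y|^{-3})$), hence $|u_1(t\,,x)| \le \|b\|_\infty\int_0^t\!\int_0^L |G_{t-s}(x\,,y)|\,\d y\,\d s \lesssim \|b\|_\infty T$, again a bound independent of $k$.

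The only term producing the $\sqrt k$ growth is $u_2$. By the Burkholder--Davis--Gundy inequality for Walsh stochastic integrals (whose constant is of order $\sqrt k$ for $k\ge 2$) followed by Minkowski's integral inequality applied to the $L^{k/2}$-norm of the nonnegative integrand, one gets
\[
	\|u_2(t\,,x)\|_k \le 2\sqrt k\,\Big( \int_0^t\!\int_0^L [G_{t-s}(x\,,y)]^2\,\|\sigma(u(s\,,y))\|_k^2\,\d s\,\d y\Big)^{1/2}.
\]
Since $\sigma$ is bounded we have $\|\sigma(u(s\,,y))\|_k \le \|\sigma\|_\infty$ (which also shows finiteness of the right-hand side, so no a priori moment bound is needed), and by Lemma \ref{lem:int:G} the remaining double integral is $\lesssim \sqrt t \le \sqrt T$. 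Therefore $\|u_2(t\,,x)\|_k \lesssim \|\sigma\|_\infty\,T^{1/4}\sqrt k$ uniformly in $(t\,,x)\in[a\,,T]\times[0\,,L]$. Adding the three bounds and using $\sqrt k \ge \sqrt 2$ to absorb the two deterministic terms into a constant multiple of $\sqrt k$ gives $\sup_{t\in[a,T],\,x\in[0,L]}\|u(t\,,x)\|_k \le C\sqrt k$, as claimed.

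There is no serious obstacle in this argument: it is a standard moment estimate, and the only point requiring a little care is tracking the $\sqrt k$ dependence of the BDG constant so that the final bound is linear in $\sqrt k$ rather than a larger power. The restriction $a>0$ enters only through the initial-data term (which can blow up like $t^{-1/4}$ for merely $L^2$ data), while the drift and stochastic contributions are controlled uniformly on $[0\,,T]\times[0\,,L]$.
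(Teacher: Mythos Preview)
Your proof is correct and follows essentially the same approach as the paper: decompose $u$ via the mild formulation into initial-data, drift, and stochastic-integral parts, bound the first two deterministically using $u_0\in L^2$ and boundedness of $b$, and control the third by BDG with the $\sqrt{k}$ constant and boundedness of $\sigma$. The only cosmetic difference is that the paper bounds the drift via Cauchy--Schwarz and Lemma~\ref{lem:int:G} (getting $\|I_1\|_k\lesssim t^{3/4}$) whereas you use the pointwise heat-kernel bound of Lemma~\ref{lem:G} to obtain $\int_0^L|G_s(x,y)|\,\d y\lesssim 1$ directly; both are equally valid.
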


\begin{proof}
Write $u(t\,,x) = I_0+I_1+I_2$, where
\begin{gather*}
	I_0 \textstyle= (G_t\ast u_0)(x), \quad 
	I_1 \textstyle= \int_{(0,t)\times[0,L]} G_{t-s}(x\,,y) b(u(s\,,y)) \, \d s\, \d y,\\
	I_2 \textstyle= \int_{(0,t)\times[0,L]} G_{t-s}(x\,,y) \sigma(u(s\,,y)) \, \xi(\d s\, \d y).
\end{gather*}
First, it is easy to show that $I_0$ is bounded on $[a\,,T]\times[0\,,L]$ using \eqref{G}, $u_0 \in L^2([0\,,L])$, and Lemma \ref{lem:G}.
Next, by Minkowski's inequality, the boundedness of $b$, Cauchy-Schwarz inequality, and Lemma \ref{lem:int:G},
\begin{align*}
	\|I_1\|_k &\textstyle\le \int_0^t \d s \int_0^L \d y\, |G_{t-s}(x\,,y)| \|b(u(s\,,y))\|_k\lesssim \int_0^t \d s \int_0^L \d y\, |G_{s}(x\,,y)| \\
	&\textstyle \le \int_0^t \d s \sqrt{L} \left[ \int_0^L |G_s(x\,,y)|^2 \d y \right]^{1/2}
	\lesssim \int_0^t s^{-1/4} \d s \lesssim t^{3/4}.
\end{align*}
Finally, by the Burkholder-Davis-Gundy (BDG) inequality \cite[Proposition 4.4]{DK}, the boundedness of $\sigma$, and Lemma \ref{lem:int:G},
\begin{align*}
	\|I_2\|_k^2
	&\textstyle\le k \int_0^t \d s \int_0^L \d y \, [G_{t-s}(x\,,y)]^2 \|\sigma(u(s\,,y))\|_k^2\\
	&\textstyle \lesssim k \int_0^t \d s \int_0^L \d y \, [G_{s}(x\,,y)]^2
	\lesssim k \sqrt{t}.
\end{align*}
Combine the estimates to finish the proof.
\end{proof}

\begin{lemma}\label{lem:u-u:x}
If $b$ and $\sigma$ are bounded, then for any $T>0$, there is $C>0$ such that
\begin{align*}
	\|\tilde{u}(t\,,x')-\tilde{u}(t\,,x)\|_k \le C \sqrt{k}\, |x'-x|^{1/2}
\end{align*}
uniformly for all $k \in [2\,,\infty)$, $t\in [0\,,T]$ and $x\,,x'\in [0\,,L]$.
\end{lemma}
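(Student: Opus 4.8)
The plan is to estimate the spatial increment of $\tilde u$ by decomposing it into the drift part and the stochastic part, following the definition \eqref{u:tilde}. Write $\tilde u(t,x') - \tilde u(t,x) = D(x,x') + M(x,x')$, where
\[
	D(x,x') = \int_{(0,t)\times[0,L]} [G_{t-s}(x',y) - G_{t-s}(x,y)]\, b(u(s,y))\, \d s\, \d y
\]
and $M(x,x')$ is the analogous expression with $b(u(s,y))\,\d s\,\d y$ replaced by $\sigma(u(s,y))\,\xi(\d s\,\d y)$. For the drift term, I would apply Minkowski's integral inequality, use boundedness of $b$, then Cauchy--Schwarz in $y$ followed by Lemma \ref{lem:int:G:x}(2): since $\int_0^t \d s \int_0^L \d y\, [G_s(x',y)-G_s(x,y)]^2 \lesssim |x'-x|$, one gets $\|D(x,x')\|_k \lesssim \int_0^t \d s\, \sqrt{L}\,(\int_0^L [G_s(x',y)-G_s(x,y)]^2\,\d y)^{1/2}$; bounding the integrand crudely by Lemma \ref{lem:int:G} (each $G$ term contributes $\lesssim s^{-1/4}$) on a time window near $s=t$ while using the small-increment bound away from it, or simply noting $\int_0^T \d s\,(\int_0^L[\cdots]^2\,\d y)^{1/2} \lesssim (\int_0^T\d s)^{1/2}(\int_0^T\d s\int_0^L[\cdots]^2\d y)^{1/2} \lesssim |x'-x|^{1/2}$ by Cauchy--Schwarz in $s$, gives $\|D(x,x')\|_k \lesssim |x'-x|^{1/2}$, which is even better than needed (no $\sqrt k$).

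For the stochastic term, I would invoke the Burkholder--Davis--Gundy inequality in the form used in the proof of Lemma \ref{lem:sup:u} (i.e. \cite[Proposition 4.4]{DK}), which yields
\[
	\|M(x,x')\|_k^2 \lesssim k \int_0^t \d s \int_0^L \d y\, [G_{t-s}(x',y) - G_{t-s}(x,y)]^2\, \|\sigma(u(s,y))\|_k^2 .
\]
Using boundedness of $\sigma$ to replace $\|\sigma(u(s,y))\|_k^2$ by a constant, a change of variable $s \mapsto t-s$, and then Lemma \ref{lem:int:G:x}(2), the double integral is $\lesssim |x'-x|$, so $\|M(x,x')\|_k^2 \lesssim k\,|x'-x|$, i.e. $\|M(x,x')\|_k \lesssim \sqrt k\, |x'-x|^{1/2}$. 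Combining the two estimates via the triangle inequality in $L^k(\Omega)$ gives the claimed bound, with the $\sqrt k$ coming entirely from the stochastic term. All bounds are uniform in $t \in [0,T]$ and $x,x' \in [0,L]$ because the heat-kernel estimates in Lemmas \ref{lem:int:G} and \ref{lem:int:G:x} are uniform there.

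I do not expect a genuine obstacle here: this is a standard factorization/BDG argument and every ingredient (boundedness of $b,\sigma$, Lemma \ref{lem:int:G:x}, BDG) is already available. The only point requiring a little care is handling the drift term near $s = t$, where the integrand $(\int_0^L [G_s(x',y)-G_s(x,y)]^2\,\d y)^{1/2}$ is not itself integrable against $\d s$ without exploiting the $|x'-x|$-smallness; the Cauchy--Schwarz-in-$s$ trick above circumvents this cleanly and in fact shows the drift contribution is $O(|x'-x|^{1/2})$ with no $k$-dependence, so it is dominated by the stochastic term.
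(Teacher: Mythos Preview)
Your proposal is correct and follows essentially the same approach as the paper: split $\tilde u(t,x')-\tilde u(t,x)$ into drift and stochastic parts, bound the drift via Minkowski, boundedness of $b$, Cauchy--Schwarz (the paper applies it jointly in $(s,y)$, you do it in $y$ then in $s$, which amounts to the same thing), and Lemma~\ref{lem:int:G:x}(2); bound the stochastic part via BDG, boundedness of $\sigma$, and Lemma~\ref{lem:int:G:x}(2). Your worry about integrability of $\big(\int_0^L[G_s(x',y)-G_s(x,y)]^2\,\d y\big)^{1/2}$ near $s=0$ is in fact unnecessary (by Lemma~\ref{lem:int:G} this quantity is $\lesssim s^{-1/4}$, which is integrable), but your Cauchy--Schwarz-in-$s$ fix is exactly what the paper does anyway.
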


\begin{proof}
Write $\tilde{u}(t\,,x')-\tilde{u}(t\,,x) = I_1+I_2$, where
\begin{align*}
	I_1 &\textstyle= \int_{(0,t)\times[0,L]} [G_{t-s}(x',y)-G_{t-s}(x\,,y)] b(u(s\,,y)) \, \d s\, \d y,\\
	I_2 &\textstyle= \int_{(0,t)\times[0,L]} [G_{t-s}(x',y)-G_{t-s}(x\,,y)] \sigma(u(s\,,y)) \, \xi(\d s\, \d y).
\end{align*}
Thanks to Minkowski's inequality, the boundedness of $b$, Cauchy-Schwarz inequality, and Lemma \ref{lem:int:G:x},
\begin{align*}
	\|I_1\|_k&\textstyle
	\le \int_0^t \d s \int_0^L \d y\, |G_{t-s}(x',y)-G_{t-s}(x\,,y)| \|b(u(s\,,y))\|_k\\
	&\textstyle \lesssim \int_0^t \d s \int_0^L \d y\, |G_{s}(x',y)-G_{s}(x\,,y)|\\
	&\textstyle\lesssim \sqrt{tL} \left[ \int_0^t \d s \int_0^L \d y\, |G_{s}(x',y)-G_{s}(x\,,y)|^2 \right]^{1/2}
	\lesssim |x'-x|^{1/2}.
\end{align*}
By the BDG inequality \cite[Prop. 4.4]{DK}, the boundedness of $\sigma$, and Lemma \ref{lem:int:G:x},
\begin{align*}
	\|I_2\|_k^2
	&\textstyle\le k \int_0^t \d s \int_0^L \d y \, [G_{t-s}(x',y)-G_{t-s}(x\,,y)]^2 \|\sigma(u(s\,,y))\|_k^2\\
	&\textstyle \lesssim k \int_0^t \d s \int_0^L \d y \, [G_s(x',y)-G_{s}(x\,,y)]^2
	\lesssim k |x'-x|.
\end{align*}
The proof is complete.
\end{proof}

\begin{lemma}\label{lem:u-u:t}
If $b$ and $\sigma$ are bounded, then for any $T>0$, there is $C>0$ such that
\begin{align*}
	\|\tilde{u}(t',x)-\tilde{u}(t\,,x)\|_k \le C \sqrt{k}\, |t'-t|^{1/4}
\end{align*}
uniformly for all $k\in [2\,,\infty)$, $t\,,t'\in [0\,,T]$ and $x\in [0\,,L]$.
\end{lemma}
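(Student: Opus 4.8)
The argument runs in close parallel to that of Lemma \ref{lem:u-u:x}, the only new feature being the presence of an extra time layer $(t\,,t')$. Assume without loss of generality that $0 \le t < t' \le T$. Using the representation of $\tilde u$ in \eqref{u:tilde}, I would decompose
\begin{align*}
	\tilde{u}(t',x)-\tilde{u}(t\,,x) = J_1 + J_2 + J_3 + J_4,
\end{align*}
where $J_1 = \int_{(0,t)\times[0,L]} [G_{t'-s}(x\,,y)-G_{t-s}(x\,,y)]\,b(u(s\,,y))\,\d s\,\d y$, $J_2 = \int_{(t,t')\times[0,L]} G_{t'-s}(x\,,y)\,b(u(s\,,y))\,\d s\,\d y$, $J_3 = \int_{(0,t)\times[0,L]} [G_{t'-s}(x\,,y)-G_{t-s}(x\,,y)]\,\sigma(u(s\,,y))\,\xi(\d s\,\d y)$, and $J_4 = \int_{(t,t')\times[0,L]} G_{t'-s}(x\,,y)\,\sigma(u(s\,,y))\,\xi(\d s\,\d y)$.

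For the two drift terms $J_1$ and $J_2$, I would apply Minkowski's inequality together with the boundedness of $b$ to replace $\|b(u(s\,,y))\|_k$ by $\|b\|_\infty$, then use Cauchy--Schwarz in the spatial variable $y$ followed by Cauchy--Schwarz in the time variable $s$. The resulting space-time $L^2$ integrals of $[G_{t'-s}(x\,,\cdot)-G_{t-s}(x\,,\cdot)]^2$ over $(0\,,t)\times[0\,,L]$ and of $[G_{t'-s}(x\,,\cdot)]^2$ over $(t\,,t')\times[0\,,L]$ are each bounded by a constant multiple of $(t'-t)^{1/2}$ by parts 2 and 3 of Lemma \ref{lem:int:G:t}. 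This gives $\|J_1\|_k \lesssim (t'-t)^{1/4}$ and $\|J_2\|_k \lesssim (t'-t)^{3/4} \le T^{1/2}(t'-t)^{1/4}$, with implied constants independent of $k$.

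For the two stochastic terms $J_3$ and $J_4$, I would instead invoke the Burkholder--Davis--Gundy inequality in the form of \cite[Proposition 4.4]{DK} together with the boundedness of $\sigma$, which yields $\|J_3\|_k^2 \lesssim k \int_0^t \d s \int_0^L \d y\, [G_{t'-s}(x\,,y)-G_{t-s}(x\,,y)]^2$ and $\|J_4\|_k^2 \lesssim k \int_t^{t'} \d s \int_0^L \d y\, [G_{t'-s}(x\,,y)]^2$; both are again $\lesssim k(t'-t)^{1/2}$ by parts 2 and 3 of Lemma \ref{lem:int:G:t}, so $\|J_3\|_k + \|J_4\|_k \lesssim \sqrt{k}\,(t'-t)^{1/4}$. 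Summing the four estimates completes the proof.

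I do not expect a genuine obstacle here; the one point requiring care is the uniformity down to $t=0$. This is precisely why the hypothesis is boundedness of $b$ and $\sigma$ rather than mere Lipschitz continuity: it lets one bound $\|b(u(s\,,y))\|_k$ and $\|\sigma(u(s\,,y))\|_k$ by constants instead of by $\sqrt{k}\,\|u(s\,,y)\|_k$, the latter being uncontrolled as $s\to 0^+$ (compare Lemma \ref{lem:sup:u}, which is only stated for $t\in[a\,,T]$ with $a>0$). The only other bookkeeping subtlety is isolating the extra time layer $(t\,,t')$ and estimating it via Lemma \ref{lem:int:G:t}(3).
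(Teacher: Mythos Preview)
Your proposal is correct and follows essentially the same route as the paper: the same four-term decomposition (drift split at time $t$, noise split at time $t$), the same tools (Minkowski and Cauchy--Schwarz for the drift, BDG for the stochastic integrals, boundedness of $b$ and $\sigma$), and the same reduction to the heat-kernel integral estimates in Lemma~\ref{lem:int:G:t}. Your observation that $\|J_2\|_k \lesssim (t'-t)^{3/4}$ is a slightly sharper intermediate bound than the paper records, but otherwise the arguments coincide.
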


\begin{proof}
Suppose $t<t'$.
Write $\tilde{u}(t',x)-\tilde{u}(t\,,x) = I_1+I_2+I_3+I_4$, where
\begin{align*}
	I_1 &\textstyle= \int_0^t \d s \int_0^L \d y\, [G_{t'-s}(x\,,y)-G_{t-s}(x\,,y)] b(u(s\,,y)),\\
	I_2 &\textstyle= \int_t^{t'} \d s \int_0^L \d y \, G_{t'-s}(x\,,y) b(u(s\,,y)),\\
	I_3&\textstyle= \int_{(0,t)\times[0,L]} [G_{t'-s}(x\,,y)-G_{t-s}(x\,,y)] \sigma(u(s\,,y)) \, \xi(\d s \, \d y),\\
	I_4 &\textstyle= \int_{(t,t')\times[0,L]} G_{t'-s}(x\,,y) \sigma(u(s\,,y)) \, \xi(\d s \, \d y).
\end{align*}
Since $b$ is bounded, Minkowski's inequality, Cauchy-Schwarz inequality and Lemma \ref{lem:int:G:t} yield
$\|I_1\|_k \lesssim |t'-t|^{1/4}$ and $\|I_2\|_k \lesssim |t'-t|^{1/4}$.
Also, since $\sigma$ is bounded, it follows from the BDG inequality \cite[Prop. 4.4]{DK} and Lemma \ref{lem:int:G:t} that
\begin{align*}
	\|I_3\|_k^2 
	&\textstyle\le k \int_0^t \d s \int_0^L \d y \, [G_{t'-s}(x\,,y)-G_{t-s}(x\,,y)]^2
	\lesssim k (t'-t)^{1/2}
\end{align*}
and
\begin{align*}\textstyle
	\|I_4\|_k^2
	\le k \int_t^{t'} \d s \int_0^L \d y \, [G_{t'-s}(x\,,y)]^2
	\lesssim k (t'-t)^{1/2}.
\end{align*}
Combine the estimates to finish the proof.
\end{proof}

\begin{lemma}\label{lem:E:exp}
If $b$ and $\sigma$ are bounded, then for any $T>0$, there is $\gamma>0$ such that
\begin{align}\label{E:exp:u}
	\E\left[ \exp\left( \gamma \sup_{z,z'\in [0,T]\times[0,L]} \left|\frac{\tilde{u}(z)-\tilde{u}(z')}{\rho(z\,,z')\sqrt{\log_+(1/\rho(z\,,z'))}}\right|^2 \right)\right]<\infty
\end{align}
and
\begin{align}\label{E:exp:w}
	\E\left[ \exp\left( \gamma \sup_{z,z'\in [0,T]\times[0,L]} \left|\frac{w(z)-w(z')}{\rho(z\,,z')\sqrt{\log_+(1/\rho(z\,,z'))}}\right|^2 \right)\right]<\infty.
\end{align}
\end{lemma}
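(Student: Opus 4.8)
Both \eqref{E:exp:u} and \eqref{E:exp:w} follow from one statement: if $X=\{X(z)\}_{z\in[0,T]\times[0,L]}$ has an a.s.\ continuous version and there is $C_0<\infty$ with
\[
	\|X(z)-X(z')\|_k \le C_0\sqrt k\,\rho(z,z')\qquad\text{for all }k\in[2,\infty),\ z,z'\in[0,T]\times[0,L],
\]
then $\E[\mathrm{e}^{\gamma S_X^2}]<\infty$ for some $\gamma>0$, where $S_X:=\sup_{z\ne z'}|X(z)-X(z')|/\bigl(\rho(z,z')\sqrt{\log_+(1/\rho(z,z'))}\bigr)$. For $X=w$ this bound holds because $w$ is Gaussian, so $\|w(z)-w(z')\|_k\lesssim\sqrt k\,\|w(z)-w(z')\|_2=\sqrt k\,\sqrt{\Var(w(z)-w(z'))}\lesssim\sqrt k\,\rho(z,z')$ by Lemma \ref{lem:w-w}; this requires no boundedness of $b,\sigma$. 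For $X=\tilde u$ (here $b,\sigma$ bounded is used) it follows from the triangle inequality applied to Lemmas \ref{lem:u-u:x} and \ref{lem:u-u:t}, since $\rho((t,x),(t',x'))=\max\{|t'-t|^{1/4},|x'-x|^{1/2}\}$. Continuous versions of $\tilde u$ and $w$ exist by the Kolmogorov criterion and these same moment bounds.

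\textbf{Garsia--Rodemich--Rumsey with the right tuning.} I would prove the reduced statement via the Garsia--Rodemich--Rumsey (GRR) inequality on the compact metric space $([0,T]\times[0,L],\rho)$ with $\mu=$ Lebesgue measure. The crucial choice is $\Psi(x)=\mathrm e^{x^2/\kappa}-1$ together with the \emph{linear} function $p(r)=r$ (not $p(r)=r\sqrt{\log_+(1/r)}$): loading the logarithm into $\Psi$ rather than into $p$ is precisely what produces the sharp modulus $\rho\sqrt{\log(1/\rho)}$ instead of the too-large $\rho\log(1/\rho)$. The hypothesis above is equivalent to a uniform sub-Gaussian bound on the normalized increments $(X(z)-X(z'))/\rho(z,z')$, so for $\kappa$ large enough depending only on $C_0$ one has $\E[\Psi(|X(z)-X(z')|/\rho(z,z'))]\le1$; since $p(\rho(z,z'))=\rho(z,z')$, Tonelli gives
\[
	\E[B]=\int\!\!\int \E\!\left[\Psi\!\left(\frac{|X(z)-X(z')|}{p(\rho(z,z'))}\right)\right]\mu(\d z)\,\mu(\d z')\le \mu([0,T]\times[0,L])^2<\infty,
\]
where $B$ is the GRR double integral; in particular $B<\infty$ a.s.

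\textbf{Conclusion.} Using the elementary bound $\mu(B_\rho(z,r))\gtrsim r^6$, valid uniformly up to the boundary of the rectangle because $\rho$-balls are products of a time-interval of length $\asymp r^4$ and a space-interval of length $\asymp r^2$, GRR yields, a.s.\ for all $z,z'$,
\[
	|X(z)-X(z')|\lesssim \int_0^{2\rho(z,z')}\!\sqrt{\log\bigl(1+CB/r^{12}\bigr)}\,\d r \lesssim \bigl(1+\sqrt{\log(1+B)}\bigr)\rho(z,z') + \rho(z,z')\sqrt{\log_+(1/\rho(z,z'))},
\]
after writing $\log(1+CB/r^{12})\lesssim \log(1+B)+\log_+(1/r)+1$ and using $\int_0^\delta\sqrt{\log_+(1/r)}\,\d r\lesssim \delta\sqrt{\log_+(1/\delta)}$. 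Dividing by $\rho(z,z')\sqrt{\log_+(1/\rho(z,z'))}$ and using $\log_+(1/\rho)\ge1$ gives $S_X\le C_1\bigl(1+\sqrt{\log(1+B)}\bigr)$ a.s.\ for some $C_1$ depending on $C_0,T,L$, hence $\mathrm e^{\gamma S_X^2}\lesssim (1+B)^{2\gamma C_1^2}$. Choosing $\gamma$ with $2\gamma C_1^2\le1$ and applying Jensen's inequality to the concave map $t\mapsto t^{2\gamma C_1^2}$ gives $\E[\mathrm e^{\gamma S_X^2}]\lesssim \E[1+B]<\infty$, which is the reduced statement.

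\textbf{Main obstacle.} The only genuine subtlety is the GRR tuning just described (linear $p$, exponential $\Psi$), since the naive choice over-counts the logarithm and would only deliver the modulus $\rho\log(1/\rho)$; together with it, the passage from the pointwise GRR estimate to exponential integrability of $S_X$ relies on the single random variable $B$ having finite first moment and on invoking Jensen with a small exponent. Verifying the sub-Gaussian increment bound and the volume estimate $\mu(B_\rho(z,r))\gtrsim r^6$ is routine given Lemmas \ref{lem:w-w}, \ref{lem:u-u:x}, \ref{lem:u-u:t}.
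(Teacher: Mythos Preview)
Your proposal is correct and follows essentially the same route as the paper's own proof: the paper establishes the sub-Gaussian increment bound $\|X(z)-X(z')\|_k \le C\sqrt{k}\,\rho(z,z')$ from Lemmas \ref{lem:w-w}, \ref{lem:u-u:x}, \ref{lem:u-u:t} and then simply invokes ``Dudley's metric entropy theorem or the Garsia--Rodemich--Ramsey continuity lemma'' with a citation, declaring the rest standard; you have supplied precisely that standard GRR argument with the exponential $\Psi$ and linear $p$, which is the intended content of the cited result.
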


\begin{proof}
Thanks to Lemmas \ref{lem:w-w}, \ref{lem:u-u:x} and \ref{lem:u-u:t}, for any $T>0$, there is $C>0$ such that
\begin{align}\label{u-u}
	\|w(z)-w(z')\|_k \le C \sqrt{k} \, \rho(z\,,z') \quad \text{and}
	\quad
	\|\tilde{u}(z)-\tilde{u}(z')\|_k \le C\sqrt{k}\, \rho(z\,,z')
\end{align}
uniformly for all $k\in [2\,,\infty)$ and $z\,,z' \in [0\,,T]\times[0\,,L]$.
Therefore, \eqref{E:exp:w} and \eqref{E:exp:u} follow from \eqref{u-u} and an appeal to Dudley's metric entropy theorem \cite{Dudley} or the Garsia-Rodemich-Ramsey continuity lemma (see, e.g., \cite[Proposition A.1]{DKN07}).
This is standard, so we omit the details.
\end{proof}

\begin{lemma}\label{lem:linear:x}
If $b$ and $\sigma$ are bounded, then for any $0<a<T$, there exist $C > 0$ and $\epsilon_1 \in (0\,,L)$ such that
\begin{align}\label{E:bd:x}
	\|\mathscr{E}(t\,,x\,;t\,,x')\|_k \le C k |x'-x|^{19/28}
\end{align}
uniformly for all $k\in [2\,,\infty)$ and $(t\,,x)\,,(t\,,x') \in I:=[a\,,T]\times [0\,,L]$ with $|x'-x| \le \epsilon_1$. This remains valid when $I = [0\,,T]\times [c\,,d]$ for fixed $T>0$ and $0 \le c < d \le L$ if  \eqref{G*u:I} holds.
\end{lemma}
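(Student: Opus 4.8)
The plan is to separate the drift and noise contributions to the spatial linearization error and to control the (dominant) noise part by a time-splitting localization that relies only on the crude kernel bound Lemma \ref{lem:G} together with the $L^2$-type estimates already proved. Expanding \eqref{E} at $t'=t$ and using \eqref{u:tilde} and \eqref{w}, write $\mathscr{E}(t\,,x\,;t\,,x')=\mathscr{E}_1+\mathscr{E}_2$ with
$\mathscr{E}_1=\int_{(0,t)\times[0,L]}[G_{t-s}(x',y)-G_{t-s}(x\,,y)]\,b(u(s\,,y))\,\d s\,\d y$ and
$\mathscr{E}_2=\int_{(0,t)\times[0,L]}[G_{t-s}(x',y)-G_{t-s}(x\,,y)]\,[\sigma(u(s\,,y))-\sigma(u(t\,,x))]\,\xi(\d s\,\d y)$;
the recentering of $\sigma$ is exactly what the subtraction of $\sigma(u(t\,,x))(w(t\,,x')-w(t\,,x))$ produces.

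The drift term is easy and is not the binding contribution. By Minkowski's inequality and boundedness of $b$, $\|\mathscr{E}_1\|_k\lesssim\int_0^t\int_0^L|G_\tau(x',y)-G_\tau(x\,,y)|\,\d y\,\d\tau$, and Cauchy--Schwarz in $y$ followed by Lemma \ref{lem:int:G:x}(1) and integration in $\tau$ give $\|\mathscr{E}_1\|_k\lesssim|x'-x|$, an exponent far exceeding $19/28$. Only $\|b\|_\infty$ enters, so this step is unchanged on $[a\,,T]\times[0\,,L]$ and on $[0\,,T]\times[c\,,d]$ under \eqref{G*u:I}.

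For $\mathscr{E}_2$, fix a scale $\delta=|x'-x|^\beta$ with $\beta\in(0\,,1)$ to be chosen, shrink $\epsilon_1$ so that $t-\delta\ge a/2$ on the near piece (in the case $I=[a\,,T]\times[0\,,L]$), and split the $s$-integral at $t-s=\delta$. On the near piece $0<t-s<\delta$, the BDG inequality \cite[Prop.~4.4]{DK} bounds the squared $k$-norm by $k\int_0^\delta\!\!\int_0^L[G_\tau(x',y)-G_\tau(x\,,y)]^2\,\|\sigma(u(t-\tau\,,y))-\sigma(u(t\,,x))\|_k^2\,\d y\,\d\tau$; since $\sigma$ is Lipschitz, Lemmas \ref{lem:u-u:x}, \ref{lem:u-u:t}, \ref{lem:G*u}, \ref{lem:sup:u} give $\|\sigma(u(t-\tau\,,y))-\sigma(u(t\,,x))\|_k\lesssim\sqrt{k}\,\big((|y-x|^{1/2}\wedge1)+\tau^{1/4}\big)$. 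The $\tau^{1/4}$ contribution, paired with Lemma \ref{lem:int:G:x}(1), is negligible relative to the target; the $|y-x|\wedge1$ contribution is estimated by splitting the $y$-integral at $|y-x|=R$, bounding the inner region by $R$ times Lemma \ref{lem:int:G:x}(1) and the outer region $|y-x|>R\gg|x'-x|$ by the pointwise tail bound $|G_\tau(x\,,y)|\lesssim\tau|y-x|^{-3}$ of Lemma \ref{lem:G}, and then optimizing in $R$. On the far piece $t-s>\delta$, one uses the crude estimate $\|\sigma(u(s\,,y))-\sigma(u(t\,,x))\|_k\lesssim\sqrt{k}\,(1+s^{-1/4})$ — which follows from boundedness of $b\,,\sigma$, Lemma \ref{lem:int:G}, and $|(G_s\ast u_0)(y)|\lesssim s^{-1/4}\|u_0\|_{L^2}$ — while Lemma \ref{lem:int:G:x}(1) now yields $\int_0^L[G_\tau(x',y)-G_\tau(x\,,y)]^2\,\d y\lesssim|x'-x|^2\tau^{-3/2}$ for $\tau\ge\delta$, against which the $s^{-1/4}$ factor is integrable. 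Adding the two pieces and choosing $\beta$ to balance them (one may take $\beta=4/7$, so $\delta\asymp|x'-x|^{4/7}$) yields $\|\mathscr{E}_2\|_k\lesssim k\,|x'-x|^{6/7}$, and since $6/7>19/28$ we obtain $\|\mathscr{E}(t\,,x\,;t\,,x')\|_k\lesssim k\,|x'-x|^{19/28}$ for $|x'-x|\le\epsilon_1$. When $I=[0\,,T]\times[c\,,d]$, the result is trivial at $t=0$ and for $t>0$ one repeats the argument with \eqref{G*u:I} replacing Lemma \ref{lem:G*u} for the terms involving $G\ast u_0$: for $y$ bounded away from $x$ the heat-kernel tail decay makes mere boundedness of $G\ast u_0$ (a consequence of $u_0$ being bounded) sufficient, so only a neighbourhood of $x$ calls on the stronger hypothesis \eqref{G*u:I}.

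The main obstacle is the near-piece estimate of $\mathscr{E}_2$: because $y$ ranges over all of $[0\,,L]$, one cannot simply use a Hölder modulus $\tau^{1/4}$ for $\|\sigma(u(t-\tau\,,y))-\sigma(u(t\,,x))\|_k$, and one must genuinely extract decay in $|x'-x|$ from the heat-kernel difference weighted by $|y-x|$; this is precisely where the interplay between the $L^2$-estimate Lemma \ref{lem:int:G:x}(1) and the pointwise bound Lemma \ref{lem:G} — in place of any Gaussian heat-kernel bound — is needed, together with the optimization of the cutoffs $R$ and $\delta$. A secondary technical point is the integrability as $s\to0^+$ of the far-piece moment bound when $u_0$ is only square-integrable (harmless, and absent under \eqref{G*u:I}), and the matching low-$t$ control of $G\ast u_0$ near the ends of $[c\,,d]$ in the second case.
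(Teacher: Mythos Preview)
Your decomposition $\mathscr{E}=\mathscr{E}_1+\mathscr{E}_2$ is the same as the paper's $J_1+J_2$, and your treatment of $\mathscr{E}_1$ is fine. The genuine gap is in $\mathscr{E}_2$: you apply the BDG inequality directly to (pieces of)
\[
\int_{(0,t)\times[0,L]}[G_{t-s}(x',y)-G_{t-s}(x\,,y)]\,[\sigma(u(s\,,y))-\sigma(u(t\,,x))]\,\xi(\d s\,\d y),
\]
but this is \emph{not} a Walsh integral of a predictable integrand, because $\sigma(u(t\,,x))$ is $\mathscr{F}_t$-measurable and hence not $\mathscr{F}_s$-measurable for $s<t$. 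Your subsequent bound $k\int_0^\delta\!\int_0^L[\cdots]^2\,\|\sigma(u(t-\tau\,,y))-\sigma(u(t\,,x))\|_k^2\,\d y\,\d\tau$ therefore has no justification; BDG in this form requires predictability of the integrand.

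The paper repairs exactly this point by \emph{freezing} the anticipating factor at an earlier time: on the near region $B=\{t-\delta<s<t,\ |x-y|\le\sqrt{|\varepsilon|}\}$ one replaces $\sigma(u(t\,,x))$ by $\sigma(u(t-\delta\,,x))$, which \emph{is} $\mathscr{F}_{t-\delta}$-measurable and can legitimately be pulled inside the stochastic integral over $B$; the resulting integrand $\sigma(u(s\,,y))-\sigma(u(t-\delta\,,x))$ is predictable and BDG applies. The price is an additional correction term $[\sigma(u(t-\delta\,,x))-\sigma(u(t\,,x))]\iint_B[\cdots]\,\xi$, handled by Cauchy--Schwarz together with the temporal increment bound Lemma~\ref{lem:u-u:t}. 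On $B^c$ one keeps the two stochastic integrals separate (one with predictable integrand $\sigma(u(s\,,y))$, the other a Gaussian integral multiplied by the bounded random variable $\sigma(u(t\,,x))$), and it is the spatial localization $|x-y|\le\sqrt{|\varepsilon|}$ together with the crude kernel bound Lemma~\ref{lem:G} that controls the complementary piece $B_2$. This four-term split, optimized with $\delta=|\varepsilon|^{9/7}$, is what produces the exponent $19/28$; your proposed time-only split and BDG step would have to be rebuilt around such a freezing argument before any exponent can be claimed.
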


\begin{proof}
Let $(t\,,x)\,,(t\,,x') \in I = [a\,,T]\times [0\,,L]$.
Set $\varepsilon=x'-x$.
Write $\mathscr{E}(t\,,x\,;t\,,x')= J_1 + J_2$, where
\begin{align*}
	J_1 &= \int_{(0,t)\times[0,L]} [G_{t-s}(x+\varepsilon\,,y)-G_{t-s}(x\,,y)] b(u(s\,,y)) \, \d s \, \d y,\\
	J_2 &= \int_{(0,t)\times[0,L]} [G_{t-s}(x+\varepsilon\,,y) - G_{t-s}(x\,,y)] \sigma(u(s\,,y)) \, \xi(\d s\, \d y)\\
	& \quad - \sigma(u(t\,,x)) \int_{(0,t)\times[0,L]} [G_{t-s}(x+\varepsilon\,,y) - G_{t-s}(x\,,y)] \, \xi(\d s\, \d y).
\end{align*}
Since $b$ is bounded, we may use \eqref{G}, \eqref{lambda} and \eqref{f_n:bd} to see that for any $\gamma \in (0\,,1)$,
\begin{align*}
	\|J_1\|_k& \lesssim \int_0^t \d s \int_0^L \d y \, |G_{s}(x+\varepsilon\,,y) - G_{s}(x\,,y)| \\
	& \lesssim \int_0^t \d s \sum_{n=1}^\infty (\varepsilon n \wedge 1)\, \e^{-cn^2 s}
	\le \varepsilon^\gamma \int_0^t \d s \sum_{n=1}^\infty n^\gamma \e^{-cn^2 s}\\
	&\lesssim \varepsilon^\gamma \int_0^t \d s \int_0^\infty \d z \, z^\gamma \, \e^{-cz^2 s}
	\lesssim \varepsilon^\gamma \int_0^t s^{-(1+\gamma)/2} \lesssim \varepsilon^\gamma,
\end{align*}
where the implied constants depend on $\gamma$.

In order to estimate $J_2$, we use the idea of localization of heat kernel \cite{FKM15}.
Let $\delta \in (0\,,|\varepsilon|)$ and define
\begin{align*}
	&B = \{ (s\,,y) \in (0\,,t) \times [0\,,L] : t-\delta < s < t\,, |x-y| \le \sqrt{|\varepsilon|}\, \},\\
	&B^c = ((0\,,t) \times [0\,,L]) \setminus B.
\end{align*}
Suppose first $\delta < t$.
Then, we may write $J_2 = J_{2,1} + J_{2,2} + J_{2,3} + J_{2,4}$, where
\begin{align*}
	J_{2,1} &= \iint_B [G_{t-s}(x+\varepsilon\,,y) - G_{t-s}(x\,,y)] [\sigma(u(s\,,y)) - \sigma(u(t-\delta\,,x))] \, \xi(\d s\,\d y),\\
	J_{2,2} &= [\sigma(u(t-\delta\,,x)) - \sigma(u(t\,,x))] \iint_B [G_{t-s}(x+\varepsilon\,,y) - G_{t-s}(x\,,y)] \, \xi(\d s\,\d y),\\
	J_{2,3} &= \iint_{B^c} [G_{t-s}(x+\varepsilon\,,y) - G_{t-s}(x\,,y)]  \sigma(u(s\,,y)) \, \xi(\d s\,\d y),\\
	J_{2,4} &= -\sigma(u(t\,,x)) \iint_{B^c} [G_{t-s}(x+\varepsilon\,,y) - G_{t-s}(x\,,y)] \, \xi(\d s\,\d y).
\end{align*}
Here, we have used the equality
\begin{align*}
	&\sigma(u(t-\delta\,,x)) \iint_B [G_{t-s}(x+\varepsilon\,,y) - G_{t-s}(x\,,y)] \, \xi(\d s\,\d y)\\
	& = \iint_B [G_{t-s}(x+\varepsilon\,,y) - G_{t-s}(x\,,y)] \sigma(u(t-\delta\,,x)) \, \xi(\d s\,\d y),
\end{align*}
which holds because $u(t-\delta\,,x)$ is $\mathscr{F}_{t-\delta}$ measurable and the right-hand side is a well-defined Walsh integral of a predictable process \cite{Walsh}.
By the BDG inequality \cite[Prop. 4.4]{DK}, the Lipschitz continuity of $\sigma$, Lemmas \ref{lem:u-u:x}, \ref{lem:u-u:t}, and Lemma \ref{lem:G*u} (or \eqref{G*u:I} when $I=[0\,,T]\times [c\,,d]$), we have
\begin{align*}
	\|J_{2,1}\|_k^2 
	&\lesssim k \iint_B \d s \, \d y\, [G_{t-s}(x+\varepsilon\,,y) - G_{t-s}(x\,,y)]^2 \|u(s\,,y) - u(t-\delta\,,x)\|_k^2\\
	& \lesssim k^2 \int_{t-\delta}^t \d s \sqrt{s-(t-\delta)} \int_0^L \d y \, \1_{\{|x-y|\le\sqrt{|\varepsilon|}\}} [G_{t-s}(x+\varepsilon\,,y) - G_{t-s}(x\,,y)]^2\\
	& \quad + k^2 \int_{t-\delta}^t \d s \int_0^L \d y\, \1_{\{|x-y|\le\sqrt{|\varepsilon|}\}} [G_{t-s}(x+\varepsilon\,,y) - G_{t-s}(x\,,y)]^2 |x-y|\\
	& \lesssim k^2 \sqrt{|\varepsilon|} \iint_B \d s \, \d y \, [G_{t-s}(x+\varepsilon\,,y) - G_{t-s}(x\,,y)]^2\\
	& \le k^2 \sqrt{|\varepsilon|} \Var(w(t\,,x+\varepsilon)-w(t\,,x)) \lesssim k^2 |\varepsilon|^{3/2}.
\end{align*}
Similarly, by Cauchy-Schwarz inequality,
\begin{align*}
	\|J_{2,2}\|_k^2 
	&\textstyle \le \|u(t-\delta\,,x)-u(t\,,x)\|_{2k}^2 \cdot\|\iint_B [G_{t-s}(x+\varepsilon\,,y) - G_{t-s}(x\,,y)] \, \xi(\d s \, \d y)\|_{2k}^2\\
	& \lesssim k^2 \delta^{1/2} \Var(w(t\,,x+\varepsilon)-w(t\,,x)) \lesssim k^2 |\varepsilon| \delta^{1/2}.
\end{align*}
Next, by the BDG inequality \cite[Prop. 4.4]{DK} and the boundedness of $\sigma$, we have
\begin{align*}
	\|J_{2,3}\|_k^2 \lesssim k \iint_{B^c} \d s \, \d y\, [G_{t-s}(x+\varepsilon\,,y) - G_{t-s}(x\,,y)]^2.
\end{align*}
We estimate the integral by splitting $B^c$ into the union of $B_1$ and $B_2$, where
\begin{align*}
	&B_1 := (0\,,t-\delta] \times [0\,,L],\\
	&B_2 := (t-\delta\,,t) \times \{ y \in [0\,,L] : |x-y| >\sqrt{|\varepsilon|}\,\}.
\end{align*}
By Lemma \ref{lem:int:G:x},
\begin{align*}
	&\iint_{B_1}  \d s \, \d y\, [G_{t-s}(x+\varepsilon\,,y) - G_{t-s}(x\,,y)]^2\\
	&\lesssim \int_0^{t-\delta} \d s \int_0^\infty \d z \, (|\varepsilon|^2 z^2 \wedge 1) \, \e^{-cz^2(t-s)}
	\le |\varepsilon|^2 \int_\delta^{t} \d s \int_0^\infty \d z \, z^2 \e^{-cz^2s}\\
	& = |\varepsilon|^2 \int_\delta^t \frac{\d s}{s^{3/2}} \int_0^\infty \d z \, z^2 \e^{-cz^2} \lesssim |\varepsilon|^2 \int_\delta^\infty \frac{\d s}{s^{3/2}}
	\lesssim |\varepsilon|^{2} \delta^{-1/2}.
\end{align*}
Moreover, if $\epsilon_1>0$ is small enough, then $\sqrt{|\varepsilon|}-|\varepsilon| > \sqrt{|\varepsilon|}/2$ for $|\varepsilon| \le \epsilon_1$, so we may use Lemma \ref{lem:G} to deduce that
\begin{align*}
	&\iint_{B_2} \d s \, \d y\, [G_{t-s}(x+\varepsilon\,,y) - G_{t-s}(x\,,y)]^2\\
	& \lesssim \int_{t-\delta}^t \d s \int_0^L \d y\, \1_{\{|x-y|>\sqrt{|\varepsilon|}\}} \left[ \frac{(t-s)^2}{|x+\varepsilon-y|^6} + \frac{(t-s)^2}{|x-y|^6} \right]\\
	& \lesssim \int_{t-\delta}^t \d s \, (t-s)^2 \left[ \int_{\sqrt{|\varepsilon|}-|\varepsilon|}^\infty \frac{\d y}{y^6} + \int_{\sqrt{|\varepsilon|}}^\infty \frac{\d y}{y^6} \right]\\
	& \lesssim \int_{t-\delta}^t \d s \, (t-s)^2 \left[ \frac{1}{(\sqrt{|\varepsilon|}/2)^5} + \frac{1}{|\varepsilon|^{5/2}} \right]
	\lesssim |\varepsilon|^{-5/2} \delta^3.
\end{align*}
Hence, $\|J_{2,3}\|_k^2 \lesssim k (|\varepsilon|^{2}\delta^{-1/2} + |\varepsilon|^{-5/2}\delta^3)$.
Similarly, 
\begin{align*}
	\|J_{2,4}\|_k^2 \lesssim k \iint_{B^c} \d s \, \d y\, [G_{t-s}(x+\varepsilon\,,y) - G_{t-s}(x\,,y)]^2 \lesssim k (|\varepsilon|^{2}\delta^{-1/2} + |\varepsilon|^{-5/2}\delta^3).
\end{align*}
Combining the above estimates yields
\begin{align*}
	\|J_2\|_k 
	&\le \|J_{2,1}\|_k + \|J_{2,2}\|_k + \|J_{2,3}\|_k + \|J_{2,4}\|_k\\
	& \lesssim k \left[ |\varepsilon|^{3/4} +  |\varepsilon|^{1/2} \delta^{1/4} + |\varepsilon|\delta^{-1/4} + |\varepsilon|^{-5/4} \delta^{3/2} \right].
\end{align*}
Choose $\delta = |\varepsilon|^{9/7}$ to optimize this bound and deduce that if $t>\delta =|\varepsilon|^{9/7}$, then
\begin{align*}
	\|J_2\|_k \lesssim k \left[ |\varepsilon|^{3/4} + |\varepsilon|^{23/28} + |\varepsilon|^{19/28} + |\varepsilon|^{19/28} \right] \lesssim k |\varepsilon|^{19/28}.
\end{align*}
Combine the estimates for $J_1$ and $J_2$ to obtain the desired estimate \eqref{E:bd:x}.
Finally, if $t \le \delta= |\varepsilon|^{9/7}$, then the estimate for $J_1$ is still valid, whereas for $J_2$, by considering
\begin{align*}
	&B= \{(s\,,y) \in (0\,,t) \times [0\,,L] : |x-y| \le \sqrt{|\varepsilon|}\,\} \text{ and } B^c = B_1 \cup B_2,\\
	&\text{where }  B_1 = \varnothing \text{ and } B_2= \{(s\,,y) \in (0\,,t) \times [0\,,L] : |x-y| > \sqrt{|\varepsilon|}\,\},
\end{align*}
it is not hard to derive the same form of estimates for $J_{2,1}\,,\dots,J_{2,4}$.
Again, we obtain the desired estimate.
\end{proof}

\begin{lemma}\label{lem:linear:t}
If $b$ and $\sigma$ are bounded, then for any $0<a<T$, there is $C>0$ such that
\begin{align}\label{E:bd:t}
	\|\mathscr{E}(t\,,x\,;t',x)\|_k \le C k |t'-t|^{19/48}
\end{align}
uniformly for all $k\in [2\,,\infty)$ and $(t\,,x) \,, (t',x) \in I:=[a\,, T]\times [0\,,L]$ with $|t'-t|\le1$.
This remains valid when $I=[0\,,T]\times [c\,,d]$ for fixed $T>0$ and $0\le c < d \le L$ if \eqref{G*u:I} holds.
\end{lemma}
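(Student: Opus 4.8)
The plan is to follow the scheme of the proof of Lemma~\ref{lem:linear:x}, now for a purely temporal increment. Assume $t<t'$ and set $\varepsilon=t'-t$; we may take $\varepsilon$ small, since for $\varepsilon$ bounded below \eqref{E:bd:t} is immediate from Lemma~\ref{lem:sup:u} together with the Gaussian estimate $\|w(t\,,x)\|_k\lesssim\sqrt k$. Decomposing the increment of $\tilde u$ as in the proof of Lemma~\ref{lem:u-u:t}, and using that $w$ carries no drift so that the correction $\sigma(u(t\,,x))(w(t',x)-w(t\,,x))$ removes only the stochastic pieces, one obtains $\mathscr{E}(t\,,x\,;t',x)=I_1+I_2+K_3+K_4$, where
\begin{align*}
	I_1 &= \int_{(0,t)\times[0,L]}[G_{t'-s}(x\,,y)-G_{t-s}(x\,,y)]\,b(u(s\,,y))\,\d s\,\d y,\\
	I_2 &= \int_{(t,t')\times[0,L]}G_{t'-s}(x\,,y)\,b(u(s\,,y))\,\d s\,\d y,\\
	K_3 &= \int_{(0,t)\times[0,L]}[G_{t'-s}(x\,,y)-G_{t-s}(x\,,y)]\,[\sigma(u(s\,,y))-\sigma(u(t\,,x))]\,\xi(\d s\,\d y),\\
	K_4 &= \int_{(t,t')\times[0,L]}G_{t'-s}(x\,,y)\,[\sigma(u(s\,,y))-\sigma(u(t\,,x))]\,\xi(\d s\,\d y).
\end{align*}

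For the drift, boundedness of $b$, the inequality $\e^{-a}-\e^{-b}\le\e^{-a}((b-a)\wedge1)$, \eqref{lambda} and \eqref{f_n:bd} give $\|I_1\|_k\lesssim\varepsilon^\gamma\int_0^t(t-s)^{-(2\gamma+1)/2}\,\d s\lesssim\varepsilon^\gamma$ for every $\gamma<1/2$, hence in particular $\|I_1\|_k\lesssim\varepsilon^{19/48}$, while $\int_0^L|G_r(x\,,y)|\,\d y\lesssim1$ (a consequence of Lemma~\ref{lem:G}) yields $\|I_2\|_k\lesssim\varepsilon$. For $K_4$, the BDG inequality, the Lipschitz property of $\sigma$, and $\|u(s\,,y)-u(t\,,x)\|_k^2\lesssim k(|x-y|+\varepsilon^{1/2})$ (from Lemmas~\ref{lem:u-u:x}, \ref{lem:u-u:t}, \ref{lem:G*u}, or \eqref{G*u:I}) reduce matters to $\int_t^{t'}\!\int_0^L[G_{t'-s}(x\,,y)]^2(|x-y|+\varepsilon^{1/2})\,\d y\,\d s$; combining $\int_0^L[G_r(x\,,y)]^2|x-y|\,\d y\lesssim1$ (again Lemma~\ref{lem:G}) with Lemma~\ref{lem:int:G} gives $\|K_4\|_k\lesssim k\varepsilon^{1/2}$.

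The main work, and the step I expect to be the obstacle, is $K_3$, the temporal analogue of $J_2$ in Lemma~\ref{lem:linear:x}, for which the plan is to use the heat-kernel localization of \cite{FKM15}. Fix $\delta\in(\varepsilon\,,t)$ and $\ell\in(0\,,1)$, to be taken as powers of $\varepsilon$, and localize to the parabolic box $B=(t-\delta\,,t)\times\{|x-y|\le\ell\}$. On $B^c$ I would bound $\|\sigma(u(s\,,y))-\sigma(u(t\,,x))\|_k\le2\|\sigma\|_\infty$ and use $\int_0^L[G_{t'-s}(x\,,y)-G_{t-s}(x\,,y)]^2\,\d y\lesssim\sum_{n\ge1}(\varepsilon^2n^4\wedge1)\,\e^{-cn^2(t-s)}$ (Lemma~\ref{lem:int:G:t}) on the early-time piece $(0\,,t-\delta]\times[0\,,L]$ and the crude decay $|G_r(x\,,y)|\lesssim r/|x-y|^3$ of Lemma~\ref{lem:G} on the far piece $(t-\delta\,,t)\times\{|x-y|>\ell\}$, obtaining $\|K_3\1_{B^c}\|_k^2\lesssim k(\varepsilon^2\delta^{-3/2}+\delta^3\ell^{-5})$. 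On $B$ I would insert $\sigma(u(t-\delta\,,x))$, using its $\mathscr{F}_{t-\delta}$-measurability to keep the integrands predictable exactly as in Lemma~\ref{lem:linear:x}; then the BDG inequality, the bound $\|u(s\,,y)-u(t-\delta\,,x)\|_k\lesssim\sqrt k(\delta^{1/4}+\ell^{1/2})$ valid on $B$, and Cauchy--Schwarz reduce the two resulting terms to $\iint_B[G_{t'-s}(x\,,y)-G_{t-s}(x\,,y)]^2\,\d s\,\d y$, which one bounds simply by $\Var(w(t',x)-w(t\,,x))\lesssim\rho^2((t\,,x)\,,(t',x))=\varepsilon^{1/2}$ via Lemma~\ref{lem:w-w} and orthogonality of the two increment pieces of $w$; this gives $\|K_3\1_B\|_k\lesssim k(\delta^{1/4}+\ell^{1/2})\varepsilon^{1/4}$. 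Collecting all contributions,
\[
	\|\mathscr{E}(t\,,x\,;t',x)\|_k\lesssim k\big[\varepsilon^{19/48}+\varepsilon\delta^{-3/4}+\delta^{3/2}\ell^{-5/2}+(\delta^{1/4}+\ell^{1/2})\varepsilon^{1/4}\big],
\]
and choosing $\delta$ and $\ell$ as suitable powers of $\varepsilon$, so that every bracketed term is $O(\varepsilon^{19/48})$, yields \eqref{E:bd:t}; note $19/48>1/4$, which is what forces $\zeta>1$ in Proposition~\ref{pr:linear}. The case $\delta\ge t$, which forces $\varepsilon$ bounded below, is handled by taking instead $B=(0\,,t)\times\{|x-y|\le\ell\}$ with empty early-time piece (and is in any case covered by the trivial bound). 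Finally, $a>0$ enters only through Lemma~\ref{lem:G*u}, and under \eqref{G*u:I} the same estimates go through with $I=[0\,,T]\times[c\,,d]$. The delicate point throughout is to make the far-field decay---the $\ell^{-5}$ factor---strong enough to absorb the $\delta^3$ while keeping the box contributions small; the crude bound of Lemma~\ref{lem:G}, rather than full Gaussian bounds, suffices precisely because on the far piece $|x-y|$ is bounded below by a positive power of $\varepsilon$.
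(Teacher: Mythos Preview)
Your proposal is correct and follows essentially the same localization-of-heat-kernel scheme as the paper, with two minor variations: you treat $K_4$ (the paper's $I_3$) directly via $\int_0^L[G_r(x,y)]^2|x-y|\,\d y\lesssim 1$, which is cleaner and yields $\varepsilon^{1/2}$ instead of the paper's $\varepsilon^{11/24}$, and for $K_3$ (the paper's $I_4$) you localize to a fixed box $\{|x-y|\le\ell\}$ with an independent parameter, whereas the paper uses the time-dependent cutoff $\{|x-y|\le(t+\varepsilon-s)^{1/2-\gamma}\}$; both choices optimize to the same exponent $19/48$ (e.g.\ $\delta=\varepsilon^{29/36}$, $\ell=\varepsilon^{7/24}$ works in your setup). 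One small omission: you only treat $t<t'$, but since $\mathscr{E}$ is not symmetric in its two arguments you should note, as the paper does, that $\mathscr{E}(t',x\,;t,x)=-\mathscr{E}(t,x\,;t',x)+[\sigma(u(t,x))-\sigma(u(t',x))][w(t',x)-w(t,x)]$ and the extra term is $O(k\varepsilon^{1/2})$.
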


\begin{proof}
Let $(t\,,x) \,, (t',x) \in I=[a\,, T]\times [0\,,L]$ with $|t'-t|\le1$.
Suppose first $t\le t'$. Set $\varepsilon = t'-t$.
We use \eqref{she:mild} and \eqref{E} to write $\mathscr{E}(t\,,x\,;t',x) =I_1 + I_2 + I_3 + I_4$, where
\begin{align*}
	I_1 &= \int_{(t,t+\varepsilon)\times[0,L]} G_{t+\varepsilon-s}(x\,,y) b(u(s\,,y)) \, \d s \, \d y,\\
	I_2 &= \int_{(0,t) \times [0,L]} [G_{t+\varepsilon-s}(x\,,y) - G_{t-s}(x\,,y)] b(u(s\,,y)) \, \d s \, \d y,\\
	I_3 &= \int_{(t,t+\varepsilon)\times[0,L]} G_{t+\varepsilon-s}(x\,,y) [\sigma(u(s\,,y))-\sigma(u(t\,,x))]\, \xi(\d s\,\d y),\\
	I_4 &= \int_{(0,t)\times[0,L]} [G_{t+\varepsilon-s}(x\,,y) - G_{t-s}(x\,,y)] \sigma(u(s\,,y))\, \xi(\d s\, \d y)\\
	& \quad - \sigma(u(t\,,x)) \int_{(0,t)\times[0,L]} [G_{t+\varepsilon-s}(x\,,y) - G_{t-s}(x\,,y)]\, \xi(\d s\, \d y).
\end{align*}
Since $b$ is bounded, we may use Minkowski's inequality and Lemma \ref{lem:G} to see that
\begin{align*}
	\|I_1\|_k \le \int_t^{t+\varepsilon} \frac{\d s}{\sqrt{t+\varepsilon-s}} \lesssim \varepsilon^{1/2}.
\end{align*}
Similarly, we may use \eqref{G}, \eqref{lambda}, and the elementary inequality $\e^{-s}-\e^{-t}\le \e^{-s}((t-s)\wedge 1)$ for $0<s<t$ to deduce the following:
\begin{align*}
	\|I_2\|_k &\le \int_0^t \d s \int_0^L \d y\, |G_{t+\varepsilon-s}(x\,,y) - G_{t-s}(x\,,y)|\\
	& \lesssim \int_0^t \d s\sum_{n=1}^\infty |\e^{-\lambda_n (t+\varepsilon-s)} - \e^{-\lambda_n (t-s)}|
	\lesssim \int_0^t \d s \int_0^\infty \d z \,  (\varepsilon z \wedge 1)\, \e^{-cz^2(t-s)} \\
	& \lesssim \varepsilon^{1/2} \int_0^t \d s \int_0^\infty \d z \, \sqrt{z} \, \e^{-cz^2 s} \lesssim \varepsilon^{1/2} \int_0^t s^{-3/4} \d s \lesssim \varepsilon^{1/2}.
\end{align*}

In order to estimate $I_3$ and $I_4$, we use again the idea of localization of heat kernel. Let $c \in [0\,,1/2]$.
By the BDG inequality \cite[Prop. 4.4]{DK}, the Lipschitz continuity of $\sigma$, Lemmas \ref{lem:sup:u}, \ref{lem:u-u:x}, \ref{lem:u-u:t}, and Lemma \ref{lem:G*u} (or \eqref{G*u:I} when $I=[0\,,T]\times [c\,,d]$),
\begin{align*}
	\|I_3\|_k^2 
	&\lesssim k^2 \int_t^{t+\varepsilon} \d s \int_0^L \d y \, \1_{\{|x-y| \le (t+\varepsilon-s)^{1/2-c}\}} G^2_{t+\varepsilon-s}(x\,,y) \sqrt{s-t}\\
	& + k^2 \int_t^{t+\varepsilon} \d s \int_0^L \d y \, \1_{\{|x-y| \le (t+\varepsilon-s)^{1/2-c}\}} G^2_{t+\varepsilon-s}(x\,,y) |x-y|\\
	& + k^2 \int_t^{t+\varepsilon} \d s \int_0^L \d y \, \1_{\{|x-y| > (t+\varepsilon-s)^{1/2-c}\}} G^2_{t+\varepsilon-s}(x\,,y) =: k^2 [ I_{3,1} + I_{3,2} + I_{3,3} ].
\end{align*}
Thanks to Parseval's identity, \eqref{lambda}, and \eqref{f_n:bd}, we have
\begin{align*}
	I_{3,1} & = \int_0^{\varepsilon} \d s \sqrt{\varepsilon-s} \int_0^L \d y \, G_{s}^2(x\,,y) = \int_0^{\varepsilon}\d s \sqrt{\varepsilon-s} \sum_{n=1}^\infty \e^{-2\lambda_n s} |f_n(x)|^2\\
	& \lesssim \sqrt{\varepsilon}\int_0^\varepsilon\d s  \sum_{n=1}^\infty \e^{-cn^2 s}
	\lesssim \sqrt{\varepsilon} \int_0^\varepsilon \d s \int_0^\infty \d z \, \e^{-cz^2 s}
	\lesssim \sqrt{\varepsilon}\int_0^\varepsilon \frac{\d s}{\sqrt{s}} \lesssim \varepsilon.
\end{align*}
By similar computations,
\begin{align*}
	I_{3,2} &\lesssim \int_0^{\varepsilon} \d s \, s^{1/2-c} \int_0^L \d y \, G_s^2(x\,,y) \lesssim \varepsilon^{1-c}.
\end{align*}
By Lemma \ref{lem:G}, if $|x-y| > (t+\varepsilon-s)^{1/2-c}$, then
\begin{align}\label{G:bd:local}
	&|G_{t+\varepsilon-s}(x\,,y)| \lesssim \frac{t+\varepsilon-s}{|x-y|^3}= \frac{1}{|x-y|} \cdot \frac{t+\varepsilon-s}{|x-y|^2} \le \frac{(t+\varepsilon-s)^{2c}}{|x-y|}
\end{align}
and hence
\begin{align*}
	I_{3,3} \lesssim \int_t^{t+\varepsilon} \d s \, (t+\varepsilon-s)^{4c} \int_{(t+\varepsilon-s)^{1/2-c}}^\infty \frac{\d y}{y^2}
	\lesssim \int_t^{t+\varepsilon} \frac{\d s}{(t+\varepsilon-s)^{1/2-5c}} \lesssim \varepsilon^{1/2+5c}.
\end{align*}
Choose $c = 1/12$ and combine the estimates to find that $\|I_3\|_k \lesssim k^2 \varepsilon^{11/24}$.

To estimate $I_4$, let $\delta = \varepsilon^b$, where $b\in (0\,,1)$, let $\gamma \in [0\,,1/2]$, and define
\begin{align*}
	&A = \{ (s\,,y) \in (0\,,t) \times [0\,,L] : t-\delta < s < t \,, |x-y| \le (t+\varepsilon-s)^{1/2-\gamma}\},\\
	&A^c = ((0\,,t) \times [0\,,L])\setminus A.
\end{align*}
Then, we may write $I_4 = I_{4,1} + I_{4,2}+ I_{4,3} + I_{4,4}$, where
\begin{align*}
	I_{4,1} &= \iint_{A} [G_{t+\varepsilon-s}(x\,,y) - G_{t-s}(x\,,y)] [\sigma(u(s\,,y))-\sigma(u(t-\delta\,,x))] \, \xi(\d s\, \d y),\\
	I_{4,2} &= [\sigma(u(t-\delta\,,x)) - \sigma(u(t\,,x))] \iint_A [G_{t+\varepsilon-s}(x\,,y) - G_{t-s}(x\,,y)] \, \xi(\d s\, \d y),\\
	I_{4,3} &= \iint_{A^c} [G_{t+\varepsilon-s}(x\,,y) - G_{t-s}(x\,,y)] \sigma(u(s\,,y)) \, \xi(\d s\,\d y),\\
	I_{4,4}& = - \sigma(u(t\,,x)) \iint_{A^c} [G_{t+\varepsilon-s}(x\,,y) - G_{t-s}(x\,,y)] \, \xi(\d s\,\d y).
\end{align*}
Suppose that $\delta < t$.
By the BDG inequality \cite[Prop. 4.4]{DK}, the Lipschitz continuity of $\sigma$, Lemmas \ref{lem:u-u:x}, \ref{lem:u-u:t}, and Lemma \ref{lem:G*u} (or \eqref{G*u:I} when $I=[0\,,T]\times [c\,,d]$),
\begin{align*}
	&\|I_{4,1}\|_k^2
	\lesssim k \iint_A \d s\, \d y \,[G_{t+\varepsilon-s}(x\,,y) - G_{t-s}(x\,,y)]^2 \|u(s\,,y)-u(t-\delta\,,x)\|_k^2\\
	& \lesssim k^2 \int_{t-\delta}^t \d s \sqrt{s-(t-\delta)} \int_0^L \d y \,\1_{\{|x-y|\le(t+\varepsilon-s)^{1/2-\gamma}\}}[G_{t+\varepsilon-s}(x\,,y) - G_{t-s}(x\,,y)]^2  \\
	& \quad + k^2 \int_{t-\delta}^t \d s \int_0^L \d y \,\1_{\{|x-y|\le(t+\varepsilon-s)^{1/2-\gamma}\}}[G_{t+\varepsilon-s}(x\,,y) - G_{t-s}(x\,,y)]^2 |x-y| \\
	& \le k^2 (\sqrt{\delta}+(\varepsilon+\delta)^{1/2-\gamma}) \iint_A \d s\, \d y \,[G_{t+\varepsilon-s}(x\,,y) - G_{t-s}(x\,,y)]^2\\
	&\lesssim k^2 \delta^{1/2-\gamma} \Var(w(t+\varepsilon\,,x)-w(t\,,x)) \lesssim k^2 \delta^{1/2-\gamma} \varepsilon^{1/2}.
\end{align*}
By Cauchy-Schwarz inequality,
\begin{align*}
	\|I_{4,2}\|_k^2
	&\textstyle\lesssim \|u(t-\delta\,,x)-u(t\,,x)\|_{2k}^2 \cdot \|\iint_A [G_{t+\varepsilon-s}(x\,,y) - G_{t-s}(x\,,y)] \, \xi(\d s\,\d y)\|_{2k}^2\\
	&\lesssim k^2 \delta^{1/2} \Var(w(t+\varepsilon\,,x)-w(t\,,x)) \lesssim k^2 \delta^{1/2} \varepsilon^{1/2}.
\end{align*}
Next, by the BDG inequality \cite[Prop. 4.4]{DK} and the boundedness of $\sigma$,
\begin{align*}
	\|I_{4,3}\|_k^2 \lesssim k \iint_{A^c} \d s\, \d y \,[G_{t+\varepsilon-s}(x\,,y) - G_{t-s}(x\,,y)]^2.
\end{align*}
Split $A^c$ into the union of $A_1$ and $A_2$, where
\begin{align*}
	&A_1:=(0\,,t-\delta]\times[0\,,L],\\
	&A_2:=(t-\delta\,,t) \times \{ y \in [0\,,L] : |x-y| > (t+\varepsilon-s)^{1/2-\gamma} \}.
\end{align*}
By Lemma \ref{lem:int:G:t},
\begin{align*}
	&\iint_{A_1} \d s \, \d y\, [G_{t+\varepsilon-s}(x\,,y) - G_{t-s}(x\,,y)]^2\\
	& \lesssim \int_0^{t-\delta} \d s \int_0^\infty \d z \, (\varepsilon^2 z^4 \wedge 1)\, \e^{-cz^2(t-s)}\\
	&\le \varepsilon^2 \int_{\delta}^t \d s \int_0^\infty \d z \, z^4\, \e^{-cz^2 s}
	\lesssim \varepsilon^2 \int_\delta^\infty \frac{\d s}{s^{5/2}} \lesssim \varepsilon^2 \delta^{-3/2}.
\end{align*}
Using $|G_{t+\varepsilon-s}(x\,,y) - G_{t-s}(x\,,y)| \le |G_{t+\varepsilon-s}(x\,,y)| + |G_{t-s}(x\,,y)|$ and a similar bound to the one in \eqref{G:bd:local}, we have
\begin{align*}
	&\iint_{A_2} \d s \, \d y\, [G_{t+\varepsilon-s}(x\,,y) - G_{t-s}(x\,,y)]^2\\
	& \lesssim \int_{t-\delta}^t \d s \int_0^L \d y \, \1_{\{|x-y|>(t+\varepsilon-s)^{1/2-\gamma}\}} \frac{(t+\varepsilon-s)^{4\gamma}}{|x-y|^2} \\
	& \lesssim \delta^{4\gamma} \int_{t-\delta}^t \d s \int_{(t+\varepsilon-s)^{1/2-\gamma}}^\infty \frac{\d y}{y^2} \lesssim \delta^{4\gamma} \int_{t-\delta}^t \frac{\d s}{(t+\varepsilon-s)^{1/2-\gamma}}\\
	& \lesssim \delta^{4\gamma} (\varepsilon+\delta)^{1/2+\gamma} \le \delta^{1/2 + 5\gamma}.
\end{align*}
Hence, $\|I_{4,3}\|_k^2 \lesssim k [\varepsilon^2\delta^{-3/2} +\delta^{1/2+5\gamma}]$.
Similarly, by the boundedness of $\sigma$,
\begin{align*}
	\|I_{4,4}\|_k^2 \lesssim k \iint_{A^c} \d s\, \d y \,[G_{t+\varepsilon-s}(x\,,y) - G_{t-s}(x\,,y)]^2 \lesssim k \left[\varepsilon^2\delta^{-3/2} + \delta^{1/2+5\gamma}\right].
\end{align*}
It is not hard to check that $I_{4,1}\,,\dots,I_{4,4}$ have the same form of estimates when $t<\delta$. Therefore,
\begin{align*}
	\|I_4\|_k &\le \|I_{4,1}\|_k + \|I_{4,2}\|_k + \|I_{4,3}\|_k + \|I_{4,4}\|_k\\
	&\lesssim k \left[ \delta^{1/4-\gamma/2}\varepsilon^{1/4} + \delta^{1/4}\varepsilon^{1/4} + \delta^{-3/4} \varepsilon + \delta^{1/4+5\gamma/2} \right].
\end{align*}
Recall that $\delta = \varepsilon^b$.
Choose $b=3/4$ and $\gamma = 1/9$ to obtain
\begin{align*}
	\|I_4\|_k \lesssim k \left[ \varepsilon^{19/48} + \varepsilon^{7/16} +   \varepsilon^{7/16} +  \varepsilon^{19/48} \right] \lesssim k \varepsilon^{19/48}
\end{align*}
uniformly for all $k \in [2\,,\infty)$, $x \in [0\,,L]$ and $t \le t'$ in $I$.
Combine the estimates for $I_1\,,\dots,I_4$ to obtain the desired estimate \eqref{E:bd:t}.

Finally, to prove the desired estimate for $t'<t$, note that this is the same as proving that $\mathscr{E}(t',x\,; t\,,x)$ satisfies the desired estimate for $t<t'$. But this can be shown by observing that
\begin{align*}
	\mathscr{E}(t',x\,; t\,,x)
	= - \mathscr{E}(t\,,x\,; t',x) + [\sigma(u(t\,,x)) - \sigma(u(t+\varepsilon\,,x))] [w(t+\varepsilon\,,x) - w(t\,,x)],
\end{align*}
applying the estimate for $\mathscr{E}(t\,,x\,; t',x)$ from the first part of this proof,
and using Cauchy-Schwarz inequality, Lipschitz continuity of $\sigma$, and Lemma \ref{lem:u-u:t}, which yields
\begin{align*}
	&\|[\sigma(u(t\,,x)) - \sigma(u(t+\varepsilon\,,x))] [w(t+\varepsilon\,,x) - w(t\,,x)]\|_k\\
	& \lesssim \|u(t\,,x)-u(t+\varepsilon\,,x)\|_{2k} \cdot \|w(t+\varepsilon\,,x) - w(t\,,x)\|_{2k}
	\lesssim k \varepsilon^{1/2}.
\end{align*}
This completes the proof.
\end{proof}

\begin{proof}[Proof of Proposition \ref{pr:linear}]
Thanks to Lemma \ref{lem:sup:u}, it suffices to show \eqref{E:bd} uniformly for all $k \in [2\,,\infty)$ and $(t\,,x)\,,(t',x')\in I$ with $\rho((t\,,x)\,,(t',x')) \le \epsilon_0$, where $\epsilon_0>0$ is a small but fixed number.
Observe that
\begin{align}\begin{split}\label{E+E}
	\mathscr{E}(t\,,x\,;t',x') &= \mathscr{E}(t\,,x';t',x')+\mathscr{E}(t\,,x\,;t\,,x')\\
	& \quad + (\sigma(u(t\,,x'))-\sigma(u(t\,,x))) (w(t',x')-w(t\,,x')).
\end{split}\end{align}
Also, by Cauchy-Schwarz inequality and Lemmas \ref{lem:u-u:x} and \ref{lem:u-u:t},
\[
	\|(\sigma(u(t\,,x'))-\sigma(u(t\,,x))) (w(t',x')-w(t\,,x'))\|_k 
	\lesssim k [\rho((t\,,x)\,,(t',x'))]^2.
\]
This and Lemmas \ref{lem:linear:x} and \ref{lem:linear:t} conclude the proof since $\min\{19/14\,, 19/12\}>1$.
\end{proof}

\subsection{Tail probability and almost sure bounds}

\begin{lemma}\label{lem:EexpE}
Let $\zeta>1$ be the number given by Proposition \ref{pr:linear}.
If $b$ and $\sigma$ are bounded, then for any $0<a<T$, there is $\gamma_1>0$ such that
\begin{align*}
	\sup_{z,z'\in I}\E\left[ \exp\left( \gamma_1\frac{|\mathscr{E}(z\,;z')|}{[\rho(z\,,z')]^{\zeta}}\right)\right] < \infty,
\end{align*}
where $I=[a\,,T]\times [0\,,L]$ (or $I=[0\,,T]\times[c\,,d]$ with $0\le c<d\le L$ if \eqref{G*u:I} holds).
\end{lemma}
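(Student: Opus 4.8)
The plan is to convert the polynomial-in-$k$ moment bound of Proposition \ref{pr:linear} into a \emph{uniform} exponential moment bound by expanding the exponential in a Taylor series, the crucial point being that the constant furnished by Proposition \ref{pr:linear} does not depend on the pair $(z\,,z')$.

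Concretely, fix $z\,,z'\in I$ and set $X := X_{z,z'} = |\mathscr{E}(z\,;z')| / [\rho(z\,,z')]^{\zeta}$ (with the convention $X=0$ when $z=z'$). By Proposition \ref{pr:linear} there is a constant $C>0$, independent of $z\,,z'$ and of $k$, with
\[
	\E[X^k] = \frac{\E\big[|\mathscr{E}(z\,;z')|^k\big]}{[\rho(z\,,z')]^{k\zeta}} \le (Ck)^k \qquad \text{for every integer } k\ge 2,
\]
while $\E[X^0]=1$ and, by Cauchy--Schwarz, $\E[X] \le (\E[X^2])^{1/2}\le 2C$. Then, by monotone convergence and Stirling's bound $k!\ge (k/\e)^k$,
\[
	\E\big[\exp(\gamma_1 X)\big] = \sum_{k=0}^\infty \frac{\gamma_1^k}{k!}\,\E[X^k] \le 1 + 2C\gamma_1 + \sum_{k=2}^\infty \gamma_1^k\,\frac{(Ck)^k}{k!} \le 1 + 2C\gamma_1 + \sum_{k=2}^\infty (\gamma_1 C\e)^k,
\]
and the right-hand side is finite as soon as $\gamma_1 C\e < 1$. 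The resulting bound depends only on $C$, hence not on $z\,,z'$, so fixing any $\gamma_1 \in (0\,,(C\e)^{-1})$ yields $\sup_{z,z'\in I}\E[\exp(\gamma_1 X_{z,z'})] < \infty$, which is the assertion. For the alternative domain $I=[0\,,T]\times[c\,,d]$ one repeats the argument verbatim, invoking the version of Proposition \ref{pr:linear} valid under hypothesis \eqref{G*u:I}.

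There is essentially no serious obstacle here; the only items deserving care are the bookkeeping for small $k$ (the terms $k=0,1$, where $(Ck)^k$ is either vacuous or must be replaced by the trivial estimates above) and the observation—already visible in the displayed chain—that uniformity of the constant in Proposition \ref{pr:linear} transfers directly to uniformity of the exponential moment, which is precisely what the supremum in the statement demands.
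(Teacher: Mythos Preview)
Your proof is correct and follows essentially the same route as the paper: expand the exponential as a Taylor series, apply the moment bound from Proposition \ref{pr:linear}, and use Stirling's inequality to see that the resulting series is dominated by a geometric series in $\gamma_1 C\e$. Your treatment is in fact slightly more careful than the paper's, since you handle the terms $k=0,1$ explicitly (where the bound $\|\mathscr{E}\|_k\le Ck[\rho]^{\zeta}$ from Proposition \ref{pr:linear} is only stated for $k\ge 2$).
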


\begin{proof}
Thanks to Proposition \ref{pr:linear}, the series expansion of the  exponential function, and Stirling's formula, there exists $C>0$ such that for all $z\,,z'\in I$,
\begin{align*}
	\E\left[ \exp\left( \gamma_1\frac{|\mathscr{E}(z\,;z')|}{[\rho(z\,,z')]^{\zeta}}\right)\right]
	=\sum_{k=0}^\infty \frac{\gamma_1^k}{k!} \frac{\|\mathscr{E}(z\,;z')\|_k^k}{[\rho(z\,,z')]^{k\zeta}}
	\le \sum_{k=0}^\infty \gamma_1^k C^k.
\end{align*}
The last quantity remains bounded provided $\gamma_1>0$ is small enough.
\end{proof}

\begin{proposition}\label{pr:P:supE}
Let $\zeta>1$ be given by Proposition \ref{pr:linear}.
If $b$ and $\sigma$ are bounded, then for any fixed $0<a<T$ and $p \in (0\,,\zeta]$, there exists $C>0$ such that
\begin{align}\label{P:supE:bd}
	\P\left\{ \sup_{\substack{z,z'\in I: \rho(z,z') \le \varepsilon}} |\mathscr{E}(z\,;z')| > h \varepsilon^p \right\}
	\le C \varepsilon^{-6(p+\zeta)} \exp\left( - \frac{h \wedge h^2}{C \varepsilon^{\zeta-p}\log_+(1/\varepsilon)} \right)
\end{align}
uniformly for all $\varepsilon \in (0\,,1]$ and $h>0$, where $I=[a\,,T]\times [0\,,L]$ (or $I=[0\,,T]\times[c\,,d]$ with $0\le c<d\le L$ if \eqref{G*u:I} holds).
\end{proposition}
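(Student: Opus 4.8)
The plan is a multiscale (recursive) discretization built on the cocycle structure of $\mathscr E$, using the $\rho^{\zeta}$–decay of $\mathscr E$ (Lemma \ref{lem:EexpE}) on a net and the sharp moduli of $\tilde u,w$ (Lemma \ref{lem:E:exp}) to absorb the increments created by moving the base point. Write $\Xi(r):=\sup_{z,z'\in I:\rho(z,z')\le r}|\mathscr E(z\,;z')|$; by joint continuity of $\mathscr E$ and $\mathscr E(z\,;z)=0$ this is a finite random variable, and $\Xi(1)\le\sup_{\rho(z,z')\le1}|\tilde u(z)-\tilde u(z')|+\sup_{\rho(z,z')\le1}|w(z)-w(z')|$ has a Gaussian-type tail by Lemma \ref{lem:E:exp}. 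Since $\varepsilon^{-6(p+\zeta)}\ge1$, the inequality \eqref{P:supE:bd} is trivial whenever its right side is $\ge1$; hence we may assume $h\wedge h^2\ge C_*\varepsilon^{\zeta-p}(\log_+(1/\varepsilon))^2$ for a large $C_*$ (the ``large–deviation regime''), and in particular that $\varepsilon$ is small (for $\varepsilon$ bounded away from $0$ the claim follows from the tail of $\Xi(1)$). We may also assume $1<\zeta<2$, replacing $\zeta$ by $\min\{\zeta,\tfrac74\}$ if necessary, which is harmless since $\rho\le\mathrm{const}$ on $I$. Fix an exponent $M$ with $\max\{1,(p+\zeta)/2\}<M\le(1+p+\zeta)/2$; this interval is nonempty because $1<\zeta$ and $p\le\zeta$. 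Put $\varepsilon_j:=\varepsilon^{M^{j}}$ and choose an $\varepsilon_{j+1}$–net $\mathcal N_j$ of $I$ in the metric $\rho$; since a $\rho$–ball of radius $r$ has Lebesgue measure $\asymp r^{6}$, $\#\mathcal N_j\lesssim\varepsilon_{j+1}^{-6}$ and the number of pairs of $\mathcal N_j$ at $\rho$–distance $\le2\varepsilon_j$ is $\lesssim\varepsilon_j^{6}\varepsilon_{j+1}^{-12}$.

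\emph{The one-step inequality.} For $z,z'\in I$ with $\rho(z,z')\le\varepsilon_j$ let $\bar z,\bar z'\in\mathcal N_j$ be nearest net points. Using the identity $\mathscr E(a\,;c)=\mathscr E(a\,;b)+\mathscr E(b\,;c)+[\sigma(u(b))-\sigma(u(a))](w(c)-w(b))$ (immediate from \eqref{E}, cf.\ \eqref{E+E}), first with $b=\bar z$ and then with $b=\bar z'$,
\[
\mathscr E(z\,;z')-\mathscr E(\bar z\,;\bar z')=\mathscr E(z\,;\bar z)+\mathscr E(\bar z'\,;z')+[\sigma(u(\bar z))-\sigma(u(z))](w(z')-w(\bar z))+[\sigma(u(\bar z'))-\sigma(u(\bar z))](w(z')-w(\bar z')).
\]
The first two summands are $\le\Xi(\varepsilon_{j+1})$ each. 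For the last two, boundedness and Lipschitzness of $\sigma$, $u=\tilde u+G\ast u_0$, Lemma \ref{lem:G*u} (or \eqref{G*u:I}), and Lemma \ref{lem:E:exp} — which gives $\sup_{\rho(a,b)\le r}|\tilde u(a)-\tilde u(b)|\le r\sqrt{\log_+(1/r)}\,Z_{\tilde u}$ and the analogue for $w$, with $\E[e^{\gamma Z_{\tilde u}^2}],\E[e^{\gamma Z_w^2}]<\infty$ — bound their sum, uniformly in such $z,z'$, by $C_1M^{j}\varepsilon_j\varepsilon_{j+1}\log_+(1/\varepsilon)\,Y$ with $Y:=(1+Z_{\tilde u})Z_w$. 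Setting $A_j:=\sup\{|\mathscr E(\bar z\,;\bar z')|:\bar z,\bar z'\in\mathcal N_j,\ \rho(\bar z,\bar z')\le2\varepsilon_j\}$ we get $\Xi(\varepsilon_j)\le A_j+2\,\Xi(\varepsilon_{j+1})+C_1M^{j}\varepsilon_j\varepsilon_{j+1}\log_+(1/\varepsilon)\,Y$. Because $M>1$, $\varepsilon_j\to0$ super-exponentially and $2^{j}\Xi(\varepsilon_j)\lesssim 2^{j}\varepsilon_j\sqrt{\log_+(1/\varepsilon_j)}(Z_{\tilde u}+Z_w)\to0$ a.s.; iterating therefore yields
\[
\Xi(\varepsilon)\ \le\ \sum_{j\ge0}2^{j}A_j\ +\ C_1Y\log_+(1/\varepsilon)\sum_{j\ge0}(2M)^{j}\varepsilon_j\varepsilon_{j+1}\ \le\ \sum_{j\ge0}2^{j}A_j\ +\ C_2\,Y\,\varepsilon^{1+M}\log_+(1/\varepsilon),
\]
the last step because $\varepsilon_j\varepsilon_{j+1}=\varepsilon^{(1+M)M^{j}}$ decays fast enough that the $j=0$ term dominates.

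\emph{The two tails.} For the first series, $\{\sum_j2^{j}A_j>h\varepsilon^p/2\}\subset\bigcup_j\{A_j>\theta_j\}$ once $\sum_j2^{j}\theta_j\le h\varepsilon^p/2$; take $\theta_0=h\varepsilon^p/4$ and $\theta_j=\varepsilon_j$ for $j\ge1$ (admissible since $M>(p+\zeta)/2$ forces $\sum_{j\ge1}2^{j}\varepsilon_j\le h\varepsilon^p/4$ in the large-deviation regime). A union bound over the $\lesssim\varepsilon_j^{6}\varepsilon_{j+1}^{-12}$ admissible pairs, with Lemma \ref{lem:EexpE} in the form $\P\{|\mathscr E(\bar z\,;\bar z')|>t\}\le C_0e^{-\gamma_1 t/(2\varepsilon_j)^{\zeta}}$, gives $\P\{A_0>\theta_0\}\lesssim\varepsilon^{6-12M}e^{-ch\varepsilon^{p-\zeta}}$, while for $j\ge1$ the bounds are negligible (for small $\varepsilon$ the factor $\varepsilon_j^{-\zeta}$ in the exponent overwhelms $\#\mathcal N_j$, so the level-$j$ terms decay doubly-exponentially in $j$). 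Since $M\le(1+p+\zeta)/2$ we have $\varepsilon^{6-12M}\le C\varepsilon^{-6(p+\zeta)}$, and $e^{-ch\varepsilon^{p-\zeta}}\le\exp\bigl(-\tfrac{h\wedge h^2}{C\varepsilon^{\zeta-p}\log_+(1/\varepsilon)}\bigr)$ in the relevant range. For the cross term, the sub-exponential tail $\P\{Y>\lambda\}\lesssim e^{-c\lambda}$ gives $\P\{C_2Y\varepsilon^{1+M}\log_+(1/\varepsilon)>h\varepsilon^p/2\}\lesssim\exp\bigl(-c'h\varepsilon^{p-1-M}/\log_+(1/\varepsilon)\bigr)$; since $1<M<\zeta+1-1$ is false but $1<M$ and $M+1<\zeta+1$ both hold, $\varepsilon^{p-1-M}=\varepsilon^{p-\zeta}\cdot\varepsilon^{\zeta-1-M}$ with $\varepsilon^{\zeta-1-M}\ge1$, so this is again $\le\exp\bigl(-\tfrac{h\wedge h^2}{C\varepsilon^{\zeta-p}\log_+(1/\varepsilon)}\bigr)$. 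Adding the two estimates and enlarging $C$ proves \eqref{P:supE:bd}; the case $I=[0,T]\times[c,d]$ under \eqref{G*u:I} is identical.

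\emph{The main obstacle} is the term $[\sigma(u(\bar z))-\sigma(u(z))](w(z')-w(\bar z))$ created by moving the base point of $\mathscr E$: it couples a $\sigma$–increment over the \emph{fine} scale $\varepsilon_{j+1}$ with a $w$–increment over the \emph{coarse} scale $\varepsilon_j$, so it is genuinely of order $\varepsilon_j\varepsilon_{j+1}$ and enjoys no $\rho^{\zeta}$–gain. The double application of the cocycle identity is designed precisely to split the discretization error into two honest copies of $\mathscr E$ at the fine scale (which re-enter the recursion) plus this mild cross term, which is then rendered lower-order by $\zeta<2$, forcing $1+M>\zeta$ for every admissible $M$. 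The recursion needs $M>1$ to converge, and the compatibility of $M>1$ with the prefactor requirement $M\le(1+p+\zeta)/2$ rests on $\zeta>1$.
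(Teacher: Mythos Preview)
Your multiscale chaining differs from the paper's single-scale interpolation: the paper discretizes once at mesh $\delta=\varepsilon^{(p+\zeta)/2}$, shows that on a good event $B_1$ (controlling the moduli of $\tilde u$ and $w$ at scale $\delta$ via Lemma~\ref{lem:E:exp}) the continuous sup forces the discrete sup over net pairs to be large, and then bounds the discrete sup via Lemma~\ref{lem:EexpE} and $\P\{B_1^c\}$ via Lemma~\ref{lem:E:exp}. Your cocycle recursion is a legitimate alternative and the one-step inequality is correctly derived, but there is a genuine gap in the tail analysis.

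Specifically, your choice $\theta_j=\varepsilon_j$ for $j\ge1$ gives bounds $\P\{A_j>\varepsilon_j\}\lesssim\varepsilon_j^{6}\varepsilon_{j+1}^{-12}\exp(-c\,\varepsilon_j^{1-\zeta})$ that do not depend on $h$; their sum is a function of $\varepsilon$ alone. But the target inequality \eqref{P:supE:bd} must hold for \emph{all} $h>0$, and for fixed small $\varepsilon$ its right side tends to zero as $h\to\infty$. So your $j\ge1$ contributions cannot be ``negligible'' uniformly in $h$: for large $h$ they exceed the target bound. The fix is to let every level carry the factor $h$, e.g.\ $\theta_j=4^{-j-1}h\varepsilon^{p}$ for all $j\ge0$; then $\P\{A_j>\theta_j\}\lesssim\varepsilon_j^{6}\varepsilon_{j+1}^{-12}\exp(-c\,4^{-j}h\varepsilon^{p}\varepsilon_j^{-\zeta})$, and since $\varepsilon_j^{-\zeta}=\varepsilon^{-\zeta M^{j}}$ grows doubly exponentially in $j$, the $j\ge1$ terms are now genuinely dominated by the $j=0$ term in the large-deviation regime, uniformly in $h$. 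A smaller point: your cross-term justification ``$1<M<\zeta+1-1$ is false but $\ldots$ $M+1<\zeta+1$ both hold'' is self-contradictory (the second clause asserts $M<\zeta$, the first denies it); what you actually need is $M>1>\zeta-1$, which follows from your reduction $\zeta<2$ and gives $\varepsilon^{\zeta-1-M}\ge1$ as required.
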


\begin{proof}
Define $L_\sigma = \sup_{u,v\in \R}|\sigma(u)-\sigma(v)|/|u-v|$ and $M_\sigma = \sup_{u \in \R}|\sigma(u)|$.
Let $h>0$ and $\varepsilon\in (0\,,1]$. 
The proof uses an interpolation argument.
Let $\delta\in (0\,,\varepsilon]$ be a number to be determined, and define
\begin{align*}
	J=\left\{ (t\,,x) \in I \, : \, \exists\, k_1\,,k_2 \in \N_+\,, t = k_1 \delta^{4} \text{ and } x = k_2 \delta^{2} \right\}.
\end{align*}
Let $A$ denote the event appearing on the left-hand side of \eqref{P:supE:bd}.
Consider the events $B_0$ and $B_1$ defined by
\begin{gather*}
	B_0=\left\{ \max_{q,q' \in J: \rho(q,q') \le 3\varepsilon}|\mathscr{E}(q\,;q')| > \frac{h\varepsilon^p}{2} \right\}\quad\text{and}\quad
	B_1=B_2 \cap B_3 \cap B_4,
\end{gather*}
where
\begin{align*}
	B_2&=\left\{ \forall q \in J\,, \sup_{q':\rho(q,q')\le \delta}|\tilde{u}(q)-\tilde{u}(q')| \le \frac{(\sqrt{h} \wedge h)\varepsilon^p}{2(2+2M_\sigma+L_\sigma)} \right\},\\
	B_3&=\left\{ \forall q \in J\,, \sup_{q': \rho(q,q') \le \delta} |w(q)-w(q')| \le \frac{(\sqrt{h} \wedge h)\varepsilon^p}{2(2+2M_\sigma+L_\sigma)} \right\},\\
	B_4&=\left\{ \sup_{z,z'\in I: \rho(z,z') \le \varepsilon} |w(z)-w(z')| \le \sqrt{h} \wedge h \right\}
\end{align*}
Suppose that $A$ and $B_1$ both occur.
Then, in particular, there exist $z\,,z'\in I$ with $\rho(z\,,z')\le \varepsilon$ such that $|\mathscr{E}(z\,;z')|>h\varepsilon^p$.
For any $q\,,q'\in J$,
\begin{align*}
	\mathscr{E}(q\,;q') &= \mathscr{E}(z\,;z') + \tilde{u}(q) - \tilde{u}(z) - \tilde{u}(q') + \tilde{u}(z') - \sigma(u(q)) (w(q')-w(z'))\\
	&\quad +\sigma(u(q))(w(q)-w(z)) - [\sigma(u(q)) - \sigma(u(z))] (w(z')-w(z)),
\end{align*}
so triangle inequality implies that
\begin{align*}
	|\mathscr{E}(q\,;q')| &\ge |\mathscr{E}(z\,;z')| - |\tilde{u}(q) - \tilde{u}(z)| - |\tilde{u}(q') - \tilde{u}(z')| - M_\sigma |w(q')-w(z')| \\
	& \quad - M_\sigma |w(q)-w(z)| - L_\sigma |u(q)-u(z)| |w(z')-w(z)|.
\end{align*}
Now, if we take $q \in J$ to be the closest point to $z$ and $q' \in J$ to be the closest point to $z'$, then $\rho(q\,,q') \le \rho(q\,,z) + \rho(z\,,z') + \rho(z',q') \le \delta+\varepsilon+\delta \le 3 \varepsilon$, and since $B_1$ occurs, it follows that
\begin{align*}
	|\mathscr{E}(q\,;q')| \ge h\varepsilon^p - \frac{(2+2M_\sigma+L_\sigma) h \varepsilon^p}{2(2+2M_\sigma+L_\sigma)} = \frac{h\varepsilon^p}{2}.
\end{align*}
This shows that $A \cap B_1 \subset B_0$, hence
\begin{align*}
	\P\{A\} = \P\{A \cap B_1 \} + \P\{A \cap B_1^c\}
	\le \P\{B_0\} + \P\{B_1^c\}.
\end{align*}
Set $\delta=\varepsilon^r$, where $r \in [p\,,\zeta]$.
Then, by a union bound, Chebyshev's inequality, Lemma \ref{lem:EexpE},  and $\# J \lesssim \delta^{-6}$, there exists $C_1>0$ such that
\begin{align*}
	\P\{B_0\} &\le (\# J)^2 \sup_{q,q'\in J: \rho(q,q')\le3\varepsilon} \P\left\{ \frac{|\mathscr{E}(q\,;q')|}{[\rho(q\,,q')]^{\zeta}} > \frac{h\varepsilon^p}{2(3\varepsilon)^{\zeta}} \right\}\\
	& \le C_1 \delta^{-12} \exp\left( - \frac{h \varepsilon^p}{C_1 \varepsilon^{\zeta}} \right) = C_1 \varepsilon^{-12r} \exp\left( -\frac{h}{C_1 \varepsilon^{\zeta-p}} \right).
\end{align*}
Similarly, thanks to Lemma \ref{lem:E:exp}, there exists $C_2>0$ such that
\begin{align*}
	&\P\{B_1^c\} 
	\le \P\{B_2^c\} + \P\{B_3^c\} + \P\{B_4^c\}\\
	& \lesssim \delta^{-6} \exp\left( - \frac{(h \wedge h^2) \varepsilon^{2p}}{C_2 \delta^2\log_+(\frac{1}{\delta})} \right) + \delta^{-6} \exp\left( - \frac{(h \wedge h^2) \varepsilon^{2p}}{C_2 \delta^2\log_+(\frac{1}{\delta})} \right) + \exp\left( - \frac{h \wedge h^2}{C_2\varepsilon^2\log_+(\frac{1}{\varepsilon})} \right)\\
	& \lesssim \varepsilon^{-6r} \exp\left( - \frac{h \wedge h^2}{C_2 \zeta \varepsilon^{2(r-p)}\log_+(1/\varepsilon)} \right) + \exp\left( - \frac{h \wedge h^2}{C_2\varepsilon^2\log_+(1/\varepsilon)} \right).
\end{align*}
We may optimize by choosing $r = (p+\zeta)/2$ so that $2(r-p)=\zeta-p$.
Then, combining the last two displays, we see that there exists $C>0$ such that
\begin{align*}
	\P\{A\} \le C \varepsilon^{-12 r} \exp\left( - \frac{h \wedge h^2}{C\varepsilon^{\zeta-p}\log_+(1/\varepsilon)} \right).
\end{align*}
This completes the proof of \eqref{P:supE:bd}.
\end{proof}

\begin{proposition}\label{pr:E:as}
Let $\zeta>1$ be the number given by Proposition \ref{pr:linear}.
Regardless of whether or not $b$ and $\sigma$ are bounded, for any fixed $p \in (0\,,\zeta)$ and fixed $0<a<T$,
\begin{align*}
	\lim_{\varepsilon\to0^+}\sup_{z,z'\in I: 0<\rho(z,z')\le\varepsilon} \frac{|\mathscr{E}(z\,;z')|}{[\rho(z\,,z')]^p} = 0 \qquad \text{a.s.}
\end{align*}
where $I=[a\,,T]\times [0\,,L]$ (or $I=[0\,,T]\times[c\,,d]$ with $0\le c<d\le L$ if \eqref{G*u:I} holds).
\end{proposition}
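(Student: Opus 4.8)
The plan is to reduce to the case of bounded $b$ and $\sigma$ by a localization argument, and then to promote the tail estimate of Proposition~\ref{pr:P:supE} to an almost-sure statement by a Borel--Cantelli argument over a geometric sequence of scales. \textbf{Step 1 (localization).} The solution $u$ of \eqref{she} has a modification with continuous sample paths, so $M:=\sup_{(t,x)\in[0,T]\times[0,L]}|u(t,x)|<\infty$ a.s. For each $R\in\N_+$ choose bounded globally Lipschitz $b_R,\sigma_R:\R\to\R$ agreeing with $b,\sigma$ on $[-R,R]$, let $u^{(R)}$ be the mild solution of \eqref{she} with $b,\sigma$ replaced by $b_R,\sigma_R$ (same noise, $u_0$, and boundary condition), and let $\mathscr{E}^{(R)}$ be the linearization error \eqref{E} built from $u^{(R)}$; note $w$ and $G\ast u_0$ are unchanged. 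A standard stopping-time and pathwise-uniqueness argument gives $u=u^{(R)}$ on $[0,T]\times[0,L]$ on the event $\{M<R\}$, and since $|u|<R$ there, also $\sigma(u(t,x))=\sigma_R(u^{(R)}(t,x))$, so $\mathscr{E}(z;z')=\mathscr{E}^{(R)}(z;z')$ for all $z,z'\in[0,T]\times[0,L]$ on $\{M<R\}$. As $\P\bigl(\bigcup_R\{M<R\}\bigr)=1$, it suffices to prove the limit with $\mathscr{E}$ replaced by $\mathscr{E}^{(R)}$ for each fixed $R$, i.e.\ under the extra hypothesis that $b$ and $\sigma$ are bounded.

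\textbf{Step 2 (Borel--Cantelli over dyadic scales).} Assume now $b,\sigma$ bounded, fix $p\in(0,\zeta)$, and choose $p'\in(p,\zeta)$. Applying Proposition~\ref{pr:P:supE} with exponent $p'$, threshold $h=1$ and $\varepsilon=2^{-j}$ gives a constant $C>0$ independent of $j$ with
\begin{align*}
	\P\Bigl\{\sup_{z,z'\in[0,T]\times[0,L]:\,\rho(z,z')\le 2^{-j}}|\mathscr{E}(z;z')|>2^{-jp'}\Bigr\}
	\le C\,2^{6j(p'+\zeta)}\exp\Bigl(-\frac{2^{j(\zeta-p')}}{C\,j\log 2}\Bigr),
\end{align*}
which is summable in $j$ because $\zeta-p'>0$. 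By Borel--Cantelli there is a.s.\ a random $j_0$ with $\sup_{\rho(z,z')\le 2^{-j}}|\mathscr{E}(z;z')|\le 2^{-jp'}$ for all $j\ge j_0$. For $z,z'$ with $\rho(z,z')\in(2^{-(k+1)},2^{-k}]$ and $k\ge j_0$ this gives $|\mathscr{E}(z;z')|/[\rho(z,z')]^{p}\le 2^{p}2^{-k(p'-p)}$, hence $\sup_{0<\rho(z,z')\le 2^{-j}}|\mathscr{E}(z;z')|/[\rho(z,z')]^{p}\le 2^{p}2^{-j(p'-p)}\to0$ as $j\to\infty$; monotonicity in $\varepsilon$ completes the proof.

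\textbf{The main obstacle.} Almost all of the analytic difficulty is already contained in Proposition~\ref{pr:P:supE}, which in turn relies on the heat-kernel localization estimates of Section~\ref{s:E}; given that input, Step~2 is routine. The genuinely new ingredient is Step~1, where care is needed to justify the pathwise identity $u=u^{(R)}$ (hence $\mathscr{E}=\mathscr{E}^{(R)}$) on $\{M<R\}$ via a stopping-time truncation of the mild equation together with uniqueness. A further technical point is that invoking Proposition~\ref{pr:P:supE} up to $t=0$ and up to the spatial endpoints requires the form of that proposition adapted to the initial data, e.g.\ under a condition such as \eqref{G*u:I}.
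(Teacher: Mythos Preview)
Your proof is correct and follows essentially the same strategy as the paper: truncate $b,\sigma$ to reduce to the bounded case (the paper does this via the stopping time $\tau_N=\inf\{t:\sup_x|u_N(t,x)|>N\}$ rather than your event $\{M<R\}$), then apply Borel--Cantelli along dyadic scales using Proposition~\ref{pr:P:supE} (the paper keeps the exponent $p$ and sends $h=\delta\downarrow 0$ rather than introducing an auxiliary $p'\in(p,\zeta)$ with $h=1$). Your final remark about the hypotheses needed to invoke Proposition~\ref{pr:P:supE} on the full rectangle $[0,T]\times[0,L]$ is apt.
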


\begin{proof}
We prove the proposition using a truncation and stopping time argument.
Fix $p \in (0\,,\zeta)$ and $T>0$.
For each $N>0$, define $b_N\,, \sigma_N : \R \to \R$ by
\[
	b_N(x) = \begin{cases}
	b(N) & \text{if $x>N$,}\\
	b(x) & \text{if $-N\le x \le N$,}\\
	b(-N) & \text{if $x<-N$,}
	\end{cases}\quad 
	\sigma_N(x) = \begin{cases}
	\sigma(N) & \text{if $x>N$,}\\
	\sigma(x) & \text{if $-N\le x \le N$,}\\
	\sigma(-N) & \text{if $x<-N$.}
	\end{cases}
\]
Define $u_N$ as the solution to \eqref{she} but with $b$ and $\sigma$ replaced by $b_N$ and $\sigma_N$, respectively.
Define $\mathscr{E}_N$ the same as $\mathscr{E}$ in \eqref{E} but with $u$ replaced by $u_N$.
Let
\[\textstyle
	\tau_N = \inf\{ t \ge 0 : \sup_{x \in [0,L]}|u_N(t\,,x)| > N \}
\]
with $\inf \varnothing = \infty$.
Then $\tau_N$ is a stopping time with respect to the filtration $\{\mathscr{F}_t\}_{t\ge 0}$ generated by the noise $\xi$.
Uniqueness of the solution to \eqref{she} implies that
\begin{align}\label{u_N=u}
	\P\{ u_N(t\,,x) = u(t\,,x) \text{ for all $t<\tau_N$ and $x \in [0\,,L]$} \} = 1.
\end{align}
Fix $N>0$ and $\delta \in (0\,,1)$.
Proposition \ref{pr:P:supE} implies that for any $n \in \N_+$,
\begin{align*}
	\P\left\{ \sup_{\substack{z,z'\in I: 2^{-n-1} \le \rho(z,z') \le 2^{-n}}} |\mathscr{E}_N(z\,;z')| > \delta 2^{-p n} \right\} \le C 2^{6(p+\zeta)n} \exp\left( - \frac{\delta^2 2^{(\zeta-p)n}}{Cn} \right).
\end{align*}
It follows by the Borel-Cantelli lemma that
\begin{align*}
	\lim_{n\to\infty} \sup_{\substack{z,z'\in I: 0<\rho(z,z')\le 2^{-n}}} \frac{|\mathscr{E}_N(z\,;z')|}{[\rho(z\,,z')]^p} \le \delta 2^p \qquad \text{a.s.}
\end{align*}
By monotonicity, this implies that
\begin{align*}
	\lim_{\varepsilon\to0^+} \sup_{\substack{z,z'\in I: 0<\rho(z,z')\le\varepsilon}} \frac{|\mathscr{E}_N(z\,;z')|}{[\rho(z\,,z')]^p} \le \delta 2^p \qquad \text{a.s.}
\end{align*}
Letting $\delta \to 0^+$ yields
\begin{align*}
	\lim_{\varepsilon\to0^+} \sup_{\substack{z,z'\in I: 0<\rho(z,z')\le\varepsilon}} \frac{|\mathscr{E}_N(z\,;z')|}{[\rho(z\,,z')]^p}= 0 \qquad \text{a.s.}
\end{align*}
Thanks to \eqref{u_N=u}, for every $N>0$, we have
\begin{align*}
	\P\left\{ \lim_{\varepsilon\to0^+} \sup_{\substack{z,z'\in I: 0<\rho(z,z')\le\varepsilon}} \frac{|\mathscr{E}(z\,;z')|}{[\rho(z\,,z')]^p}= 0 \right\} \ge \P\{\tau_N>T\}.
\end{align*}
Finally, we may finish the proof by letting $N\to\infty$ because the a.s. continuity of $u$ (see Lemma \ref{lem:E:exp}) together with \eqref{u_N=u} implies that $\lim_{N\to\infty}\P\{\tau_N>T\} =1$.
\end{proof}

\section{Proofs of the main results}\label{s:pf}

\subsection{Proof of Theorem \ref{th:she:lil}}

\begin{proof}
Recall the linearization error $\mathscr{E}(t\,,x\,;t',x')$ defined in \eqref{E}.
By triangle inequality, for any $z\,,z' \in [0\,,\infty)\times [0\,,L]$,
\begin{align}\begin{split}\label{u:w:tri}
	&|\sigma(u(z))| |w(z')-w(z)| - |(G\ast u_0)(z') - (G \ast u_0)(z)| - |\mathscr{E}(z\,;z')|\\
	& \le  |u(z')-u(z)|\\
	&\le |\sigma(u(z))| |w(z')-w(z)| + |(G\ast u_0)(z') - (G \ast u_0)(z)| + |\mathscr{E}(z\,;z')|.
\end{split}\end{align}
Fix $z_0 = (t_0\,,x_0) \in (0\,,\infty)\times[0\,,L]$ and write 
\[
	\phi(z\,,z')=\rho(z\,,z')\sqrt{\log\log(1/\rho(z\,,z'))}.
\]
Thanks to Lemma \ref{lem:G*u}, there exists $K_0>0$ such that for all $z = (t\,,x) \in B_\rho(z_0\,,\varepsilon)$,
\begin{align}\label{G*u:z-z}
	|(G\ast u_0)(z) - (G \ast u_0)(z_0)| \le K_0(|t-t_0|+|x-x_0|) \le K_0(\varepsilon^4 + \varepsilon^2)
\end{align}
and hence
\begin{align}\label{u:lil:G*u}
	\lim_{\varepsilon\to0^+} \sup_{z\in B^*_\rho(z_0,\varepsilon)} \frac{|(G\ast u_0)(z) - (G \ast u_0)(z_0)|}{\phi(z\,,z_0)} = 0.
\end{align}
By Proposition \ref{pr:E:as},
\[
	\lim_{\varepsilon\to 0^+} \sup_{z\in B^*_\rho(z_0,\varepsilon)}\frac{|\mathscr{E}(z_0\,;z)|}{\phi(z\,,z_0)}=0 \quad \text{a.s.}
\]
It follows from \eqref{u:w:tri} and the last two displays that, a.s.,
\begin{align*}
	\lim_{\varepsilon\to0^+} \sup_{z\in B^*_\rho(z_0,\varepsilon)} \frac{|u(z)-u(z_0)|}{\phi(z\,,z_0)}
	= |\sigma(u(z_0))| \lim_{\varepsilon\to0^+} \sup_{z\in B^*_\rho(z_0,\varepsilon)} \frac{|w(z)-w(z_0)|}{\phi(z\,,z_0)}.
\end{align*}
Owing to \eqref{w:lil} in Theorem \ref{th:w:lil:mc}, the right-hand side is equal to $|\sigma(u(z_0))| K_0$ a.s.

Finally, when $t_0=0$, \eqref{u:lil:G*u} still holds under the additional assumption \eqref{G*u:z_0}.
Moreover, Proposition \ref{pr:E:as} and \eqref{w:lil} in Theorem \ref{th:w:lil:mc} continue to hold when $t_0=0$.
This again shows \eqref{u:lil} and completes the proof of Theorem \ref{th:she:lil}.
\end{proof}

\subsection{Proof of Theorem \ref{th:she:mc}}

\begin{proof}
Fix $I=[a\,,T]\times[c\,,d]$ as in the statement of the theorem.
Write 
\[
	\psi(z\,,z') = \rho(z\,,z')\sqrt{\log(1/\rho(z\,,z'))}.
\]
By the polarity condition, $\sigma(u(z)) \ne 0$ for all $z\in I$.
But since $u$ is a.s.~continuous on the compact set $I$, it follows that $\Delta:=\inf_{z \in I}|\sigma(u(z))|$ is an a.s.~strictly positive random variable.
With this in mind, we begin with \eqref{u:w:tri}, which implies
\begin{align*}
	&|w(z')-w(z)| - \tfrac{1}{\Delta}|(G\ast u_0)(z') - (G \ast u_0)(z)|-\tfrac{1}{\Delta}|\mathscr{E}(z\,;z')|\\
	&\le \frac{|u(z')-u(z)|}{|\sigma(u(z))|}\\
	&\le |w(z')-w(z)| + \tfrac{1}{\Delta}|(G\ast u_0)(z') - (G \ast u_0)(z)|+\tfrac{1}{\Delta}|\mathscr{E}(z\,;z')|.
\end{align*}
By Lemma \ref{lem:G*u},
\begin{align}\label{u:mc:G*u}
	\lim_{\varepsilon\to0^+} \sup_{z,z'\in I: 0< \rho(z,z')\le\varepsilon} \frac{|(G\ast u_0)(z') - (G \ast u_0)(z)|}{\psi(z\,,z')} = 0.
\end{align}
We may apply Proposition \ref{pr:E:as} to see that
\begin{align*}
	\lim_{\varepsilon \to 0^+} \sup_{{z,z'\in I:0<\rho(z,z')\le\varepsilon}}\frac{|\mathscr{E}(z\,;z')|}{\psi(z\,,z')} =0 \quad \text{a.s.}
\end{align*}
Applying the last two displays to \eqref{u:w:tri} yields
\begin{align*}
	\lim_{\varepsilon\to0^+} \sup_{\substack{z,z'\in I\\ 0< \rho(z,z')\le\varepsilon}} \frac{|u(z')-u(z)|}{|\sigma(u(z))| \psi(z\,,z')}
	= \lim_{\varepsilon\to0^+} \sup_{\substack{z,z'\in I\\ 0< \rho(z,z')\le\varepsilon}} \frac{|w(z')-w(z)|}{\psi(z\,,z')} \quad \text{a.s.}
\end{align*}
Thanks to \eqref{w:mc} in Theorem \ref{th:w:lil:mc}, the right-hand side above is equal to $K$ a.s.

Finally, when $a=0$, \eqref{u:mc:G*u} still holds under the additional assumption \eqref{G*u:I}.
Moreover, Proposition \ref{pr:E:as} and \eqref{w:mc} in Theorem \ref{th:w:lil:mc} continue to hold when $a=0$.
This shows \eqref{u:mc} and completes the proof of Theorem \ref{th:she:mc}.
\end{proof}

\subsection{Proof of Corollary \ref{cor:she:ex}}

\begin{proof}
Fix $I=[a\,,T]\times[c\,,d]$, where $0<a<T$ and $0\le c<d \le L$.
Suppose $\theta>K$.
If on an event of positive probability, $F(\theta)$ is nonempty and contains a random point $z$, then on this event,
\[
	\lim_{\varepsilon\to0^+} \sup_{z,z'\in I: 0<\rho(z,z')\le\varepsilon} \frac{|u(z')-u(z)|}{|\sigma(u(z))| \rho(z\,,z')\sqrt{\log(1/\rho(z\,,z'))}} \ge \theta.
\]
This is a contradiction to \eqref{u:mc}. Hence, $F(\theta) = \varnothing$ a.s.

Suppose $0<\theta \le K$.
Theorem \ref{th:she:lil} implies that for every fixed $z\in I$, 
\begin{align*}
	\P\left\{ \lim_{\varepsilon\to0^+} \sup_{z' \in B_\rho^*(z,\varepsilon)} \frac{|u(z')-u(z)|}{\rho(z\,,z')\sqrt{\log(1/\rho(z\,,z'))}} = 0  \right\} = 1.
\end{align*}
By Fubini's theorem and the preceding, the expectation of the Lebesgue measure of $F(\theta)$ is
\begin{align*}
	&\E\left[\int_I \1_{F(\theta)} \d z \right] = \int_I \P\left\{ z \in F(\theta) \right\} \d z\\
	&= \int_I \P\left\{ \lim_{\varepsilon\to0^+} \sup_{z' \in B_\rho^*(z,\varepsilon)} \frac{|u(z')-u(z)|}{\rho(z\,,z')\sqrt{\log(1/\rho(z\,,z'))}} \ge \theta |\sigma(u(z))|  \right\} \d z = 0.
\end{align*}
Hence, $F(\theta)$ has Lebesgue measure 0 a.s.

Set $K' = \sqrt{12 c_2}$, where $c_2$ is the constant in \eqref{SLND:DNR}.
It is clear that for any rectangle $J \subset I$, \eqref{SLND:DNR} still holds on $J$ with the same constant $c_2$.
The proof of Theorem \ref{th:she:mc} and \eqref{w:mc} show that for any such rectangle $J$,
\begin{align}\label{u:mc:J}
	\lim_{\varepsilon\to0^+} \sup_{z,z'\in J: 0<\rho(z,z')\le\varepsilon} \frac{|u(z)-u(z')|}{|\sigma(u(z))| \rho(z\,,z')\sqrt{\log(1/\rho(z\,,z'))}} \ge K' \quad \text{a.s.}
\end{align}
and $K'\le K$.
For any $z\,,z'\in I$, let $J(z\,,z')$ denote the unique closed rectangle that contains $z$ and $z'$ as vertices.
Suppose $0<\theta<K'$.
In order to prove the last assertion of Corollary \ref{cor:she:ex}, we adapt the argument of \cite{OT74} to show that for any open rectangle $I'$ with rational vertices with $I' \cap I \ne \varnothing$, $\P\{F(\theta) \cap I' \ne \varnothing\} = 1$.
To show this, let $\Omega_0$ be the intersection of the events \eqref{u:mc:J} over all rectangles $J$ in $I$ with rational vertices, which satisfies $\P\{\Omega_0\}=1$.
On $\Omega_0$, there exist rational points $z_1\,,z_1' \in I' \cap I$ such that $\rho(z_1\,,z_1') \le 2^{-1}$ and
\begin{align*}
	\frac{|u(z_1)-u(z_1')|}{|\sigma(u(z_1'))|} > \theta \rho(z_1\,,z_1') \sqrt{\log(1/\rho(z_1\,,z_1'))}.
\end{align*}
Since $u$ and $\sigma$ are continuous, we may choose a rational $z_1^* \in J(z_1\,,z_1')$ such that $\rho(z_1\,,z_1^*) \le 2^{-1}$ and for all $z\in J(z_1\,,z_1^*)$,
\begin{align*}
	\frac{|u(z_1)-u(z)|}{|\sigma(u(z))|}
	> \theta \rho(z_1\,,z_1') \sqrt{\log(1/\rho(z_1\,,z_1'))}
	\ge \theta \rho(z_1\,,z) \sqrt{\log(1/\rho(z_1\,,z))}
\end{align*}
where the second inequality holds because $x \mapsto x \sqrt{\log(1/x)}$ is increasing on $[0\,,2^{-1}]$.
Next, since $J(z_1\,,z_1^*)$ is a rectangle with rational vertices, we can iterate the above procedure to find that, on $\Omega_0$, there are rational points $z_n\,,z_n'\,,z_n^* \in I' \cap I$, $n \in \N_+$ such that $\rho(z_n\,,z_n^*) \le 2^{-n}$,
\begin{align}\label{nested}
	J(z_n\,,z_n^*) \subset J(z_n\,,z_n') \subset J(z_{n-1}\,,z_{n-1}^*) \quad \text{for each $n \ge 2$}
\end{align}
and
\begin{align}\label{lower:fcn}
	\frac{|u(z_n)-u(z)|}{|\sigma(u(z))|}
	> \theta \rho(z_n\,,z) \sqrt{\log(1/\rho(z_n\,,z))}
	\quad\text{for all $z \in J(z_n\,,z_n^*)$.}
\end{align}
In particular, the nested property \eqref{nested} implies that $\bigcap_{n\in\N_+} J(z_n\,,z_n^*)$ is nonempty and contains a point $z_0$ which, thanks to \eqref{lower:fcn}, satisfies
\begin{align*}
	\frac{|u(z_n)-u(z_0)|}{|\sigma(u(z_0))|}
	> \theta \rho(z_n\,,z_0) \sqrt{\log(1/\rho(z_n\,,z_0))} \quad \text{for all $n \in \N_+$.}
\end{align*}
That is, $z_0 \in F(\theta) \cap I'$. 
This proves the claim, and hence $F(\theta)$ is dense in $I$.
\end{proof}

\subsection{Proof of Theorem \ref{th:she:sb}}

\begin{proof}
Suppose $b$ and $\sigma$ are bounded.
Let $\phi$ be as in the statement of the theorem.
Fix $z_0 \in [0\,,\infty)\times[0\,,L]$.
Let $m_\sigma = \inf_{x \in \R}|\sigma(x)|$ and $M_\sigma = \sup_{x \in \R}|\sigma(x)|$.
Let $\zeta>1$ be given by Proposition \ref{pr:linear}.
Choose and fix a number $p \in (1\,,\zeta)$.
Thanks to \eqref{phi}, we can find $\varepsilon_1 \in (0\,,1]$ such that
\begin{align}\label{eps:phi}
	\varepsilon \le \tfrac{1}{4K_0}(\phi(\varepsilon))^{-1/6} \quad
	\text{and} \quad \varepsilon^p \le \tfrac14 \varepsilon (\phi(\varepsilon))^{-1/6} \quad \text{for all $\varepsilon \in (0\,,\varepsilon_1]$,}
\end{align}
where $K_0$ is the constant in \eqref{G*u:z-z}.

Suppose first that $t_0>0$ and $m_\sigma>0$.
Recall the linearization error $\mathscr{E}$ defined in \eqref{E}.
For any $\varepsilon\in (0\,,\varepsilon_1]$ and $z \in B_\rho(z_0\,,\varepsilon)$,
if $|u(z)-u(z_0)| \le \varepsilon(\phi(\varepsilon))^{-1/6}$ and $|\mathscr{E}(z_0\,;z)| \le \varepsilon^p$, then
\begin{align}\begin{split}\label{u:sb:w-w}
	|w(z)-w(z_0)|
	&\le |\sigma(u(z_0))|^{-1} \left( |\tilde{u}(z)-\tilde{u}(z_0)| + |\mathscr{E}(z_0\,;z)| \right)\\
	& \le m_\sigma^{-1} \left( \varepsilon(\phi(\varepsilon))^{-1/6} + |(G\ast u_0)(z)-(G\ast u_0)(z_0)| + \varepsilon^p \right)\\
	&\le m_\sigma^{-1}\left( \tfrac54 \varepsilon(\phi(\varepsilon))^{-1/6} + K_0 \varepsilon^2\right)
	\le K_1 \varepsilon(\phi(\varepsilon))^{-1/6},
\end{split}\end{align}
where $K_1 = 3m_\sigma^{-1}/2$ and the last two lines follow from \eqref{G*u:z-z} and \eqref{eps:phi}.
It follows from the preceding, \eqref{w:sb}, and Proposition \ref{pr:P:supE} that for all $\varepsilon\in (0\,,\varepsilon_1]$,
\begin{align*}
	&\textstyle\P\left\{ \sup_{z \in B_\rho(z_0,\varepsilon)}|u(z)-u(z_0)| \le \varepsilon (\phi(\varepsilon))^{-1/6} \right\}\\
	&\textstyle\le \P\left\{ \sup_{z \in B_\rho(z_0,\varepsilon)}|w(z)-w(z_0)| \le \frac{K_1\varepsilon}{\phi(\varepsilon)^{1/6}}  \right\}
	+ \P\left\{ \sup_{z \in B_\rho(z_0,\varepsilon)}|\mathscr{E}(z_0\,;z)| > \varepsilon^p\right\}\\
	&\textstyle \le \exp\left( - c_0 K_1^{-6} \phi(\varepsilon) \right) + C \varepsilon^{-6(1+\zeta)}\exp\left( - \frac{1}{C\varepsilon^{\zeta-p}\log_+(1/\varepsilon)} \right).
\end{align*}
Take $C_0 = c_0K_1^{-6}/2$.
By \eqref{phi}, we can find $\varepsilon_2 \in (0\,,\varepsilon_1)$ such that for all $\varepsilon \in (0\,,\varepsilon_2)$,
\begin{align*}\textstyle
	\P\left\{ \sup_{z \in B_\rho(z_0,\varepsilon)}|u(z)-u(z_0)| \le \varepsilon(\phi(\varepsilon))^{-1/6} \right\}
	\le \e^{-C_0 \phi(\varepsilon)}.
\end{align*}
Next, let $K_2 = 1/(4M_\sigma)$. For $\varepsilon \in (0\,,\varepsilon_2)$, $z \in B_\rho(z_0\,,\varepsilon)$, if $|w(z)-w(z_0)|\le K_2 \varepsilon(\phi(\varepsilon))^{-1/6}$ and $|\mathscr{E}(z_0\,;z)| \le \varepsilon^p$, then by \eqref{E} and \eqref{G*u:z-z},
\begin{align}\begin{split}\label{u:sb:u-u}
	|u(z)-u(z_0)|
	&\le \varepsilon^p + M_\sigma K_2 \varepsilon(\phi(\varepsilon))^{-1/6} + 2K_0 \varepsilon^2\\
	&\le \varepsilon (\phi(\varepsilon))^{-1/6}.
\end{split}\end{align}
Hence, we can obtain in a similar way a reverse inequality for the small-ball probabilities for $\varepsilon \in (0\,,\varepsilon_2)$ using \eqref{w:sb} and Proposition \ref{pr:P:supE}:
\begin{align*}
	&\textstyle\exp\left( - c_1 K_2^{-6} \phi(\varepsilon) \right)\le \P\left\{ \sup_{z \in B_\rho(z_0,\varepsilon)}|w(z)-w(z_0)| \le K_2 \varepsilon(\phi(\varepsilon))^{-1/6} \right\}\\
	&\textstyle\le \P\left\{ \sup_{z \in B_\rho(z_0,\varepsilon)}|u(z)-u(z_0)| \le \frac{\varepsilon}{\phi(\varepsilon)^{1/6}} \right\}
	+ \P\left\{ \sup_{z \in B_\rho(z_0,\varepsilon)}|\mathscr{E}(z_0\,;z)| > \varepsilon^p \right\}\\
	&\textstyle \le \P\left\{ \sup_{z \in B_\rho(z_0,\varepsilon)}|u(z)-u(z_0)| \le \frac{\varepsilon}{\phi(\varepsilon)^{1/6}} \right\}
	+ C \varepsilon^{-6(1+\zeta)} \exp\left( - \frac{1}{C\varepsilon^{\zeta-p}\log_+(1/\varepsilon)} \right).
\end{align*}
Let $C_1 = 2c_1 K_2^{-6}$.
Thanks to \eqref{phi} again, we may choose another small number $\varepsilon_0 \in (0\,,\varepsilon_2)$ to ensure that for all $\varepsilon \in (0\,,\varepsilon_0)$,
\begin{align*}
	\textstyle\P\left\{ \sup_{z \in B_\rho(z_0,\varepsilon)}|u(z)-u(z_0)| \le \varepsilon(\phi(\varepsilon))^{-1/6} \right\}
	\ge \e^{-C_1\phi(\varepsilon)}.
\end{align*}
This proves \eqref{u:sb}.

Finally, if $t_0=0$, then under \eqref{phi}, \eqref{G*u:chung} and $\sigma(u_0(x_0)) \ne 0$,
similarly to the argument in \eqref{u:sb:w-w} above, 
we can find $\tilde K_1>0$ such that if $|u(z)-u(z_0)| \le \varepsilon (\phi(\varepsilon))^{-1/6}$ and $|\mathscr{E}(z_0\,;z)| \le \varepsilon^p$, then
\begin{align*}
	 |w(z)-w(z_0)| \le \tilde K_1 |\sigma(u_0(x_0))|^{-1} \varepsilon (\phi(\varepsilon))^{-1/6}.
\end{align*}
Hence, by \eqref{w:sb} and Proposition \ref{pr:P:supE}, for all $\varepsilon>0$ small,
\begin{align*}
	&\textstyle\P\left\{ \sup_{z \in B_\rho(z_0,\varepsilon)}|u(z)-u(z_0)| \le \varepsilon (\phi(\varepsilon))^{-1/6} \right\}\\
	&\textstyle\le \P\left\{ \sup_{z \in B_\rho(z_0,\varepsilon)}|w(z)-w(z_0)| \le \frac{\tilde K_1\varepsilon}{|\sigma(u_0(x_0))|\phi(\varepsilon)^{1/6}}  \right\}
	+ \P\left\{ \sup_{z \in B_\rho(z_0,\varepsilon)}|\mathscr{E}(z_0\,;z)| > \varepsilon^p\right\}\\
	&\textstyle \le \exp\left( - c_0 \tilde K_1^{-6} |\sigma(u_0(x_0))|^6 \phi(\varepsilon) \right) + C \varepsilon^{-6(1+\zeta)}\exp\left( - \frac{1}{C\varepsilon^{\zeta-p}\log_+(1/\varepsilon)} \right).
\end{align*}
This leads to the upper bound in \eqref{u:sb:t=0} for some constant $C_0>0$. 
Similarly to the argument in \eqref{u:sb:u-u} above, we can find $\tilde K_2>0$ such that if $|w(z)-w(z_0)| \le \tilde K_2 |\sigma(u_0(x_0))|^{-1} \varepsilon (\phi(\varepsilon))^{-1/6}$ and $|\mathscr{E}(z_0\,; z)| \le \varepsilon^p$, then
\begin{align*}
	|u(z)-u(z_0)| \le \varepsilon^p + K_2' \varepsilon (\phi(\varepsilon))^{-1/6} + \varepsilon^q \le \varepsilon (\phi(\varepsilon))^{-1/6}.
\end{align*}
Hence, by \eqref{w:sb} and Proposition \ref{pr:P:supE}, for all $\varepsilon > 0$ small,
\begin{align*}
	&\textstyle\exp\left( - c_1 \tilde K_2^{-6} |\sigma(u_0(x_0))|^6 \phi(\varepsilon) \right)\\
	&\textstyle\le \P\left\{ \sup_{z \in B_\rho(z_0,\varepsilon)}|w(z)-w(z_0)| \le \tilde K_2 |\sigma(u_0(x_0))|^{-1} \varepsilon(\phi(\varepsilon))^{-1/6} \right\}\\
	&\textstyle \le \P\left\{ \sup_{z \in B_\rho(z_0,\varepsilon)}|u(z)-u(z_0)| \le \frac{\varepsilon}{\phi(\varepsilon)^{1/6}} \right\}
	+ C \varepsilon^{-6(1+\zeta)} \exp\left( - \frac{1}{C\varepsilon^{\zeta-p}\log_+(1/\varepsilon)} \right).
\end{align*}
This yields the lower bound in \eqref{u:sb:t=0}.
\end{proof}

\subsection{Proof of Theorem \ref{th:she:chung}}

\begin{proof}
Fix $z_0 \in (0\,,\infty) \times [0\,,L]$ and write
$\varphi(\varepsilon) = \varepsilon^{-1}(\log\log(1/\varepsilon))^{1/6}$.
By \eqref{G*u:z-z},
\begin{align}\label{u:chung:G*u}
	\liminf_{\varepsilon\to0^+} \varphi(\varepsilon) \sup_{z\in B_\rho(z_0,\varepsilon)} |(G \ast u_0)(z) - (G \ast u_0)(z_0)| = 0.
\end{align}
By Proposition \ref{pr:E:as},
\begin{align*}
	\liminf_{\varepsilon\to0^+} \varphi(\varepsilon) \sup_{z\in B_\rho(z_0,\varepsilon)} |\mathscr{E}(z_0\,;z)| = 0 \quad \text{a.s.}
\end{align*}
The last two displays applied to \eqref{u:w:tri} yield
\begin{align*}
	&\liminf_{\varepsilon\to0^+} \varphi(\varepsilon) \sup_{z\in B_\rho(z_0,\varepsilon)} |u(z)-u(z_0)|\\
	&= |\sigma(u(z_0))| \liminf_{\varepsilon\to0^+} \varphi(\varepsilon) \sup_{z\in B_\rho(z_0,\varepsilon)} |w(z)-w(z_0)| = |\sigma(u(z_0))| C_2 \;\;\text{a.s.}
\end{align*}
where the last equality is due to \eqref{w:chung} in Theorem \ref{th:w:sb:chung}.

Finally, when $t_0=0$, \eqref{u:chung:G*u} still holds under the additional assumption \eqref{G*u:chung}.
Also, Proposition \ref{pr:E:as} and \eqref{w:chung} in Theorem \ref{th:w:sb:chung} continue to hold when $t_0=0$.
This leads to the same conclusion and concludes the proof of Theorem \ref{th:she:chung}.
\end{proof}

\section{Proofs for the open KPZ equation}\label{s:kpz}

\subsection{Proof of Theorem \ref{th:kpz:lil:mc}}

\begin{proof}
Fix $z_0\in [0\,,\infty)\times[0\,,1]$ and $\epsilon_0\in (0\,,1)$ such that $B_\rho(z_0\,,\epsilon_0) \subset [0\,,\infty) \times [0\,,1]$.
The random field $u$ is the solution to \eqref{she} with $b=0$ and $\sigma(u)=u$.
Since $u$ is continuous and strictly positive (see \cite[Proposition 2.7]{CS18} and \cite[Proposition 4.2]{P19}), this implies that $\sigma^{-1}\{0\} = \{0\}$ is polar for $u$ and 
$\Delta_0:= \inf_{z \in B_\rho(z_0,\epsilon_0)}u(z)$ is a strictly positive random variable. 
We adopt the idea of \cite{FKM15} to argue as follows. 
By Taylor expansion, for any $u\,,\bar{u}>0$,
\[
	\log{\bar{u}} = \log{u} + \frac{\bar{u}-u}{u} - \frac{(\bar{u}-u)^2}{2v^2},
\]
where $v = v(u\,,\bar{u})$ takes values between $u$ and $\bar{u}$. Applying this with $h(z) = \log u(z)$ and using \eqref{E} yield the following:
\begin{align}\label{h:tri:ub}
	&|h(z)-h(z_0)|
	\le \frac{|u(z)-u(z_0)|}{u(z_0)} + \frac{|u(z)-u(z_0)|^2}{2\Delta_0^2}\\
	& \le |w(z)-w(z_0)| + \frac{|(G\ast u_0)(z)-(G\ast u_0)(z_0)|}{u(z_0)} + \frac{|\mathscr{E}(z_0\,;z)|}{u(z_0)} + \frac{|u(z)-u(z_0)|^2}{2\Delta_0^2}.\notag
\end{align}
Similarly,
\begin{align}\label{h:tri:lb}
	&|h(z)-h(z_0)|
	\ge \frac{|u(z)-u(z_0)|}{u(z_0)} - \frac{|u(z)-u(z_0)|^2}{2\Delta_0^2}\\
	& \ge |w(z)-w(z_0)| - \frac{|(G\ast u_0)(z)-(G\ast u_0)(z_0)|}{u(z_0)} - \frac{|\mathscr{E}(z_0\,;z)|}{u(z_0)} - \frac{|u(z)-u(z_0)|^2}{2\Delta_0^2}.\notag
\end{align}
Let $\phi(z\,,z_0) = \rho(z\,,z_0)\sqrt{\log\log(1/\rho(z\,,z_0))}$.
Then, by Lemma \ref{lem:G*u} (or \eqref{G*u:z_0} when $t_0=0$), Proposition \ref{pr:E:as}, and Theorem \ref{th:she:lil}, respectively, we have
\begin{gather*}
	\lim_{\varepsilon\to0^+}\sup_{z\in B^*_\rho(z_0,\varepsilon)} \frac{|(G\ast u_0)(z)-(G\ast u_0)(z_0)|}{u(z_0) \phi(z\,,z_0)} = 0,\\
	\lim_{\varepsilon\to0^+}\sup_{z\in B^*_\rho(z_0,\varepsilon)} \frac{|\mathscr{E}(z_0\,;z)|}{u(z_0) \phi(z\,,z_0)} = 0 \quad \text{a.s.,}\\
	\lim_{\varepsilon\to0^+}\sup_{z\in B^*_\rho(z_0,\varepsilon)} \frac{|u(z)-u(z_0)|^2}{2 \Delta_0^2 \phi(z\,,z_0)} = 0 \quad \text{a.s.}
\end{gather*}
These together with \eqref{w:lil} imply that, a.s.,
\begin{align*}
	\lim_{\varepsilon\to0^+}\sup_{z\in B^*_\rho(z_0,\varepsilon)} \frac{|h(z)-h(z_0)|}{\phi(z\,,z_0)}
	= \lim_{\varepsilon\to0^+} \sup_{z\in B^*_\rho(z_0,\varepsilon)} \frac{|w(z)-w(z_0)|}{\phi(z\,,z_0)} = K_0.
\end{align*}
This proves \eqref{kpz:lil}.

We now turn to the proof of \eqref{kpz:mc}. 
Fix $I=[a\,,T]\times[c\,,d]$. We may use the same argument as in the first part of this proof to show that $\Delta := \inf_{z\in I} u(z)$ is a strictly positive random variable, and for all $z\,,z' \in I$,
\begin{align*}
	&|w(z')-w(z)| - \frac{|(G\ast u_0)(z')-(G\ast u_0)(z)|}{\Delta} - \frac{|\mathscr{E}(z\,;z')|}{\Delta} - \frac{|u(z')-u(z)|^2}{2\Delta^2}\\
	&\le |h(z')-h(z)|\\
	&\le |w(z')-w(z)| + \frac{|(G\ast u_0)(z')-(G\ast u_0)(z)|}{\Delta} + \frac{|\mathscr{E}(z\,;z')|}{\Delta} + \frac{|u(z')-u(z)|^2}{2\Delta^2}.
\end{align*}
Let $\psi(z\,,z') = \rho(z\,,z')\sqrt{\log(1/\rho(z\,,z'))}$.
Then, by Lemma \ref{lem:G*u} (or \eqref{G*u:I} when $a=0$), Proposition \ref{pr:E:as}, and Theorem \ref{th:she:mc} (recalling that $\sigma^{-1}\{0\}$ is polar for $u$),
\begin{gather*}
	\lim_{\varepsilon\to0^+}\sup_{z,z'\in I: 0<\rho(z,z')\le\varepsilon} \frac{|(G\ast u_0)(z')-(G\ast u_0)(z)|}{\Delta \psi(z\,,z')} = 0,\\
	\lim_{\varepsilon\to0^+}\sup_{z,z'\in I: 0<\rho(z,z')\le\varepsilon}\frac{|\mathscr{E}(z\,;z')|}{\Delta \psi(z\,,z')} = 0 \quad \text{a.s.,}\\
	\lim_{\varepsilon\to0^+} \sup_{z,z'\in I: 0<\rho(z,z')\le\varepsilon}\frac{|u(z')-u(z)|^2}{2\Delta^2 \psi(z\,,z')} = 0 \quad \text{a.s.}
\end{gather*}
The above and \eqref{w:mc} together imply that, a.s.,
\begin{align*}
	\lim_{\varepsilon\to0^+} \sup_{z,z'\in I: 0<\rho(z,z')\le\varepsilon} \frac{|h(z')-h(z)|}{\psi(z\,,z')}
	= \lim_{\varepsilon\to0^+} \sup_{z,z'\in I: 0<\rho(z,z')\le\varepsilon} \frac{|w(z')-w(z)|}{\psi(z\,,z')} = K_1.
\end{align*}
This proves \eqref{kpz:mc} and hence completes the proof of Theorem \ref{th:kpz:lil:mc}.
\end{proof}

\subsection{Proof of Corollary \ref{cor:kpz:ex}}

\begin{proof}
The proof is the same as that of Corollary \ref{cor:she:ex} and is therefore omitted.
\end{proof}

\subsection{Proof of Theorem \ref{th:kpz:chung}}

\begin{proof}
Write $\varphi(\varepsilon)=\varepsilon^{-1}(\log\log(1/\varepsilon))^{1/6}$.
By Lemma \ref{lem:G*u} (or \eqref{G*u:chung} when $t_0=0$), Proposition \ref{pr:E:as}, and Theorem \ref{th:she:lil}, we have
\begin{gather*}
	\limsup_{\varepsilon\to0^+} \varphi(\varepsilon) \sup_{z\in B_\rho(z_0,\varepsilon)} |(G\ast u_0)(z)-(G\ast u_0)(z_0)| = 0,\\
	\limsup_{\varepsilon\to0^+} \varphi(\varepsilon) \sup_{z\in B_\rho(z_0,\varepsilon)} |\mathscr{E}(z_0\,;z)| = 0 \quad \text{a.s.,}\\
	\limsup_{\varepsilon\to0^+} \varphi(\varepsilon) \sup_{z\in B_\rho(z_0,\varepsilon)}|u(z)-u(z_0)|^2 = 0 \quad \text{a.s.}
\end{gather*}
Applying the preceding to \eqref{h:tri:ub} and \eqref{h:tri:lb} yields
\begin{align*}
	\liminf_{\varepsilon\to0^+} \varphi(\varepsilon) \sup_{z\in B_\rho(z_0,\varepsilon)} |h(z)-h(z_0)|
	=\liminf_{\varepsilon\to0^+} \varphi(\varepsilon) \sup_{z\in B_\rho(z_0,\varepsilon)}|w(z)-w(z_0)| = C_2
\end{align*}
a.s., where the last equality follows from \eqref{w:chung} in Theorem \ref{th:w:sb:chung}.
\end{proof}

{\bf Acknowledgments.}
C.Y. Lee was supported in part by the Shenzhen Peacock grant 2025TC0013.
The authors thank Professor Davar Khoshnevisan for his comments on the open problems. The authors also thank two anonymous referees for their careful reading and helpful comments which have led to some improvements in the paper.

\bibliography{spde12}
\bibliographystyle{amsplain}

\end{document}